\documentclass[11pt]{amsart}

\usepackage{amsmath, amssymb, amsfonts, amsthm, xcolor,tikz}

\usepackage{scalerel}
\usepackage{comment}
\definecolor{my-link}{rgb}{0.5,0.0,0.0}
\definecolor{my-blue}{rgb}{0.0,0.0,0.6}
\definecolor{my-red}{rgb}{0.5,0.0,0.0}
\definecolor{my-green}{rgb}{0.2,0.5,0.2}
\definecolor{darkgreen}{rgb}{0.0,0.5,0.0}
\definecolor{darkblue}{rgb}{0.0,0.0,0.3}
\definecolor{light-gray}{gray}{0.7}
\RequirePackage[colorlinks,urlcolor=my-blue,linkcolor=my-red,citecolor=my-green]{hyperref}
\usepackage[numbers,comma,square,sort&compress]{natbib}

\definecolor{Lp}{rgb}{0.2,0.5,0.2}
\definecolor{Rp}{rgb}{0.0,0.0,0.6}
\definecolor{Lm}{rgb}{1,0,1}
\definecolor{Rm}{rgb}{1,0.5,0}
\definecolor{Inst}{rgb}{0.65,0.1,0.1}
\definecolor{pt}{gray}{0.7}

\usepackage{enumitem}

\numberwithin{figure}{section}
\definecolor{nicosred}{rgb}{0.65,0.1,0.1}

\overfullrule=1mm

\usepackage{datetime}

\usepackage{constants}

\newconstantfamily{Om}{symbol=\Omega}  
\newcommand{\Omnew}[1]{\ensuremath{\Cl[Om]{#1}}}  
\newcommand{\Omref}[1]{\ensuremath{\Cr{#1}}}  

\newconstantfamily{Omp}{symbol=\Omega'\!}  
\newcommand{\Omnewp}[1]{\ensuremath{\Cl[Omp]{#1}}}  

\newcommand{\dual}{^\star}
\newcommand{\buse}[5]{\mathcal{B}^{\theta#1}(#2,#3,#4, #5)} 
\newcommand{\bus}[2]{\mathcal{B}^{#1#2}} 

\newcommand{\geo}{\Gamma}
\newcommand{\from}[1]{_{#1}}
\newcommand{\dir}[3]{^{#1, #2#3}}
\newcommand{\baddir}{\Theta^\w}
\newcommand{\tht}{\theta}
\newcommand{\sig}{{\scaleobj{0.8}{\square}}} 
\newcommand{\sigg}{{\scaleobj{0.9}{\square}}} 
\newcommand{\IG}[1]{{\mathcal S}^{#1}} 


\newcommand{\half}{\frac{1}{2}}
\newcommand{\R}{\mathbb{R}}
\newcommand{\Z}{\mathbb{Z}}
\newcommand{\N}{\mathbb{N}}
\newcommand{\Q}{\mathbb{Q}}
\renewcommand{\P}{\mathbb{P}}
\newcommand{\E}{\mathbb{E}}

\newcommand{\ep}{\varepsilon}
\newcommand{\e}{\ep}
\newcommand{\bfx}{{\mathbf x}}
\newcommand{\bfy}{{\mathbf y}}
\newcommand{\bfz}{{\mathbf z}}

\newcommand{\kS}{\mathfrak{S}}

\newcommand{\CI}{{\mathrm{CI}}}
\newcommand{\NU}{{\mathrm{NU}}}

\newcommand{\w}{\omega}

\font \mymathbb = bbold10 at 11pt
\newcommand{\one}{\mbox{\mymathbb{1}}}

\newcommand\abullet{\hspace{0.6pt}{\raisebox{1pt}{\scaleobj{0.6}{\bullet}}}\hspace{0.8pt}}  



\def\wt{\widetilde}


\newtheorem{thm}{Theorem}[section]
\newtheorem{lem}[thm]{Lemma}
\newtheorem{prop}[thm]{Proposition}
\newtheorem{cor}[thm]{Corollary}

\theoremstyle{definition}
\newtheorem{defn}[thm]{Definition}

\theoremstyle{remark}
\newtheorem{rmk}[thm]{Remark}

\numberwithin{figure}{section}
\numberwithin{equation}{section}

\usepackage[margin=1in]{geometry}

\title[Shocks and instability in BLPP]{Shocks and instability\\ in Brownian last-passage percolation}

\author[F.~Rassoul-Agha]{Firas Rassoul-Agha}
\address{Firas Rassoul-Agha\\ University of Utah\\  Mathematics Department\\ 155S 1400E\\   Salt Lake City, UT 84112\\ USA.}
\email{firas@math.utah.edu}
\urladdr{http://www.math.utah.edu/~firas}
\thanks{F.\ Rassoul-Agha was partially supported by National Science Foundation grants DMS-1811090 and DMS-2054630 and MPS-Simons Fellowship grant 823136}

\author[M.~Sweeney]{Mikhail Sweeney}
\address{Mikhail Sweeney\\ University of Utah\\  Mathematics Department\\ 155S 1400E\\   Salt Lake City, UT 84112\\ USA.}
\email{sweeney@math.utah.edu}
\thanks{M.\ Sweeney was partially supported by National Science Foundation grant DMS-2054630}

\thanks{Data sharing is not applicable to this article as no datasets were generated or analyzed during the current study.}

\thanks{The authors have no conflicts of interest to declare that are relevant to the content of this article.}

\keywords{Brownian last-passage percolation, Busemann function, bi-infinite geodesic, Busemann process, Busemann geodesic, competition interface, instability, instability web, shock, stochastic Burgers' equation, stochastic Hamilton-Jacobi equation.}
\subjclass[2020]{60K35, 60K37, 	37H05, 	37H30,	37L55, 35F21, 35R60.} 

\date{\usdate\today}

\begin{document}

\begin{abstract}
For stochastic Hamilton-Jacobi (SHJ) equations, instability points are the space-time locations where two eternal solutions with the same asymptotic velocity differ. Another crucial structure in such equations is shocks, which are the space-time locations where the velocity field is discontinuous. In this work, we provide a detailed analysis of the structure and relationships between shocks, instability, and competition interfaces in the Brownian last-passage percolation model, which serves as a prototype of a semi-discrete inviscid stochastic HJ equation in one space dimension. Among our findings, we show that the shock trees of the two unstable eternal solutions differ within the instability region and align outside of it. Furthermore, we demonstrate that one can reconstruct a skeleton of the instability region from these two shock trees.
\end{abstract}

\maketitle

\setcounter{tocdepth}{1}
\tableofcontents

\section{Introduction}\label{sec:intro}
In this paper we examine instability, shocks, and competition interfaces in 
Brownian last-passage percolation (BLPP), a semi-discrete one-dimensional growth model that shares essential structural features with inviscid stochastic Hamilton-Jacobi equations.  The BLPP model is introduced briefly in Section \ref{sec:BLPP} and then in greater detail in Section~\ref{sec:setting}.  We begin the introduction, however, by 
placing BLPP within the broader context of stochastic Hamilton-Jacobi equations and the Kardar-Parisi-Zhang (KPZ) universality class, as the definitions, methods, and many of the structural conclusions developed here should extend naturally to these more general settings.

\subsection{Stochastic Hamilton-Jacobi equations}

Stochastic Hamilton-Jacobi (SHJ) equations are a broad class of randomly forced Hamilton-Jacobi equations of the form
\[\partial_t \Phi+ H(\nabla \Phi) = \nu \Delta\Phi - F,\]
where $\Phi = \Phi(x,t):\R^d\times \R\to\R $ is a scalar function, $\nabla\Phi$ is its spatial gradient representing momentum, and the Hamiltonian $H:\R^d\to\R$ is assumed to be a convex function. The term $F=F_\omega(x,t)$ represents the random external force potential, with the random forcing given by its spatial gradient $f = -\nabla F$. 
The viscosity parameter $\nu\ge0$ dictates the level of diffusion, with $\nu>0$ representing the viscous case and $\nu=0$ corresponding to the inviscid case.

Given an appropriate initial condition $\varphi$ at the initial time $s$, the solution to the viscous SHJ equation is unique and smooth. In contrast, the inviscid SHJ equation admits multiple solutions with discontinuities in the spatial gradient, known as \emph{shocks}. Shocks play a crucial role in Hamilton-Jacobi equations and are fundamental to understanding phenomena such as sonic booms, breaking waves, traffic jams, and discontinuities in spacetime curvature, all of which are manifestations of nonlinear shock wave phenomena.

The solution of interest to us is the so-called \emph{viscosity solution}, which is obtained by solving the equation with $\nu>0$ and then taking the limit as $\nu\to0$. This solution is expressed by the Hopf-Lax-Oleinik variational representation. Specifically, define the random Hamiltonian $H_\omega(p,x,t) =H(p)+F_\omega(x,t)$ and its Legendre transform, the random Lagrangian $L_\omega(v,x,t)= \sup_p \{ p\cdot v - H_\omega(p,x,t) \}$. Then for an initial condition $\varphi:\R^d\to\R$ at time $s$, the viscosity solution at time $t>s$ is given by 
\begin{align}\label{LO}
\Phi(x,t) = \inf_{\gamma:\gamma(t)=x} \Bigl\{ \varphi(\gamma(s))+\int_{s}^{t} L_\omega(\gamma'(r), \gamma(r), r)\,dr \Bigr\},
\end{align}
where the infimum is taken over all absolutely continuous paths $\gamma:[s,t]\to\R$ with $\gamma(t)=x$. Paths that achieve the infimum are known as \emph{characteristic lines} or \emph{geodesics}, borrowing terminology from first-passage percolation models. Thus, the equation is solved by picking up an initial value $\varphi(\gamma(s))$ and then collecting the Lagrangian action along the characteristic line. Shock points $(t,x)$ are those from which emanate multiple characteristic lines. A similar variational formula exists in the viscous case; see (2) in \cite{Bak-Kha-18}. In this case, the paths $\gamma$ are referred to as \emph{random polymers}. 

The representation \eqref{LO} shows that the SHJ equation is a Markov process, making it suitable for analysis with tools from random dynamical systems (RDS). For an overview of RDS, refer to \cite{Arn-98}. A fundamental concept in RDS is the stability or reliability of the system. In a seminal paper, Sinai \cite{Sin-91} studied the viscous SHJ equation with a forcing term that is either regular and periodic in both space and time or periodic and regular in space and white in time. He demonstrated that solutions to the equation, starting from different initial conditions, can be coupled to a process defined for all times. This process acts as a \emph{pullback attractor}, as defined in Definition 9.3.1 of \cite{Arn-98}.

The existence of a unique, globally defined stochastic process (called an \emph{eternal solution}) that is measurable with respect to the history of the noise is commonly referred to as the one force-one solution (1F1S) principle (see, for example, the introduction of \cite{E-etal-00}). The authors of \cite{Bak-Kha-10} incorporate the pullback attractor property into the definition of the 1F1S principle. When the 1F1S principle holds, the system is considered stable or reliable, and \emph{stochastic synchronization} occurs: solutions starting from different initial conditions (within the basin of attraction of the pullback attractor) converge over time.

In this paper, we are interested in the one-dimensional case $d=1$. In this case, the asymptotic velocities $\lim_{x\to\infty}x^{-1}\Phi(x,t)$ and $\lim_{x\to-\infty}x^{-1}\Phi(x,t)$ are conserved by the dynamics of the Hamilton-Jacobi equation and the 1F1S principle is discussed for given values of the conserved quantity. 
The ergodic theory of the SHJ calls for the 1F1S principle to hold for each fixed velocity value, on a full-probability event dependent on this value.
However, there may be exceptional values of the conserved quantity for which the 1F1S principle fails to hold.  In such instances, multiple eternal solutions with the same asymptotic velocity exist, indicating that the random dynamical system is unstable. The points of instability are those points $(x,t)$ at which the eternal solutions differ. 

The instability phenomenon described above occurs in both the viscous and inviscid cases, whereas shocks are unique to inviscid Hamilton-Jacobi equations. Our interest lies in understanding the connection between the structure of shocks and the structure of instability. Consequently, this paper focuses on inviscid SHJ equations.

Before describing the BLPP model considered in this paper, we provide a list of works where the aforementioned phenomena have been rigorously demonstrated.

The 1F1S principle for deterministic values of the asymptotic velocity has been rigorously established for several models. Initially, it was demonstrated in settings where space is compact (or essentially compact) and the random forcing is fairly regular \cite{Bak-13,Bak-Kha-10,Dir-Sou-05,E-etal-00,Gom-etal-05,Hoa-Kha-03,Kif-97,Sin-93}. Recently, the principle was proven for the one-dimensional KPZ equation (see Section \ref{sec:KPZ}) with space-time white noise on the torus \cite{Ros-22}. In \cite{Bak-16,Bak-Cat-Kha-14,Bak-Li-19,Dri-etal-23}, the necessity of spatial compactness was eliminated, but the random forcing was essentially discrete (e.g., a Poisson point process) or semi-discrete (a kick-type forcing activated at integer times and smooth in space). These works did not consider instability for exceptional values of the conserved quantity.

The 1F1S principle was shown to hold for deterministic velocities and to fail for a random countable dense set of exceptional velocities in two discretizations of SHJ equations \cite{Sep-Sor-23-pmp,Jan-Ras-Sep-23} (see Sections \ref{sec:LPP} and \ref{sec:BLPP}). In the fully continuous space-time setting, this was proved for the KPZ equation \cite{Jan-Ras-Sep-23-1F1S-} and its inviscid counterpart, the KPZ fixed point \cite{Bus-Sep-Sor-24}. For more details, see Section \ref{sec:KPZ}.

\begin{rmk}
    Although the 1F1S principle results are not explicitly stated in \cite{Sep-Sor-23-pmp,Jan-Ras-Sep-23,Bus-Sep-Sor-24}, they can be derived from the directional semi-infinite geodesics coalescence results presented in these papers. This connection is perhaps not surprising since these geodesics are characteristics from time $-\infty$ that can be used to define eternal solutions. This path to proving the 1F1S principle is demonstrated in \cite{Jan-Ras-20-aop} for a viscous model and in \cite{Bak-16} for an inviscid model.
\end{rmk}


\subsection{The KPZ equation and KPZ universality}\label{sec:KPZ}
A particular SHJ of note is the famed KPZ equation
\begin{align}\label{KPZ}
\partial_t h -\half (\partial_x h)^2 = \nu\partial_{xx}h + \beta W,
\end{align}
obtained by setting $d=1$, $\nu>0$, $H(p) = \frac{p^2}2$, $F=-\beta W$, where $W$ is a space-time white noise and $\beta$ a real parameter modulating the strength of the noise, and then defining $h=-\Phi$. 
This viscous SHJ equation was introduced by Kardar, Parisi, and Zhang \cite{Kar-Par-Zha-86} to describe the evolving height interface $h(x,t)$ of a growing random surface. 
 It has been argued in the physics literature that the KPZ equation represents a broad \emph{universality class} of physical systems and mathematical models that exhibit similar long-term behavior. This behavior is characterized by a fluctuations scaling exponent of $1/3$ a correlation scaling exponent of $2/3$, which coincides with the polymer paths' fluctuation scaling exponent. Additionally, these systems exhibit Tracy-Widom asymptotic long-time scaling limit probability distributions. The KPZ universality class is believed to contain viscous and inviscid SHJ equations, interacting particle systems, percolation/growth models, paths/polymers in random environments, driven diffusive systems, and random matrices. 

Taking a formal spatial derivative of the KPZ equation says that the velocity $u=-\partial_x h$ satisfies the stochatic viscous Burger's equation:
\begin{align}\label{Burgers}
\partial_t u + u\partial_x u = \nu\partial_{xx} u  - \beta \partial_x W.
\end{align}
The well-posedness of an appropriately renormalized version of the KPZ equation \eqref{KPZ} was shown only recently, first on the torus \cite{Gub-Imk-Per-15,Gub-Per-17,Hai-13,Hai-14} and then on
the real line \cite{Per-Ros-19}. For the case of the stochastic Burgers equation \eqref{Burgers} see \cite{Ber-Can-Jon-94,Gon-Jar-10-,Gub-Per-17,Gub-Per-18,Gub-Per-18-review,Gub-Per-20}. 

As part of a broader study of randomly forced models in fluid dynamics, 
\cite{For-Nel-Ste-77} performed a dynamical renormalization group analysis of the stochastic Burgers equation \eqref{Burgers} and predicted the existence of the scaling limit of $\e^{1/2}h(\e^{-1} x,\e^{-3/2}t)-C_\e t$, as $\e\to0$, where $h$ solves \eqref{Burgers} with $\nu=\frac12$ and $\beta=1$ and $C_\e$ is an appropriate constant. 
This prediction was recently proved in \cite{Qua-Sar-23} for the KPZ equation with appropriate initial conditions, including both continuous initial data and the fundamental solution, corresponding to a Dirac $\delta$ measure initial condition. The limit, originally constructed in \cite{Mat-Qua-Rem-21} and termed the \emph{KPZ fixed point}, is believed to be the universal attractor for the entire KPZ universality class under the same scaling. This was proved for a number of models in \cite{Qua-Sar-23,Vir-20-,Dau-Vir-21-}. By rescaling the KPZ equation (see Remark 1.1 in \cite{Jan-Ras-Sep-23-1F1S-}), this statement implies that the solution of equation \eqref{KPZ} with $\beta=\sqrt{2\nu}$ 
converges to the KPZ fixed point, as $\nu\to0$. 
The KPZ fixed point can thus be viewed as an inviscid version of the KPZ equation.


We believe that the methods developed in this paper can be transferred to other models in the KPZ universality class.  In particular, 
as mentioned in the previous section, stability and instability for the KPZ equation and the KPZ fixed point have been studied in 
\cite{Jan-Ras-Sep-23-1F1S-} and \cite{Bus-Sep-Sor-24}, respectively.  
Applying the methods in this paper, in the forthcoming work \cite{Ras-Swe-24-b-}, we further describe the instability graph for the KPZ fixed point and investigate its connections to shocks.

\subsection{Last-passage percolation}\label{sec:LPP} 
Analogous to the way Brownian motion can be discretized by a random walk, \eqref{LO} can be discretized by taking $s, t, x$ to be integers with $s<t$ and replacing the continuous paths $\gamma:[s,t] \to \R$ with nearest-neighbor paths $(\gamma_k)_{k=s}^t$. After rotating the space-time lattice, this results in the directed last-passage percolation (LPP) model on the cubic lattice $\Z^{d+1}$.

In the directed LPP model on $\Z^{d+1}$, random weights $\{\w_x:x\in\Z^{d+1}\}$ are placed at the lattice sites according to a shift-invariant ergodic (usually product) probability measure $\P$. 
The admissible paths between vertices are those with increments in $\{e_1,\dotsc,e_{d+1}\}$. The weight of an admissible path $\gamma$ is the sum of the weights along the path, $\sum_{z\in\gamma}\w_z$. For $x,y\in\Z^{d+1}$ with $x\le y$ coordinate-wise, the point-to-point last-passage time $G_{x,y}$ from $x$ to $y$ is the maximum path weight
$G_{x,y} = \max_{\gamma} \sum_{z\in\gamma}\w_z$
over admissible paths from $x$ to $y$. For $x\in\Z^{d+1}$ and $n\in\Z$ with $n>x\cdot(e_1+\cdots+e_{d+1})=\ell$ and a function $\varphi:\Z^{d+1}\to\R$, the point-to-level last-passage time is 
\begin{align}\label{Gp2l}
G_{x,(n)}^\varphi = \max_{\gamma}\Bigl\{\varphi(\gamma_n)+\sum_{z\in\gamma}\w_z\Bigr\},
\end{align}
where the maximum is taken over all admissible paths $(\gamma_i)_{i=\ell}^n$ with $\gamma_\ell=x$ and maximizing paths are called geodesics.

The planar directed last-passage percolation (LPP) model on $\Z^2$  is a pivotal model in probability theory, situated at the intersection of multiple disciplines, including queueing theory, interacting particle systems, integrable systems, and representation theory.

Comparing the formula \eqref{Gp2l} to \eqref{LO}, one can see how the directed LPP model discretizes a stochastic inviscid Hamilton-Jacobi equation. In this context, the time coordinate of a vertex $y\in\Z^{d+1}$ is represented by $t=-y\cdot(e_1+\cdots+e_{d+1})$ and, therefore, eternal solutions correspond to taking $n\to\infty$ in \eqref{Gp2l}. This, however, will result in $G_{x,(n)}^\varphi$ blowing up and one instead considers the limit of the differences $G_{x,(n)}^\varphi-G_{y,(n)}^\varphi$.

By writing
\[G_{x,(n)}^\varphi=\max\bigl\{\varphi(y)+G_{x,y}:y\in\Z^{d+1},y\cdot(e_1+\cdots+e_{d+1})=n\bigr\},\]
one sees that eternal solutions can be studied by analyzing the existence or absence of the almost sure limits
\begin{align}\label{Blim}
B^\xi(x,y)=\lim_{n\to\infty}(G_{x,\lfloor n\xi\rfloor}-G_{y,\lfloor n\xi\rfloor})
\end{align}
for $x,y\in\Z^{d+1}$ and $\xi\in\R^{d+1}$ with  $\xi\cdot(e_1+\cdots+e_{d+1})=1$. These limits are related to the existence, uniqueness, and coalescence of semi-infinite directed geodesics. Therefore, $B^\xi$ is called the \emph{Busemann function}, borrowing terminology from metric geometry. 

Busemann functions were first introduced to first-passage percolation by Chuck Newman \cite{New-95} and were initially utilized to prove results about semi-infinite geodesics in the same context by Hoffman \cite{Hof-08}. Since then, they have become a fundamental tool for studying semi-infinite geodesics and polymer measures \cite{Ahl-Hof-16-,Cat-Pim-12,Cat-Pim-13,Dam-Han-14,Dam-Han-17,Fan-Sep-20,Fer-Mar-Pim-09,Fer-Pim-05,Geo-Ras-Sep-17-ptrf-1,Geo-Ras-Sep-17-ptrf-2,How-New-97,How-New-01,Lic-New-96,Jan-Ras-20-aop,Jan-Ras-20-jsp,Gro-Jan-Ras-25-,Bat-Fan-Sep-25}.

In the context of the directed LPP model, \cite{Jan-Ras-Sep-23} demonstrated that the limits in \eqref{Blim} exist for deterministic directions $\xi$, but fail to exist for an exceptional random countable dense set of these directions. For each such exceptional direction, \cite{Jan-Ras-Sep-23} also introduced and analyzed the \emph{instability graph}. This graph has vertices $u \in \Z^2$ and nearest-neighbor edges $(u, u + e_i)$, $i \in {1,2}$, characterized by the failure of the limit in \eqref{Blim} to hold for $x = u$ and $y = u + e_i$.

What the lattice LPP model lacks, however, is the presence of shocks. Due to the discrete nature of the space, there is no meaningful notion of discontinuity in the spatial gradient. To study the interaction between shocks and instability, a model with a continuous space variable is required. One such model is the Brownian last-passage percolation model, which we describe next.

\subsection{Brownian LPP and our contribution}\label{sec:BLPP} 
Brownian last-passage percolation (BLPP) is similar to the planar LPP  described earlier, but with a discrete time coordinate and a continuous space coordinate. Here, we provide a brief description of the model, with a more rigorous introduction to the model presented in Section \ref{sec:setting}.

In the Brownian last-passage percolation model (BLPP), paths are restricted to up-right staircases on $\Z\times\R$. These paths are indexed by their exit times from each integer level $m$, denoted by $s_m$. The random weights are given by i.i.d.\ standard two-sided Brownian motions $\{B_k:k\in\Z\}$.
For integers $m\le n$ and $s<t$, the weight of a path from $(m,s)$ to $(n,t)$ with jump times $s=s_{m-1}\le s_m\le\cdots\le s_n=t$ is the cumulative amount of Brownian motions it collects: 
$\sum_{k=m}^n B_k(s_{k-1}, s_k)$. The Brownian LPP time $L_{(m,s),(n,t)}$ from $(m,s)$ to $(n,t)$ is defined as the supremum of the weights of all admissible paths from $(m,s)$ to $(n,t)$.
Paths that attain this supremum are called point-to-point geodesics, as in the lattice LPP model, and semi-infinite geodesics are rooted infinite paths which are geodesics between any two of their points. 

The Busemann functions
\[\mathcal{B}^\tht(m,s,n,t)=\lim_{k\to\infty}(L_{(m,s),(k,\theta k)}-L_{(n,t),(k,\theta k)}),\]
$\tht>0$, have been defined and studied in \cite{Alb-Ras-Sim-20,Sep-Sor-23-aihp,Sep-Sor-23-pmp,Bus-Sep-Sor-24}. In particular, it is shown in \cite{Sep-Sor-23-pmp} that there is an exceptional random countable dense set of directions $\tht$ for which these limits do not exist. 

In this paper, we introduce and study the properties of the instability graph of points $(n,t)$ where these Busemann limits do not exist. These points are also where (at least) two $\tht$-directed semi-infinite geodesics originate, which separate immediately and never touch again. 

On the other hand, \cite{Sep-Sor-23-aihp} showed that, due to the continuous nature of the space variable, for each direction $\tht>0$, there exists a countable set of points $(n,t)$ from which two $\tht$-directed semi-infinite geodesics emanate. These geodesics separate immediately and then rejoin and coalesce at a later point. These are the shock points, as they are locations where two distinct characteristic lines originate, carrying the same information from the remote past. This is in contrast with the aforementioned two distinct semi-infinite geodesics emanating from instability points, which do not touch after separating, thus bringing different information from the remote past.

We demonstrate that the shock points exhibit the familiar tree structure and investigate the relationship between their locations and the instability graph. In particular, we show that a skeleton of the instability graph can be reconstructed by knowing the locations of the shock points.\smallskip

This work represents the first investigation into the relationship between shocks and instability in inviscid stochastic Hamilton-Jacobi equations. It lays the foundation for a broader analysis of the interplay between these two critical structures, providing a new framework for future research in this area.
\medskip

{\bf Organization.}
In Section \ref{sec:setting}, we introduce the Brownian last-passage percolation model and highlight its parallels with the inviscid stochastic Hamilton-Jacobi equation. Our main results are presented in Section \ref{sec:results}. Section \ref{sec:previous} compiles results from previous papers that we will need for this work.
The instability graph is defined in Section \ref{def:Bus:analytic} through the Busemann process, with an equivalent definition via semi-infinite geodesics given in Section \ref{def:Bus:geometric}. Section \ref{sec:inst} is devoted to the properties of the instability graph. Section \ref{sec:shocks} introduces shock points and examines their relationship with the instability graph. Section \ref{sec:cif} explores the relationship between the instability graph, shocks, and the origins of competition interfaces.
Section \ref{sec:aux} contains the proofs of several auxiliary results stated in Section \ref{sec:results}. Additionally, two more results concerning semi-infinite geodesics and shocks are left to the appendix, as they are not used in the rest of the paper but are still of significant interest.
Notably, the graph defined in Sections \ref{def:Bus:analytic} and \ref{def:Bus:geometric}, and studied in Sections \ref{sec:inst}-\ref{sec:cif}, is in fact more general than the one mentioned in Section \ref{sec:results}.

\medskip

{\bf Notation.} 
$\R$ denotes the set of real numbers, $\Q$ the rationals, $\Z$ the integers, $\R_+$ the nonnegative reals, and $\Z_+$ the nonnegative integers. 
We abbreviate $a\vee b=\max(a,b)$. 
$(m,s)\le(n,t)$ means $m\le n$ and $s\le t$. 
$(m,s)\preceq(n,t)$ means $m\ge n$ and $s\le t$, i.e.\ $(n,t)$ is south-east of or down-right from $(m,s)$. 

\section{The setting}\label{sec:setting}

\subsection{Brownian LPP} Let $(B_i(t), t \in \R)_{i \in \Z}$ be a sequence of independent two-sided standard Brownian motions, defined on a Polish probability space $(\Omega,\kS,\P)$. $\E$ denotes expectation under $\P$. 

An \emph{up-right path} between two points $(m,s)\le(n,t)\in\Z\times\R$ with $s<t$ is described by its jump times  $s = s_{m-1} \le s_m \le \dotsc \le s_n = t$. Alternatively, the path can be thought of as the linear interpolation of the sequence of points $\{(k,s_{k-1}),(k,s_k):m\le k\le n\}$.

Given a realization $\w\in\Omega$, space-time points $(m, s) \le (n, t)$ in $\Z\times\R$ with $s<t$, and a sequence of jump times $s = s_{m-1} \le s_m \le \dotsc \le s_n = t$, we let $\sum_{k = m}^n \bigl(B_k(s_k)-B_k(s_{k -1})\bigr)$ be the \emph{length} of the path from $(m,s)$ to $(n,t)$ defined by the jump times. 
The \emph{last-passage time} from $(m,s)$ to $(n,t)$ is defined as 
\begin{align}\label{LPP}
L_{(m, s), (n, t)}(\w) = \sup \Bigl\{ \sum_{k = m}^n\bigl(B_k(s_k)-B_k(s_{k-1})\bigr) : s = s_{m-1} \le s_m \le \dots \le s_n = t \Bigr\}.
\end{align}

An up-right path is a geodesic from $(m,s)$ to $(n,t)$ if its jump points maximize the supremum in \eqref{LPP}. 
Theorem B.1 in \cite{Ham-19} demonstrates that for any $m\le n$ in $\Z$ and $s<t$ in $\R$, there is $\P$-almost surely a unique geodesic path from $(m,s)$ to $(n,t)$. However, there may be exceptional start and end points for which there are multiple geodesic paths. Indeed, Lemma 3.5 in \cite{Dau-Ort-Vir-22} shows that for any $m\le n$ in $\Z$ and $s<t$ in $\R$ and for any realization of $\w\in\Omega$ for which the Brownian motions $B_k$, $m\le k\le n$, are continuous, there exist jump times $s = s^S_{m-1} < s^S_m < \dotsc < s^S_n = t$, $S\in\{L,R\}$, that maximize the supremum in \eqref{LPP} and such that these are, respectively, the leftmost and rightmost geodesic paths: any maximizing sequence of jump times satisfies $s^L_k\le s_k\le s^R_k$, for all integers $m\le k\le n$. 

\begin{figure}[ht!]
\begin{center}

\begin{tikzpicture}[>=latex, scale=0.6]
\begin{scope}
\draw(0,0)--(6,0);
\draw(0,1)--(6,1);
\draw(0,2)--(6,2);
\draw(0,3)--(6,3);
\draw[line width=2pt,color=Lp](1,0)--(2,0)--(2,1)--(2.5,1)--(2.5,2)--(4,2)--(4,3)--(5,3);
\end{scope}

\begin{scope}[shift={(10,0)}]
\draw(0,0)--(6,0);
\draw(0,1)--(6,1);
\draw(0,2)--(6,2);
\draw(0,3)--(6,3);
\draw[line width=2pt,color=Lp](1,0)--(2,0)--(2,1)--(2.5,1)--(2.5,3)--(4,3)--(5,3);
\end{scope}

\end{tikzpicture}
\end{center}

\caption{\small Left: An illustration of an up-right path. According to Proposition \ref{nodouble}, up-right paths do not jump twice in a row like the path in the right panel.}
\label{fig:path}
\end{figure}

A semi-infinite up-right path starting at $(m,s)\in\Z\times\R$ is determined by its jump times $(s_k)_{k\ge m-1}$ with $s_{m-1}=s$ and  $s_k\le s_{k+1}$ for all $k\ge m-1$. Such a path is called a \emph{semi-infinite geodesic} if for each integer $n>m$ $(s_{m-1},s_m,\dotsc,s_n)$ is a geodesic between $(m,s)$ and $(n,s_n)$. Similarly, a bi-infinite up-right path is determined by its jump times $(s_k)_{k\in\Z}$ and it is said to be a \emph{bi-infinite geodesic} if for any integers $m<n$, 
$(s_{m-1},s_m,\dotsc,s_n)$ is a geodesic between $(m,s_{m-1})$ and $(n,s_n)$.

We are interested in the large-scale properties of the Brownian last-passage percolation model and, in particular, the structure of the semi-infinite geodesics. 
The following limit is one of the main tools we use in our analysis.  

\begin{thm}[Theorem 4.2 in \cite{Alb-Ras-Sim-20}]\label{thm: LPPresult}
Fix $m,n\in\Z$, $s,t\in\R$, and $\tht>0$. Then with $\P$-probability one, the limit
\begin{align}\label{B:lim}
\buse{}{m}{s}{n}{t}=\lim_{k \to \infty} (L_{(m,s), (k, t_k)} - L_{(n,t), (k, t_k)})
\end{align}
exists almost surely and is independent of the choice of the sequence $\{ t_k \}$, so long as $\lim_{k \to \infty} \frac{t_k}{k} = \tht$. We call $\bus{\theta}{}$ the Busemann function with velocity $\tht$.
\end{thm}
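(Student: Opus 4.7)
The plan is to establish almost sure existence of the limit via comparison with increment-stationary versions of BLPP. For each drift parameter $\lambda > 0$, one augments the model by prepending a Brownian boundary of drift $\lambda$ along a horizontal line beneath level $m$, allowing admissible paths to first travel along this boundary before entering the bulk. Let $L^\lambda$ denote the corresponding last-passage time. A Burke-type identity for BLPP, obtained through the tandem-queue interpretation of the RSK-like recursion satisfied by $L^\lambda$, shows that the horizontal increments of $L^\lambda$ along any fixed integer level are themselves distributed as a Brownian motion with drift $\lambda$, and that the joint law is invariant under horizontal translation. Calibrate $\lambda$ to a characteristic direction $\theta(\lambda)$ via the Legendre duality dictated by the shape function $\gamma(\theta)=2\sqrt{\theta}$, so that $\lambda \mapsto \theta(\lambda)$ is a continuous bijection $(0,\infty)\to(0,\infty)$.

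In the stationary model at drift $\lambda$, the Busemann limit along direction $\theta(\lambda)$ exists and is explicit: for endpoints $(k,t_k)$ with $t_k/k\to\theta(\lambda)$, the geodesic maximizing the definition of $L^\lambda_{(m,s),(k,t_k)}$ exits the boundary near a deterministic characteristic location, and consequently the difference $L^\lambda_{(m,s),(k,t_k)}-L^\lambda_{(n,t),(k,t_k)}$ is driven by the common bulk contribution plus boundary increments read off from the prescribed drifted Brownian motion alone. A moderate-deviation bound for $L^\lambda$ (Gaussian-type tails, exploiting the Brownian structure) shows that the residual randomness from the bulk cancels in the difference as $k\to\infty$, yielding almost sure convergence to a limit with explicit stationary increments.

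To transfer the conclusion to the original BLPP, use a sandwich: couple the unaugmented model with two augmented models of drifts $\lambda-\varepsilon$ and $\lambda+\varepsilon$ through the same Brownian motions on the interior levels. Monotonicity of $L$ in the boundary data, combined with the continuity of $\theta(\lambda)$, squeezes the non-stationary difference $L_{(m,s),(k,t_k)}-L_{(n,t),(k,t_k)}$ between the two stationary limits, and letting $\varepsilon\to 0$ forces existence of the limit in \eqref{B:lim}. Finally, independence from the choice of $\{t_k\}$ with $t_k/k\to\theta$ is a routine monotonicity argument: for any two such sequences one compares $L_{(m,s),(k,t_k)}-L_{(m,s),(k,\theta k)}$ and the same quantity rooted at $(n,t)$, and Brownian scaling together with $t_k-\theta k=o(k)$ shows the resulting contributions cancel to $o(1)$. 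The main obstacle is the concentration step for the stationary model: one must control sharply enough the exit point of the $\theta(\lambda)$-directed geodesic from the drifted boundary so that the bulk correction to $L^\lambda$ is negligible in the difference, and this is where the specific Brownian structure (sharp tail estimates and the explicit form of the shape function) is used decisively.
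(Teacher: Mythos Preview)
The paper does not prove this theorem; it is quoted verbatim as Theorem 4.2 of \cite{Alb-Ras-Sim-20} and used as an input. So there is no proof in the present paper to compare against.

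That said, your sketch is broadly in the spirit of the argument in \cite{Alb-Ras-Sim-20}: one does introduce the increment-stationary BLPP with a drifted Brownian boundary, identify the drift--direction correspondence through the shape function, control the exit point of the stationary geodesic from the boundary, and then sandwich the non-stationary increments between two stationary ones with nearby drifts. One point to tighten: your final paragraph treats sequence-independence as a separate step via ``Brownian scaling together with $t_k-\theta k=o(k)$.'' This is not how it goes, and as written it does not work: $L_{(m,s),(k,t_k)}-L_{(m,s),(k,\theta k)}$ is not $o(1)$ in general (it is of order $k^{1/3}$), so the cancellation you describe does not occur termwise. Sequence-independence is instead a byproduct of the sandwich itself: the stationary Busemann increments that bound the $\limsup$ and $\liminf$ do not depend on the particular sequence $\{t_k\}$ with $t_k/k\to\theta$, and sending $\varepsilon\to 0$ in the two drifts squeezes both to the same limit simultaneously for all such sequences.
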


The above limits clearly satisfy the cocycle property:
\[\buse{}{\ell}{r}{m}{s}+\buse{}{m}{s}{n}{t}=\buse{}{\ell}{r}{n}{t}\]
for all $\ell,m,n\in\Z$ and $r,s,t\in\R$.
Lemma 6.3(iii) in \cite{Sep-Sor-23-aihp} says that one also has the following monotonicity: for all $\tht'>\tht>0$ fixed, 
    \[\bus{\tht'}{}(m,s,m,t)\le\buse{}{m}{s}{m}{t}\quad\text{and}\quad
    \buse{}{m}{s}{m+1}{s}\le\bus{\tht'}{}(m,s,m+1,s)\]
hold $\P$-almost surely, simultaneously for all $m\in\Z$, and $s<t$ in $\R$. These two properties imply that on a single full $\P$-probability event, the limits
\begin{align}\label{bus.def}
\buse{-}{m}{s}{n}{t}=\lim_{\Q\ni\tht'\nearrow\tht}\bus{\tht'}{}(m,s,n,t)
\quad\text{and}\quad
\buse{+}{m}{s}{n}{t}=\lim_{\Q\ni\tht'\searrow\tht}\bus{\tht'}{}(m,s,n,t)
\end{align}
exist for all $m,n\in\Z$ and $s,t\in\R$.
This produces the \emph{Busemann process}
    \[\bigl\{\buse{\sig}{m}{s}{n}{t}:\tht>0,\sigg\in\{-,+\},m,n\in\Z,s,t\in\R\bigr\}.\]
\cite{Sep-Sor-23-aihp,Sep-Sor-23-pmp} established many properties of this process. Theorem \ref{thm:B} below summarizes the ones we need for this paper. 

The sign distinction in \eqref{bus.def} would be unnecessary if the Busemann functions $\bus{\tht}{}$ defined in \eqref{B:lim} were continuous in $\tht\in\Q$.  
However, Theorem 2.5 of \cite{Sep-Sor-23-pmp} shows that the set
\begin{align}\label{def:baddir}
\baddir=\bigl\{\tht>0:\bus{\tht}{-}\ne\bus{\tht}{+}\bigr\}
\end{align}
is almost surely a ``genuinely random'' countable dense subset of $(0,\infty)$ 
(see also Theorem \ref{baddir dense} below).  
Consequently, the distinction between the left and right limits is not only necessary but in fact essential for the structures analyzed in this paper. See Section \ref{sec:inst.intro} for more.

The above Busemann process produces a process of semi-infinite geodesics: 
    \begin{align}\label{geo-proc}
    \bigl\{\geo\from{(m,s)}\dir{S}{\tht}{\sig}:\tht>0,\sigg\in\{-,+\},S\in\{L,R\},m\in\Z,s\in\R\bigr\}.
    \end{align}
For each $\tht>0$, $\sigg\in\{-,+\}$, and $(m,s)\in\Z\times\R$, $\geo\from{(m,s)}\dir{L}{\tht}{\sig}$ and $\geo\from{(m,s)}\dir{R}{\tht}{\sig}$ are, respectively, the leftmost and the rightmost up-right paths starting at $(m,s)$ and satisfying
    \[B_n(s_n)-\buse{\sig}{n+1}{0}{n+1}{s_n}=\sup\{B_n(s)-\buse{\sig}{n+1}{0}{n+1}{s}:s\ge s_{n-1}\},\quad\text{for all }n\ge m.\]
\cite{Sep-Sor-23-aihp,Sep-Sor-23-pmp} introduced and studied the above geodesics and proved a large number of properties that these paths satisfy.
Theorem \ref{thm:geo} summarizes the properties we need in this work. 
Theorem 4.8(ii–iii) of \cite{Sep-Sor-23-pmp} states that for every $\tht>0$ and $\sigg\in\{-,+\}$ there exists a ``genuinely random'' countable set of points $(m,s)$ 
at which the geodesics 
$\geo\from{(m,s)}\dir{L}{\tht}{\sig}$ and 
$\geo\from{(m,s)}\dir{R}{\tht}{\sig}$ differ.  
Theorem \ref{thm:shocks} below summarizes the properties of these points.  
As with the sign distinction in the Busemann limits discussed above, the 
difference between leftmost and rightmost semi-infinite geodesics is both necessary and central to the structures analyzed in this paper.  
See Section \ref{sec:shocks.intro} for further discussion.

The formula \eqref{LPP} is analogous to the Hopf-Lax-Oleinik variational formula for the fundamental solutions of inviscid Hamilton-Jacobi equations, with time running downward. In this analogy, point-to-point geodesics correspond to the characteristic lines of these equations.

Similarly, the function $\buse{\sig}{0}{0}{n}{t}$ can be viewed as an \emph{eternal} solution, meaning it is defined for all times $n\in\Z$. Given that time progresses downward in this analogy, we can interpret the semi-infinite geodesics  $\geo\from{(m,s)}\dir{L}{\tht}{\sig}$ and $\geo\from{(m,s)}\dir{R}{\tht}{\sig}$ as characteristic curves of $\bus{\tht}{\sig}$, tracing back from 
$(m,s)$ to the distant past.

\subsection{Stability and instability}\label{sec:inst.intro} 
Eternal solutions and semi-infinite geodesics are related to the notions of pullback attractors and stochastic synchronization, from random dynamical systems. See Sections 3.3 and 3.5 in \cite{Jan-Ras-Sep-23-1F1S-} for these connections in the context of the Kardar-Parisi-Zhang equation. 

Recall the random set $\baddir$, defined in \eqref{def:baddir}.
When $\tht>0$ is such that $\tht\not\in\baddir$, the $\pm$ distinction disappears and $\geo\from{(m,s)}\dir{S}{\tht}{-}=\geo\from{(m,s)}\dir{S}{\tht}{+}$ for all $(m,s)\in\Z\times\R$ and $S\in\{L,R\}$.
By Theorem 4.21 in \cite{Sep-Sor-23-pmp},
we have that $\P$-almost surely, for all $\tht\not\in\baddir$, all $\tht$-directed geodesics, out of the various points $(m,s)\in\Z\times\R$, coalesce. This translates to the uniqueness of the pullback attractors (Definition 9.3.1 in \cite{Arn-98}), indicating stability, where solutions started from initial conditions in an appropriate space of functions (basin of attraction) synchronize. Conversely, according to Remark 4.22 in \cite{Sep-Sor-23-pmp}, $\P$-almost surely, for any $\tht\in\baddir$, there exist at least two non-coalescing $\tht$-directed geodesics, $\geo\from{(m,s)}\dir{L}{\tht}{-}$ and $\geo\from{(m,s)}\dir{R}{\tht}{+}$, out of each $(m,s)\in\Z\times\R$. In this case, the associated random dynamical system exhibits multiple pullback attractors, suggesting instability, where stochastic synchronization fails.
For more details and precise definitions in the related cases of the discrete last-passage percolation model and the Kardar-Parisi-Zhang (KPZ) equation, refer to Section 4.1 in \cite{Jan-Ras-Sep-23} and Section 3.5 in \cite{Jan-Ras-Sep-23-1F1S-}, respectively. This instability, when $\tht\in\baddir$, can also be interpreted as a phase transition in the familiar setting of Gibbs measures. See Section 2.4 in \cite{Jan-Ras-18-arxiv} for details in the case of directed random polymers.

With the above in mind, we are interested in analyzing the properties of the set
\[\bigl\{(n,t)\in\Z\times\R:\bus{\tht}{-}\ne\bus{\tht}{+}\text{ ``near $(n,t)$''}\bigr\}.\]
The precise definition of the actual object we study, called the \emph{instability graph}, is given in Section \ref{sec:results}, then a more general version is developed in Sections \ref{def:Bus:analytic} and \ref{def:Bus:geometric}. Properties of this graph are given in Section \ref{sec:inst}. 

\subsection{Shocks}\label{sec:shocks.intro} Viewing semi-infinite geodesics as characteristic lines suggests this definition of shock points (of $\bus{\tht}{\sig}$). 
For $\tht>0$ and $\sigg\in\{-,+\}$, let 
\[\NU_1^{\tht\sig}=\bigl\{(m,s)\in\Z\times\R:(m+1,s)\in\geo\from{(m,s)}\dir{L}{\tht}{\sig},\exists\delta>0\text{ such that }(m,s+\delta)\in\geo\from{(m,s)}\dir{R}{\tht}{\sig}\bigr\}.\]
In words, points in $\NU_1^{\tht\sig}$ are ones at which the two geodesics $\geo\from{(m,s)}\dir{L}{\tht}{\sig}$ and $\geo\from{(m,s)}\dir{R}{\tht}{\sig}$ 
 split immediately.  

\begin{defn}
    Given $\tht>0$ and $\sigg\in\{-,+\}$ a point $(m,s)\in\Z\times\R$ is called a $\tht\sigg$-\emph{shock point} if $(m,s)\in\NU_1^{\tht\sig}$. When $\tht$ and $\sigg$ are clear from the context, we just say that $(m,s)$ is a shock point.
\end{defn}

\begin{rmk}
    The above definition only considers semi-infinite geodesics from the process \eqref{geo-proc}. {\it A priori}, there may be other semi-infinite geodesics in the model and hence there may be more shock points. However, as explained in Remark \ref{rk:all geo} below, it is expected that the process \eqref{geo-proc} contains all the semi-infinite geodesics of the model and hence that there are no other shocks that may have been overlooked.
\end{rmk}

The set $\NU_1^{\tht\sig}$ was introduced in \cite{Sep-Sor-23-pmp}. Theorem \ref{thm:shocks} below summarizes the results we need from that paper.

Shock points mark places where we have multiple semi-infinite geodesics, corresponding to the same Busemann function (i.e.\ eternal solution). As per Theorem \ref{thm:shocks}\eqref{thm:shocks:a}, these points are present for all $\tht>0$ and $\sigg\in\{-,+\}$. However, according to Theorem \ref{thm:geo}\eqref{thm:geo:dir}, the geodesics $\geo\from{(m,s)}\dir{S}{\tht}{\sig}$, $S\in\{L,R\}$ and $\sigg\in\{-,+\}$, are all $\tht$-directed. Thus, when $\tht\in\baddir$ and $\bus{\tht}{-}$ and $\bus{\tht}{+}$ differ we get two eternal solutions, each producing semi-infinite geodesics traveling at the same speed $\tht$. Consequently, there might be points that are not shock points but still emit multiple $\tht$-directed geodesics. 

As highlighted in the introduction, the BLPP model has aspects of a Burgers' equation, where the system dissipates energy at shock points. It is then natural to ponder the connection between the locations of shock points and instability points. This forms the primary focus of our investigation in this paper and is presented in Section \ref{sec:shocks} and summarized in Section \ref{sec:results}. 
A pertinent question left open is whether instability points are linked to any form of energy dissipation.

\subsection{Competition interfaces}\label{sec:intro.ci} Another question of interest to us is the relationship between instability and competition interfaces.  More precisely, given $S\in\{\text{left},\text{right}\}$ and a starting point $(m,s)\in\Z\times\R$, we can split $(m,s)+\Z_+\times\R_+$ into two regions, based on whether or not the $S$-most geodesic from $(m,s)$ to the point $(n,t)\in(m,s)+\Z_+\times\R_+$ (with $(n,t)\ne(m,s)$) goes through $(m+1,s)$, i.e.\ whether the geodesic proceeds from $(m,s)$ upward or to the right. The two regions are then separated by an interface, an up-right path on $(\Z+\half)\times\R$, starting at $(m+\half,s)$, called the \emph{$S$-competition interface}. The detailed construction of the paths is done in Remark 4.24 in \cite{Sep-Sor-23-pmp}. Theorem \ref{thm:cif-SS} below summarizes the properties we need. In particular, part \eqref{thm:cif:a} of that theorem says that, $\P$-almost surely, for any $(m,s)\in\Z\times\R$, the left and the right competition interfaces out of $(m,s)$ have asymptotic directions $\tht_{(m,s)}^L$ and $\tht_{(m,s)}^R$, respectively. Section \ref{sec:cif} explores the relationship between these roots of competition interfaces, the instability graph, and shock points.

\section{An overview of the main results}\label{sec:results}

In this section, we give a quick run through our main results. Section \ref{sec:previous} collects results from earlier works that we will use in this paper, while Section \ref{sec:more} presents a number of new  preliminary results. The sum of the results in Sections \ref{sec:previous} and \ref{sec:more} holds on a single event $\Omref{Omega8}\in\kS$ with $\P(\Omref{Omega8})=1$. See Remark \ref{Omega}. This is the full $\P$-probability event on which all of our results in the present section and in Sections \ref{def:Bus:analytic}-\ref{sec:cif} hold.

\subsection{Instability}
We begin by defining instability points and edges and the instability graph.  Recall the exceptional set $\baddir$, defined in \eqref{def:baddir}.
 Recall that for a non-decreasing function $f:\R\to\R$,  $t\in\R$ is a \emph{point of increase} of $f$ if, for all $r'<t$ and $r''>t$, $f(r')<f(r'')$.

\begin{defn}\label{def:inst}
	Take $\tht\in\baddir$.
\begin{enumerate}  [label={\rm(\alph*)}, ref={\rm\alph*}]   \itemsep=3pt  
    \item\label{def:inst:a} For $m\in\Z$ and $t\in\R$, we call the vertical closed interval $[(m-\half,t),(m+\half,t)]$ a $\tht$-\emph{instability edge} if
    $s\mapsto\bus{\tht-}{}(m,0,m,s)-\bus{\tht+}{}(m,0,m,s)$ 	has a point of increase at $t$.
    \item\label{def:inst:b} For $m\in\Z$ and $t\in\R$, we say that $(m+\half, t)$ is a \emph{proper $\tht$-instability point} if $\bus{\tht-}{}(m,t,m+1,t)<\bus{\tht+}{}(m,t,m+1,t)$.
    \item\label{def:inst:c} The \emph{instability graph} $\IG{\tht}=\IG{\tht}(\w)$ is the union of all the $\tht$-instability edges and the closure of the set of proper $\tht$-instability points.
    \item\label{def:inst:d} For $m\in\Z$ and $t\in\R$, the point $(m+\half,t)$ is a $\tht$-\emph{instability point} if it belongs to $\IG{\tht}$. A $\tht$-instability point that is not proper is called \emph{improper}.
    \item\label{def:inst:e} For $m\in\Z$, and an interval $I\subset\R$, $\{(m+\half,r):r\in I\}$ is a  $\tht$-\emph{instability interval} if it is a subset of $\IG{\tht}$. It is a proper instability interval if $(m+\half,r)$ is a \emph{proper} $\tht$-instability point for all $r\in I$. 
    \item\label{def:inst:f}  For $m\in\Z$ and $t\in\R$, $(m+\half,t)$ is called a \emph{double-edge $\tht$-instability point} if both $[(m-\half,t),(m+\half,t)]$ and $[(m+\half,t),(m+\frac32,t)]$ are instability edges.
\end{enumerate}
\end{defn}

In the above definition, proper instability points are defined explicitly through the Busemann process but improper instability points are defined via a closure procedure. 
Theorem \ref{thm:instpt} gives a characterization of all instability points in terms of the Busemann process. In words, it says that $(m+\half,s)\in(\Z+\half)\times\R$ is a $\tht$-instability point if and only if for any $r'<t$ and $r''>t$ we have $\buse{-}{m}{r'}{m+1}{r''}<\buse{+}{m}{r'}{m+1}{r''}$. Then Lemma \ref{geodefinstabedge1} and Theorem \ref{geo:instpt} provide equivalent characterizations in terms of the geodesics process \eqref{geo-proc}. In words, they say that $[(m-\half,t),(m+\half,t)]$ is a $\tht$-instability interval if and only if $\geo\from{(m,t)}\dir{R}{\tht}{+}$ and $\geo\from{(m,t)}\dir{L}{\tht}{-}$ separate immediately and never touch again and $(m+\half,t)$ is a $\tht$-instability point if and only if $\geo\from{(m,t)}\dir{R}{\tht}{+}$ and $\geo\from{(m+1,t)}\dir{L}{\tht}{-}$ do not intersect.

Section \ref{sec:inst} is devoted to the properties of the instability graph.   We summarize the main results of that section in the following two theorems.

The first theorem says that this graph is a connected closed bi-infinite set that goes north-east in direction $\tht$. It consists of the union of closed horizontal intervals of the form $[(m+\half,s),(m+\half,t)]$, $m\in\Z$ and $t>s$ in $\R$, with downward closed vertical instability edges $[(m-\half,s),(m+\half,s)]$ from the instability intervals' left endpoints and upward vertical instability edges $[(m+\half,t),(m+\frac32,t)]$ from the instability intervals' right endpoints, and with an additional set of vertical instability edges out of a Hausdorff dimension $1/2$ set of points $(m+\half,r)$, $s<r<t$, in the interiors of instability intervals, going either up or down, but not both. See Figure \ref{fig:inst-summary} and the left panel in Figure \ref{fig:inst}. 

\begin{thm}\label{inst:summary}
    The following holds for all $\w\in\Omref{Omega8}$ and all $\tht\in\baddir$.
    \begin{enumerate}  [label={\rm(\roman*)}, ref={\rm\roman*}]   \itemsep=3pt 
    \item\label{inst:summary.biinf} The instability graph is bi-infinite: For any instability point $\bfx\dual\in \IG{\tht}$ there exist an infinite up-right path and an infinite down-left path, both starting at $\bfx\dual$ and moving along the instability graph $\IG{\tht}$.
    \item\label{inst:summary.directed} Any up-right path $\bfx_{0:\infty}\dual$ on the instability graph $\IG{\tht}$ has direction $\tht$.
    \item\label{inst:summary.nodouble} There are no double-edge instability points in $\IG{\tht}$. 
    \item\label{inst:summary.improper} The only improper $\tht$-instability points are the boundaries of maximal $\tht$-instability intervals.
     \item\label{inst:summary.unbounded} The instability graph extends infinitely far to the left and to the right: For any $m\in\Z$
     \[\sup\bigl\{s:\bigl[(m-\tfrac12,s),(m+\tfrac12,s)\bigr]\in\IG{\tht}\bigr\}=\infty\quad\text{and}\quad
    \inf\bigl\{s:\bigl[(m-\tfrac12,s),(m+\tfrac12,s)\bigr]\in\IG{\tht}\bigr\}=-\infty.\]
    \item\label{inst:summary.I} Take $m\in\Z$ and $s<t$. The interval $\{(m+\half,r):s\le r\le t\}$ is a maximal $\tht$-instability interval if and only if $(m+\half,r)$ is a proper $\tht$-instability point for all $r\in(s,t)$ and $(m+\half,s)$ and $(m+\half,t)$ are improper $\tht$-instability points.
    \item\label{inst:summary.Hausdorff} For any $m\in\Z$, the set of instability points $(m+\half,s)\in\IG{\tht}$ from which descends an instability edge has a Hausdorff dimension of $\half$. 
    \end{enumerate}
\end{thm}

The next theorem demonstrates that the instability graph exhibits a web-like structure. Specifically, one can find up-right paths on the graph connecting any two instability points to a shared instability point (we call a \emph{common ancestor}), as well as down-left paths on the graph also leading to a common instability point (a \emph{common descendant}). See the center panel in Figure \ref{fig:inst}.

\begin{thm}\label{instabweb:summary}
 The following holds for all $\w\in\Omref{Omega8}$ and all $\tht\in\baddir$. 
    For every pair of instability points $ \bfx\dual, \bfy\dual \in\IG{\tht}$ there exist instability points $ \bfz_1\dual,\bfz_2\dual\in \IG{\tht}$ and up-right paths on the graph $\IG{\tht}$ going from $\bfz_1\dual$ to $\bfx\dual$, from $\bfz_1\dual$ to $\bfy\dual$, from $\bfx\dual$ to $\bfz_2\dual$, and from $\bfy\dual$ to $\bfz_2\dual$.  
\end{thm}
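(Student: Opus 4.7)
My focus is on constructing the common descendant $\bfz_2\dual$; the common ancestor $\bfz_1\dual$ will then follow by an entirely symmetric argument applied to the infinite down-left paths from $\bfx\dual$ and $\bfy\dual$ guaranteed by Theorem \ref{inst:summary}(\ref{inst:summary.biinf}). Using that same item, start infinite up-right paths $\pi_x$ and $\pi_y$ on $\IG{\tht}$ from $\bfx\dual$ and $\bfy\dual$ respectively. Theorem \ref{inst:summary}(\ref{inst:summary.directed}) tells us both paths have asymptotic direction $\tht$. The entire task thus reduces to showing that $\pi_x$ and $\pi_y$ must eventually pass through a common point, which may then be taken as $\bfz_2\dual$.

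The strategy is to sandwich each up-right path between two $\tht$-directed semi-infinite geodesics from the process~(\ref{geo-proc}). Using Lemma \ref{geodefinstabedge1} and Theorem \ref{geo:instpt}, the vertical instability edges and isolated instability points traversed by $\pi_x$ are each associated with a non-coalescing pair $\geo\from{\cdot}\dir{R}{\tht}{+}, \geo\from{\cdot}\dir{L}{\tht}{-}$. Concatenating these along $\pi_x$ produces a rightmost $+$-geodesic $\Gamma_x^+$ lying weakly to the right of $\pi_x$ and a leftmost $-$-geodesic $\Gamma_x^-$ lying weakly to its left, so that $\pi_x$ is trapped in the strip between $\Gamma_x^-$ and $\Gamma_x^+$. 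Construct $\Gamma_y^\pm$ analogously for $\pi_y$.

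Next I would invoke sign-restricted coalescence: even when $\tht \in \baddir$, the $+$ family of $\tht$-directed semi-infinite geodesics coalesces internally, and so does the $-$ family, as developed in \cite{Sep-Sor-23-pmp}. Therefore $\Gamma_x^+$ and $\Gamma_y^+$ coalesce at some finite level, and $\Gamma_x^-$ and $\Gamma_y^-$ coalesce at some (possibly different) finite level. Above the larger of the two levels, $\pi_x$ and $\pi_y$ lie in a common strip bounded on the right by a single $+$ geodesic and on the left by a single $-$ geodesic.

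I would then observe that within such a common strip, each horizontal slice of $\IG{\tht}$ at a level $m+\half$ reduces to a single maximal instability interval: by the Busemann-process characterization in Theorem \ref{thm:instpt}, two disjoint instability intervals inside the strip would force the bounding $+$ and $-$ geodesics to separate, contradicting their coincidence. Combined with part (\ref{inst:summary.I}) of Theorem \ref{inst:summary}, both paths must traverse this unique interval and exit through its common up-edge at the right endpoint, producing $\bfz_2\dual$. The main obstacle I anticipate is the sandwiching step: establishing rigorously that an arbitrary up-right path on $\IG{\tht}$ is trapped between coalescing-within-sign geodesics requires care across the interior vertical edges coming from the Hausdorff-$\half$ set in Theorem \ref{inst:summary}(\ref{inst:summary.Hausdorff}), since traversal of such an edge could in principle shift the bounding geodesic and one must verify any such reassignment is consistent with the $\pm$ coalescence framework.
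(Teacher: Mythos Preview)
Your sandwiching idea for the NE common ancestor $\bfz_2\dual$ is essentially the paper's approach, but simpler than you make it. The paper does not concatenate geodesics along $\pi_x$; rather, Lemma~\ref{instabifbtw} shows directly that \emph{any} up-right path in $\IG{\tht}$ out of $\bfx\dual=(m+\tfrac12,s)$ is trapped between the two fixed geodesics $\geo\from{(m+1,s)}\dir{L}{\tht}{-}$ and $\geo\from{(m,s)}\dir{R}{\tht}{+}$. After the $L-$ pair and the $R+$ pair each coalesce (Theorem~\ref{thm:geo}\eqref{thm:geo:coal}), the paper locates an instability edge in the common strip by following the coalesced $R+$ geodesic to a far point $u$ and watching where $\geo\from{u}\dir{L}{\tht}{-}$ separates from it; Lemma~\ref{instabifbtw} then makes that edge a common ancestor. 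Your Step~5 (a unique maximal instability interval per level inside the strip) is neither needed nor established: the bounding $+$ and $-$ geodesics do not coincide, so your ``would force separation, contradicting their coincidence'' reasoning does not apply, and in fact Lemma~\ref{instabifbtw} shows \emph{every} instability point in the strip is already reachable from $\bfx\dual$.

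The real gap is the SW common descendant $\bfz_1\dual$. Your claim that it ``follows by an entirely symmetric argument applied to the infinite down-left paths'' does not work: semi-infinite geodesics and their coalescence are up-right phenomena, so there is no pair of coalescing bounding geodesics one can attach to a down-left path. The paper's proof of this half (Theorem~\ref{instabcommonancstr}) is genuinely different: assume no common descendant, use Lemma~\ref{biinf:geo} to run two down-left paths $(k+\tfrac12,s_k)$ and $(k+\tfrac12,t_k)$ with $s_k<t_k$ for all $k$, apply Lemma~\ref{instabifbtw} to trap the geodesics $\geo\from{(k,s_k)}\dir{R}{\tht}{+}$ between the two sequences at every higher level, extract a limiting bi-infinite geodesic by monotonicity and a diagonal argument, and contradict Theorem~\ref{nobiinf}. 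Nothing in your sketch covers this direction.
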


\begin{figure}[ht!]
\begin{center}

\begin{tikzpicture}[>=latex, scale=0.6]
\begin{scope}
\draw[line width=1pt,color=Inst](0,1)--(3,1);
\draw[line width=1pt,color=Inst](0,0)--(0,1);
\draw[line width=1pt,color=Inst](0.15,0)--(0.15,1);
\draw[line width=1pt,color=Inst](0.3,0)--(0.3,1);
\draw[line width=1pt,color=Inst](0.8,0)--(0.8,1);
\draw[line width=1pt,color=Inst](1,0)--(1,1);
\draw[line width=1pt,color=Inst](1.15,0)--(1.15,1);
\draw[line width=1pt,color=Inst](1.4,0)--(1.4,1);
\draw[line width=1pt,color=Inst](1.9,0)--(1.9,1);
\draw[line width=1pt,color=Inst](2.05,0)--(2.05,1);
\draw[line width=1pt,color=Inst](2.2,0)--(2.2,1);
\draw[line width=1pt,color=Inst](2.35,0)--(2.35,1);
\draw[line width=1pt,color=Inst](2.5,0)--(2.5,1);
\draw[line width=1pt,color=Inst](2.66,0)--(2.66,1);

\draw[line width=1pt,color=Inst](0.4,1)--(0.4,2);
\draw[line width=1pt,color=Inst](0.55,1)--(0.55,2);
\draw[line width=1pt,color=Inst](0.7,1)--(0.7,2);
\draw[line width=1pt,color=Inst](0.9,1)--(0.9,2);
\draw[line width=1pt,color=Inst](1.3,1)--(1.3,2);
\draw[line width=1pt,color=Inst](1.5,1)--(1.5,2);
\draw[line width=1pt,color=Inst](1.65,1)--(1.65,2);
\draw[line width=1pt,color=Inst](1.8,1)--(1.8,2);
\draw[line width=1pt,color=Inst](1.95,1)--(1.95,2);
\draw[line width=1pt,color=Inst](2.45,1)--(2.45,2);
\draw[line width=1pt,color=Inst](2.6,1)--(2.6,2);
\draw[line width=1pt,color=Inst](2.75,1)--(2.75,2);
\draw[line width=1pt,color=Inst](2.9,1)--(2.9,2);
\draw[line width=1pt,color=Inst](3,1)--(3,2);
\end{scope}

\begin{scope}[shift={(7,0)}]
\draw[line width=1pt,color=Inst](-1,-1)--(-1,0)--(0,0)--(0,1)--(2,1)--(2,2)--(4,2);
\draw[line width=1pt,color=Inst](-1,-1)--(0.5,-1)--(0.5,0)--(3,0)--(3,1)--(4,1)--(4,2);

\draw(0.5,1)node[above]{$\bfx$};
\draw(1.5,0)node[below]{$\bfy$};
\draw(4,2)node[above]{$\bfz$};
\draw(-1,-1)node[below]{$\bfz'$};

\shade[ball color=Inst](0.5,1)circle(2mm);
\shade[ball color=Inst](1.5,0)circle(2mm);
\shade[ball color=Inst](4,2)circle(2mm);
\shade[ball color=Inst](-1,-1)circle(2mm);

\end{scope}

\begin{scope}[shift={(14,-1.5)}]
\draw[line width=1.5pt,color=Inst](0,0)--(1,1)--(3,2)--(3.75,3);
\draw[line width=1.5pt,color=Inst](1,1)--(5,2)--(5.7,3)--(7,4);
\draw[line width=1pt,color=my-blue](0,0)--(0.8,1);
\draw[line width=1pt,color=my-blue](0,0)--(1.2,.95);
\draw[line width=1.5pt,color=Inst](0,0)--(1.5,1);
\draw[line width=1pt,color=my-blue](1,1)--(2.5,2);
\draw[line width=1pt,color=my-blue](1,1)--(2.7,2);
\draw[line width=1pt,color=my-blue](1,1)--(3.5,2);
\draw[line width=1pt,color=my-blue](1,1)--(5.6,2);
\draw[line width=1pt,color=my-blue](1,1)--(4.5,2);
\draw[line width=1pt,color=my-blue](3,2)--(3.4,3);
\draw[line width=1pt,color=my-blue](3,2)--(3.55,3);
\draw[line width=1pt,color=my-blue](3,2)--(3.95,3);
\draw[line width=1pt,color=my-blue](3,2)--(4.1,3);
\draw[line width=1pt,color=my-blue](5,2)--(5.5,3);
\draw[line width=1pt,color=my-blue](5,2)--(5.9,3);
\draw[line width=1.5pt,color=Inst](5,2)--(6.1,3);
\draw[line width=1pt,color=my-blue](5.7,3)--(7.2,4);
\draw[line width=1pt,color=my-blue](5.7,3)--(7.4,4);
\draw[line width=1pt,color=my-blue](3.55,3)--(3.7,4);
\draw[line width=1pt,color=my-blue](3.55,3)--(3.85,4);
\draw[line width=1pt,color=my-blue](3.55,3)--(4,4);

\end{scope}

\end{tikzpicture}
\end{center}

\caption{\small Left: An illustration of a maximal $\tht$-instability interval.
In reality, the set of endpoints of the vertical edges has a Hausdorff dimension $\half$. Thus, each vertical edge has uncountably many vertical edges near it, similar to the way the zeros of standard Brownian motion behave. There are no $\tht$-instability points from which emanate both a vertical edge going upward and another going downward. Center: An illustration of the web-like structure. Points $\bfx$ and $\bfy$ have a common NE ancestor $\bfz$ and a common SW descendant $\bfz'$. Right: An illustration of the shocks tree structure. When $\tht\not\in\baddir$, the coloring is immaterial. When $\tht\in\baddir$ and $\protect\sigg=+$, red depicts the subtree of points in $\NU_1^{\tht+}\setminus\NU_1^{\tht-}$ and blue depicts the ``bushes'' of points in $\NU_1^{\tht+}\cap\NU_1^{\tht-}$. Similarly, when $\tht\in\baddir$ and $\protect\sigg=-$, red depicts the subtree of points in $\NU_1^{\tht-}\setminus\NU_1^{\tht+}$ and blue depicts the ``bushes'' of points in $\NU_1^{\tht+}\cap\NU_1^{\tht-}$.}
\label{fig:inst}
\end{figure}

\subsection{Shocks}
In Section \ref{sec:shocks} we study the structure of shock points and their relation to the instability graph. 
As one might expect from the connection to the Burgers' equation, shock points form coalescing trees.

\begin{defn}\label{def:shock-tree}
   Take $\tht>0$ and $\sigg\in\{-,+\}$. 
   We say that $\bfy\in\NU_1^{\tht\sig}$ is a NE \emph{ancestor} of $\bfx\in\NU_1^{\tht\sig}$ or, equivalently, that $\bfx$ is a SW \emph{descendant} of $\bfy$ if $\bfy$ is weakly between $\geo\from{\bfx}\dir{L}{\tht}{\sig}$ and $\geo\from{\bfx}\dir{R}{\tht}{\sig}$.
\end{defn}

Note that for any $\bfx\in\NU_1^{\tht\sig}$, $\bfx$ is both an ancestor and a descendant of $\bfx$. The above ancestry relation is a partial order on $\NU_1^{\tht\sig}$.

We show that for each $\tht>0$ and $\sigg\in\{-,+\}$, the above ancestry relation induces a tree structure on $\NU_1^{\tht\sig}$. The tree's branches coalesce in the south-west direction and die out in the north-east direction. That is, the tree does not have a bi-infinite backbone. Each shock point $(m,s)$ has a unique immediate child on level $m-1$. Interestingly, even though the tree branches die out in the north-east direction, each shock point $(m,s)$ that has a NE ancestor on level $m+1$ has infinitely many of them on that level. 
We also show that the shock points in $\NU_1^{\tht+}\setminus\NU_1^{\tht-}$  form a subtree of the $\NU_1^{\tht+}$ tree and the shock points in $\NU_1^{\tht-}\setminus\NU_1^{\tht+}$  form a subtree of the $\NU_1^{\tht-}$ tree. In both cases, the remaining points in $\NU_1^{\tht-}\cap\NU_1^{\tht+}$ form smaller trees (bushes) with each such bush containing infinitely many branches but only finitely many generations. See the right panel in Figure \ref{fig:inst}.

\begin{rmk}
    In the standard tree convention, children branch out from their parents. However, our terminology reverses this direction because the Hamilton-Jacobi equation progresses downward in time. Thus, we refer to the points further down as children and the points further up as ancestors.  
\end{rmk}

\begin{thm}\label{shocks:summary}
    The following holds for all $\w\in\Omref{Omega8}$, all $\tht>0$, and all $\sigg\in\{-,+\}$.
    \begin{enumerate}  [label={\rm(\roman*)}, ref={\rm\roman*}]   \itemsep=3pt 
    \item Take $(m,s)\in\NU_1^{\tht\sig}$. Then for any $r<s$ with $(m,r)\in\NU_1^{\tht\sig}$, $(m,r)$ is not a descendant of $(m,s)$.
    \item Take $\bfx, \bfy,\bfz \in\NU_1^{\tht\sig}$. Assume that $\bfx$ and $\bfy$ are both SW descendants of $\bfz$. Then either $\bfx$ is a SW descendant of $\bfy$ or $\bfy$ is a SW descendant of $\bfx$. 
    \item For each $(m,s) \in \NU_1^{\tht\sig}$ there exists a unique $t<s$ such that $(m-1,t)\in \NU_1^{\tht\sig}$ and $(m-1,t)$ is a descendant of $(m,s)$. Furthermore, if $s_1<s_2$ are such that $(m,s_1),(m,s_2)\in\NU_1^{\tht\sig}$ and $(m-1,t_i)$ is the unique SW descendent of $(m,s_i)$ for each $i\in\{1,2\}$, then $t_1\le t_2$.
    \item Suppose $(m,s)\in\NU_1^{\tht\sig}$ has a NE ancestor. Then it has infinitely many NE ancestors on level $m+1$.
    \item Take $\bfx, \bfy \in\NU_1^{\tht\sig}$. Then $\bfx$ and $\bfy$ have a common SW descendant $\bfz \in \NU_1^{\tht\sig}$.
    \item Take $(m,s) \in\NU_1^{\tht+}\setminus\NU_1^{\tht-} $. Then all SW descendants of $ (m,s) $ in the $\NU_1^{\tht+}$ tree  are in $\NU_1^{\tht+}\setminus \NU_1^{\tht-} $.
    \item Take $ (m,s) \in\NU_1^{\tht-}\setminus\NU_1^{\tht+} $. Then all SW descendants of $ (m,s) $ in the $\NU_1^{\tht-}$ tree are in $\NU_1^{\tht-}\setminus \NU_1^{\tht+} $. 
    \item  Take $ (m,s) \in \NU_1^{\tht+}\cap \NU_1^{\tht-} $. Then $ (m,s) $ has a SW descendant in $ \NU_1^{\tht+}\setminus \NU_1^{\tht-} $ and a SW descendant in $ \NU_1^{\tht-}\setminus \NU_1^{\tht+} $.
    \end{enumerate}
\end{thm}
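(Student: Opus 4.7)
The approach is to re-express the ancestor/descendant relation geometrically via shock cones and then exploit the monotonicity and coalescence structure of the geodesic process \eqref{geo-proc}. For each $\bfx\in\NU_1^{\tht\sig}$, write $\mathcal{C}(\bfx)$ for the closed planar region lying weakly between $\geo\from{\bfx}\dir{L}{\tht}{\sig}$ and $\geo\from{\bfx}\dir{R}{\tht}{\sig}$; by Definition~\ref{def:shock-tree}, $\bfy$ is a NE ancestor of $\bfx$ exactly when $\bfy\in\NU_1^{\tht\sig}\cap\mathcal{C}(\bfx)$. Two ingredients drive the whole argument: the leftmost/rightmost ordering of the geodesic process from Theorem~\ref{thm:geo}, which yields the nesting principle $\bfy\in\mathcal{C}(\bfx)\Rightarrow\mathcal{C}(\bfy)\subseteq\mathcal{C}(\bfx)$, and the almost-sure coalescence of same-sign $\tht\sig$-directed geodesics from the same theorem.

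I would dispatch the local statements (i)--(iii) first. For (i), the shock condition forces $\geo\from{(m,r)}\dir{L}{\tht}{\sig}$ to leave level $m$ at time $r$, so the level-$m$ slice of $\mathcal{C}((m,r))$ is the segment starting at $(m,r)$ and running along $\geo\from{(m,r)}\dir{R}{\tht}{\sig}$ until its first jump time; one then verifies that if $(m,s)$ with $s>r$ lay in this slice, the rightmost property would force the tail of $\geo\from{(m,r)}\dir{R}{\tht}{\sig}$ from $(m,s)$ to coincide with $\geo\from{(m,s)}\dir{R}{\tht}{\sig}$, and combining this with the Busemann-process formulation of the leftmost geodesic from $(m,s)$ contradicts $(m,s)\in\NU_1^{\tht\sig}$. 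Part (ii) is a double application of the nesting principle: if $\bfz\in\mathcal{C}(\bfx)\cap\mathcal{C}(\bfy)$, the ordering of leftmost and rightmost geodesics forces one of $\mathcal{C}(\bfx)$, $\mathcal{C}(\bfy)$ to sit inside the other. Part (iii) follows from monotonicity of the jump times of $t\mapsto\geo\from{(m-1,t)}\dir{S}{\tht}{\sig}$ off level $m-1$, which identify a unique $t<s$ whose cone contains $(m,s)$ and simultaneously yield the monotonicity in $s$.

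Next come (iv) and (v). For (iv), assume $(m,s)$ has a NE ancestor $\bfy$ on level $m+1$. By nesting, the level-$(m{+}1)$ slice of $\mathcal{C}(\bfy)$ is a horizontal interval strictly containing that of $\mathcal{C}((m,s))$; the density of shock points on each level (a consequence of Theorem~\ref{thm:shocks}) combined with repeated application of the nesting principle then produces an infinite strictly nested chain of ancestor cones on level $m+1$ sandwiched between $\mathcal{C}((m,s))$ and $\mathcal{C}(\bfy)$. For (v), coalescence is used directly: run $\geo\from{\bfx}\dir{R}{\tht}{\sig}$ and $\geo\from{\bfy}\dir{L}{\tht}{\sig}$ until they coalesce at some point $\bfw$; any shock sufficiently far NE of $\bfw$ then has a cone containing both $\bfx$ and $\bfy$, and the existence of such a shock is furnished by part (iv) applied upward from $\bfw$.

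Parts (vi)--(viii) use the inequality $\bus{\tht}{-}\le\bus{\tht}{+}$ from Theorem~\ref{thm:B}, which places $\geo\from{\bfx}\dir{S}{\tht}{-}$ weakly to the left of $\geo\from{\bfx}\dir{S}{\tht}{+}$ for each $S\in\{L,R\}$ and each starting point $\bfx$. For (vi), the strict separation of R geodesics at a point in $\NU_1^{\tht+}\setminus\NU_1^{\tht-}$ propagates downward along the $\NU_1^{\tht+}$ tree via the cone ordering, forcing all SW descendants to remain in the same symmetric difference; (vii) is symmetric. For (viii), part (v) produces common descendants in each of the two subtrees, and (vi)--(vii) then place each in the intended symmetric difference. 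The main obstacle is (iv): the infinitude of ancestors on a fixed level requires a density argument for shocks on a horizontal interval, for which the nesting principle alone is insufficient and one must invoke the finer structural results of Theorem~\ref{thm:shocks} together with a careful iteration between successive enclosing cones.
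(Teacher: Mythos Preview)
Your sketch has several genuine gaps; the most serious are in parts (ii), (v), and (vi)--(viii).

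\textbf{Part (ii).} Your ``nesting principle'' says $\bfy\in\mathcal{C}(\bfx)\Rightarrow\mathcal{C}(\bfy)\subseteq\mathcal{C}(\bfx)$, but the hypothesis here is only $\bfz\in\mathcal{C}(\bfx)\cap\mathcal{C}(\bfy)$, which does not place $\bfx$ in $\mathcal{C}(\bfy)$ or vice versa. The paper's argument (Lemma~\ref{totalorder}) is more delicate: assuming neither cone contains the other's apex, it shows the open interiors $C_{\bfx},C_{\bfy}$ are disjoint, so $\bfz$ must lie on a shared boundary segment of the form $\geo\from{\bfx}\dir{L}{\tht}{\sig}\cap\geo\from{\bfy}\dir{R}{\tht}{\sig}$ (or the symmetric case), and then the shock condition at $\bfz$ together with Proposition~\ref{nodouble} rules out both possibilities.

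\textbf{Part (v).} Your argument is directionally wrong: a common SW descendant $\bfz$ must satisfy $\bfx,\bfy\in\mathcal{C}(\bfz)$, so $\bfz$ must lie SW of both, not NE of the coalescence point $\bfw$. Moreover, $\bfw$ need not be a shock, so part~(iv) does not apply to it. The paper's proof (Lemma~\ref{shockcommonancstr}) is entirely different: it assumes no common descendant, uses part~(iii) to build two infinite descendant chains $(k,s_k)$ and $(k,t_k)$ with $s_k<t_k$ for all $k\le m$, sandwiches the geodesics $\geo\from{(k,s_k)}\dir{R}{\tht}{\sig}$ between these chains, and extracts by compactness a non-degenerate bi-infinite geodesic, contradicting Theorem~\ref{nobiinf}. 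Coalescence alone cannot produce this; the absence of bi-infinite geodesics is the essential ingredient. The same ingredient is also needed in part~(iii) to guarantee the infimum defining the child is finite, which your sketch omits.

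\textbf{Parts (vi)--(viii).} Your abstract propagation argument does not work. The paper's proofs (Lemmas~\ref{+tree}, \ref{-tree}, \ref{pm tree}) rely on the detailed correspondence between $\NU_1^{\tht+}\setminus\NU_1^{\tht-}$ shocks and left endpoints of maximal instability intervals (Lemmas~\ref{+shckiffimprop}, \ref{properif-shck}, \ref{+-shckiffnotinstab}), and between $\NU_1^{\tht-}\setminus\NU_1^{\tht+}$ shocks and instability edges under proper instability points. The child of a $\tht+$-only shock is shown to sit under the left endpoint of the next instability interval down, which is again $\tht+$-only; this uses the instability-graph structure (Theorem~\ref{inst:summary.int}) and cannot be read off from the cone ordering of the $\pm$ geodesics alone. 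For (viii), your appeal to (v) is circular: (v) gives a common descendant in $\NU_1^{\tht\sig}$, not in a specified symmetric difference; the paper instead locates nearby instability intervals (via Lemma~\ref{inf many edges} and Theorem~\ref{nobiinf}) to find $\NU_1^{\tht+}\setminus\NU_1^{\tht-}$ shocks on the same level and then applies monotonicity of the child map from~(iii).
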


To analyze the relationship between shock points and the instability graph one needs to consider $\tht\in\baddir$. Then, in this case, the $\pm$ distinction implies that there are two types of shock points: $\NU_1^{\tht-}$ and $\NU_1^{\tht+}$. Note that shock points live on the primal semi-discrete lattice $\Z\times\R$ while instability points live on the dual lattice $(\Z+\half)\times\R$. 
We prove that shock points that have no instability points above them are both $\tht-$ and $\tht+$ shocks. On the other hand, if a shock point has an instability point above it, then it is either only a $\tht+$ shock point if the instability point is a left endpoint of an instability interval, or only a $\tht-$ shock point if the instability point above it is an interior point of an instability interval. Each shock point of the latter type has an instability edge going through it. Lastly, a feature that identifies the right endpoints of instability intervals is that they have a $\tht-$ shock point above them.  Thus, knowing the locations of the $\tht\pm$ shock points allows us to reconstruct all the horizontal instability  intervals, and a countable number of vertical instability edges. 

\begin{thm}\label{thm:shocks-IG}
The following holds for all $\w\in\Omref{Omega8}$ and all $\tht\in\baddir$. 
Suppose that for some $m\in\Z$ and $s<t$, $\{(m+\half,r):s\le r\le t\}$ is a $\tht$-instability interval and that $(m+\half,s)$ and $(m+\half,t)$ are improper $\tht$-instability points.
\begin{enumerate}  [label={\rm(\roman*)}, ref={\rm\roman*}]   \itemsep=3pt 
\item\label{thm:shocks-IG.a} $(m,s)\in\NU_1^{\tht+}\setminus\NU_1^{\tht-}$ and $(m+1,t)\in\NU_1^{\tht-}\setminus\NU_1^{\tht+}$. 
\item\label{thm:shocks-IG.b} For any $r\in(s,t)$, $(m,r)\not\in\NU_1^{\tht+}$. Furthermore, $(m,r)\in\NU_1^{\tht-}$ if, and only if, $[(m-\half,r),(m+\half,r)]$ is a $\tht$-instability edge that is right-isolated among $\tht$-instability edges. 
\item\label{thm:shocks-IG.c} For any $r\in(s,t)$ with $(m,r)\in\NU_1^{\tht-}$, $[(m-\half,r),(m+\half,r)]$ is a $\tht$-instability edge.
\item\label{thm:shocks-IG.d} There exists an $\e>0$ such that for any $r\in[t,t+\e]$, $(m,r)\not\in\NU_1^{\tht-}\cup\NU_1^{\tht+}$.
\item\label{thm:shocks-IG.e} For any $r\in\R$ such that $(m+\half,r)\not\in\IG{\tht}$, $(m,r)$ is either in $\NU_1^{\tht-}\cap\NU_1^{\tht+}$ or not in $\NU_1^{\tht-}\cup\NU_1^{\tht+}$.
\item\label{thm:shocks-IG.f} There exist sequences $r_n'\nearrow s$ and $r_n''\searrow s$ such that $(m,r_n')\in\NU_1^{\tht-}\cap\NU_1^{\tht+}$ and $(m,r_n'')\in\NU_1^{\tht-}\setminus\NU_1^{\tht+}$ for all $n$.
\end{enumerate}
\end{thm}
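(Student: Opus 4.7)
The plan is to translate each assertion into a statement about the mutual positions of the $\tht\pm$ semi-infinite geodesics from the process \eqref{geo-proc} and then read the conclusions off from the geodesic characterizations of instability edges and points (Lemma \ref{geodefinstabedge1}, Theorems \ref{thm:instpt} and \ref{geo:instpt}), the definition of $\NU_1^{\tht\sig}$, and the ordering and monotonicity of these geodesics in the $\pm$ index and in the starting point. Two inputs drive the argument. First, properness of $(m+\half,r)$ for $r\in(s,t)$ is equivalent to $\geo\from{(m,r)}\dir{R}{\tht}{+}$ being disjoint from $\geo\from{(m+1,r)}\dir{L}{\tht}{-}$. Second, improperness of the endpoints $(m+\half,s)$ and $(m+\half,t)$, together with Theorem \ref{inst:summary}\eqref{inst:summary.I}, yields one-sided pinched neighborhoods ($r<s$ near $s$ and $r>t$ near $t$) on which $(m+\half,r)\notin\IG{\tht}$, so that the $\pm$ Busemann process and the associated geodesics coincide there.

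For part \eqref{thm:shocks-IG.a}, taking $r\searrow s$ along proper interior points and using lower semicontinuity of $\geo\from{(m,\cdot)}\dir{R}{\tht}{+}$ in the starting point, persistent disjointness from $\geo\from{(m+1,r)}\dir{L}{\tht}{-}$ forces $\geo\from{(m,s)}\dir{R}{\tht}{+}$ to begin with a horizontal step while $\geo\from{(m,s)}\dir{L}{\tht}{+}$ goes up, giving $(m,s)\in\NU_1^{\tht+}$. On the $-$ side, the coincidence of the $\pm$ geodesics just below $s$ forces $\geo\from{(m,s)}\dir{L}{\tht}{-}$ and $\geo\from{(m,s)}\dir{R}{\tht}{-}$ to coalesce; an immediate split of these two at $(m,s)$ would propagate via continuity to produce a proper instability point just below $s$, contradicting improperness of $(m+\half,s)$. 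Hence $(m,s)\notin\NU_1^{\tht-}$. The mirror argument at the upper endpoint gives $(m+1,t)\in\NU_1^{\tht-}\setminus\NU_1^{\tht+}$.

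Parts \eqref{thm:shocks-IG.b} and \eqref{thm:shocks-IG.c} follow from the same geodesic comparison at interior proper points. If $(m,r)\in\NU_1^{\tht+}$ then $\geo\from{(m,r)}\dir{R}{\tht}{+}$ starts horizontally, and monotonicity forces it to meet $\geo\from{(m+1,r)}\dir{L}{\tht}{-}$, contradicting properness; so $(m,r)\notin\NU_1^{\tht+}$. Membership in $\NU_1^{\tht-}$ is, by Lemma \ref{geodefinstabedge1}, equivalent to $[(m-\half,r),(m+\half,r)]$ being a $\tht$-instability edge, with the extra right-isolation requirement in \eqref{thm:shocks-IG.b} corresponding to $\geo\from{(m,r)}\dir{R}{\tht}{-}$ beginning with a horizontal step; an accumulation of instability edges on the right would prevent this. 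Part \eqref{thm:shocks-IG.c} is the ``only if'' direction of that equivalence. For part \eqref{thm:shocks-IG.d}, the pinched neighborhood above $t$ collapses the $\pm$ distinction while the $-$ right geodesic from $(m+1,t)$ of \eqref{thm:shocks-IG.a} threads through every $(m,r)$ with $r\in(t,t+\e]$ without splitting. Part \eqref{thm:shocks-IG.e} is the general off-$\IG{\tht}$ version: the $\pm$ geodesics at $(m,r)$ coincide, so shock membership, if any, must be in both $\NU_1^{\tht-}$ and $\NU_1^{\tht+}$ simultaneously.

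Part \eqref{thm:shocks-IG.f} is the most delicate and is where I expect the main obstacle to lie. The sequence $r_n''\searrow s$ comes from Theorem \ref{inst:summary}\eqref{inst:summary.Hausdorff}: the set of $r\in(s,t)$ at which a vertical $\tht$-instability edge attaches to $(m+\half,r)$ has Hausdorff dimension $\half$ and so accumulates at $s$ from above; restricting further to non-right-isolated such edges (again dense in the dimension-$\half$ set), parts \eqref{thm:shocks-IG.b} and \eqref{thm:shocks-IG.c} place each $(m,r_n'')$ in $\NU_1^{\tht-}\setminus\NU_1^{\tht+}$. For $r_n'\nearrow s$ with $(m,r_n')\in\NU_1^{\tht-}\cap\NU_1^{\tht+}$, the idea is that $\geo\from{(m,s)}\dir{R}{\tht}{+}$ starts horizontally, so for $r$ slightly below $s$ the geodesic $\geo\from{(m,r)}\dir{R}{\tht}{+}=\geo\from{(m,r)}\dir{R}{\tht}{-}$ (by the pinched neighborhood) also starts horizontally. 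Combining this with denseness of shocks on level $m$ near $s$, extracted from the tree and bush structure of Theorem \ref{shocks:summary}, one locates $r_n'\nearrow s$ at which the corresponding $L$ geodesic goes up, placing $(m,r_n')$ in $\NU_1^{\tht-}\cap\NU_1^{\tht+}$ via part \eqref{thm:shocks-IG.e}. The technical difficulty will be controlling this accumulation on the correct side of $s$ (from below rather than above), which will require a careful continuity argument for $\geo\from{(m,r)}\dir{L}{\tht}{+}$ as $r\nearrow s$ together with the bush structure sketched in the right panel of Figure \ref{fig:inst}.
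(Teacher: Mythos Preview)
Your overall strategy---translate each item into a geodesic statement and read it off the characterizations of Sections \ref{def:Bus:analytic}--\ref{def:Bus:geometric}---is exactly the paper's strategy (Theorem \ref{thm:shocks-IG} is the $\gamma=\tht$ case of Theorem \ref{thm:shocks-IG.int}, itself a summary of Lemmas \ref{+shckiffimprop}--\ref{-shockabove} and the two lemmas after). But several of the translations are wrong, and part \eqref{thm:shocks-IG.f} is both backwards and harder than it needs to be.

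\textbf{The edge/$\NU_1^{\tht-}$ conflation.} You write that ``membership in $\NU_1^{\tht-}$ is, by Lemma \ref{geodefinstabedge1}, equivalent to $[(m-\tfrac12,r),(m+\tfrac12,r)]$ being a $\tht$-instability edge.'' That lemma says no such thing: it characterizes the edge by the pair $\geo\from{(m,r)}\dir{R}{\tht}{+}$ and $\geo\from{(m,r)}\dir{L}{\tht}{-}$ separating and never touching, whereas $(m,r)\in\NU_1^{\tht-}$ means the pair $\geo\from{(m,r)}\dir{L}{\tht}{-}$ and $\geo\from{(m,r)}\dir{R}{\tht}{-}$ split. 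These are different. The implication ``$(m,r)\in\NU_1^{\tht-}\Rightarrow$ edge'' (your part \eqref{thm:shocks-IG.c}) needs the leftmost-geodesic argument of Lemma \ref{properif-shck}; the converse direction needs right-isolation and is Lemma \ref{lem:right isolated=>-}, with the non-right-isolated case ruled out by the lemma following it. Similarly, your argument for $(m,r)\notin\NU_1^{\tht+}$ in \eqref{thm:shocks-IG.b} is inverted: the point is that under the proper interval, $\geo\from{(m,r)}\dir{L}{\tht}{+}$ and $\geo\from{(m,r)}\dir{R}{\tht}{+}$ both go right to $(m,t)$ and hence coincide (Lemma \ref{underpropinstab}), so there is no split.

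\textbf{Part \eqref{thm:shocks-IG.e} is not ``$\pm$ geodesics coincide.''} Off $\IG{\tht}$ you do not get global equality of the $\tht\pm$ geodesics from $(m,r)$. The paper's Lemma \ref{+-shckiffnotinstab} argues by contradiction using Theorem \ref{thm:geo}\eqref{thm:geo:LR} (leftmost/rightmost point-to-point property), not by identifying the $\pm$ processes.

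\textbf{Part \eqref{thm:shocks-IG.f}: wrong isolation, and overkill.} For $r_n''\searrow s$ you want points in $\NU_1^{\tht-}$, which by your own part \eqref{thm:shocks-IG.b} requires \emph{right-isolated} edges, not non-right-isolated ones. The Hausdorff-dimension route is unnecessary anyway: since $(m,s)\in\NU_1^{\tht+}\setminus\NU_1^{\tht-}$ by part \eqref{thm:shocks-IG.a}, Theorem \ref{thm:shocks}\eqref{thm:shocks:d} hands you $r_n''\searrow s$ with $(m,r_n'')\in\NU_1^{\tht-}$, and Lemma \ref{+shckiffimprop} then excludes $\NU_1^{\tht+}$. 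For $r_n'\nearrow s$, the tree/bush structure is also overkill: left-density of shocks (Theorem \ref{thm:shocks}\eqref{thm:shocks:c}) applied to $(m,s)\in\NU_1^{\tht+}$ gives $(m,r_n')\in\NU_1^{\tht+}$ with $r_n'\nearrow s$; since $(m+\tfrac12,s)$ is a left endpoint of a maximal interval, $(m+\tfrac12,r_n')\notin\IG{\tht}$ for large $n$, and part \eqref{thm:shocks-IG.e} upgrades this to $(m,r_n')\in\NU_1^{\tht-}\cap\NU_1^{\tht+}$.
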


\begin{rmk}\label{more shocks}
Parts \eqref{thm:shocks-IG.a}, \eqref{thm:shocks-IG.b},  \eqref{thm:shocks-IG.e}, and \eqref{thm:shocks-IG.f} in the above theorem give a natural injection that says that there are infinitely many more $\tht-$ shocks than $\tht+$ shocks. We will see, in Section \ref{sec:shocks}, a more general result that says that the number of shock points strictly increases as the velocity $\tht$ decreases. See Remark \ref{more shocks 2}.
\end{rmk}

\begin{rmk}
    Parts \eqref{thm:shocks-IG.a}, \eqref{thm:shocks-IG.b}, \eqref{thm:shocks-IG.d}, and \eqref{thm:shocks-IG.e} show that $\NU_1^{\tht+}\setminus\NU_1^{\tht-}$ consists of isolated points. This is in contrast to $\NU_1^{\tht-}\setminus\NU_1^{\tht+}$, which by parts \eqref{thm:shocks-IG.a}, \eqref{thm:shocks-IG.b}, \eqref{thm:shocks-IG.d}, \eqref{thm:shocks-IG.e}, and Theorem \ref{thm:shocks}\eqref{thm:shocks:c} is a left-dense set, and to  $\NU_1^{\tht-}\cap\NU_1^{\tht+}$, which by parts (\ref{thm:shocks-IG.a}-\ref{thm:shocks-IG.c}), \eqref{thm:shocks-IG.e},  and Theorem \ref{thm:shocks}\eqref{thm:shocks:c} is also a left-dense set. 
\end{rmk}

\subsection{Competition interfaces}
Since the set of vertical instability edges has a Hausdorff dimension $\half$ while shock points are only countably infinite, one cannot reconstruct all the instability edges from only knowing the locations of shock points. However, one can completely reconstruct the instability graph from knowing the starting points of left and right competition interfaces with asymptotic direction $\tht$ (see Section \ref{sec:intro.ci}). Specifically, we show that an instability edge goes through $(m,s)\in\Z\times\R$ if, and only if, $(m+\half,s)$ is the starting point of either a left or a right competition interface with asymptotic direction $\tht$, i.e.\ $\tht\in\{\tht_{(m,s)}^L,\tht_{(m,s)}^R\}$. 
Furthermore, these points identify the shock points that are below instability points: if $(m+\half,s)$ is an instability point, then $(m,s)\in\NU_1^{\tht+}\setminus\NU_1^{\tht-}$ if and only if $(m+\half,s)$ is the starting point of a right, but not left, competition interface with an asymptotic direction $\tht$ (i.e., $\tht=\tht_{(m,s)}^R < \tht_{(m,s)}^L$). Similarly, $(m,s)\in\NU_1^{\tht-}\setminus\NU_1^{\tht+}$ if and only if $(m+\half,s)$ is the starting point of a left, but not right, competition interface with an asymptotic direction $\tht$ (i.e., $\tht=\tht_{(m,s)}^L > \tht_{(m,s)}^R$), and $(m,s)\not\in\NU_1^{\tht-}\cup\NU_1^{\tht+}$ if and only if $(m+\half,s)$ is the starting point of both left and right competition interfaces with the same asymptotic direction $\tht$ (i.e., $\tht=\tht_{(m,s)}^L=\tht_{(m,s)}^R$).

\begin{thm}\label{thm:cif}
 The following holds for all $\w\in\Omref{Omega8}$, $\tht\in\baddir$, and $(m,s)\in\Z\times\R$.
    \begin{enumerate} [label={\rm(\alph*)}, ref={\rm\alph*}]   \itemsep=3pt 
    \item\label{thm:cif.a} $[(m-\half,s),(m+\half,s)]$ is a $\tht$-instability edge if, and only if, for some $S\in\{L,R\}$, $\geo\from{(m,s)}\dir{S}{\tht+}{}$ and $\geo\from{(m,s)}\dir{S}{\tht-}{}$ split at $(m,s)$ {\rm(}i.e.\ $\tht\in\{\tht^L_{(m,s)},\tht^R_{(m,s)}\}${\rm)}.
    \item\label{thm:cif.b} If $(m+\half,s)$ is the left endpoint of a maximal $\tht$-instability interval,
    then $\geo\from{(m,s)}\dir{R}{\tht+}{}$ and $\geo\from{(m,s)}\dir{R}{\tht-}{}$ split at $(m,s)$ but $\geo\from{(m,s)}\dir{L}{\tht+}{}$ and $\geo\from{(m,s)}\dir{L}{\tht-}{}$ do not. That is, $\tht=\tht^R_{(m,s)}<\tht^L_{(m,s)}$.
    \item\label{thm:cif.c} If $(m+\half,s)$ is a proper $\tht$-instability point and $(m,s)\in\NU_1^{\tht-}$, then $\geo\from{(m,s)}\dir{L}{\tht+}{}$ and $\geo\from{(m,s)}\dir{L}{\tht-}{}$ split at $(m,s)$ but $\geo\from{(m,s)}\dir{R}{\tht+}{}$ and $\geo\from{(m,s)}\dir{R}{\tht-}{}$ do not. That is, $\tht=\tht^L_{(m,s)}>\tht^R_{(m,s)}$.
    \item\label{thm:cif.d} If $[(m-\half,s),(m+\half,s)]$ is a $\tht$-instability edge but $(m,s)\not\in\NU_1^{\tht-}\cup\NU_1^{\tht+}$, then $\geo\from{(m,s)}\dir{L}{\tht-}{}=\geo\from{(m,s)}\dir{L}{\tht+}{}$ and $\geo\from{(m,s)}\dir{R}{\tht-}{}=\geo\from{(m,s)}\dir{R}{\tht+}{}$ split at $(m,s)$. That is, $\tht=\tht^R_{(m,s)}=\tht^L_{(m,s)}$.
    \end{enumerate}
\end{thm}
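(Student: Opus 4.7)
The central observation driving all four parts is the following equivalence: for every $(m,s)\in\Z\times\R$ and $S\in\{L,R\}$, the geodesics $\geo\from{(m,s)}\dir{S}{\tht-}{}$ and $\geo\from{(m,s)}\dir{S}{\tht+}{}$ split at $(m,s)$ if and only if $\tht=\tht^S_{(m,s)}$. The plan is to establish this first, directly from Theorem \ref{thm:cif-SS}: the $S$-competition interface out of $(m+\half,s)$ separates targets according to whether the $S$-most geodesic from $(m,s)$ initially moves up or right, and its asymptotic direction is $\tht^S_{(m,s)}$. Consequently, $\geo\from{(m,s)}\dir{S}{\tht'}{}$ goes up from $(m,s)$ for $\tht'<\tht^S_{(m,s)}$ and right for $\tht'>\tht^S_{(m,s)}$, so the two one-sided $\tht$-limits at $\tht^S_{(m,s)}$ realize the two choices, while for $\tht\ne\tht^S_{(m,s)}$ they agree. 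Writing $\tht^L=\tht^L_{(m,s)}$ and $\tht^R=\tht^R_{(m,s)}$, with $\tht^R\le\tht^L$ by the ordering of $L$- and $R$-most geodesics, this yields the dictionary
\[(m,s)\in\NU_1^{\tht+}\iff\tht^R\le\tht<\tht^L,\qquad(m,s)\in\NU_1^{\tht-}\iff\tht^R<\tht\le\tht^L.\]

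Parts (b), (c), (d) now follow by combining this dictionary with Theorem \ref{thm:shocks-IG}. For (b), part \eqref{thm:shocks-IG.a} of that theorem gives $(m,s)\in\NU_1^{\tht+}\setminus\NU_1^{\tht-}$, which forces $\tht=\tht^R<\tht^L$. For (c), part \eqref{thm:shocks-IG.b} rules out $(m,s)\in\NU_1^{\tht+}$ at an interior instability point, so combined with the hypothesis $(m,s)\in\NU_1^{\tht-}$ we land in $\NU_1^{\tht-}\setminus\NU_1^{\tht+}$, giving $\tht^R<\tht=\tht^L$. For (d), the hypothesis $(m,s)\notin\NU_1^{\tht-}\cup\NU_1^{\tht+}$ leaves only $\tht<\tht^R$, $\tht>\tht^L$, or $\tht=\tht^R=\tht^L$; the first two are excluded because $\geo\from{(m,s)}\dir{R}{\tht+}{}$ and $\geo\from{(m,s)}\dir{L}{\tht-}{}$ would fail to separate at $(m,s)$, contradicting the geometric instability-edge characterization in Lemma \ref{geodefinstabedge1}.

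For part (a), the forward direction uses that if $[(m-\half,s),(m+\half,s)]$ is an instability edge, then $(m+\half,s)$ is either a left endpoint of a maximal instability interval or an interior (proper) instability point, since the right-endpoint alternative would make only the upward edge an instability edge (Theorem \ref{inst:summary}\eqref{inst:summary.nodouble}). The former is (b), and the latter splits via Theorem \ref{thm:shocks-IG}\eqref{thm:shocks-IG.b} into (c) and (d), each producing $\tht\in\{\tht^L,\tht^R\}$. For the reverse direction, the cases $\tht=\tht^R<\tht^L$ and $\tht^R<\tht=\tht^L$ correspond to $(m,s)\in\NU_1^{\tht+}\setminus\NU_1^{\tht-}$ and $(m,s)\in\NU_1^{\tht-}\setminus\NU_1^{\tht+}$ respectively, from which Theorem \ref{thm:shocks-IG} delivers the instability edge. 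The coincident case $\tht=\tht^R=\tht^L$ places $(m,s)\notin\NU_1^{\tht-}\cup\NU_1^{\tht+}$ yet has all four geodesics $\geo\from{(m,s)}\dir{S}{\tht\pm}{}$ splitting; one must then verify that this forces a point of increase of $s\mapsto\buse{-}{m}{0}{m}{s}-\buse{+}{m}{0}{m}{s}$ at the relevant $s$. This last case, which corresponds to the Hausdorff-dimension-$\half$ set of interior instability edges not arising from shock points, is expected to be the main technical obstacle; its verification will rely on the analytic definition of instability edge (Definition \ref{def:inst}\eqref{def:inst:a}) together with fine properties of the Busemann process collected in Section \ref{sec:previous}.
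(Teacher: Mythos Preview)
Your approach for parts (b), (c), (d) and the forward direction of (a) is correct and mirrors the paper's: the paper also derives (b)--(d) from the shock/interval lemmas (packaged as Theorem~\ref{thm:shocks-IG.int}, equivalently Lemmas~\ref{+shckiffimprop}, \ref{properif-shck}, \ref{instabrightendpt}) together with the dictionary in Theorem~\ref{thm:cif-SS}, and obtains the forward direction of (a) as a consequence of (b)--(d).

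Where you diverge is in the reverse direction of (a), and your assessment of the difficulty there is off. You case-split on the relation between $\tht^R_{(m,s)}$ and $\tht^L_{(m,s)}$, route the strict-inequality cases through Theorem~\ref{thm:shocks-IG}, and then flag the coincident case $\tht=\tht^R_{(m,s)}=\tht^L_{(m,s)}$ as ``the main technical obstacle'' requiring ``fine properties of the Busemann process.'' It is not. The paper dispatches the entire reverse direction of (a) in one stroke, and the same argument closes your gap: if $\geo\from{(m,s)}\dir{R}{\tht-}{}$ and $\geo\from{(m,s)}\dir{R}{\tht+}{}$ split at $(m,s)$ (the case $\tht=\tht^R_{(m,s)}$), then $\geo\from{(m,s)}\dir{R}{\tht+}{}$ goes right while $\geo\from{(m,s)}\dir{R}{\tht-}{}$ goes up, which forces $\geo\from{(m,s)}\dir{L}{\tht-}{}$ up as well. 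If $\geo\from{(m,s)}\dir{R}{\tht-}{}$ and $\geo\from{(m,s)}\dir{R}{\tht+}{}$ were to meet again at some $z$, the segment of $\geo\from{(m,s)}\dir{R}{\tht-}{}$ from $(m,s)$ to $z$ would be the rightmost point-to-point geodesic by Theorem~\ref{thm:geo}\eqref{thm:geo:LR}, yet the segment of $\geo\from{(m,s)}\dir{R}{\tht+}{}$ lies strictly to its right --- a contradiction. Hence $\geo\from{(m,s)}\dir{L}{\tht-}{}$ (which in the coincident case equals $\geo\from{(m,s)}\dir{R}{\tht-}{}$ by Lemma~\ref{geo-split}, since $(m,s)\notin\NU_1^{\tht-}$) never touches $\geo\from{(m,s)}\dir{R}{\tht+}{}$ again, and Lemma~\ref{geodefinstabedge1} delivers the instability edge directly. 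No analytic Busemann argument is needed.

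In short: your proposed detour through Theorem~\ref{thm:shocks-IG} for the reverse of (a) is unnecessary and leaves you stuck precisely where the direct geodesic argument is cleanest.
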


In Sections \ref{def:Bus:analytic} through \ref{sec:cif}, we will introduce and explore a broader concept of instability. This expanded notion enables us to examine the relationship between the locations of shock points as the parameter 
$\tht$ varies. The definitions and results presented in this section will subsequently be seen as a special case of the more general findings discussed later.

\section{Inputs from previous works}\label{sec:previous}

In this section, we compile several previous results that will be utilized in this work.

\subsection{The Busemann process}
The following theorem summarizes the properties of the Busemann process (defined in \eqref{bus.def}) that we need in this work. 
It comes from Theorem 3.5 in \cite{Sep-Sor-23-aihp} and Theorems 3.15 and 7.19(v) in \cite{Sep-Sor-23-pmp}.

\begin{thm}\label{thm:B}
    There exists a real-valued process 
    \begin{align}\label{Bus-proc}
        \bigl\{\buse{\sig}{m}{s}{n}{t}:\tht>0,\sigg\in\{-,+\},m,n\in\Z,s,t\in\R\bigr\}
    \end{align}
    and an event $\Omnew{Omega0}\in\kS$ with $\P(\Omref{Omega0})=1$ and such that the following hold for all $\w\in\Omref{Omega0}$, $\tht>0$, and $\sigg\in\{-,+\}$.
    \begin{enumerate} [label={\rm(\roman*)}, ref={\rm\roman*}]   \itemsep=3pt 
    \item\label{B:cocycle}{\rm(}Cocycle{\rm)} For any $\ell,m,n\in\Z$, and $r,s,t\in\R$,
    \begin{align}\label{cocycle}
    \buse{\sig}{\ell}{r}{m}{s}+\buse{\sig}{m}{s}{n}{t}=\buse{\sig}{\ell}{r}{n}{t}.
    \end{align}
    \item\label{B:mono}{\rm(}Monotonicity{\rm)} For any 
    $\tht'>\tht$, 
    $m\in\Z$, and $s<t$ in $\R$, 
    \begin{align}\label{B:mono-eqn}
    \begin{split}
    &\bus{\tht'}{+}(m,s,m,t)\le\bus{\tht'}{-}(m,s,m,t)\le\buse{+}{m}{s}{m}{t}\le \buse{-}{m}{s}{m}{t}
    \quad\text{and}\quad\\
    &\buse{-}{m}{s}{m+1}{s}\le \buse{+}{m}{s}{m+1}{s}\le\bus{\tht'}{-}(m,s,m+1,s)\le\bus{\tht'}{+}(m,s,m+1,s). 
    \end{split}
    \end{align}
    \item\label{B:cont}{\rm(}Continuity{\rm)} For any $m,n\in\Z$, $(s,t)\mapsto\buse{\sig}{m}{s}{n}{t}$ is a continuous function.
    \item\label{B:cont2}{\rm(}Limits{\rm)} $\bus{\gamma}{\sig}$ converges uniformly on compacts to $\bus{\tht}{-}$ as $\gamma$ increases to $\tht$ and to $\bus{\tht}{+}$ as $\gamma$ decreases to $\tht$.
    \item\label{B:jump}{\rm(}Locally constant{\rm)} For any compact set $K\subset\Z\times\R$ there exists an $\e=\e(\w,K,\tht)>0$ such that for any $(m,s),(n,t)\in K$, $\gamma\in(\tht-\e,\tht)$, and $\delta\in(\tht,\tht+\e)$, 
        \[\bus{\gamma}{\sig}(m,s,n,t)=\bus{\tht}{-}(m,s,n,t)\quad\text{and}\quad 
        \bus{\delta}{\sig}(m,s,n,t)=\bus{\tht}{+}(m,s,n,t).\]
    \end{enumerate}
\end{thm}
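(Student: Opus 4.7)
The plan is to build the process on a countable index set and then extend by monotone limits in $\tht$. First, fix a countable dense set $D\subset(0,\infty)$, say the positive rationals, and apply Theorem \ref{thm: LPPresult} together with a countable union to obtain a single full-probability event $\Omref{Omega0}$ on which, for every $\tht\in D$, every $m,n\in\Z$, and every $s,t\in\Q$, the Busemann limit $\buse{}{m}{s}{n}{t}$ exists. The crucial input is monotonicity in $\tht$ of the basic vertical and horizontal increments $\bus{\tht}{}(m,s,m,t)$ and $\bus{\tht}{}(m,s,m+1,s)$; this is already built into the statement of Theorem \ref{thm:B}\eqref{B:mono} and follows from a direct comparison of point-to-point last-passage times in nearly the asymptotic directions. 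Having monotonicity along $D$, I define
\[
\buse{-}{m}{s}{m}{t}=\lim_{\gamma\nearrow\tht,\ \gamma\in D}\bus{\gamma}{}(m,s,m,t),\qquad
\buse{+}{m}{s}{m}{t}=\lim_{\gamma\searrow\tht,\ \gamma\in D}\bus{\gamma}{}(m,s,m,t),
\]
and analogously for the horizontal increments, first for rational $s,t$. This is the core of the construction.

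Next I would verify \eqref{B:cocycle}--\eqref{B:cont2}. The cocycle identity \eqref{cocycle} holds on $D$ (for rational arguments) as an identity between almost-sure limits, and the $\pm$ limits inherit it; it then serves to \emph{define} $\buse{\sig}{m}{s}{n}{t}$ for general $(m,s),(n,t)$ in terms of vertical and horizontal generators. Monotonicity \eqref{B:mono-eqn} transfers from $D$ to the $\pm$ limits by taking the appropriate one-sided limit of each inequality, yielding both the inequality between $\bus{\tht}{-}$ and $\bus{\tht}{+}$ and the monotonicity in $\tht$. Property \eqref{B:cont2} is essentially the definition of the $\pm$ limits, once one checks that the limit is independent of the approximating sequence in $D$ and extends continuously from rational to real arguments. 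Continuity \eqref{B:cont} in $(s,t)$ reduces, via the cocycle identity, to continuity of $s\mapsto \buse{\sig}{m}{0}{m}{s}$ and of the horizontal increment; this comes from the fact that each $\bus{\gamma}{}(m,0,m,s)$ for $\gamma\in D$ is a continuous function of $s$ (indeed it admits a ``queue-type'' representation as a Busemann function of a Brownian environment), so that one-sided monotone limits of such functions are one-sided continuous, and the combination of left- and right-continuous versions satisfying \eqref{B:mono-eqn} must in fact be continuous.

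The main obstacle is the local constancy property \eqref{B:jump}. Since, by definition of $\baddir$, one has $\bus{\tht}{-}=\bus{\tht}{+}$ for all $\tht\notin\baddir$, the claim is that on a compact set $K\subset\Z\times\R$ the jumps of $\tht\mapsto\bus{\tht}{\sig}$ are not only countable but locally finite. The strategy is to work on finitely many generators $(m,s,n,t)$ lying in a discrete subset controlling $K$, use the fact that $\tht\mapsto \bus{\tht}{\sig}(m,s,m,t)$ is monotone with finite total variation on compact $\tht$-intervals (the difference $\bus{\tht_1}{}(m,s,m,t)-\bus{\tht_2}{}(m,s,m,t)$ is bounded by the explicit Brownian increment $B_m(t)-B_m(s)$ minus the horizontal increments), and deduce that the set of directions carrying an $\e$-size jump on any chosen generator in $K$ is locally finite. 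Continuity \eqref{B:cont} then upgrades this from a finite set of generators to a uniform statement over $K$, yielding the $\e=\e(\w,K,\tht)$ in \eqref{B:jump}. The detailed execution of this step, along with proving that the set $\baddir$ is countable and dense, is the technical heart of the construction in Theorems 3.15 and 7.19(v) of \cite{Sep-Sor-23-pmp} and Theorem 3.5 of \cite{Sep-Sor-23-aihp}, from which the present theorem is assembled.
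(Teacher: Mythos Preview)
The paper does not prove Theorem~\ref{thm:B}; it is stated in Section~\ref{sec:previous} (``Inputs from previous works'') with the attribution ``It comes from Theorem~3.5 in \cite{Sep-Sor-23-aihp} and Theorems~3.15 and~7.19(v) in \cite{Sep-Sor-23-pmp}.'' Your sketch of the construction via a countable dense set of directions, monotone one-sided limits, and propagation of the cocycle/monotonicity/continuity properties is indeed the architecture used in those papers, and your closing sentence correctly acknowledges this.

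There is, however, a genuine gap in your outline of~\eqref{B:jump}. Monotonicity plus finite total variation of $\tht\mapsto\bus{\tht}{\sig}(m,s,m,t)$ only shows that, for each $\e>0$, there are finitely many directions supporting a jump of size at least $\e$. It does \emph{not} rule out a continuous, non-constant piece between jumps, nor does it show the function is locally a step function. Knowing that $\baddir$ is countable does not help directly either, since $\baddir$ is dense. The actual mechanism behind local constancy is geometric: for $(m,s),(n,t)$ in a compact $K$, the Busemann increment is determined by the $\tht\sigg$-geodesics out of those points, and those geodesics do not move as $\tht$ varies until $\tht$ crosses a competition-interface direction associated to some point along the geodesic (this is what underlies Theorem~7.19(v) in \cite{Sep-Sor-23-pmp}, via the coalescence structure). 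That argument produces a genuine $\e$-neighborhood of $\tht$ on which the geodesics, and hence the Busemann values, are frozen. Your variation-based argument would need to be replaced or supplemented by this geodesic input.
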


%

\subsection{Semi-infinite geodesics} Next, we need results concerning  semi-infinite geodesics.

\begin{defn} The following are a collection of definitions pertaining to semi-infinite paths.
\begin{enumerate} [label={\rm(\alph*)}, ref={\rm\alph*}]   \itemsep=3pt 
    \item Given two semi-infinite up-right paths $\Gamma^1$ and $\Gamma^2$ starting at the same level $m\in\Z$ and defined by their jump times $\{s_k^1:k\ge m-1\}$ and $\{s_k^2:k\ge m-1\}$, respectively, we say that $\Gamma^1$ is to the left of, or equivalently above, $\Gamma^2$ if $s_k^1\le s_k^2$ for all $k\ge m-1$. We write $\Gamma^1\preceq\Gamma^2$ or, equivalently, $\Gamma^2\succeq\Gamma^1$. 
    \item Fix a starting level $\ell\in\Z$. A sequence of up-right, semi-infinite paths, defined by the sequences of jump times $(s_i^{(k)})_{i\ge \ell-1}$, $k\in\N$, converges to the up-right path defined by the sequence of jump times $(s_i)_{i\ge \ell-1}$ if for each integer $i\ge\ell-1$, $s_i^{(k)}\to s_i$ as $k\to\infty$.
    \item A semi-infinite or bi-infinite up-right path defined by the sequence of jump times $(s_n)_{n\ge\ell-1}$ or, respectively, $(s_n)_{n\in\Z}$ is said to have direction $\tht>0$ if $s_n/n\to\tht$ as $n\to\infty$. 
    \item Two up-right, semi-infinite paths, defined by the sequences of jump times $(s_i)_{i\ge \ell-1}$ and $(s'_j)_{j\ge k-1}$, coalesce if $n=\min\bigl\{j\ge(\ell-1)\vee(k-1):s_j=s'_j\not\in\{s_{\ell-1}\}\cap\{s'_{k-1}\}\bigr\}<\infty$ and $s_i=s'_i$ for all $i\ge n$. In words, the two paths coalesce if they either start at the same point, immediately split, then intersect later and continue together, or if they start at different points and intersect later to proceed together.
\end{enumerate}
\end{defn}

The following theorem summarizes the properties of the process \eqref{geo-proc} that we need in this work. Its claims follow from Theorem 4.3(ii,v) in \cite{Sep-Sor-23-aihp}, Theorems 4.3(iii), 4.5(ii), and 4.11 in \cite{Sep-Sor-23-pmp}, and our Proposition \ref{nodouble}.

\begin{thm}
\label{thm:geo}
There exists an event $\Omnew{Omega1}\in\kS$ with $\Omref{Omega1}\subset\Omref{Omega0}$, $\P(\Omref{Omega1})=1$, and such that the geodesics process \eqref{geo-proc} satisfies all of the following properties, for all $\w\in\Omref{Omega1}$, $\tht>0$, and $\sigg\in\{-,+\}$.

\begin{enumerate} [label={\rm(\roman*)}, ref={\rm\roman*}]   \itemsep=3pt 
\item\label{thm:geo:mono}{\rm(}Monotonicity{\rm)}  
For all $m\in\Z$, $s<t$ in $\R$, and $S \in \{L,R\}$
\begin{align}
    &\geo\from{(m,s)}\dir{R}{\tht}{\sig} \preceq \geo\from{(m,t)}\dir{L}{\tht}{\sig} \, , \quad 
    \geo\from{(m,t)}\dir{L}{\tht}{\sig}  \preceq \geo\from{(m,t)}\dir{R}{\tht}{\sig}\, , \quad
    \geo\from{(m,t)}\dir{S}{\tht}{-} \preceq \geo\from{(m,t)}\dir{S}{\tht}{+}\,.\label{geomono}
\end{align}
\item\label{thm:geo:cont}{\rm(}Continuity{\rm)}
For all $m\in\Z$, $s\in\R$, and $S \in \{L,R\}$
\begin{align}
    &\lim\limits_{r\nearrow s}\geo\from{(m,r)}\dir{S}{\tht}{\sig} = \geo\from{(m,s)}\dir{L}{\tht}{\sig} \, \quad\text{and}\quad
     \lim\limits_{r\searrow s}\geo\from{(m,r)}\dir{S}{\tht}{\sig}=\geo\from{(m,s)}\dir{R}{\tht}{\sig}.\label{geocont}
\end{align}
\item\label{thm:geo:dir}{\rm(}Directedness{\rm)} For all $m\in\Z$, $s\in\R$, and $S \in \{L,R\}$, $\geo\from{(m,s)}\dir{S}{\tht}{\sig}$ has direction $\tht$.
\item\label{thm:geo:coal}{\rm(}Coalescence{\rm)} For all $m,n\in\Z$, $s,t\in\R$, and $S,S' \in \{L,R\}$, $\geo\from{(m,s)}\dir{S}{\tht}{\sig}$ and $\geo\from{(n,t)}\dir{S'}{\tht}{\sig}$ coalesce. 
\item\label{thm:geo:LR}{\rm(}Leftmost and rightmost{\rm)} For all $(m,s)\in\Z\times\R$, if $\bfx$ and $\bfy$ are on $\geo\from{(m,s)}\dir{L}{\tht}{\sig}$, then the segment on this path that is between $\bfx$ and $\bfy$ coincides with the leftmost geodesic path between $\bfx$ and $\bfy$. Similarly, if $\bfx$ and $\bfy$ are on $\geo\from{(m,s)}\dir{R}{\tht}{\sig}$, then the segment on this path that is between $\bfx$ and $\bfy$ coincides with the rightmost geodesic path between $\bfx$ and $\bfy$.
\end{enumerate}
\end{thm}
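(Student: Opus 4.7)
The plan is to assemble the theorem from the cited results in \cite{Sep-Sor-23-aihp,Sep-Sor-23-pmp} together with Proposition \ref{nodouble}. Each individual claim already appears in the literature for fixed parameters, so my main task is to intersect the underlying full-probability events into a single $\Omref{Omega1}\subset\Omref{Omega0}$ on which all five statements hold simultaneously for every $\theta>0$, $\sigg\in\{-,+\}$, and $(m,s)\in\Z\times\R$. Concretely, I would take $\Omref{Omega1}$ as the intersection of $\Omref{Omega0}$, the full-probability events from Theorem 4.3(ii,v) of \cite{Sep-Sor-23-aihp} and Theorems 4.3(iii), 4.5(ii), and 4.11 of \cite{Sep-Sor-23-pmp}, the event on which Proposition \ref{nodouble} holds, and the event on which every $B_k$ is continuous. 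Being a countable intersection of probability-one events, $\P(\Omref{Omega1})=1$.

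The two monotonicity inequalities at a fixed $\theta\sigg$ are built into the very definition of $\geo\from{(m,t)}\dir{L}{\tht}{\sig}$ and $\geo\from{(m,t)}\dir{R}{\tht}{\sig}$ as the leftmost and rightmost argmaxima of the level-by-level variational problem driven by $\bus{\tht}{\sig}$. The inequality $\geo\from{(m,t)}\dir{S}{\tht}{-}\preceq\geo\from{(m,t)}\dir{S}{\tht}{+}$ then follows by feeding the monotonicity of the Busemann process (Theorem \ref{thm:B}\eqref{B:mono}) into this same variational problem: monotone data yield monotone extremal argmaxima. Continuity in the starting coordinate $s$ follows from the joint continuity of $\bus{\tht}{\sig}$ (Theorem \ref{thm:B}\eqref{B:cont}) together with the standard one-sided semicontinuity of argmax, so that left limits collapse onto the $L$-argmax and right limits onto the $R$-argmax. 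Directedness, coalescence, and the leftmost/rightmost characterization are the substantial analytic inputs and are imported directly from the cited theorems; Proposition \ref{nodouble} enters here to guarantee that every path in \eqref{geo-proc} is a genuine up-right path without double jumps, so that the planar crossing and comparison arguments underlying those theorems apply.

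The main obstacle I anticipate is the uniformity in $\theta$: the cited results are stated pointwise in $\theta$, whereas Theorem \ref{thm:geo} is asserted simultaneously for all $\theta>0$ and $\sigg\in\{-,+\}$. I will handle this via the locally constant property Theorem \ref{thm:B}\eqref{B:jump}, which says that on any compact window $\bus{\tht}{\sig}$, and consequently the argmax paths it generates, are piecewise constant in $\theta$ off a countable random exceptional set contained in $\baddir$. Hence it suffices to verify each claim on a countable dense set of rational velocities together with the random countable set $\baddir$, and then to propagate the conclusions to all $\theta>0$ by taking left and right $\theta$-limits, matching the $\pm$ superscripts via Theorem \ref{thm:B}\eqref{B:cont2}. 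The combined event on which this extension works is again a countable intersection of full-probability events and can be absorbed into $\Omref{Omega1}$.
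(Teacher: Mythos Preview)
Your approach matches the paper's: Theorem \ref{thm:geo} is not proved in the paper but simply assembled from the cited results in \cite{Sep-Sor-23-aihp,Sep-Sor-23-pmp} together with Proposition \ref{nodouble}, exactly as you outline. Two small corrections: first, the theorems cited from \cite{Sep-Sor-23-pmp} are already stated on a single full-probability event simultaneously for all $\theta>0$ and $\sigg\in\{-,+\}$ (this uniformity is the whole point of the Busemann-process construction), so your final paragraph's workaround is unnecessary---and the proposed route through the random set $\baddir$ would be circular anyway; second, the specific role of Proposition \ref{nodouble} is to close a gap in the coalescence claim \eqref{thm:geo:coal} left open by Theorem 4.11 of \cite{Sep-Sor-23-pmp}, as the remark immediately following Theorem \ref{thm:geo} explains.
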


\begin{rmk}
    Theorem 4.11 in \cite{Sep-Sor-23-pmp} leaves the possibility of the geodesic $\geo\from{(m,s)}\dir{L}{\tht}{\sig}$ moving vertically, going through $(n,s)$ and $(n+1,s)$ for some integer $n>m$, and the geodesic $\geo\from{(n,s)}\dir{R}{\tht}{\sig}$ first going to the right from $(n,s)$, and then later merging with $\geo\from{(m,s)}\dir{L}{\tht}{\sig}$. See Remark 4.14 in that paper. Our Proposition \ref{nodouble} says that this cannot happen. 
\end{rmk}

\begin{rmk}\label{rk:all geo}
    For each $(m,s)\in\Z\times\R$, the paths $\{(m,t):t\ge s\}$ and $\{(r,s):r\ge m\}$ are degenerate geodesics. 
    Theorem 3.1(iv-v) in \cite{Sep-Sor-23-aihp} implies that with $\P$-probability one, for any starting point $(m,s)\in\Z\times\R$, every non-degenerate semi-infinite geodesic out of $(m,s)$ is $\tht$-directed for some $\tht>0$. Then Theorem 4.5(ii) in the same paper implies that this geodesic stays between $\geo\from{(m,s)}\dir{L}{\tht}{-}$ and $\geo\from{(m,s)}\dir{R}{\tht}{+}$.
    That said, it is expected that for every non-degenerate semi-infinite geodesic out of $(m,s)$ there exists a sign $\sigg\in\{-,+\}$ such that the geodesic is trapped between $\geo\from{(m,s)}\dir{L}{\tht}{\sig}$ and $\geo\from{(m,s)}\dir{R}{\tht}{\sig}$. This has already been shown in the case of the fully discrete last-passage percolation model on $\Z^2$ with exponentially distributed weights \cite[Theorem 3.11(a)]{Jan-Ras-Sep-23} and in the fully continuous model called the \emph{Directed Landscape} \cite[Theorem 1.8(iv)]{Bus-25-}. Proving this conjecture for the Brownian last-passage percolation model is work in progress. If it does hold, then together with Lemma \ref{no-geo}, this would imply that the geodesics process \eqref{geo-proc} contains all the non-degenerate semi-infinite geodesics of the model (see also Remark 4.23 in \cite{Sep-Sor-23-pmp}). 
    This last fact is known to hold in the case of the last-passage percolation model with exponential weights \cite[Theorem 3.11(a)]{Jan-Ras-Sep-23}. 
\end{rmk}

\subsection{Shock points} 

Although Theorem B.1 in \cite{Ham-19} says that for any given $(m,s)\le(n,t)$ in $\Z\times\R$, there exists, $\P$-almost surely, a unique geodesic path between the two points, there are random points between which there exist multiple geodesics. Thus, \cite{Sep-Sor-23-pmp} introduced the sets
\[\NU_0^{\tht\sig}=\bigl\{(m,s)\in\Z\times\R:\geo\from{(m,s)}\dir{L}{\tht}{\sig}\ne\geo\from{(m,s)}\dir{R}{\tht}{\sig}\bigr\}\]
and
\begin{align}\label{def:NU1}
\NU_1^{\tht\sig}=\bigl\{(m,s)\in\Z\times\R:(m+1,s)\in\geo\from{(m,s)}\dir{L}{\tht}{\sig},\exists\delta>0\text{ such that }(m,s+\delta)\in\geo\from{(m,s)}\dir{R}{\tht}{\sig}\bigr\},
\end{align}
for $\tht>0$ and $\sigg\in\{-,+\}$.
In words, points in $\NU_0^{\tht\sig}$ are ones for which the geodesics $\geo\from{(m,s)}\dir{L}{\tht}{\sig}$ and $\geo\from{(m,s)}\dir{R}{\tht}{\sig}$ eventually split and points in $\NU_1^{\tht\sig}$ are ones at which the two geodesics split immediately.  Our Lemma \ref{geo-split} shows that the two geodesics can only split at their starting point. This implies the following.

\begin{figure}[ht!]
\begin{center}

\begin{tikzpicture}[>=latex, scale=0.6]
\begin{scope}
\draw(0,0)--(6,0);
\draw(0,1)--(6,1);
\draw(0,2)--(6,2);
\draw[line width=2pt,color=Rp,->](1,0)--(2,0)--(2,1)--(2.5,1)--(2.5,1.95)--(4,1.95)--(4,2.46)--(5.5,3.8);
\draw[line width=2pt,color=Lp,->](1,0)--(1,1)--(1.5,1)--(1.5,2.05)--(3.9,2.05)--(3.9,2.54)--(5.1,3.55);
\shade[ball color=pt](1,0)circle(2mm);
\end{scope}

\begin{scope}[shift={(10,0)}]
\draw(0,0)--(6,0);
\draw(0,1)--(6,1);
\draw(0,2)--(6,2);
\draw(0,3)--(6,3);
\draw[line width=2pt,color=Rp,->](1,0)--(1,1)--(2,1)--(2,2)--(2.5,2)--(2.5,2.95)--(4,2.95)--(4,3.45)--(5.5,4.95);
\draw[line width=2pt,color=Lp,->](0.9,0)--(0.9,2)--(1.5,2)--(1.5,3.05)--(3.9,3.05)--(3.9,3.5)--(4.9,4.5);
\shade[ball color=pt](.95,0)circle(2mm);
\end{scope}

\end{tikzpicture}
\end{center}

\caption{\small Left: An illustration of a point $(m,s)\in\NU_1^{\tht\protect\sig}$. Right: An illustration of a point  $(m,s)\in\NU_0^{\tht\protect\sig}\setminus\NU_1^{\tht\protect\sig}$. Proposition \ref{nodouble} prevents the existence of points of the latter type. In both figures, the top up-right path is $\geo\from{(m,s)}\dir{L}{\tht}{\protect\sig}$ and the bottom up-right path is $\geo\from{(m,s)}\dir{R}{\tht}{\protect\sig}$.}
\label{fig:shocks}
\end{figure}

\begin{lem}\label{NU0=NU1}
There exists an event $\Omnew{Omega2}\in\kS$ such that $\Omref{Omega2}\subset\Omref{Omega1}$, $\P(\Omref{Omega2})=1$, and for any $\w\in\Omref{Omega2}$, $\tht>0$, and $\sigg\in\{-,+\}$, $\NU_0^{\tht\sig}=\NU_1^{\tht\sig}$.
\end{lem}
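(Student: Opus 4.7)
The plan is to set $\Omref{Omega2}$ equal to $\Omref{Omega1}$ intersected with the full-probability event on which Lemma \ref{geo-split} holds, and then derive the equality from the fact that the leftmost and rightmost semi-infinite geodesics emanating from a common starting point can split only at that starting point. The inclusion $\NU_1^{\tht\sig}\subset\NU_0^{\tht\sig}$ is immediate from the definitions: if $(m,s)\in\NU_1^{\tht\sig}$, then $\geo\from{(m,s)}\dir{L}{\tht}{\sig}$ contains $(m+1,s)$ while $\geo\from{(m,s)}\dir{R}{\tht}{\sig}$ contains $(m,s+\delta)$ for some $\delta>0$, so the two paths are distinct and $(m,s)\in\NU_0^{\tht\sig}$. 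The substance of the lemma lies entirely in the reverse inclusion.

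For the reverse inclusion, fix $\w\in\Omref{Omega2}$, $\tht>0$, $\sigg\in\{-,+\}$, and $(m,s)\in\NU_0^{\tht\sig}$. Denote $\Gamma_L=\geo\from{(m,s)}\dir{L}{\tht}{\sig}$ and $\Gamma_R=\geo\from{(m,s)}\dir{R}{\tht}{\sig}$, with jump times $(s_k^L)_{k\ge m-1}$ and $(s_k^R)_{k\ge m-1}$ satisfying $s_{m-1}^L=s_{m-1}^R=s$. Theorem \ref{thm:geo}\eqref{thm:geo:mono} gives $s_k^L\le s_k^R$ for all $k\ge m-1$, and by assumption $\Gamma_L\ne\Gamma_R$, so Lemma \ref{geo-split} says the two paths agree only at $(m,s)$ and cannot share any positive-length initial segment.

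Two short cases on the first jump times then pin down the first step of each path. First, if $s_m^L>s$, then both $\Gamma_L$ and $\Gamma_R$ move rightward along level $m$ and coincide on the horizontal segment from $(m,s)$ to $(m,s_m^L)$, contradicting Lemma \ref{geo-split}; hence $s_m^L=s$, which places $(m+1,s)\in\Gamma_L$. Second, if $s_m^R=s$ as well, then $\Gamma_R$ also jumps immediately and both paths coincide on the vertical segment from $(m,s)$ to $(m+1,s)$, again contradicting Lemma \ref{geo-split}; hence $s_m^R>s$, and setting $\delta=s_m^R-s$ yields $(m,s+\delta)\in\Gamma_R$. These are exactly the defining conditions of $\NU_1^{\tht\sig}$ at $(m,s)$, completing the proof. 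The main obstacle is nothing more than the careful case handling of what it means for two up-right paths to share an initial segment: one must rule out simultaneously the horizontal scenario (both paths delay their first jump) and the vertical scenario (both paths jump immediately), each of which would produce a common segment of positive length that Lemma \ref{geo-split} forbids.
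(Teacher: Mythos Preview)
Your proof is correct and takes the same approach as the paper. The paper does not give a separate proof of this lemma at all: it simply remarks that Lemma~\ref{geo-split} shows the two geodesics can only split at their starting point and then says ``This implies the following,'' stating Lemma~\ref{NU0=NU1}. You have correctly unpacked that implication with the two-case analysis on the first jump times. One cosmetic remark: the phrase ``agree only at $(m,s)$'' is slightly loose, since by Theorem~\ref{thm:geo}\eqref{thm:geo:coal} the two geodesics do coalesce later; what you actually use, and what Lemma~\ref{geo-split} gives, is that they share no positive-length \emph{initial} segment, which is exactly what your argument needs.
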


Note that even though for $(m,s)\in\NU_1^{\tht\sig}$, $\geo\from{(m,s)}\dir{L}{\tht}{\sig}$ and $\geo\from{(m,s)}\dir{R}{\tht}{\sig}$ split, the coalescence in Theorem \ref{thm:geo}\eqref{thm:geo:coal} says that they have to come together at some later point and then proceed together from there onwards.

The following is a summary of the subset of results from \cite{Sep-Sor-23-pmp} that we will need in this work. A few more results relating shock points to competition interfaces appear further down in Theorem \ref{thm:cif-SS}.

\begin{thm}[Theorems 4.8(i,iii) and 4.32(iv) in \cite{Sep-Sor-23-pmp}]
\label{thm:shocks}
There exists an event $\Omnew{Omega3}\in\kS$ with $\Omref{Omega3}\subset\Omref{Omega2}$, $\P(\Omref{Omega3})=1$, and such that the following hold for all $\w\in\Omref{Omega3}$, $\tht>0$, and $\sigg\in\{-,+\}$.

\begin{enumerate} [label={\rm(\roman*)}, ref={\rm\roman*}]   \itemsep=3pt 
\item\label{thm:shocks:0} For any $(m,s)\in\Z\times\Q$, $(m,s)\not\in\NU_1^{\tht\sig}$.
\item\label{thm:shocks:a} $\NU_1^{\tht\sig}$ is countably infinite. 
\item\label{thm:shocks:c} For each $(m,s)\in\NU_1^{\tht\sig}$, there exists a sequence $(m,s_k)\in\NU_1^{\tht\sig}$ such that $s_k$ strictly increases to $s$ as $k\to\infty$.
\item\label{thm:shocks:d} For each $(m,s)\in\NU_1^{\tht+}\setminus\NU_1^{\tht-}$, there is a sequence $s_k$, strictly decreasing to $s$, and such that $(m,s_k)\in\NU_1^{\tht-}$ for all $k$.
\end{enumerate}
\end{thm}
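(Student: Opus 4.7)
I would prove the parts in the order (ii), (i), (iii), (iv).

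For part (ii), fix $\tht$ and $\sigg$. By the coalescence property (Theorem \ref{thm:geo}\eqref{thm:geo:coal}), any two of the geodesics in the process \eqref{geo-proc} eventually merge. Between two shock points $(m,s_1),(m,s_2)\in\NU_1^{\tht\sig}$ with $s_1<s_2$, the rightmost geodesic from $(m,s_1)$ is trapped strictly between $\geo\from{(m,s_2)}\dir{L}{\tht}{\sig}$ and $\geo\from{(m,s_2)}\dir{R}{\tht}{\sig}$ until coalescence, and so must pass through a rational lattice point $(n,q)\in\Z\times\Q$ that is not visited by the corresponding geodesic out of any other shock in $\{m\}\times(s_1,s_2)$. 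This produces an injection $\NU_1^{\tht\sig}\hookrightarrow\Z\times\Q$, proving countability.

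For part (i), fix $(m,s_0)\in\Z\times\Q$. The joint law of the Brownian motions $(B_k)_{k\in\Z}$ is invariant under time translation, and the construction of the Busemann process makes the law of $\bus{\tht}{\sig}$ translation invariant, so $\P((m,s)\in\NU_1^{\tht\sig})$ is independent of $s$, equal to some $p$. Fubini combined with part (ii) gives $p=\E[\mathrm{Leb}(\NU_1^{\tht\sig}\cap(\{m\}\times[0,1]))]=0$, taking care of a single $(\tht,\sigg)$. To handle all $\tht>0$ simultaneously, I would parametrize the shock condition at $(m,s_0)$ via the first-jump times of $\geo\from{(m,s_0)}\dir{L}{\tht}{\sig}$ and $\geo\from{(m,s_0)}\dir{R}{\tht}{\sig}$; these are monotone in $\tht$ by Theorem \ref{thm:B}\eqref{B:mono}, hence piecewise constant with at most countably many $\w$-dependent jumps. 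On each piece, Theorem \ref{thm:B}\eqref{B:jump} lets me replace $\tht$ by a nearby rational $\tht'$, reducing the claim to a countable union of fixed-$(\tht',\sigg)$ null events.

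For parts (iii) and (iv), I would work with the function $f^{\tht\sig}(u)=B_m(u)-\bus{\tht}{\sig}(m+1,0,m+1,u)$, whose maximizers on $[s,\infty)$ encode the first jump times of the geodesics out of $(m,s)$; being in $\NU_1^{\tht\sig}$ is exactly the statement that $s$ and some $u>s$ are both maximizers of $f^{\tht\sig}$ on $[s,\infty)$. Near $s$, $f^{\tht\sig}$ is distributionally a Brownian-motion-type path, and a fluctuation argument of the same flavor as for Brownian zero sets shows that the times $r<s$ at which $r$ itself becomes a new boundary-maximizer of $f^{\tht\sig}$ on $[r,\infty)$ accumulate at $s$ from the left, yielding (iii). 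For (iv), the assumption $(m,s)\in\NU_1^{\tht+}\setminus\NU_1^{\tht-}$ combined with Theorem \ref{thm:B}\eqref{B:mono} forces a strict inequality between $\bus{\tht+}{}(m+1,0,m+1,\cdot)$ and $\bus{\tht-}{}(m+1,0,m+1,\cdot)$ on a right-neighborhood of $s$; comparing the maximizer structures of $f^{\tht+}$ and $f^{\tht-}$ then forces $f^{\tht-}$ to exhibit the split-maximizer pattern at a sequence $s_k\searrow s$, producing the required $\tht-$ shocks.

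The main obstacle is the uniform-in-$\tht$ step of part (i). While monotonicity reduces the continuum of $\tht$ to countably many ``pieces,'' one has to reconcile this reduction with the exceptional random set $\baddir$ and the jumps of the Busemann process at its points; the locally-constant property of Theorem \ref{thm:B}\eqref{B:jump} is designed precisely for this purpose but must be applied uniformly in $\sigg$ and delicately on both sides of each $\tht\in\baddir$, so that shock characterizations are indeed inherited from nearby rational directions rather than occurring only in the exceptional limits.
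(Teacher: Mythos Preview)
This theorem is not proved in the paper; it is imported wholesale from \cite{Sep-Sor-23-pmp} (specifically Theorems 4.8(i,iii) and 4.32(iv) there), as indicated in its header and its placement in Section~\ref{sec:previous} (``Inputs from previous works''). There is therefore no proof in the paper to compare your proposal against.

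That said, a brief comment on your sketch. Your strategy for (i) via translation invariance and Fubini, and for (iii)--(iv) via the running-maximum structure of $u\mapsto B_m(u)-\bus{\tht}{\sig}(m+1,0,m+1,u)$, is the natural line of attack and matches the spirit of how such results are established in the literature on Busemann geodesics. Your argument for (ii), however, has a gap: the claim that $\geo\from{(m,s_1)}\dir{R}{\tht}{\sig}$ is trapped \emph{strictly} between $\geo\from{(m,s_2)}\dir{L}{\tht}{\sig}$ and $\geo\from{(m,s_2)}\dir{R}{\tht}{\sig}$ is not justified---in fact the monotonicity \eqref{geomono} gives $\geo\from{(m,s_1)}\dir{R}{\tht}{\sig}\preceq\geo\from{(m,s_2)}\dir{L}{\tht}{\sig}$, so it lies to the \emph{left} of that geodesic, not between the two---and the asserted injection into $\Z\times\Q$ does not follow. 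A cleaner route to countability is to associate to each $(m,s)\in\NU_1^{\tht\sig}$ a rational point on level $m$ strictly to the right of $s$ but still on the horizontal segment of $\geo\from{(m,s)}\dir{R}{\tht}{\sig}$ before its first jump; distinct shocks on the same level give distinct such rationals because the geodesic $\geo\from{(m,s_1)}\dir{R}{\tht}{\sig}$ must jump at or before $s_2$ (else $(m,s_2)$ would lie on it and force $\geo\from{(m,s_2)}\dir{L}{\tht}{\sig}$ to go right, contradicting $(m,s_2)\in\NU_1^{\tht\sig}$).
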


\begin{rmk}
    Even though we will not use this fact, it is noteworthy that Theorem 4.8(ii) in \cite{Sep-Sor-23-pmp} states the following stronger property than the one in Theorem \ref{thm:shocks}\eqref{thm:shocks:a}: with $\P$-probability one, $\bigcup_{\tht>0}(\NU_1^{\tht-}\cup\NU_1^{\tht+})$ is countable. 
\end{rmk}

\subsection{Instability points}
Recall the set $\baddir$, defined in \eqref{def:baddir}. The next theorem says that $\baddir$ is almost surely a ``genuinely random'' countable dense set. 

\begin{thm}[Theorem 2.5 in \cite{Sep-Sor-23-pmp}]\label{baddir dense}
For any $\tht>0$, $\P(\tht\in\baddir)=0$.
There exists an event $\Omnew{Omega3a}\in\kS$ with $\Omref{Omega3a}\subset\Omref{Omega3}$, $\P(\Omref{Omega3a})=1$, and such that for any $\w\in\Omref{Omega3a}$, $\baddir$ is countable and dense in $\R_+$.
\end{thm}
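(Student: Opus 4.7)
\textbf{Proof sketch for Theorem \ref{baddir dense}.}

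\emph{First statement} ($\P(\tht\in\baddir)=0$ for fixed $\tht$). My plan is to compare the one-sided limits $\bus{\tht}{\pm}$ with the direct Busemann limit at the fixed direction. By Theorem \ref{thm: LPPresult}, for fixed $\tht>0$, the limit $\mathcal{B}^{\theta}(m,s,n,t)$ exists $\P$-a.s.\ for each fixed $(m,s,n,t)$. By construction (as described before Theorem \ref{thm:B}), $\bus{\tht}{-}$ and $\bus{\tht}{+}$ are obtained by taking, respectively, left and right limits in $\tht$ (along rational sequences) of $\bus{\gamma}{}$, using the monotonicity in Theorem \ref{thm:B}\eqref{B:mono}. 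Sandwiching the monotone Busemann increments in $\theta$ between the (a.s.\ defined) values at the fixed $\tht$ and using the monotone convergence, one concludes that on a full-probability event
\[\buse{-}{m}{s}{n}{t}=\mathcal{B}^{\theta}(m,s,n,t)=\buse{+}{m}{s}{n}{t}\]
for a countable dense collection of $(m,s),(n,t)\in\Z\times\Q$ and hence, by the continuity in Theorem \ref{thm:B}\eqref{B:cont}, for all $(m,s),(n,t)\in\Z\times\R$. This forces $\tht\notin\baddir$ a.s.

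\emph{Countability of $\baddir$.} By Theorem \ref{thm:B}\eqref{B:mono}, for each fixed $(m,s,n,t)$ the maps
\[\tht\mapsto\bus{\tht}{\pm}(m,s,m,t)\quad\text{and}\quad \tht\mapsto\bus{\tht}{\pm}(m,s,m+1,s)\]
are monotone, and hence have at most countably many jumps. Using the cocycle identity \eqref{cocycle}, every increment $\bus{\tht}{\pm}(m,s,n,t)$ decomposes into finitely many horizontal and vertical increments of this elementary type. Thus, jumps of $\tht\mapsto\bus{\tht}{}(m,s,n,t)$ occur only at countably many $\tht$ for each $(m,s,n,t)$. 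By the joint continuity of Theorem \ref{thm:B}\eqref{B:cont}, the set of $\tht$ for which some increment is discontinuous equals the union of the jump sets for a countable dense collection of $(m,s),(n,t)\in\Z\times\Q$. This is a countable union of countable sets, so $\baddir$ is countable.

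\emph{Density of $\baddir$.} This is where the main difficulty lies. The idea is to rule out, with probability one, the existence of an open interval $(a,b)\subset\R_+$ on which $\tht\mapsto\bus{\tht}{}(0,0,1,0)$ is continuous. My plan is a zero-one argument: for any rationals $0<a<b$, show that the event
\[A_{a,b}=\bigl\{\baddir\cap(a,b)=\emptyset\bigr\}\]
has probability $0$; the density claim then follows by a countable union over rational $a<b$. To show $\P(A_{a,b})=0$ one would combine: (i) a zero-one law for $A_{a,b}$ coming from an appropriate ergodicity/translation invariance property of the driving Brownian family $\{B_i\}_{i\in\Z}$, and (ii) a positive-probability construction of a jump in $(a,b)$. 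For (ii), the natural route is through the geodesic picture: if $\tht\notin\baddir$ for every $\tht\in(a,b)$, then by Theorem \ref{thm:geo}\eqref{thm:geo:coal} all semi-infinite geodesics with asymptotic slope in $(a,b)$ coalesce and fill out a ``fan'' consistently in $\tht$; one then derives a contradiction by exhibiting, with positive probability, two disjoint semi-infinite geodesics with distinct asymptotic slopes in $(a,b)$, forcing some intermediate $\tht\in(a,b)$ to lie in $\baddir$.

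\emph{Main obstacle.} The routine parts are the fixed-$\tht$ vanishing and the countability, both of which reduce cleanly to standard facts about monotone functions. The hard step is establishing density, which requires the positive-probability construction of a non-coalescing pair of geodesics in an arbitrarily small interval of directions. This typically leans on the full Busemann/geodesic machinery of \cite{Sep-Sor-23-pmp} (including global coalescence and the fine structure of the Busemann process) rather than on any short argument, and it is the reason why we cite Theorem \ref{baddir dense} from that paper rather than reprove it here.
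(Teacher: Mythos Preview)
The paper does not give its own proof of this statement: Theorem~\ref{baddir dense} is listed in Section~\ref{sec:previous} as an input, attributed verbatim to Theorem~2.5 of \cite{Sep-Sor-23-pmp}, and no argument is supplied. So there is no ``paper's proof'' to compare against, and your closing remark that the result is cited rather than reproved is exactly in line with how the paper treats it.

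On the substance of your sketch: the first two parts (fixed-$\tht$ vanishing via sandwiching against the a.s.\ Busemann limit, and countability via monotone functions having countably many jumps plus the cocycle decomposition) are correct and standard. The density argument, however, has a genuine gap in the mechanism you propose. Exhibiting ``two disjoint semi-infinite geodesics with distinct asymptotic slopes in $(a,b)$'' does \emph{not} by itself force an intermediate $\tht\in(a,b)$ to lie in $\baddir$: geodesics in different directions are supposed to separate, and nothing in Theorem~\ref{thm:geo}\eqref{thm:geo:coal} prevents a continuous one-parameter family of coalescing trees across $(a,b)$. The actual route in \cite{Sep-Sor-23-pmp} goes through competition interfaces: one shows that $\baddir$ coincides with the set of asymptotic directions $\tht^S_{(m,s)}$ of competition interfaces (cf.\ the discussion around Theorem~\ref{thm:cif-SS} and Theorem~4.36(i) of \cite{Sep-Sor-23-pmp}), and then argues that these directions are dense in $\R_+$ by a distributional argument on the interface direction. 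Your zero--one framework is reasonable scaffolding, but the ``positive-probability construction'' step needs this competition-interface input, not merely a pair of non-coalescing geodesics.
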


We will also need the following result, which follows from Theorem 5.4 in \cite{Sep-Sor-23-pmp} and from Theorems 5.3(iii) and 5.5(ii) together with Corollaries 8.10 and 8.11 in \cite{Bus-Sep-Sor-24}.

\begin{thm}\label{B:increase}
There exists an event $\Omnew{Omega3b}\in\kS$ with $\Omref{Omega3b}\subset\Omref{Omega3a}$, $\P(\Omref{Omega3b})=1$, and such that for any $\w\in\Omref{Omega3b}$, $\tht\in\baddir$, and any $m\in\Z$, 
    \[\lim_{t\to-\infty}(\bus{\tht}{-}(m,0,m,t)-\bus{\tht}{+}(m,0,m,t))=-\infty,\quad
    \lim_{t\to\infty}(\bus{\tht}{-}(m,0,m,t)-\bus{\tht}{+}(m,0,m,t))=\infty,\]
    and the set of points of increase of the function 
    \[t\mapsto\bus{\tht-}{}(m,0,m,t)-\buse{+}{m}{0}{m}{t}\] 
    has a Hausdorff dimension of $\half$. 
\end{thm}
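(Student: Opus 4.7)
The plan is to decompose the theorem into three claims, handled separately. Fix $\tht\in\baddir$ and $m\in\Z$, and set $\Delta(t):=\buse{-}{m}{0}{m}{t}-\buse{+}{m}{0}{m}{t}$. I would first verify that $\Delta$ is continuous (by Theorem \ref{thm:B}\eqref{B:cont}) and nondecreasing: the cocycle \eqref{cocycle} gives, for $s<t$,
\[\Delta(t)-\Delta(s)=\buse{-}{m}{s}{m}{t}-\buse{+}{m}{s}{m}{t}\ge 0\]
by \eqref{B:mono-eqn}. So the one-sided limits $\Delta(\pm\infty)\in[-\infty,\infty]$ exist, the set of points of increase is closed, and the Stieltjes measure $d\Delta$ is supported on it.

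For the first two claims I would use an ergodic argument. The increments $X_k:=\Delta(k)-\Delta(k-1)\ge 0$ form a stationary sequence under the horizontal shift of the underlying Brownian environment, since the Busemann process, the exceptional set $\baddir$, and the sign labeling $\pm$ are all jointly covariant with respect to horizontal shifts of $B_m$. A zero-one dichotomy then applies: if $\E[X_1\,|\,\tht\in\baddir]=0$, then almost surely $\buse{-}{m}{k-1}{m}{k}=\buse{+}{m}{k-1}{m}{k}$ for every $k\in\Z$, and continuity together with the cocycle would force $\bus{\tht}{-}(m,s,m,t)=\bus{\tht}{+}(m,s,m,t)$ for all $s<t$ on level $m$; iterating the argument across levels (using the analogous horizontal stationarity on each level) contradicts $\tht\in\baddir$. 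Therefore $\E[X_1\,|\,\tht\in\baddir]>0$, and Birkhoff's theorem yields $\Delta(t)/t$ converging to a positive constant as $t\to\infty$, so $\Delta(+\infty)=+\infty$. The case $t\to-\infty$ is symmetric.

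The Hausdorff dimension $\half$ is the main obstacle, and it is where essentially all the technical content lies. The strategy I would pursue is to identify $\Delta$ with the local time at zero of a Brownian-type process. On a fixed level $m$, both $\bus{\tht}{\pm}(m,0,m,\cdot)$ are two-sided Brownian motions with drift that can be recovered from the underlying $B_m$ via one-sided (Pitman-type) queueing transformations, as in the BLPP/queueing construction of \cite{Alb-Ras-Sim-20}; at exceptional directions the two transformations coincide except during the excursions away from zero of an auxiliary Brownian-like process $Y$, and $\Delta$ accumulates precisely the local time of $Y$ at zero. By L\'evy's classical theorem, the zero set of $Y$, which coincides with the support of $d\Delta$, has Hausdorff dimension $\half$. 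Making this picture rigorous requires combining the Brownian queueing machinery of \cite{Alb-Ras-Sim-20} with the refined analysis at exceptional directions carried out in \cite{Sep-Sor-23-aihp,Sep-Sor-23-pmp,Bus-Sep-Sor-24} to pin down the joint law of $(\bus{\tht}{-},\bus{\tht}{+})$ on a horizontal line.
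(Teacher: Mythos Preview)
The paper does not supply a proof of this theorem; it is quoted as input from \cite{Bus-Sep-Sor-24}. That said, your proposal has a genuine gap that would survive regardless of how the cited reference argues.

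Your ergodic argument for $\Delta(\pm\infty)=\pm\infty$ conditions on the event $\{\tht\in\baddir\}$ and then invokes Birkhoff. But Theorem \ref{baddir dense} says $\P(\tht\in\baddir)=0$ for every deterministic $\tht$, so the expression $\E[X_1\,|\,\tht\in\baddir]$ is not defined and the Birkhoff almost-sure statement is vacuous on the null set where $\tht\in\baddir(\omega)$. It is true that the horizontal shift $T$ preserves $\baddir$ and that $X_k(\omega)=X_1(T^{k-1}\omega)$ for the same $\tht$, but this only says the sequence $(X_k)$ is stationary along the orbit of each $\omega$; it does not let you average over $\omega$ while keeping $\tht$ inside the $\omega$-dependent set $\baddir(\omega)$. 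Any argument here has to produce a single full-measure event on which the conclusion holds simultaneously for every $\tht\in\baddir(\omega)$, and a fixed-$\tht$ ergodic theorem cannot do that. The approach in \cite{Bus-Sep-Sor-24} instead works with the joint distribution of the Busemann process across all directions at once and extracts the needed uniformity from its explicit structure on a horizontal line.

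For the Hausdorff dimension claim your heuristic is in the right direction, and you correctly flag that this is where the real work lies; but as written it is a sketch of a program rather than a proof, and the same simultaneity issue (one event good for all $\tht\in\baddir$) must be confronted there as well.
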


We will need for our analysis the following connection between the discontinuities of the Busemann process (as a function of $\tht$) and the intersection of leftmost and rightmost geodesics.

\begin{thm}[Theorems 4.20 and 8.8 in \cite{Sep-Sor-23-pmp}]
\label{thm:Bus-geo}
There exists an event $\Omnew{Omega4}\in\kS$ with $\Omref{Omega4}\subset\Omref{Omega3b}$, $\P(\Omref{Omega4})=1$, and such that for any $\w\in\Omref{Omega4}$, $\tht\ge\gamma>0$, and $\bfx\succeq\bfy$ in $\Z\times\R$, the following statements are equivalent.

\begin{enumerate} [label={\rm(\roman*)}, ref={\rm\roman*}]   \itemsep=3pt 
\item\label{thm:Bus-geo.a} $\bus{\gamma}{-}{(\bfx,\bfy)}=\bus{\theta}{+}{(\bfx,\bfy)}$.
\item\label{thm:Bus-geo.b} $\geo\from{\bfy}\dir{R}{\gamma}{-}\cap\geo\from{\bfx}\dir{L}{\tht}{+}\ne\varnothing$.
\end{enumerate}
\end{thm}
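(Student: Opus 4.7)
The key ingredients are the cocycle property \eqref{cocycle}, the monotonicity of $\bus{\tht\sig}{}$ in $(\tht,\sig)$ from Theorem \ref{thm:B}\eqref{B:mono}, and the identification of the Busemann function with the last-passage time along a semi-infinite geodesic. A useful preliminary observation is the unconditional inequality
\[
\bus{\gamma-}(\bfx,\bfy)\ \le\ \bus{\tht+}(\bfx,\bfy)
\]
whenever $\tht\ge\gamma$ and $\bfx\succeq\bfy$: decomposing $(\bfx,\bfy)$ via the cocycle into a horizontal segment (same level, backwards in time) and a vertical segment (going up one level at a time), and applying Theorem \ref{thm:B}\eqref{B:mono} to each elementary step, proves it. The theorem then asserts that this inequality is an equality exactly when the two specified geodesics meet.

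\medskip

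\textbf{Direction \eqref{thm:Bus-geo.b}$\Rightarrow$\eqref{thm:Bus-geo.a}.} Let $\bfz\in\geo\from{\bfy}\dir{R}{\gamma}{-}\cap\geo\from{\bfx}\dir{L}{\tht}{+}$. Since the geodesics travel up-right, $\bfz$ is NE of both $\bfx$ and $\bfy$. Because the Busemann function agrees with the last-passage time along a semi-infinite geodesic between two of its points,
\[
\bus{\tht+}(\bfx,\bfz)=L_{\bfx,\bfz}, \qquad \bus{\gamma-}(\bfy,\bfz)=L_{\bfy,\bfz}.
\]
Letting the reference point go to infinity in the super-additivity $L_{\bfa,\bfu}\ge L_{\bfa,\bfb}+L_{\bfb,\bfu}$ yields the general lower bound $\bus{\delta\sig}(\bfa,\bfb)\ge L_{\bfa,\bfb}$ for NE-ordered pairs. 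Invoking the cocycle through the intermediate point $\bfz$ then gives
\[
\bus{\gamma-}(\bfx,\bfy)=\bus{\gamma-}(\bfx,\bfz)-L_{\bfy,\bfz}\ \ge\ L_{\bfx,\bfz}-L_{\bfy,\bfz}\ \ge\ L_{\bfx,\bfz}-\bus{\tht+}(\bfy,\bfz)=\bus{\tht+}(\bfx,\bfy),
\]
and together with the preliminary inequality this forces equality.

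\medskip

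\textbf{Direction \eqref{thm:Bus-geo.a}$\Rightarrow$\eqref{thm:Bus-geo.b}, and the main obstacle.} Argue by contrapositive: assume the two named geodesics are pathwise disjoint. Using Theorem \ref{thm:geo}\eqref{thm:geo:coal}, let $\bfw$ be the coalescence point of $\geo\from{\bfx}\dir{L}{\tht}{+}$ with $\geo\from{\bfy}\dir{L}{\tht}{+}$ and $\bfw'$ that of $\geo\from{\bfx}\dir{R}{\gamma}{-}$ with $\geo\from{\bfy}\dir{R}{\gamma}{-}$; the same geodesic-plus-cocycle identity used above yields
\[
\bus{\tht+}(\bfx,\bfy)=L_{\bfx,\bfw}-L_{\bfy,\bfw},\qquad \bus{\gamma-}(\bfx,\bfy)=L_{\bfx,\bfw'}-L_{\bfy,\bfw'}.
\]
The plan is to combine the leftmost/rightmost identification of Theorem \ref{thm:geo}\eqref{thm:geo:LR} with the geodesic monotonicity of Theorem \ref{thm:geo}\eqref{thm:geo:mono} to show that disjointness of $\geo\from{\bfy}\dir{R}{\gamma}{-}$ and $\geo\from{\bfx}\dir{L}{\tht}{+}$ pins $\bfw$ and $\bfw'$ into a configuration in which one can splice sub-geodesics to form a strictly heavier path from $\bfx$ to a common NE point, producing $\bus{\tht+}(\bfx,\bfy)>\bus{\gamma-}(\bfx,\bfy)$ and contradicting \eqref{thm:Bus-geo.a}. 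The main obstacle is that the assumed Busemann equality is a statement at infinity while the desired intersection is a statement at finite distance; the conversion relies on the fact that the leftmost/rightmost structure localizes the slack created by disjoint geodesics, allowing a strict finite-cell gain to propagate without cancellation through coalescence to the Busemann limit.
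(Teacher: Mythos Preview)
The paper does not prove this theorem: it is listed in Section~\ref{sec:previous} (``Inputs from previous works'') and attributed verbatim to Theorems~4.20 and~8.8 of \cite{Sep-Sor-23-pmp}. There is therefore no proof in the paper to compare your attempt against.

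On the substance of your attempt: the direction \eqref{thm:Bus-geo.b}$\Rightarrow$\eqref{thm:Bus-geo.a} is correct and cleanly written. The preliminary inequality $\bus{\gamma}{-}(\bfx,\bfy)\le\bus{\tht}{+}(\bfx,\bfy)$ for $\bfx\succeq\bfy$ follows exactly as you say from the cocycle decomposition and the monotonicity \eqref{B:mono-eqn}, and your chain of inequalities via the intersection point $\bfz$ correctly produces the reverse inequality.

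The direction \eqref{thm:Bus-geo.a}$\Rightarrow$\eqref{thm:Bus-geo.b}, however, is not proved in your write-up; you give only a plan and explicitly flag the ``main obstacle.'' The difficulty is real: from the formulas $\bus{\tht}{+}(\bfx,\bfy)=L_{\bfx,\bfw}-L_{\bfy,\bfw}$ and $\bus{\gamma}{-}(\bfx,\bfy)=L_{\bfx,\bfw'}-L_{\bfy,\bfw'}$ alone, disjointness of $\geo\from{\bfy}\dir{R}{\gamma}{-}$ and $\geo\from{\bfx}\dir{L}{\tht}{+}$ does not immediately yield a \emph{strict} inequality, since the splicing you allude to would only produce a weakly longer path unless you can identify a specific edge where a strict gain occurs. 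The actual argument in \cite{Sep-Sor-23-pmp} goes through a more refined structure (the last intersection point $\bfz^{\tht\sig}(\bfx,\bfy)$ of the two $\tht\sigg$-geodesics out of $\bfx$ and $\bfy$, and its monotonicity in $(\tht,\sigg)$); your sketch does not yet contain the mechanism that converts disjointness into a strict Busemann gap.
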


\subsection{Competition interfaces}\label{sec:def.cif}
Recall from Section \ref{sec:intro.ci} that the competition interface starting at $(m+\half,s)$ is an up-right path on 
the dual semi-discrete lattice $(\Z+\tfrac12)\times\R$
which separates points $(n,t)\in (m,s)+\Z_+\times\R_+$ whose 
geodesic from $(m,s)$ begins with a vertical step from those whose geodesic begins with a horizontal step. (The detailed construction is done in Remark 4.24 in \cite{Sep-Sor-23-pmp}.)
Theorem 4.29 in \cite{Sep-Sor-23-pmp} states that the left and the right competition interfaces out of $(m,s)$ have asymptotic directions $\tht_{(m,s)}^L$ and $\tht_{(m,s)}^R$, respectively. Then, as a result of the directedness in Theorem \ref{thm:geo}\eqref{thm:geo:dir}, we see that for any $S\in\{L,R\}$, if $\tht<\tht_{(m,s)}^S$, then $\geo\from{(m,s)}\dir{S}{\tht}{\sig}$, $\sigg\in\{-,+\}$, both move up from $(m,s)$, and if $\tht>\tht_{(m,s)}^S$, then $\geo\from{(m,s)}\dir{S}{\tht}{\sig}$, $\sigg\in\{-,+\}$, both proceed to the right from $(m,s)$. In particular, $\tht_{(m,s)}^R\le\tht_{(m,s)}^L$, and if $\tht\not\in(\tht_{(m,s)}^R,\tht_{(m,s)}^L)$, then $\geo\from{(m,s)}\dir{L}{\tht}{\sig}=\geo\from{(m,s)}\dir{R}{\tht}{\sig}$, and $(m,s)\not\in\NU_1^{\tht\sig}$, for any $\sigg\in\{-,+\}$. Similarly, if $\tht\in(\tht_{(m,s)}^R,\tht_{(m,s)}^L)$, then $\geo\from{(m,s)}\dir{L}{\tht}{\sig}$ and $\geo\from{(m,s)}\dir{R}{\tht}{\sig}$ split at $(m,s)$ and $(m,s)\in\NU_1^{\tht-}\cap\NU_1^{\tht+}$. These results are summarized in the following theorem, relating competition interfaces and shocks. 

Let $\{\sigma^L_{(m,s),n}:n\ge m\}$, respectively $\{\sigma^R_{(m,s),n}:n\ge m\}$, denote the jump times of the left, respectively right, competition interface out of $(m+\half,s)$. Thus, $\sigma^L_{(m,s),m}=\sigma^R_{(m,s),m}=s$.

\begin{thm}[Corollary 2.10 and Theorems 4.29 and 4.32(iii) in \cite{Sep-Sor-23-pmp}]\label{thm:cif-SS}
There exists an event $\Omnew{Omega5}\in\kS$ with $\Omref{Omega5}\subset\Omref{Omega4}$, $\P(\Omref{Omega5})=1$, and such that the following hold for any $\w\in\Omref{Omega5}$ and $(m,s)\in\Z\times\R$.

\begin{enumerate} [label={\rm(\roman*)}, ref={\rm\roman*}]   \itemsep=3pt 
\item\label{thm:cif:a} For all $S\in\{L,R\}$, the limit
\[\tht_{(m,s)}^S=\lim_{n\to\infty}\frac{\sigma^S_{(m,s),n}}n\]
exists and we have $\tht_{(m,s)}^R\le\tht_{(m,s)}^L$. \item\label{thm:cif:b} $(m,s)\in\NU_1^{\tht-}$ if and only if $\tht_{(m,s)}^R<\tht\le\tht_{(m,s)}^L$.
\item\label{thm:cif:c} $(m,s)\in\NU_1^{\tht+}$ if and only if $\tht_{(m,s)}^R\le\tht<\tht_{(m,s)}^L$.
\end{enumerate}
\end{thm}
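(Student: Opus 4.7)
The plan is to identify an intrinsic critical parameter at which the first step of the $S$-most semi-infinite geodesic out of $(m,s)$ switches from upward to rightward as its asymptotic direction varies, and to show this parameter equals both the asymptotic direction of the $S$-competition interface and the relevant threshold in the characterizations of $\NU_1^{\tht-}$ and $\NU_1^{\tht+}$. Monotonicity of the Busemann process in $\tht$ (Theorem \ref{thm:B}\eqref{B:mono}) and the construction of the geodesic process \eqref{geo-proc} imply that, for fixed $(m,s)$, $S\in\{L,R\}$, and $\sigg\in\{-,+\}$, the first step of $\geo\from{(m,s)}\dir{S}{\tht}{\sig}$ is upward for small $\tht>0$ and rightward for large $\tht$; define $\tht^S_{(m,s)}$ as the transition value. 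The ordering $\geo\from{(m,s)}\dir{L}{\tht}{\sig}\preceq\geo\from{(m,s)}\dir{R}{\tht}{\sig}$ in Theorem \ref{thm:geo}\eqref{thm:geo:mono} forces the $R$-geodesic to switch no later than the $L$-one, giving $\tht^R_{(m,s)}\le\tht^L_{(m,s)}$.

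For parts (ii) and (iii), I apply the locally-constant property Theorem \ref{thm:B}\eqref{B:jump}: for $\gamma$ slightly below (resp.\ above) $\tht$, the Busemann process at $\gamma\sig$ agrees near $(m,s)$ with $\bus{\tht}{-}$ (resp.\ $\bus{\tht}{+}$), so the first step of $\geo\from{(m,s)}\dir{S}{\gamma}{\sig}$ agrees with that of $\geo\from{(m,s)}\dir{S}{\tht-}{}$ (resp.\ $\dir{S}{\tht+}{}$). Matching against the threshold $\tht^S_{(m,s)}$ yields that $\geo\from{(m,s)}\dir{L}{\tht-}{}$ goes up iff $\tht\le\tht^L_{(m,s)}$, $\geo\from{(m,s)}\dir{L}{\tht+}{}$ goes up iff $\tht<\tht^L_{(m,s)}$, and symmetrically $\geo\from{(m,s)}\dir{R}{\tht-}{}$ goes right iff $\tht>\tht^R_{(m,s)}$, $\geo\from{(m,s)}\dir{R}{\tht+}{}$ goes right iff $\tht\ge\tht^R_{(m,s)}$. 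By definition $(m,s)\in\NU_1^{\tht\sig}$ asserts precisely the conjunction ``$\geo\from{(m,s)}\dir{L}{\tht}{\sig}$ goes up and $\geo\from{(m,s)}\dir{R}{\tht}{\sig}$ goes right''; intersecting the threshold conditions yields (ii) $\tht^R_{(m,s)}<\tht\le\tht^L_{(m,s)}$ and (iii) $\tht^R_{(m,s)}\le\tht<\tht^L_{(m,s)}$.

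For the existence of the limit in (i), I would sandwich the competition interface between two families of $S$-most semi-infinite geodesics. By construction, the point $(n,t)$ lies on the ``up'' side of the $S$-competition interface out of $(m+\half,s)$ precisely when the $S$-most geodesic from $(m,s)$ to $(n,t)$ first visits $(m+1,s)$. Fix $\gamma<\tht^S_{(m,s)}$: the $S$-most $\gamma$-directed semi-infinite geodesic from $(m,s)$ goes up, and by directedness (Theorem \ref{thm:geo}\eqref{thm:geo:dir}) and coalescence (Theorem \ref{thm:geo}\eqref{thm:geo:coal}) its level-$n$ coordinate satisfies $s^\gamma_n=\gamma n+o(n)$; the finite $S$-most geodesic from $(m,s)$ to $(n,s^\gamma_n)$ inherits the upward first step, placing $(n,s^\gamma_n)$ on the up side of the interface and producing $\sigma^S_{(m,s),n}\le s^\gamma_n=\gamma n+o(n)$. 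Symmetrically, $\gamma>\tht^S_{(m,s)}$ yields $\sigma^S_{(m,s),n}\ge\gamma n+o(n)$. Sending $\gamma\to\tht^S_{(m,s)}$ from either side gives $\sigma^S_{(m,s),n}/n\to\tht^S_{(m,s)}$.

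The main technical obstacle is justifying the sandwiching step: one must transfer the property ``the $S$-most \emph{semi-infinite} $\gamma$-geodesic from $(m,s)$ goes up'' to ``the $S$-most \emph{finite} geodesic from $(m,s)$ to $(n,s^\gamma_n)$ goes up''. This transfer uses the coalescence in Theorem \ref{thm:geo}\eqref{thm:geo:coal} and the leftmost/rightmost characterization Theorem \ref{thm:geo}\eqref{thm:geo:LR}. Extra care is needed at $\gamma=\tht^S_{(m,s)}$ itself, where the Busemann process in $\tht$ may jump and the two geodesics $\geo\from{(m,s)}\dir{S}{\tht-}{}$ and $\geo\from{(m,s)}\dir{S}{\tht+}{}$ need not coincide; one therefore approaches $\tht^S_{(m,s)}$ through a sequence of deterministic $\gamma\notin\baddir$, which exists by the density in Theorem \ref{baddir dense} and the continuity of Theorem \ref{thm:B}\eqref{B:cont2}.
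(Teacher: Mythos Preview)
The paper does not give its own proof of this theorem: it is imported wholesale from \cite{Sep-Sor-23-pmp} (Corollary~2.10 and Theorems~4.29 and~4.32(iii) there), so there is no internal argument to compare against. Your sketch is the natural route and is essentially how one would expect the cited results to go: define a threshold direction via the first step of the Busemann geodesics, use the locally-constant property (Theorem~\ref{thm:B}\eqref{B:jump}) to pin down the $\pm$ boundary behavior, and sandwich the competition interface between directed semi-infinite geodesics using Theorem~\ref{thm:geo}\eqref{thm:geo:LR} to transfer the first-step information to finite geodesics.

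Two points need repair. First, your sandwich inequalities are reversed. For $\gamma<\tht^S_{(m,s)}$ the $S$-most $\gamma$-geodesic from $(m,s)$ goes up, so the point $(n,s^\gamma_n)$ lies on the \emph{upper-left} side of the interface; at level $n$ this means $s^\gamma_n\le\sigma^S_{(m,s),n}$ (up to an immaterial off-by-one), giving $\liminf_n\sigma^S_{(m,s),n}/n\ge\gamma$ for every $\gamma<\tht^S_{(m,s)}$ and hence $\liminf\ge\tht^S_{(m,s)}$. The case $\gamma>\tht^S_{(m,s)}$ yields the matching $\limsup\le\tht^S_{(m,s)}$. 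As written, your inequalities would give $\limsup<\tht^S_{(m,s)}<\liminf$, a contradiction. Second, you assert without proof that the first step of $\geo\from{(m,s)}\dir{S}{\tht}{\sig}$ is upward for small $\tht$ and rightward for large $\tht$; you need this to know the threshold is finite and positive (else parts~\eqref{thm:cif:b}--\eqref{thm:cif:c} could be vacuous on one side). This does not follow from the structural properties you cite; it requires quantitative input on the Busemann process, e.g.\ that $t\mapsto\bus{\tht}{\sig}(m+1,0,m+1,t)-B_m(t)$ has drift of order $\tht^{-1/2}$, or an appeal to the cited results themselves. With these two fixes, the argument is complete.
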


Examining the above we see that if for $S\in\{L,R\}$, $\tht=\tht_{(m,s)}^S$, then $\geo\from{(m,s)}\dir{S}{\tht}{-}$ and $\geo\from{(m,s)}\dir{S}{\tht}{+}$ end up on different sides of the $S$-competition interface out of $(m+\half,s)$ and, consequently, they split at $(m,s)$.
Thus, the set $\baddir$ of exceptional directions is exactly the union of all the asymptotic directions of left competition interfaces from the various points in $(\Z+\half)\times\R$ and also the union of all the asymptotic directions of right competition interfaces. See Theorem 4.36(i) in \cite{Sep-Sor-23-pmp}.
A more detailed description of this relation between competition interfaces and instability points is one of our results, given as Theorem \ref{thm:cif}. A more general version is given in Theorem \ref{thm:cif.int}.

The next theorem will allow us to describe the Hausdorff dimension of certain objects we study.  For $m\in\Z$ define the set
\begin{align*}
\CI_m
&=\bigl\{s:\exists(n,t)\in\Z\times\R,n> m,t>s,\text{ and a geodesic from $(m,s)$ to $(n,t)$}\\
&\qquad\qquad\qquad\text{that goes through $(m+1,s)$}\bigr\}.
\end{align*}
The acronym $\CI$ stands for ``competition interface.''  
For each $m\in\mathbb{Z}$, the set $\CI_m$ consists of those points $s$ for which $(m,s)$ is the starting point of a competition interface.

\begin{thm}[Theorem 2.10 in \cite{Sep-Sor-23-pmp}]\label{CI-Hausdorff} There exists an event $\Omnew{Omega6}\in\kS$ with $\Omref{Omega6}\subset\Omref{Omega5}$, $\P(\Omref{Omega6})=1$, and such that for all $\w\in\Omref{Omega6}$ and $m\in\Z$, the set $\CI_m$ does not contain any rational point, has Hausdorff dimension $\half$, and is dense in $\R$. \end{thm}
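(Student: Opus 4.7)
The plan is to first obtain an analytic reformulation of membership in $\mathrm{CI}_m$ in terms of the Busemann process, and then read off each of the three claims from that reformulation using Theorem \ref{B:increase}, Theorem \ref{thm:shocks}\eqref{thm:shocks:0}, and Theorem \ref{baddir dense}. Since the leftmost geodesic from $(m,s)$ to $(n,t)$ has the smallest first-jump time among all maximizers, $s\in\mathrm{CI}_m$ is equivalent to the existence of some $(n,t)$ for which the \emph{leftmost} point-to-point geodesic from $(m,s)$ jumps at time $s$. Taking $(n,t)\to\infty$ along an appropriate sequence and using compactness, this is in turn equivalent to the existence of a direction $\tht>0$ and $\sigg\in\{-,+\}$ for which $\geo\from{(m,s)}\dir{L}{\tht}{\sig}$ jumps at $(m,s)$. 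The variational description of this geodesic noted immediately after \eqref{geo-proc}, together with the cocycle identity Theorem \ref{thm:B}\eqref{B:cocycle}, rewrites this as the analytic statement: $s\in\mathrm{CI}_m$ iff there exist $\tht>0$ and $\sigg\in\{-,+\}$ with $B_m(u)-B_m(s)\le\bus{\tht}{\sig}(m+1,s,m+1,u)$ for every $u\ge s$.

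For the Hausdorff dimension bound, the key observation is that the obstacle $\bus{\tht}{\sig}(m+1,s,m+1,\cdot)$ is locally constant in $\tht$ off of $\baddir$ by Theorem \ref{thm:B}\eqref{B:jump}, and its jump as $\tht$ crosses $\tht_0\in\baddir$ is exactly $\bus{\tht_0-}{}(m+1,s,m+1,\cdot)-\bus{\tht_0+}{}(m+1,s,m+1,\cdot)$. Combined with the monotonicity in $\tht$ from Theorem \ref{thm:B}\eqref{B:mono}, one argues that if $s\in\mathrm{CI}_m$ the above pointwise inequality must hold at some critical $\tht\in\baddir$ on one side of $\tht$ and fail on the other, which pins $s$ as a point of increase of $\bus{\tht-}{}(m+1,0,m+1,\cdot)-\bus{\tht+}{}(m+1,0,m+1,\cdot)$. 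Thus $\mathrm{CI}_m$ is contained in a countable union (over $\tht\in\baddir$, which is countable by Theorem \ref{baddir dense}) of sets of Hausdorff dimension $\half$ (by Theorem \ref{B:increase}), giving $\dim_H\mathrm{CI}_m\le\half$. For the matching lower bound, fix any $\tht\in\baddir$ and observe that every point of increase of the corresponding Busemann difference supplies concrete $(\tht,\sigg)$ for which the inequality in the analytic reformulation holds, placing that point in $\mathrm{CI}_m$.

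The remaining two claims follow more routinely. For the no-rationals property, Theorem \ref{thm:shocks}\eqref{thm:shocks:0} says no rational $(m,q)$ lies in $\NU_1^{\tht\sig}$, and Theorem \ref{baddir dense} gives $\P(\tht\in\baddir)=0$ for any deterministic $\tht$; together these force $\P(q\in\mathrm{CI}_m)=0$ for each fixed $q\in\Q$, and a countable union over $(m,q)\in\Z\times\Q$ produces a full-probability event on which $\mathrm{CI}_m\cap\Q=\varnothing$ for every $m$. For density, the positive Hausdorff dimension just established makes $\mathrm{CI}_m$ almost surely uncountable and in particular non-empty; combining shift-invariance of the Brownian environment with a zero-one law applied to the germ $\sigma$-field at each rational time shows that, on a full-probability event, $\mathrm{CI}_m$ meets every rational open interval, which gives density.

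The main obstacle I anticipate is the upper bound on $\dim_H\mathrm{CI}_m$: converting the marginal comparison at the critical $\tht$ into a genuine identification of $s$ as a point of increase of $\bus{\tht-}{}-\bus{\tht+}{}$ requires quantitative control on the size of the Busemann jump across $\tht$ in a neighborhood of $s$, rather than merely the qualitative monotonicity supplied by Theorem \ref{thm:B}\eqref{B:mono}. Everything else, including the lower bound, the density, and the no-rationals claim, follows rather directly from the analytic reformulation once the upper bound is in hand.
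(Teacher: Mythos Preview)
This theorem is not proved in the present paper at all: it is imported verbatim as Theorem 2.10 of \cite{Sep-Sor-23-pmp} and used as a black box (for instance in the proof of Lemma \ref{edge-Hausdorff}). So there is no ``paper's own proof'' to compare against; you are proposing an independent derivation from the other imported results.

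Your sketch has real gaps. First, the claimed equivalence ``$s\in\mathrm{CI}_m$ iff some Busemann geodesic $\geo\from{(m,s)}\dir{L}{\tht}{\sig}$ jumps at $(m,s)$'' is only clear in one direction. Passing from a finite point-to-point geodesic that jumps at $s$ to a semi-infinite Busemann geodesic that jumps at $s$ requires exactly the statement flagged as open in Remark \ref{rk:all geo} (that every semi-infinite geodesic is trapped between $\geo\dir{L}{\tht}{\sig}$ and $\geo\dir{R}{\tht}{\sig}$ for some $\tht,\sigg$), which is not available for BLPP. Second, your upper-bound step does not pin $s$ as a point of increase: from $B_m(u_0)-B_m(s)\le\bus{\tht-}{}(m{+}1,s,m{+}1,u_0)$ and $B_m(u_0)-B_m(s)>\bus{\tht+}{}(m{+}1,s,m{+}1,u_0)$ you only get that the Busemann difference increases somewhere on $[s,u_0]$, not at $s$ itself, so you have not embedded $\mathrm{CI}_m$ into a countable union of $\tfrac12$-dimensional sets. (Note also a level mismatch: your variational reformulation lives on level $m{+}1$, whereas the paper's Lemma \ref{edge-Hausdorff} links $\mathrm{CI}_m$ to points of increase on level $m$.) Third, the density argument via a ``germ $\sigma$-field zero--one law'' is too vague to be a proof; shift-invariance alone gives you translation invariance of the law of $\mathrm{CI}_m$, not density of every realization. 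In short, the lower bound and the backward direction of your reformulation are fine, but the forward direction, the Hausdorff upper bound, and the density claim all need substantially more than what you have written.
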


\section{Further geodesics properties}\label{sec:more} 
In this section we present three preliminary results that will be used in the sequel and may also be of independent interest. 
Their proofs are deferred to Section \ref{sec:aux}.
The first result states that, almost surely, there are no geodesics with two consecutive vertical edges. 

\begin{prop}\label{nodouble}
There exists an event $\Omnewp{Omega'1}\in\kS$ such that $\P(\Omref{Omega'1})=1$ and for any $\w\in\Omref{Omega'1}$ neither of the following occurs: 
\begin{enumerate} [label={\rm(\alph*)}, ref={\rm\alph*}]   \itemsep=3pt 
\item\label{nodouble-a} There exist $(m,n)\in\Z^2$ and $(s,t)\in\R^2$ with $n\ge m+2$, $t>s$,
and $L_{(m,s),(n,t)}=L_{(m,s),(n-2,t)}$.
\item\label{nodouble-b} There exist $(m,n)\in\Z^2$ and $(s,t)\in\R^2$ with $n\ge m+2$, $t>s$,
and $L_{(m,s),(n,t)}=L_{(m+2,s),(n,t)}$.
\end{enumerate}
\end{prop}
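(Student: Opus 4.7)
We focus on part (a); part (b) follows by a fully analogous argument using the dual decomposition that isolates the first two levels. The strategy is to show that equality in (a) would force an extremal coincidence between two essentially independent scaled Brownian motions---an event of probability zero.

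By decomposing the path by the time it spends on levels $n-1$ and $n$,
\[L_{(m,s),(n,t)}\;=\;\sup_{s\le r_1\le r_2\le t}\!\Bigl\{L_{(m,s),(n-2,r_1)} + B_{n-1}(r_2)-B_{n-1}(r_1) + B_n(t)-B_n(r_2)\Bigr\},\]
with value $L_{(m,s),(n-2,t)}$ attained at $(r_1,r_2)=(t,t)$, we see (a) is equivalent to $(t,t)$ being the argmax. A downward induction on $n-m$ reduces to the generic case in which the $(m,s)$-to-$(n-2,t)$ geodesic enters level $n-2$ at some $\tau<t$; in this case $L_{(m,s),(n-2,t-\epsilon_1)}-L_{(m,s),(n-2,t)}=B_{n-2}(t-\epsilon_1)-B_{n-2}(t)$ for all small $\epsilon_1\ge 0$ (the inductive step handles the ``degenerate'' situation where the sub-geodesic itself has a vertical top jump, reducing (a) for $(m,n)$ to (a) for $(m,n-1)$; the base $n=m+2$ is automatic). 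Substituting $(r_1,r_2)=(t-\epsilon_1,t)$ and $(t-\epsilon_1,t-\epsilon_1)$ then yields, for every $\epsilon_1\in[0,t-\tau]$,
\[(B_{n-1}-B_{n-2})(t)\le(B_{n-1}-B_{n-2})(t-\epsilon_1)\quad\text{and}\quad(B_n-B_{n-2})(t)\le(B_n-B_{n-2})(t-\epsilon_1);\]
that is, both scaled Brownian motions $W_1:=B_{n-1}-B_{n-2}$ and $W_2:=B_n-B_{n-2}$ attain their minima on $[\tau,t]$ at the right endpoint $r=t$. For each fixed $(m,n,s,t)\in\Z^2\times\Q^2$ with $n\ge m+2$ and $s<t$, each single condition has probability zero (by time reversal, a Brownian motion a.s.\ does not attain its minimum on a non-degenerate interval at the right endpoint), so a countable union rules out (a) at all rational quadruples.

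The main obstacle is extending to arbitrary real $(s,t)$: the set $E_{m,n}:=\{(s,t):\,s<t,\,L_{(m,s),(n,t)}=L_{(m,s),(n-2,t)}\}$ is a.s.\ closed and Lebesgue-null, but neither property alone forces emptiness. To close the gap, define the random countable sets
\[C_i\;:=\;\bigl\{t\in\R:\exists\,s<t,\,W_i(t)=\min_{r\in[s,t]}W_i(r)\bigr\},\qquad i=1,2,\]
of ``left-local minima'' of $W_i$. If $(s_0,t_0)\in E_{m,n}$, then necessarily $t_0\in C_1\cap C_2$. The key observation is that, conditionally on $B_{n-2}$, the Brownian motions $B_{n-1}$ and $B_n$ are independent; hence $C_1$ and $C_2$ are conditionally independent countable subsets of $\R$, and each single element of $C_i$ has atomless conditional distribution (for fixed $t_0$, the Levy modulus of continuity forces $W_i(r)<W_i(t_0)$ for arbitrarily small $r<t_0$, so $t_0\notin C_i$ a.s.). Countable union therefore gives $\P(C_1\cap C_2\ne\varnothing\mid B_{n-2})=0$, so $E_{m,n}=\varnothing$ almost surely. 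Taking a countable union over $(m,n)\in\Z^2$ with $n\ge m+2$ yields the desired full-probability event for part (a), and the symmetric argument takes care of (b).
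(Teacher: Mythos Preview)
Your argument has a genuine gap: the sets $C_i$ are \emph{not} countable. For a Brownian-type process $W_i$, the set of left-local minima contains the ladder set $\{t: W_i(t)=\min_{[0,t]}W_i\}$, which by L\'evy's identity has the law of the zero set of a reflected Brownian motion and therefore is an uncountable set of Hausdorff dimension $1/2$. So the line ``$C_1$ and $C_2$ are conditionally independent \emph{countable} subsets of $\R$'' is false, and the countable-union step that follows collapses. Your parenthetical remark establishes only that each \emph{fixed} $t_0$ is a.s.\ not in $C_i$; it says nothing about the cardinality of the random set $C_i$, and one cannot ``enumerate'' elements of an uncountable random set to apply Fubini.

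Worse, the statement $C_1\cap C_2=\varnothing$ that you actually need is essentially as hard as the proposition itself. Even conditioning on $B_{n-2}$, you are left with two independent dimension-$1/2$ random sets, and ruling out their intersection is exactly a ``no simultaneous zero'' statement for a two-dimensional reflected process---the very content of the SRBM non-collision result the paper invokes. The paper sidesteps this by a different reduction: instead of fixing the top levels and letting $(s,t)$ roam, it observes that any double vertical jump forces, on the lowest level where it occurs, a horizontal stretch of geodesic that must pass through some \emph{rational} time $s''$. This pins down the starting point to a countable family $(m',s'')\in\Z\times\Q$, and for each such fixed start one must rule out a triple collision $Y_1(t)=Y_2(t)=Y_3(t)$ of $Y_k(t)=L_{(0,0),(k-1,t)}$ for some $t>0$. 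That is handled by identifying the gap process $(Y_2-Y_1,Y_3-Y_2)$ as an SRBM in the quadrant and appealing to the corner-avoidance criterion of Sarantsev. Your necessary condition $t_0\in C_1\cap C_2$ is correct and is a nice observation, but turning it into a proof still requires the non-trivial two-dimensional input that your countability claim was meant to replace.
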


The following is a consequence of the above proposition. 

\begin{lem}\label{geo-split}
	There exists an event $\Omnewp{Omega'2}\in\kS$ such that $\Omref{Omega'2}\subset\Omref{Omega1}\cap\Omref{Omega'1}$, $\P(\Omref{Omega'2})=1$, and for any $\w\in\Omref{Omega'2}$, $\tht>0$, $\sigg\in\{-,+\}$, and any $(m,t)\in\Z\times\R$, $ \geo\from{(m,t)}\dir{R}{\tht}{\sig} $ and $ \geo\from{(m,t)}\dir{L}{\tht}{\sig} $ either separate immediately or are equal. 
\end{lem}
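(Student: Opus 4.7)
The plan is to argue by contradiction, combining Proposition \ref{nodouble} (to exclude a degenerate double-vertical configuration) with a local uniqueness property of maxima of the variational functions that recursively define the Busemann geodesics.

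Set $\gamma^L := \geo\from{(m,t)}\dir{L}{\tht}{\sig}$ and $\gamma^R := \geo\from{(m,t)}\dir{R}{\tht}{\sig}$, and denote by $\ell_k\leq r_k$ their respective jump times from level $k$ to level $k+1$, with the convention $\ell_{m-1}=r_{m-1}:=t$. By the Busemann construction of the geodesic process in \eqref{geo-proc}, $\ell_k$ (resp.\ $r_k$) is the leftmost (resp.\ rightmost) argmax of
\[
F_k(s)\;:=\;B_k(s)-\buse{\sig}{k+1}{0}{k+1}{s}
\]
over $s\geq\ell_{k-1}$ (resp.\ $s\geq r_{k-1}$). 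Assume, toward a contradiction, that $\gamma^L\neq\gamma^R$ and that the two geodesics do not separate immediately at $(m,t)$.

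Let $n\geq m$ be the smallest level with $\ell_n<r_n$; then $\ell_k=r_k$ for $m\leq k<n$ and we denote $s^\star:=\ell_{n-1}=r_{n-1}$ (with $s^\star=t$ if $n=m$). I first show that $s^\star<\ell_n<r_n$. Indeed, if $n=m$ and $\ell_m=t$, then $\gamma^L$ begins with a vertical edge while $\gamma^R$ begins with a horizontal edge (since $r_m>t$), which is precisely the separate-immediately pattern excluded by the contradiction hypothesis; hence $\ell_m>t$. If $n>m$, both geodesics arrive at $(n,s^\star)$ via a vertical jump from $(n-1,s^\star)$, so $\ell_n=s^\star$ would force $\gamma^L$ to jump up again at once, yielding two consecutive vertical edges $(n-1,s^\star)\to(n,s^\star)\to(n+1,s^\star)$, which is forbidden by Proposition \ref{nodouble}. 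Therefore $s^\star<\ell_n<r_n$, and by the continuity of $F_n$ both $\ell_n$ and $r_n$ are interior local maxima of $F_n$ on $(s^\star,\infty)$ with $F_n(\ell_n)=F_n(r_n)$.

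To rule out such a pair of coincident interior local maxima, observe that the function $g(s):=\buse{\sig}{n+1}{0}{n+1}{s}$ is continuous in $s$ (Theorem \ref{thm:B}\eqref{B:cont}) and, by its construction as a Busemann limit at level $n+1$, is measurable with respect to $\sigma(B_k:k\geq n+1)$; in particular $g$ is independent of $B_n$. Conditionally on $g$, the process $F_n=B_n-g$ is a two-sided Brownian motion shifted by a deterministic continuous function. A standard fact for such processes---derivable from the continuous (Gaussian) distribution of each increment $B_n(u)-B_n(v)$ for $u\neq v$, combined with a dyadic exhaustion over candidate local-max pairs---states that, almost surely, no two distinct local maxima attain the same value. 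This contradicts the preceding paragraph and establishes the claim for fixed $(\tht,\sigg,n)$. Uniformity is then obtained by intersecting these full-probability events over $n\in\Z$, $\sigg\in\{-,+\}$, and $\tht$ in a countable dense set $D\subset(0,\infty)$, and extending to arbitrary $\tht>0$ via the local constancy of $\tht\mapsto\bus{\tht}{\pm}$ on compact sets (Theorem \ref{thm:B}\eqref{B:jump}) and the almost-sure countability of $\baddir$ (Theorem \ref{baddir dense}). Taking the intersection with $\Omref{Omega1}\cap\Omref{Omega'1}$ produces the event $\Omref{Omega'2}$.

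The chief obstacle is the Brownian local-maxima claim: while intuitively a direct consequence of the irregularity of Brownian paths, a rigorous justification requires transporting the classical ``distinct local-max values'' property for $B_n$ to the shifted process $F_n=B_n-g$ (using the conditional independence to preserve the Gaussianity of increments) and then exhausting the uncountable family of candidate pairs $(\ell,r)$ via dyadic rationals to conclude that no such coincidence occurs on a single full-probability event.
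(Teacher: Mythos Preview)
Your proof is correct and takes a genuinely different route from the paper's. The paper's argument is two lines: it invokes Theorem 4.8(iv) of \cite{Sep-Sor-23-pmp}, which asserts that $\geo\from{(m,t)}\dir{L}{\tht}{\sig}$ and $\geo\from{(m,t)}\dir{R}{\tht}{\sig}$ can only separate along the vertical ray $\{(k,t):k\ge m\}$ from the starting point, and then observes that Proposition \ref{nodouble} forces any such separation to occur at $(m,t)$ itself, since the geodesics cannot travel together vertically for more than one step before going right together.

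You instead reprove the content of that external input from first principles. After using Proposition \ref{nodouble} to rule out $\ell_n=s^\star$, you reduce to the statement that $F_n=B_n-g$ cannot have two distinct local maxima with equal values, where $g=\bus{\tht}{\sig}(n{+}1,0,n{+}1,\cdot)$ is $\sigma(B_k:k\ge n+1)$-measurable and hence independent of $B_n$. This is a correct and standard Brownian fact (conditionally on $g$, for disjoint rational intervals $I,J$ the difference $\max_I F_n-\max_J F_n$ has a continuous law, since it equals a nondegenerate Gaussian increment plus a conditionally deterministic quantity). Your extension to all $\tht$ via local constancy also works: on any compact interval the family $\{g^{\tht\sig}|_{[a,b]}:\tht>0,\sig\in\{-,+\}\}$ coincides with $\{g^{q}|_{[a,b]}:q\in\Q_{>0}\}$ because rationals are almost surely outside $\baddir$, reducing to a countable intersection over $q,n$ and rational intervals. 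What you gain is a self-contained proof that does not lean on the structural result from \cite{Sep-Sor-23-pmp}; what the paper gains is brevity by deferring exactly this mechanism to the cited reference. Your closing paragraph correctly flags where more detail would be needed for a fully rigorous write-up, but the outline is sound.
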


The last result of this section concerns bi-infinite geodesics. For each $m\in\Z$ and $z\in\R$, the paths $\{(m,t):t\in\R\}$ and $\{(n,s):n\in\Z\}$ are degenerate bi-infinite geodesics. 

\begin{defn}\label{lrm-llm}
 An infinite geodesic is said to be \emph{locally rightmost} (resp.\ \emph{locally leftmost})  if it is the rightmost (resp.\ leftmost) geodesic between any pair of points on the geodesic.    
\end{defn}

\begin{thm}\label{nobiinf}
	There exists an event $\Omnew{Omega8}\in\kS$ such that $\Omref{Omega8}\subset\Omref{Omega6}\cap\Omref{Omega'2}$, $\P(\Omref{Omega8})=1$, and for any $\w\in\Omref{Omega8}$, there are no non-degenerate bi-infinite locally rightmost or locally leftmost  geodesics.
\end{thm}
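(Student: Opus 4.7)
The plan is a proof by contradiction. Suppose a non-degenerate bi-infinite geodesic $\Pi$, with jump times $(s_n)_{n\in\Z}$, exists for some $\w$ in a positive-probability event. First I extract asymptotic directions: fix any base point $(m_0,s_{m_0-1})\in\Pi$, and note that the forward tail $\Pi|_{n\ge m_0}$ is a non-degenerate semi-infinite geodesic. By Theorem 3.1(iv-v) of \cite{Sep-Sor-23-aihp} (cited in Remark \ref{rk:all geo}), this tail has a direction $\tht^+>0$ and is sandwiched between $\geo\from{(m_0,s_{m_0-1})}\dir{L}{\tht^+}{-}$ and $\geo\from{(m_0,s_{m_0-1})}\dir{R}{\tht^+}{+}$. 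Applying the same result in the time-reversed Brownian environment, which has the same distribution by Brownian time-reversal symmetry, the backward tail similarly has a direction $\tht^->0$, meaning $s_n/n\to\tht^-$ as $n\to-\infty$.

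I would then rule out the case $\tht^-\ne\tht^+$ via a comparison argument. Without loss of generality $\tht^-<\tht^+$; pick a rational $\gamma\in(\tht^-,\tht^+)$ with $\gamma\notin\baddir$, which exists by Theorem \ref{baddir dense}. Because $\gamma<\tht^+$, the sandwich from the previous step applied to $\gamma$ together with the monotonicity in Theorem \ref{thm:geo}\eqref{thm:geo:mono} force the forward tail of $\Pi$ from $(m_0,s_{m_0-1})$ to separate from $\geo\from{(m_0,s_{m_0-1})}\dir{R}{\gamma}{+}$ at linear rate $\tht^+-\gamma>0$. On the other hand, as $n\to-\infty$ the base point $(n,s_{n-1})\in\Pi$ drifts in direction $\tht^-<\gamma$, placing it far to the left of $\geo\from{(m_0,s_{m_0-1})}\dir{R}{\gamma}{+}$ for $|n|$ large. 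Coalescence of $\gamma$-geodesics (Theorem \ref{thm:geo}\eqref{thm:geo:coal}) says $\geo\from{(n,s_{n-1})}\dir{R}{\gamma}{+}$ joins $\geo\from{(m_0,s_{m_0-1})}\dir{R}{\gamma}{+}$ after finitely many steps, so the sandwich applied to the forward tail from $(n,s_{n-1})$ forces $\Pi|_{n'\ge n}$ to ultimately coincide with this coalesced $\gamma$-path; but then the forward tail of $\Pi$ has direction $\gamma\ne\tht^+$, a contradiction.

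The remaining case $\tht^-=\tht^+=\tht$, in which $\Pi$ is $\tht$-directed in both senses, is the crux. For fixed rational $\tht$ one has $\tht\notin\baddir$ $\P$-a.s.\ by Theorem \ref{baddir dense}; the existence of a bi-infinite $\tht$-geodesic is then a time-shift-invariant event, and combined with the coalescence in \eqref{geo-proc} would require a bi-infinite backward branch in the coalescing tree of $\tht$-geodesics. A Burton--Keane/mass-transport argument, using the cocycle property of $\bus{\tht}{}$ (Theorem \ref{thm:B}\eqref{B:cocycle}) and ergodicity of the time shift on the Brownian environment, rules this out. For general (possibly random) $\tht$, one approximates by rationals $\gamma_k$ from outside $\baddir$ converging to $\tht$, uses continuity of the Busemann process (Theorem \ref{thm:B}\eqref{B:cont2}) and monotonicity to trap $\Pi$ into progressively thinner corridors, and passes to the limit. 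The main obstacle is the Burton--Keane-type step for rational $\tht$: one must build a measurable notion of backward branching in the coalescing tree of $\tht$-geodesics and bound its density using stationarity, which requires care in the semi-discrete BLPP setting, avoiding the rational time-levels excluded by Theorem \ref{thm:shocks}\eqref{thm:shocks:0}.
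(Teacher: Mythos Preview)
Your approach is genuinely different from the paper's and, as written, has real gaps in the crucial steps.

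\textbf{Comparison with the paper.} The paper uses no directions at all. Instead it exploits Brownian scaling: the family $\{L_{(m,s),(n,t)}\}$ has the same law as $\{qL_{(m,q^{-2}s),(n,q^{-2}t)}\}$ for each $q>0$, so the entire geodesic family is distributionally invariant under dilating the spatial coordinate. If a non-degenerate bi-infinite geodesic exists with probability $\ge\delta$, then by shift-invariance one can assume it first enters level $0$ at some point $\Gamma(0)$ with $|\Gamma(0)|<a$ on an event of probability $>\delta/2$. Scaling by $q$ turns this into $q^{2}|\Gamma(0)|<a$ with the same probability for every $q>0$; sending $q\to\infty$ forces $\Gamma(0)=0$ with probability $\ge\delta/2$. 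This contradicts Lemma~\ref{nofixedjump}, which says a deterministic space-time point is almost surely never a geodesic jump time. The entire argument is three lines once one has Lemma~\ref{nofixedjump}.

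\textbf{Gaps in your proposal.} In Step~2 you claim the sandwich ``forces $\Pi|_{n'\ge n}$ to ultimately coincide with this coalesced $\gamma$-path.'' But the sandwich from Remark~\ref{rk:all geo} traps $\Pi$ between the $\tht^+$-Busemann geodesics, not the $\gamma$-ones; monotonicity in direction only gives $\geo\from{(n,s_{n-1})}\dir{}{\gamma}{}\preceq\Pi$ one-sidedly, and nothing forces $\Pi$ onto a $\gamma$-geodesic. The contradiction you assert does not follow as stated. More seriously, Step~3 is the entire content of the theorem, and you essentially defer it: the Burton--Keane step for \emph{fixed} $\tht$ is not the obstacle (that is exactly Theorem~3.1(iv) of \cite{Sep-00-aihp}, which the paper cites as already known); the real difficulty is upgrading from a $\tht$-dependent null set to a single null set valid for \emph{every} direction simultaneously, including the random directions in $\baddir$. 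Your ``approximate by rationals $\gamma_k$ and pass to the limit'' sketch gives no mechanism for this upgrade: trapping $\Pi$ in a $[\gamma_k,\gamma_k']$-corridor does not produce a bi-infinite $\gamma_k$-geodesic, and nothing rules out a bi-infinite geodesic whose direction is exceptional. The paper's scaling argument sidesteps this entirely by never mentioning directions.
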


This is a stronger version of Theorem 3.1(iv) in \cite{Sep-00-aihp},  which states that for any given $\tht>0$ there exists a full $\P$-probability event $\Omega_\tht$ (possibly depending on $\tht$) on which there are no bi-infinite geodesics going in direction $\tht$. Our method of proof relies on the uniqueness of finite geodesics, which necessitates the assumption of local extremality. For more on this,  see Remark \ref{rk:lrmllm}.

\begin{rmk}\label{Omega}
At this point, we have a full $\P$-probability event $\Omref{Omega8}$ on which all the results of Sections \ref{sec:previous} and \ref{sec:more} hold. 
All of our subsequent results will hold on this event. Thus, any instance of $\w$ mentioned in the remaining text should be understood as belonging to $\Omref{Omega8}$.
\end{rmk}

\section{Analytical characterization of the instability graph}\label{def:Bus:analytic}

In this section, we define instability points and the instability graph though the Busemann process \eqref{Bus-proc}.  We begin by defining the instability edges. (The notion of a point of increase was recalled before Definition \ref{def:inst}.)

\begin{defn}\label{def:edge.int}
	For $\tht\ge\gamma>0$, $m\in\Z$, and $t\in\R$, we call the vertical closed interval $[(m-\half,t),(m+\half,t)]$ a $[\gamma,\tht]$-\emph{instability edge} if
	\begin{equation}
			s\mapsto\bus{\gamma-}{}(m,0,m,s)-\buse{+}{m}{0}{m}{s} \label{inst-edge}
	\end{equation}
	has a point of increase at $t$. 
 \end{defn}

By the cocycle, monotonicity, and continuity properties in Theorem \ref{thm:B},
the function in \eqref{inst-edge} is continuous and nondecreasing. 

Next, we define instability points, also using Busemann functions.

 \begin{defn}\label{def:instpt.int}
	For $\tht\ge\gamma>0$, $m\in\Z$, and $t\in\R$, we say that $(m+\half, t)$ is a \emph{proper $[\gamma,\tht]$-instability point} if $\bus{\gamma-}{}(m,t,m+1,t)<\buse{+}{m}{t}{m+1}{t}$.
\end{defn}

By the monotonicity and continuity properties in Theorem \ref{thm:B},
the function 
\[t\mapsto\buse{+}{m}{t}{m+1}{t}-\bus{\gamma-}{}(m,t,m+1,t)\] 
is continuous and nonnegative. Consequently, the set of proper $[\gamma,\tht]$-instability points on a horizontal level $m$ is an open set and hence a countable union of disjoint open intervals.

\begin{defn}\label{def:IG.int}
    For $\tht\ge\gamma>0$, the $[\gamma,\tht]$-\emph{instability graph} $\IG{[\gamma,\tht]}=\IG{[\gamma,\tht]}(\w)$ is the union of all the $[\gamma,\tht]$-instability edges and the closure of the set of proper $[\gamma,\tht]$-instability points. 
\end{defn}

\begin{defn}\label{def:pt.int}   
For $\tht\ge\gamma>0$, $m\in\Z$, and $t\in\R$, the point $(m+\half,t)$ is a $[\gamma,\tht]$-\emph{instability point} if it belongs to $\IG{[\gamma,\tht]}$. A $[\gamma,\tht]$-instability point that is not proper is called \emph{improper}.
\end{defn}

In the rest, unless we are dealing with multiple instability graphs simultaneously, we will often simplify our language by omitting the explicit mention of $[\gamma,\tht]$ and instead refer to ``instability edges'', ``instability points'', and other instability objects, when addressing $[\gamma,\tht]$-instability edges, $[\gamma,\tht]$-instability points, and other $[\gamma,\tht]$-instability objects, respectively.  When $\gamma=\tht$, we write $\IG{\tht}$ instead of $\IG{[\tht,\tht]}$ and will refer to $\tht$-instability objects.

\begin{rmk}
    When $\gamma=\tht\not\in\baddir$, $\IG{[\gamma,\tht]}=\varnothing$ and all the definitions and results on the instability graph are vacuous. When $\gamma=\tht\in\baddir$, the definitions in this section match the ones in Definition \ref{def:inst}. Consequently, the results presented in Section \ref{sec:results} follow 
    from the more general results we present throughout the rest of the paper by specializing to the case $\gamma=\tht\in\baddir$. See Lemmas \ref{IGint=union} and \ref{IG-approx} below for further connections between $\IG{\tht}$ and $\IG{[\gamma,\delta]}$.
\end{rmk}

The next theorem gives a characterization of instability points.

\begin{thm}\label{thm:instpt}
For all $\w\in\Omref{Omega8}$, $\tht\ge\gamma>0$, $m\in\Z$, and $t\in\R$, the following are equivalent.
\begin{enumerate}[label={\rm(\roman*)}, ref={\rm\roman*}]   \itemsep=3pt 
\item\label{instpt.a} $(m+\half,t)$ is a $[\gamma,\tht]$-instability point.
\item\label{instpt.b} For any $r'<t$ and $r''>t$ we have
\begin{align}\label{bus:cond}
\bus{\gamma}{-}(m+1,r',m,r'')>\buse{+}{m+1}{r'}{m}{r''}.
\end{align}
\item\label{instpt.c} For any $r'<t$ and $r''>t$ we have
\begin{align}\label{bus:cond2}
\begin{split}
&\bus{\gamma}{-}(m+1,r',m+1,r'')>\buse{+}{m+1}{r'}{m+1}{r''}\\
&\text{or}\quad\bus{\gamma}{-}(m+1,r'',m,r'')>\buse{+}{m+1}{r''}{m}{r''}.
\end{split}
\end{align}
\item\label{instpt.d} For any $r'<t$ and $r''>t$ we have
\begin{align}\label{bus:cond3}
\begin{split}
&\bus{\gamma}{-}(m,r',m,r'')>\buse{+}{m}{r'}{m}{r''}\\
&\text{or}\quad\bus{\gamma}{-}(m+1,r',m,r')>\buse{+}{m+1}{r'}{m}{r'}.
\end{split}
\end{align}
\end{enumerate}
\end{thm}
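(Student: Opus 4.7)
The plan is to first derive (ii) $\Leftrightarrow$ (iii) $\Leftrightarrow$ (iv) from an algebraic identity provided by the cocycle \eqref{cocycle}, and then to prove (i) $\Leftrightarrow$ (iv) by contrapositive, exploiting the topological description of $\IG{[\gamma,\tht]}$. Applying the cocycle to both $\bus{\gamma}{-}$ and $\bus{\tht}{+}$ along the two monotone staircases joining $(m+1,r')$ to $(m,r'')$ yields
\begin{align*}
&\bus{\gamma}{-}(m+1,r',m,r'')-\buse{+}{m+1}{r'}{m}{r''}\\
&\qquad=\bigl[\bus{\gamma}{-}(m+1,r',m+1,r'')-\buse{+}{m+1}{r'}{m+1}{r''}\bigr]+\bigl[\bus{\gamma}{-}(m+1,r'',m,r'')-\buse{+}{m+1}{r''}{m}{r''}\bigr]\\
&\qquad=\bigl[\bus{\gamma}{-}(m,r',m,r'')-\buse{+}{m}{r'}{m}{r''}\bigr]+\bigl[\bus{\gamma}{-}(m+1,r',m,r')-\buse{+}{m+1}{r'}{m}{r'}\bigr].
\end{align*}
Since $\gamma\le\tht$, each of the four bracketed differences is nonnegative by \eqref{B:mono-eqn}---directly for the two horizontal brackets, and after flipping the sign via the cocycle for the two vertical ones. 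Consequently, the common left-hand side is strictly positive iff at least one summand in either decomposition is strictly positive, which proves (ii) $\Leftrightarrow$ (iii) and (ii) $\Leftrightarrow$ (iv) simultaneously.

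For (i) $\Leftrightarrow$ (iv), I argue contrapositively. If $(m+\tfrac12,t)\notin\IG{[\gamma,\tht]}$, then since $\IG{[\gamma,\tht]}$ is closed by Definition \ref{def:IG.int}, there is $\delta>0$ such that $\{m+\tfrac12\}\times(t-\delta,t+\delta)$ is disjoint from $\IG{[\gamma,\tht]}$. On this interval no proper $[\gamma,\tht]$-instability point lies on level $m+\tfrac12$, giving $\bus{\gamma}{-}(m,s,m+1,s)=\buse{+}{m}{s}{m+1}{s}$ for all such $s$; and no $[\gamma,\tht]$-instability edge of the form $[(m-\tfrac12,s),(m+\tfrac12,s)]$ with $s\in(t-\delta,t+\delta)$ exists, so the continuous nondecreasing map $s\mapsto\bus{\gamma}{-}(m,0,m,s)-\buse{+}{m}{0}{m}{s}$ has no point of increase in $(t-\delta,t+\delta)$ and is therefore constant there. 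Choosing $r'\in(t-\delta,t)$ and $r''\in(t,t+\delta)$ yields $\bus{\gamma}{-}(m,r',m,r'')=\buse{+}{m}{r'}{m}{r''}$ and $\bus{\gamma}{-}(m+1,r',m,r')=\buse{+}{m+1}{r'}{m}{r'}$, contradicting (iv). Conversely, if (iv) fails at some $r'<t<r''$, both brackets in the second decomposition vanish, and by (iii) $\Leftrightarrow$ (iv) so do both brackets in the first. The nondecreasing nonnegative continuous functions $s\mapsto\bus{\gamma}{-}(m,r',m,s)-\buse{+}{m}{r'}{m}{s}$ and $s\mapsto\bus{\gamma}{-}(m+1,r',m+1,s)-\buse{+}{m+1}{r'}{m+1}{s}$ vanish at $s=r'$ and $s=r''$, hence throughout $[r',r'']$, so neither $s\mapsto\bus{\gamma}{-}(m,0,m,s)-\buse{+}{m}{0}{m}{s}$ nor $s\mapsto\bus{\gamma}{-}(m+1,0,m+1,s)-\buse{+}{m+1}{0}{m+1}{s}$ has a point of increase in $(r',r'')$, ruling out $[\gamma,\tht]$-instability edges on levels $m$ and $m+1$ in that interval. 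Finally, applying the first cocycle decomposition with $r''$ replaced by any $s\in(r',r'')$ writes $\bus{\gamma}{-}(m+1,r',m,s)-\buse{+}{m+1}{r'}{m}{s}=0$ as a sum of two nonnegative brackets that must both vanish, forcing $\bus{\gamma}{-}(m+1,s,m,s)=\buse{+}{m+1}{s}{m}{s}$. Thus $(m+\tfrac12,s)$ is not a proper instability point for any $s\in[r',r'']$, so $(m+\tfrac12,t)\notin\IG{[\gamma,\tht]}$, which is $\neg$(i).

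The main technical step is this propagation argument: pointwise vanishing of two specific Busemann differences at the corners of a rectangle must be shown to force vanishing throughout the entire vertical slice $\{m+\tfrac12\}\times[r',r'']$. It is handled by repeatedly invoking the two cocycle decompositions together with monotonicity, so that each total that is shown to vanish is expressed as a sum of nonnegative summands; the vanishing then sweeps first along the horizontal edges of the rectangle and subsequently into every interior vertical slice.
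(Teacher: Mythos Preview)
Your proof is essentially correct, with one small wrinkle in the final step. In the sentence ``applying the first cocycle decomposition with $r''$ replaced by any $s\in(r',r'')$ writes $\bus{\gamma}{-}(m+1,r',m,s)-\buse{+}{m+1}{r'}{m}{s}=0$ as a sum of two nonnegative brackets,'' you assert that this quantity equals $0$ before decomposing it, but the first (\ref{instpt.c}-style) decomposition by itself does not establish this: one of its summands is precisely the vertical bracket at $s$ you are trying to show vanishes. You should first apply the \emph{second} (\ref{instpt.d}-style) decomposition at $(r',s)$, whose two brackets---the level-$m$ horizontal from $r'$ to $s$ and the vertical at $r'$---have already been shown to vanish, which yields the $=0$; only then does the first decomposition force the vertical bracket at $s$ to vanish. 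This is a one-line fix. Also, the closedness of $\IG{[\gamma,\tht]}\cap(\{m+\tfrac12\}\times\R)$ is not literally asserted in Definition~\ref{def:IG.int}; it follows because the set of points of increase of a continuous nondecreasing function is closed, but a word to that effect would help.

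Your route for (\ref{instpt.a})$\Leftrightarrow$(\ref{instpt.d}) differs from the paper's. The paper proves (\ref{instpt.a})$\Rightarrow$(\ref{instpt.b}) directly by a case split on the structure at the instability point (edge up, edge down, or a limit of proper points), and then $\neg$(\ref{instpt.a})$\Rightarrow\neg$(\ref{instpt.c}) by choosing $r',r''$ close to $t$. You instead run both implications contrapositively and replace the case analysis with a purely algebraic propagation: once the four corner brackets vanish at $(r',r'')$, you sweep the vanishing through the whole slab $\{m+\tfrac12\}\times[r',r'']$ using only cocycle plus monotonicity. Your version is tidy in that it avoids casework and makes explicit that the equivalence rests solely on the additive and order structure of the Busemann process; the paper's version is shorter because it never has to propagate anything.
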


\begin{proof}
The equivalence of (\ref{instpt.b}-\ref{instpt.d}) is a direct consequence of the cocycle and monotonicity properties in Theorem \ref{thm:B}. 

	Suppose next that $(m+\half,t) $ is an instability point and take arbitrary $r'<t<r''$. Consider the cases that can happen at the instability point.
    If there is an instability edge between $(m+\frac12,t)$ and $(m+\frac32,t)$, then $\bus{\gamma}{-}(m+1,r',m+1,r'')>\buse{+}{m+1}{r'}{m+1}{r''}$. 
    Monotonicity implies $\bus{\gamma}{-}(m+1,r'',m,r'')\ge\buse{+}{m+1}{r''}{m}{r''}$. Then the cocycle property implies
        \begin{align*}
            \bus{\gamma}{-}(m+1,r',m,r'')
            &=\bus{\gamma}{-}(m+1,r',m+1,r'')+\bus{\gamma}{-}(m+1,r'',m,r'')\\
            &>\buse{+}{m+1}{r'}{m+1}{r''}+\buse{+}{m+1}{r''}{m}{r''}\\
            &=\buse{+}{m+1}{r'}{m}{r''}.
            \end{align*}
    The case where there is an instability edge between $(m+\frac12,t)$ and $(m-\frac12,t)$ is similar, using $\bus{\gamma}{-}(m+1,r',m,r'')=\bus{\gamma}{-}(m+1,r',m,r')+\bus{\gamma}{-}(m,r',m,r'')$. 
    
    Suppose now that there is a sequence of proper instability points $(m+\frac12,r_n)$ with $r_n\to t$. (This includes the case where $r_n=t$ for all $n$.) Then we have for each $n$,
    $\bus{\gamma}{-}(m+1,r_n,m,r_n)>\buse{+}{m+1}{r_n}{m}{r_n}$. But then for $n$ large enough, $r'<r_n<r''$ and hence
         \begin{align*}
             &\bus{\gamma}{-}(m+1,r',m,r'')\\
             &\qquad=\bus{\gamma}{-}(m+1,r',m+1,r_n)+\buse{\gamma}{-}(m+1,r_n,m+1,r')+\bus{\gamma}{-}(m+1,r'',m,r')\\
             &\qquad>\buse{+}{m+1}{r'}{m+1}{r_n}+\buse{+}{m+1}{r_n}{m+1}{r'}+\buse{+}{m+1}{r''}{m}{r'}\\
             &\qquad=\buse{+}{m+1}{r'}{m}{r''}.
             \end{align*}
    We thus see that \eqref{bus:cond} is satisfied in all cases.
    
    Suppose now that $(m+\frac12,t)$ is not an instability point. Then there is no instability edge between $(m+\frac12,t)$ and $(m+\frac32,t)$ and, therefore, $\bus{\gamma}{-}(m+1,r',m+1,r'')=\buse{+}{m+1}{r'}{m+1}{r''}$ for all $r'<t$ and $r''>t$ close enough to $t$. Furthermore, there are no proper instability points converging to $(m+\frac12,t)$. This implies that  $\bus{\gamma}{-}(m+1,r'',m,r'')=\buse{+}{m+1}{r''}{m}{r''}$ for all $r''>t$ close enough to $t$. Then \eqref{bus:cond2} fails to hold. Thus, we have shown that if \eqref{instpt.a} does not hold, then \eqref{instpt.c}, which is equivalent to \eqref{instpt.b}, does not hold. The proof of the lemma is complete.
\end{proof}

\begin{defn}\label{def:II}
     For $\tht\ge\gamma>0$, $m\in\Z$, and an interval $I\subset\R$, $\{(m+\half,r):r\in I\}$ is a  $[\gamma,\tht]$-\emph{instability interval} if it is a subset of $\IG{[\gamma,\tht]}$. It is a proper $[\gamma,\tht]$-instability interval if $(m+\half,r)$ is a \emph{proper} $[\gamma,\tht]$-instability point for all $r\in I$. 
\end{defn}

\begin{defn}
    For $\tht\ge\gamma>0$, $m\in\Z$, and $t\in\R$, $(m+\half,t)$ is called a \emph{double-edge $[\gamma,\tht]$-instability point} if both $[(m-\half,t),(m+\half,t)]$ and $[(m+\half,t),(m+\frac32,t)]$ are $[\gamma,\tht]$-instability edges.
\end{defn}

\begin{lem}\label{lm:PIP}
    The following holds for all $\w\in\Omref{Omega8}$,
    $\tht\ge\gamma>0$, $m\in\Z$, and $s<t$. Suppose $\{(m+\half,r):s<r<t\}\subset\IG{[\gamma,\tht]}$. Then for each $r\in(s,t)$,  $(m+\half,r)$ is either a proper $[\gamma,\tht]$-instability point or a double-edge $[\gamma,\tht]$-instability point.
\end{lem}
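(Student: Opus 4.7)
The plan is to recast both instability-edge conditions at level $m+\half$ as points of increase of two continuous, nondecreasing one-variable gap functions, and to relate them through the cocycle. Set
\[
\phi(r') := \bus{\gamma}{-}(m,0,m,r') - \buse{+}{m}{0}{m}{r'}, \qquad
\psi(r') := \bus{\gamma}{-}(m+1,0,m+1,r') - \buse{+}{m+1}{0}{m+1}{r'}.
\]
By Theorem~\ref{thm:B}, both are continuous and nondecreasing in $r'$, and by Definition~\ref{def:edge.int}, $r'$ is a point of increase of $\phi$ (respectively, $\psi$) iff $[(m-\half,r'),(m+\half,r')]$ (respectively, $[(m+\half,r'),(m+\tfrac32,r')]$) is a $[\gamma,\tht]$-instability edge. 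Two applications of the cocycle \eqref{cocycle} give
\[
\chi(r') := \buse{+}{m}{r'}{m+1}{r'} - \bus{\gamma}{-}(m,r',m+1,r') = \phi(r') - \psi(r') - K
\]
for the constant $K := \phi(0) - \psi(0)$. Monotonicity \eqref{B:mono-eqn} yields $\chi \ge 0$, and Definition~\ref{def:instpt.int} says $(m+\half, r')$ is a proper $[\gamma,\tht]$-instability point exactly when $\chi(r') > 0$.

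Fix $r \in (s,t)$ with $(m+\half, r)$ improper, so $\chi(r) = 0$, i.e., $\phi(r) = \psi(r) + K$. I will show that $r$ is a point of increase of $\phi$; the argument for $\psi$ is analogous. Suppose for contradiction that $\phi$ is constant, with value $c$, on some neighborhood $(r-\e, r+\e)$. Then $c = \phi(r) = \psi(r) + K$, and the inequality $\chi \ge 0$ forces $\psi(r') \le c - K = \psi(r)$ on $(r-\e, r+\e)$; combined with monotonicity of $\psi$, this pins $\psi \equiv \psi(r)$ on $[r, r+\e)$, so $\chi \equiv 0$ on $[r, r+\e)$. Pick any $r' \in (r, r+\e) \cap (s,t)$, which is nonempty since $(s,t)$ is open. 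Then $r'$ is not proper ($\chi(r') = 0$), is not on either adjacent instability edge (since $\phi$ and $\psi$ are constant on the neighborhoods $(r-\e, r+\e)$ and $(r, r+\e)$ of $r'$), and is not in the closure of the proper set either: the set $\{\chi > 0\}$ is disjoint from $[r, r+\e)$, and hence every proper point lies at distance at least $\min(r' - r,\, r+\e - r') > 0$ from $r'$. This contradicts $(m+\half, r') \in \IG{[\gamma,\tht]}$.

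The symmetric argument assumes $\psi$ is constant on $(r-\e, r+\e)$; monotonicity of $\phi$ then forces $\phi \equiv \phi(r)$ on $(r-\e, r]$ rather than $[r, r+\e)$, and a contradiction is obtained from any $r' \in (r-\e, r) \cap (s,t)$ by the same three-part argument. The main point to track carefully is this left/right asymmetry: because of the sign of $\chi = \phi - \psi - K \ge 0$, a $\phi$-plateau at $r$ pins $\psi$ from above only to the right of $r$, while a $\psi$-plateau pins $\phi$ from below only to the left of $r$. Once both $\phi$ and $\psi$ have a point of increase at $r$, both adjacent edges are $[\gamma,\tht]$-instability edges, making $(m+\half, r)$ a double-edge instability point as required.
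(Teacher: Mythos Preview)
Your argument is correct, though note that your stated value of $K$ is off: since $\phi(0)=\psi(0)=0$ by the cocycle, you would get $K=0$, whereas in fact the cocycle yields $\chi(r')=\phi(r')-\psi(r')+\chi(0)$, so the constant is $-\chi(0)=\bus{\gamma}{-}(m,0,m+1,0)-\buse{+}{m}{0}{m+1}{0}$. This slip is harmless, since you only use that $\chi-\phi+\psi$ is constant together with $\chi\ge0$.

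Your route differs from the paper's. The paper invokes the characterization Theorem~\ref{thm:instpt}: fixing $r'\in(s,r)$ and applying part~\eqref{instpt.c} to the instability point $(m+\tfrac12,(r'+r)/2)$ with endpoint $r''=r$, the alternative on the vertical increment at $r$ is killed by the improperness assumption, forcing $\bus{\gamma}{-}(m+1,r',m+1,r)>\buse{+}{m+1}{r'}{m+1}{r}$ for all $r'\in(s,r)$; monotonicity then makes $r$ a point of increase of $\psi$. The level-$m$ edge follows symmetrically from part~\eqref{instpt.d}. By contrast, you avoid Theorem~\ref{thm:instpt} entirely and argue from the definition of $\IG{[\gamma,\tht]}$ via a plateau contradiction: a flat $\phi$ near $r$ forces (through $\chi\ge0$ and monotonicity of $\psi$) a one-sided interval on which $\chi\equiv0$ and both $\phi,\psi$ are flat, producing a nearby point of $(s,t)$ that cannot lie in the graph. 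The paper's proof is shorter because the Busemann characterization has already absorbed the cocycle bookkeeping; your argument is more self-contained and exposes the one-variable structure (the identity $\chi=\phi-\psi+\text{const}$ and the left/right asymmetry it induces) more transparently.
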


\begin{proof}
    Suppose $(m+\half,r)$ is not a proper $[\gamma,\tht]$-instability point. Then it must be that $\bus{\gamma}{-}(m,r,m+1,r)=\buse{+}{m}{r}{m+1}{r}$. 
    Take $s<r'<r<t$. Since $(m+\half,(r'+r)/2)$ is an instability point, Theorem \ref{thm:instpt}\eqref{instpt.c} (with $r''=r$) implies that $\bus{\gamma}{-}(m+1,r',m+1,r)>\buse{+}{m+1}{r'}{m+1}{r}$. This shows that $r$ is a point of increase for $\bus{\gamma}{-}(m+1,0,m+1,\abullet)-\buse{+}{m+1}{0}{m+1}{\abullet}$ and, therefore, $[(m+\half,r),(m+\frac32,r)]$ is an instability edge.
    Similarly, taking $s<r<r''<t$ and using Theorem \ref{thm:instpt}\eqref{instpt.d} with $r'=r$ shows that 
    $r$ is a point of increase for $\bus{\gamma}{-}(m,0,m,\abullet)-\buse{+}{m}{0}{m}{\abullet}$ and, therefore, $[(m-\half,r),(m+\half,r)]$ is an instability edge.
\end{proof}

\begin{rmk}\label{rk:(gamma,tht)}
    When $\tht>\gamma>0$, replacing $\gamma-$ everywhere with $\gamma+$ and changing $\tht+$ everywhere to $\tht-$ gives another instability graph, denoted by $\IG{(\gamma,\tht)}$. Then all of our results about $\IG{[\gamma,\tht]}$ hold for $\IG{(\gamma,\tht)}$.
\end{rmk}

\section{Geometric characterization of the instability graph}\label{def:Bus:geometric}
Now we develop equivalent definitions of instability through semi-infinite geodesics. 
We begin by looking at instability edges.

\begin{lem}
\label{lm:edge}
    For all $\w\in\Omref{Omega8}$, $\tht\ge\gamma>0$,
	 and $(m,t)\in\Z\times\R$, $\bus{\gamma}{-}(m,0,m,\abullet)-\buse{+}{m}{0}{m}{\abullet}$ has a point of increase at $t$ if and only if for any $r'<t$ and $r''>t$,
	\begin{align}\label{R-&L+}
	\geo\from{(m,r')}\dir{R}{\gamma}{-} \cap \geo\from{(m,r'')}\dir{L}{\tht}{+} = \varnothing.
	\end{align}
\end{lem}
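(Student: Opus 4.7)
The plan is to reduce the claim to the equivalence in Theorem \ref{thm:Bus-geo} by rewriting the point-of-increase condition via the cocycle property. Let me write $f(s)=\bus{\gamma}{-}(m,0,m,s)-\buse{+}{m}{0}{m}{s}$. By the cocycle property (Theorem \ref{thm:B}\eqref{B:cocycle}), for any $r'<r''$,
\[
f(r'')-f(r')=\bus{\gamma}{-}(m,r',m,r'')-\buse{+}{m}{r'}{m}{r''},
\]
and the monotonicity in Theorem \ref{thm:B}\eqref{B:mono} ensures this quantity is always nonnegative. Consequently, $f$ has a point of increase at $t$ if and only if, for every $r'<t<r''$,
\[
\bus{\gamma}{-}(m,r',m,r'')>\buse{+}{m}{r'}{m}{r''}.
\]

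Next I will convert this strict inequality into a statement about geodesics by invoking Theorem \ref{thm:Bus-geo}. Writing $\bfx=(m,r'')$ and $\bfy=(m,r')$, we have $\bfx\succeq\bfy$ in the south-east partial order. Applying $\bus{}{}(\bfx,\bfy)=-\bus{}{}(\bfy,\bfx)$ (itself a consequence of the cocycle), the displayed strict inequality above is equivalent to
\[
\bus{\gamma}{-}(\bfx,\bfy)<\buse{+}{}{}{}{}(\bfx,\bfy),
\]
and by monotonicity the reverse non-strict inequality always holds, so this strict inequality is in turn equivalent to $\bus{\gamma}{-}(\bfx,\bfy)\ne\bus{\tht+}(\bfx,\bfy)$. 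Theorem \ref{thm:Bus-geo} then identifies this last condition with $\geo\from{\bfy}\dir{R}{\gamma}{-}\cap\geo\from{\bfx}\dir{L}{\tht}{+}=\varnothing$, which is exactly \eqref{R-&L+}.

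Combining the two steps, $t$ is a point of increase of $f$ precisely when, for every $r'<t<r''$, the two geodesics $\geo\from{(m,r')}\dir{R}{\gamma}{-}$ and $\geo\from{(m,r'')}\dir{L}{\tht}{+}$ are disjoint. The proof is essentially a bookkeeping argument: the hard analytic content sits inside Theorem \ref{thm:Bus-geo}. The only point requiring mild care is to keep track of which endpoint plays the role of $\bfx$ and which plays the role of $\bfy$ in the south-east ordering convention; once that is aligned with the orientation of $\geo\dir{R}{\gamma}{-}$ (from the lower-left endpoint) and $\geo\dir{L}{\tht}{+}$ (from the upper-right endpoint), the equivalence falls out immediately.
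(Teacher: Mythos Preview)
Your proof is correct and is exactly the argument the paper has in mind: the paper simply writes ``This comes straight from Theorem \ref{thm:Bus-geo}'', and you have spelled out the cocycle/monotonicity bookkeeping that makes that reduction precise. In particular, your identification of $\bfx=(m,r'')$, $\bfy=(m,r')$ (so that $\bfx\succeq\bfy$) and the observation that the nonstrict inequality $\bus{\gamma}{-}(\bfx,\bfy)\le\bus{\tht}{+}(\bfx,\bfy)$ always holds, reducing ``strict'' to ``$\ne$'', are the only things one needs to check before invoking Theorem \ref{thm:Bus-geo}.
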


This comes straight from Theorem \ref{thm:Bus-geo}. 
The above result requires considering a whole interval around $t$. The following result gives a criterion only in terms of geodesics out of $(m,t)$.

\begin{lem}\label{geodefinstabedge1}
    For all $\w\in\Omref{Omega8}$, $\tht\ge\gamma>0$, and $(m,t)\in\Z\times\R$, $\bus{\gamma}{-}(m,0,m,\abullet)-\buse{+}{m}{0}{m}{\abullet}$ has a point of increase at $t$ if and only if $\geo\from{(m,t)}\dir{R}{\tht}{+}$ and $\geo\from{(m,t)}\dir{L}{\gamma}{-}$ separate immediately and never touch again.
\end{lem}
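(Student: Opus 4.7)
The plan is to derive Lemma \ref{geodefinstabedge1} from Lemma \ref{lm:edge} by using the monotonicity and continuity of the geodesics process (Theorem \ref{thm:geo}) to transfer the uniform disjointness $\geo\from{(m,r')}\dir{R}{\gamma}{-}\cap\geo\from{(m,r'')}\dir{L}{\tht}{+}=\varnothing$ over all $r'<t<r''$ into a statement about the single pair $\geo\from{(m,t)}\dir{L}{\gamma}{-}$ and $\geo\from{(m,t)}\dir{R}{\tht}{+}$. Throughout, denote by $(s_k^L)$, $(s_k^R)$, $(s_k^{r'})$, $(s_k^{r''})$ the jump times of $\geo\from{(m,t)}\dir{L}{\gamma}{-}$, $\geo\from{(m,t)}\dir{R}{\tht}{+}$, $\geo\from{(m,r')}\dir{R}{\gamma}{-}$, and $\geo\from{(m,r'')}\dir{L}{\tht}{+}$, respectively.

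For the backward direction, assume the two limit geodesics separate immediately and never touch again. The monotonicity $s_k^L\le s_k^R$ combined with ``separate immediately'' forces $s_m^L=t<s_m^R$ (the left limit goes up at $(m,t)$ while the right goes horizontally), and the ``never touch again'' condition, given that the level-$n$ horizontal segments of the two paths are $[s_{n-1}^L,s_n^L]$ and $[s_{n-1}^R,s_n^R]$, translates into the strict inequality $s_n^L<s_{n-1}^R$ for every $n\ge m+1$. If $(n,s)$ were common to the two approximating geodesics for some $r'<t<r''$, Theorem \ref{thm:geo}\eqref{thm:geo:mono} would give $s_k^{r'}\le s_k^L$ and $s_k^R\le s_k^{r''}$, and the containment conditions would force $s_{n-1}^R\le s\le s_n^L$, contradicting $s_n^L<s_{n-1}^R$ when $n\ge m+1$; for $n=m$, $s\le s_m^{r'}\le s_m^L=t<r''\le s$ gives a contradiction directly. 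Lemma \ref{lm:edge} then delivers the point of increase.

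For the forward direction, I argue the contrapositive: if the two limit geodesics fail to separate immediately or share some point $(n,s)\neq(m,t)$, then for suitable $r'<t<r''$ the approximating geodesics must intersect, contradicting Lemma \ref{lm:edge}. The main tool is continuity of the geodesics process (Theorem \ref{thm:geo}\eqref{thm:geo:cont}), which gives $s_k^{r'}\nearrow s_k^L$ and $s_k^{r''}\searrow s_k^R$ as $r'\nearrow t$ and $r''\searrow t$. In the clean scenario where a shared point $(n^*,c)$ satisfies the strict interior condition $s_{n^*-1}^R<c<s_{n^*}^L$, for $r'$ and $r''$ sufficiently close to $t$ the point $(n^*,c)$ lies in both approximating paths, yielding the contradiction. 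The main obstacle is the degenerate-corner case where every shared point $(n,s)$ satisfies $s_{n-1}^R=s=s_n^L$, so no open interior exists on any single level. To resolve this I plan to invoke Proposition \ref{nodouble} to rule out two consecutive vertical edges in either path, which forces $s_{n-1}^L<s<s_n^R$; then, by exploiting the positive direction of both geodesics (Theorem \ref{thm:geo}\eqref{thm:geo:dir}) and inspecting the horizontal segments at levels $n-1$ and $n+1$ that meet the corner, I would locate an adjacent level $n^*$ at which a genuine interior $c$ can be selected, completing the contradiction.
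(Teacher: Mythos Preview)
Your backward direction is correct and essentially the same as the paper's.

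Your forward direction has a genuine gap in the degenerate-corner case. When the only common point of $L=\geo\from{(m,t)}\dir{L}{\gamma}{-}$ and $R=\geo\from{(m,t)}\dir{R}{\tht}{+}$ beyond $(m,t)$ is a corner $(n,s)$ with $s_{n-1}^R=s=s_n^L$, there is \emph{no} level $n^*$ at which their horizontal segments overlap on a nontrivial interval: at level $n$ the overlap is the single point $\{s\}$, and at every other level $k$ you have $s_k^L<s_{k-1}^R$ (disjointness). Proposition~\ref{nodouble} gives $s_{n-1}^L<s<s_n^R$, but this only separates the \emph{other} corners of $L$ and $R$ from $(n,s)$; it does not produce overlap at level $n-1$ (you would need $s_{n-2}^R\le s_{n-1}^L$, which is not implied) or at level $n+1$. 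Directedness is an asymptotic statement and says nothing about local geometry near $(n,s)$. So your continuity argument, which only gives $s_k^{r''}\to s_k^R$ and $s_k^{r'}\to s_k^L$, cannot force the approximating geodesics through $(n,s)$: you would need $s_{n-1}^{r''}\le s$, but monotonicity gives $s_{n-1}^{r''}\ge s_{n-1}^R=s$ with mere convergence, not equality.

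The missing ingredient is coalescence of same-label geodesics (Theorem~\ref{thm:geo}\eqref{thm:geo:coal}) combined with the leftmost/rightmost property (Theorem~\ref{thm:geo}\eqref{thm:geo:LR}); this is exactly what the paper uses. In the split-and-reconvene case $R$ goes right, so for $r''>t$ close to $t$ the point $(m,r'')$ lies on $R$. Then $\geo\from{(m,r'')}\dir{L}{\tht}{+}$ is a $\tht+$ geodesic starting on $R$, weakly to the right of $R$ by monotonicity, and coalescing with $R$; the leftmost/rightmost property forces it to equal the continuation of $R$ from $(m,r'')$. Symmetrically, $\geo\from{(m,r')}\dir{R}{\gamma}{-}$ coalesces with $L$ at $(M,t)$ for $M\in\{m,m+1\}$ and $r'$ close to $t$. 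Thus the approximating geodesics are \emph{eventually equal} to $L$ and $R$ (not merely convergent), so both contain the corner $(n,s)$ exactly. You should replace the vague ``adjacent level'' plan by this coalescence argument.
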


\begin{figure}[ht!]
\begin{center}

\begin{tikzpicture}[>=latex, scale=0.6]
\begin{scope}
\draw(0,0)--(6,0);
\draw(0,2)--(6,2);
\draw[line width=2pt,color=Rp](1,0)--(1.5,0)--(1.5,2)--(2,2)--(2,3);
\draw[line width=2pt,color=Lm](2.25,0)--(2.25,2)--(3,2)--(3,3);
\draw[line width=2pt,color=Rp](2.25,0)--(3.5,0)--(3.5,2.05)--(5,2.05);
\draw[line width=2pt,color=Lm](3.8,0)--(4.25,0)--(4.25,1.95)--(5,1.95);

\shade[ball color=pt](1,0)circle(1.5mm);
\draw(1,0)node[below]{$r'$};
\shade[ball color=pt](2.25,0)circle(1.5mm);
\draw(2.25,0)node[below]{$t$};
\shade[ball color=pt](3.8,0)circle(1.5mm);
\draw(3.8,0)node[below]{$r''$};
\draw(6,0)node[right]{$m$};
\draw(6,2)node[right]{$m+1$};
\end{scope}

\begin{scope}[shift={(9,0)}]
\draw(0,0)--(6,0);
\draw(0,2)--(6,2);
\draw[line width=2pt,color=Rp](1,0)--(1.5,0)--(1.5,2.15)--(4,2.15);
\draw[line width=2pt,color=Lm](1.9,0)--(1.9,2.05)--(4,2.05);
\draw[line width=2pt,color=Rp](2.02,0)--(2.02,1.95)--(4,1.95);
\draw[line width=2pt,color=Lm](2.4,0)--(2.75,0)--(2.75,1.85)--(4,1.85);

\shade[ball color=pt](1,0)circle(1.5mm);
\draw(1,0)node[below]{$r'$};
\shade[ball color=pt](1.97,0)circle(1.5mm);
\draw(1.97,-0.1)node[below]{$t$};
\shade[ball color=pt](2.4,0)circle(1.5mm);
\draw(2.5,0)node[below]{$r''$};
\shade[ball color=gray](4,2)circle(2mm);
\draw(4,0)node[below]{$T$};
\draw(4,2.1)node[above]{$z$};
\draw[line width=1pt](4,-0.15)--(4,0.15);
\draw(6,0)node[right]{$m$};
\draw(6,2)node[right]{$m+1$};
\end{scope}

\begin{scope}[shift={(18,0)}]
\draw(0,0)--(6,0);
\draw(0,1)--(6,1);
\draw(0,3)--(6,3);
\draw[line width=2pt,color=Rp](1,0)--(1,1.05)--(2,1.05)--(2,1.55);
\draw[line width=2pt,color=Rp,dashed](2,1.5)--(3,2.5);
\draw[line width=2pt,color=Rp](3,2.5)--(3,3.05)--(4.5,3.05);
\draw[line width=2pt,color=Lm](1.5,0)--(1.5,0.95)--(2.1,0.95)--(2.1,1.45);
\draw[line width=2pt,color=Lm,dashed](2.07,1.42)--(3.1,2.5);
\draw[line width=2pt,color=Lm](3.1,2.5)--(3.1,2.95)--(4.5,2.95);
\draw[line width=2pt,color=Rp](1.5,0.05)--(3.5,0.05)--(3.5,1.05)--(4,1.05)--(4,1.55);
\draw[line width=2pt,color=Rp,dashed](4,1.55)--(4.5,2.55);
\draw[line width=2pt,color=Rp](4.5,2.55)--(4.5,3.05);
\draw[line width=2pt,color=Lm](2,-0.05)--(3.6,-0.05)--(3.6,0.95)--(4.1,0.95)--(4.1,1.5);
\draw[line width=2pt,color=Lm,dashed](4.1,1.47)--(4.57,2.47);
\draw[line width=2pt,color=Lm](4.62,2.47)--(4.62,3);

\shade[ball color=pt](1,0)circle(1.5mm);
\draw(1,0)node[below]{$r'$};
\shade[ball color=pt](1.5,0)circle(1.5mm);
\draw(1.5,-0.1)node[below]{$t$};
\shade[ball color=pt](2,0)circle(1.5mm);
\draw(2.1,0)node[below]{$r''$};
\shade[ball color=gray](4.6,3)circle(2mm);
\draw(4.6,3.1)node[above]{$z$};
\draw(6,0)node[right]{$m$};
\draw(6,1)node[right]{$m+1$};
\end{scope}

\end{tikzpicture}
\end{center}

\caption{\small An illustration of the proof of Lemma \ref{geodefinstabedge1}. Left: The proof of the first direction. The two geodesics in the middle are $\geo\from{(m,t)}\dir{L}{\gamma}{-}$ and $\geo\from{(m,t)}\dir{R}{\tht}{+}$. They separate immediately and do not meet again, thus separating the two outer geodesics $\geo\from{(m,r')}\dir{R}{\tht}{+}$ and $\geo\from{(m,r'')}\dir{L}{\gamma}{-}$. Middle: The first case in the second direction. Right: The second and third cases in the second direction.}
\label{fig:geodefinstabedge1}
\end{figure}
  
\begin{proof}
	Suppose that the lemma's geodesic condition is true. Take $r'<t$ and $r''>t$. 
	Then, by the first inequality in \eqref{geomono}, the geodesic $\geo\from{(m,r'')}\dir{L}{\tht}{+}$ either instantly coalesces with or stays to the right of $\geo\from{(m,t)}\dir{R}{\tht}{+}$. Likewise, $\geo\from{(m,r')}\dir{R}{\gamma}{-}$ must stay to the left of $\geo\from{(m,t)}\dir{L}{\gamma}{-}$. Thus, by montonicity, $\geo\from{(m,r')}\dir{R}{\gamma}{-} \cap\geo\from{(m,r'')}\dir{L}{\tht}{+} = \varnothing$ and  by Lemma \ref{lm:edge}, $t$ is a point of increase of $\bus{\gamma}{-}(m,0,m,\abullet)-\buse{+}{m}{0}{m}{\abullet}$. See the left panel in Figure \ref{fig:geodefinstabedge1}.
 	
	For the other direction, suppose that the lemma's geodesic condition is false. 
    Then $\geo\from{(m,t)}\dir{L}{\gamma}{-}$ and $\geo\from{(m,t)}\dir{R}{\tht}{+}$ 
  must do one of the following: (1) both go up one level and then both proceed rightwards, (2) split instantly and reconvene again at some later point $\bfz \in \Z\times\R$, or (3) go rightwards together immediately; they cannot proceed vertically together for more than one level, nor can they split at $(m+1,t)$, as both scenarios would make $\geo\from{(m,t)}\dir{L}{\gamma}{-}$ take two consecutive vertical steps, which does not happen by Proposition \ref{nodouble}. See the middle panel in Figure \ref{fig:geodefinstabedge1}.

  For the first case, suppose that $\geo\from{( m,t )}\dir{R}{\tht}{+}$ and $\geo\from{(m,t)}\dir{L}{\gamma}{-}$ both proceed upwards to $(m+1,t)$ and then proceed together rightward on level $m+1$ to $(m+1,T)$ for $T>t$. In that case, by the montonicity \eqref{geomono} and continuity \eqref{geocont}, for $r''>t$ and $r'<t$ sufficiently close to $t$, $\geo\from{(m,r'')}\dir{L}{\tht}{+}$ and $\geo\from{(m,r')}\dir{R}{\gamma}{-}$ must both pass through $(m+1,T)$, 
which by Lemma \ref{lm:edge} implies that $t$ is not a point of increase for $\bus{\gamma}{-}(m,0,m,\abullet)-\buse{+}{m}{0}{m}{\abullet}$.
	
    In the second and third cases,  $\geo\from{(m,t)}\dir{R}{\tht}{+}$ must immediately proceed laterally on level $m$. Then for $r''>t$ sufficiently close to $t$, $\geo\from{(m,r'')}\dir{L}{\tht}{+} \subset \geo\from{(m,t)}\dir{R}{\tht}{+}$. By Proposition \ref{nodouble}, $\geo\from{(m,t)}\dir{L}{\gamma}{-}$ must either proceed laterally on level $m$ or $m+1$. Regardless, on that level $M \in \{m,m+1\}$, $\geo\from{(M,t)}\dir{L}{\gamma}{-}$ is the limit of $\geo\from{(M,r')}\dir{R}{\gamma}{-}$ as $r'\nearrow t$ and thus for $r'<t$ sufficiently near $t$, $\geo\from{(M,r')}\dir{R}{\gamma}{-}$ must proceed laterally to $(M,t)$, coalescing there with $\geo\from{(M,t)}\dir{L}{\gamma}{-}$. By monotonicity, taking such an $r'$ yields that $ \geo\from{(m,r')}\dir{R}{\gamma}{-}$ must also coalesce with $ \geo\from{(m,t)}\dir{L}{\gamma}{-}$ at $(M,t)$. Thus, for $r' <t$ and $r''>t$ sufficiently close to $t$, $\bfz \in \geo\from{(m,r')}\dir{R}{\gamma}{-} \cap  \geo\from{(m,r'')}\dir{L}{\tht}{+}  $ and, according to Lemma \ref{lm:edge}, $t$ is not a point of increase for $\bus{\gamma}{-}(m,0,m,\abullet)-\buse{+}{m}{0}{m}{\abullet}$
    See the right panel in Figure \ref{fig:geodefinstabedge1}.
\end{proof}

\begin{lem}\label{edge-Hausdorff}
    The following holds for all $\w\in\Omref{Omega8}$. Take $\tht\ge\gamma>0$. If $\tht=\gamma$, then assume $\tht\in\baddir$.  For all $m\in\Z$, the set of points of increase of the function in \eqref{inst-edge} has a Hausdorff dimension $\half$. 
\end{lem}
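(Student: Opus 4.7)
The plan is to bootstrap from the case $\gamma = \tht \in \baddir$, which is exactly Theorem~\ref{B:increase}. Henceforth assume $\gamma < \tht$, and establish matching lower and upper bounds of $\half$ on the Hausdorff dimension.

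For the lower bound, use density of $\baddir$ (Theorem~\ref{baddir dense}) to pick some $\eta \in \baddir \cap (\gamma,\tht)$, and decompose
\begin{align*}
\bus{\gamma-}{}(m,0,m,s) - \buse{+}{m}{0}{m}{s}
&= \bigl[\bus{\gamma-}{}(m,0,m,s) - \bus{\eta-}{}(m,0,m,s)\bigr] \\
&\quad + \bigl[\bus{\eta-}{}(m,0,m,s) - \bus{\eta+}{}(m,0,m,s)\bigr] \\
&\quad + \bigl[\bus{\eta+}{}(m,0,m,s) - \buse{+}{m}{0}{m}{s}\bigr].
\end{align*}
Combining the cocycle and monotonicity-in-$\tht$ properties from Theorem~\ref{thm:B} makes each of the three brackets nondecreasing in $s$, so any point of increase of the middle bracket is also a point of increase of the full sum. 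Theorem~\ref{B:increase} applied at $\eta \in \baddir$ gives that the middle bracket has a set of points of increase of Hausdorff dimension $\half$, yielding the lower bound.

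For the upper bound, fix any $a < b$ and let $K = \{m\} \times [a,b]$. The locally-constant property in Theorem~\ref{thm:B}\eqref{B:jump} supplies, for each $\tht' \in [\gamma,\tht]$, a neighborhood $(\tht'-\e,\tht'+\e)$ on which $\tht'' \mapsto \bus{\tht''\pm}{}(m,0,m,s)$ is constant on each side of $\tht'$ for every $s \in [a,b]$. Compactness of $[\gamma,\tht]$ extracts a finite subcover, so only finitely many values $\eta_1 < \cdots < \eta_k$ in $[\gamma,\tht]$ can induce a jump of the Busemann process on $K$, and each $\eta_j$ must belong to $\baddir$. Telescoping via $\bus{\eta_j+}{}(m,0,m,s) = \bus{\eta_{j+1}-}{}(m,0,m,s)$ on $K$, together with the boundary identifications $\bus{\gamma-}{}(m,0,m,s) = \bus{\eta_1-}{}(m,0,m,s)$ and $\bus{\eta_k+}{}(m,0,m,s) = \buse{+}{m}{0}{m}{s}$ on $K$, yields
\[\bus{\gamma-}{}(m,0,m,s) - \buse{+}{m}{0}{m}{s} = \sum_{j=1}^{k}\bigl[\bus{\eta_j-}{}(m,0,m,s) - \bus{\eta_j+}{}(m,0,m,s)\bigr]\]
for all $s \in [a,b]$. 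Since each summand is nondecreasing, the set of points of increase of the left-hand side on $[a,b]$ equals the finite union of those of the summands, each of Hausdorff dimension $\half$ by Theorem~\ref{B:increase}; finite unions preserve dimension $\half$, and by countable stability of Hausdorff dimension, so does the countable union over a sequence of intervals $[a,b]$ covering $\R$.

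The main bookkeeping obstacle will be the careful verification of the boundary identifications at $\gamma$ and $\tht$ in the telescoping identity, which splits into subcases depending on whether each endpoint lies in $\baddir$ and whether it induces a nontrivial jump on $K$ (i.e.\ whether $\eta_1 = \gamma$ or $\eta_k = \tht$).
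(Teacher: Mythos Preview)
Your proof is correct. The lower bound matches the paper's argument essentially verbatim: pick $\eta\in[\gamma,\tht]\cap\baddir$, use monotonicity to dominate the $\eta$-difference, and invoke Theorem~\ref{B:increase}.

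For the upper bound the two proofs diverge. The paper does not telescope; instead it observes, via the geometric characterization in Lemma~\ref{geodefinstabedge1}, that any point of increase $s$ forces the geodesic $\geo\from{(m,s)}\dir{L}{\gamma}{-}$ to go immediately upward, whence $s\in\mathrm{CI}_m$. Theorem~\ref{CI-Hausdorff} then gives dimension $\half$ for $\mathrm{CI}_m$ in one stroke, uniformly over all of $\R$ and without any compactness or telescoping bookkeeping.

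Your route stays entirely within the analytic properties of the Busemann process (Theorem~\ref{thm:B}\eqref{B:jump} and Theorem~\ref{B:increase}), never touching the geodesic description or $\mathrm{CI}_m$. This is a genuine alternative and arguably more self-contained, but it costs more: the compactness extraction of finitely many jump directions $\eta_j$, the telescoping identity with its endpoint case analysis (the ``bookkeeping obstacle'' you flag), and the countable-stability wrap-up. One small omission: your compact set should be $K=\{m\}\times([a,b]\cup\{0\})$ rather than $\{m\}\times[a,b]$, since the increments $\bus{\cdot}{}(m,0,m,s)$ involve the fixed point $(m,0)$ as well; this is harmless. In short, both arguments work; the paper's is a one-liner once the geodesic machinery is in place, while yours trades that machinery for a longer but more elementary compactness argument.
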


\begin{proof}
    By Theorem \ref{baddir dense}, there exists a $\tht'\in[\gamma,\tht]\cap\baddir$. 
    By the monotonicity property in Theorem \ref{thm:B}\eqref{B:mono} we have, for any $\e>0$, 
    \[\bus{\gamma}{-}(m,s-\e,m,s+\e)\ge\bus{\tht'}{-}(m,s-\e,m,s+\e)\ge\bus{\tht'}{+}(m,s-\e,m,s+\e)\ge\bus{\tht}{+}(m,s-\e,m,s+\e).\]
    Therefore, the set in question contains the set of points of increase of the function 
    
    \[s\mapsto\bus{\tht'}{-}(m,0,m,s)-\bus{\tht'}{+}(m,0,m,s),\] which by Theorem \ref{B:increase} has a Hausdorff dimension of $\half$. On the other hand, if $s$ is a point of increase of \eqref{inst-edge}, then Lemma \ref{geodefinstabedge1} implies that $\geo\from{(m,s)}\dir{L}{\gamma}{-}$ goes up from $(m,s)$ and, therefore, $s\in\CI_m$, which by Theorem \ref{CI-Hausdorff} has a Hausdorff dimension of $\half$.
\end{proof}

We now turn to the instability points.
Theorem \ref{thm:Bus-geo} implies the following characterization of proper instability points through semi-infinite geodesics.

\begin{lem}\label{geodefproperIS}
    For all $\w\in\Omref{Omega8}$, $\tht\ge\gamma>0$,  $m\in\Z$, and $t\in\R$,
	$
	\buse{+}{m}{t}{m+1}{t}<\bus{\gamma}{-}(m,t,m+1,t),
	$
        if and only if
	$
	\geo\from{(m+1,t)}\dir{R}{\gamma}{-}\cap \geo\from{(m,t)}\dir{L}{\tht}{+} = \varnothing.
	$
\end{lem}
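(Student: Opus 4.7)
The plan is to derive this lemma as an immediate corollary of Theorem \ref{thm:Bus-geo} combined with the vertical form of the monotonicity in Theorem \ref{thm:B}\eqref{B:mono}.

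First I would identify $\bfx=(m,t)$ and $\bfy=(m+1,t)$ and verify the ordering $\bfx\succeq\bfy$ required to invoke Theorem \ref{thm:Bus-geo}: by the convention of Section \ref{sec:setting}, $(m+1,t)\preceq(m,t)$ follows from $m+1\ge m$ and $t\le t$, both immediate. With this pairing Theorem \ref{thm:Bus-geo} yields
\[\bus{\gamma}{-}(m,t,m+1,t)=\bus{\tht}{+}(m,t,m+1,t)\iff \geo\from{(m+1,t)}\dir{R}{\gamma}{-}\cap\geo\from{(m,t)}\dir{L}{\tht}{+}\ne\varnothing.\]

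I would then invoke the vertical monotonicity in Theorem \ref{thm:B}\eqref{B:mono}, which yields the weak inequality $\bus{\gamma}{-}(m,t,m+1,t)\le\bus{\tht}{+}(m,t,m+1,t)$ whenever $\gamma\le\tht$. Combined with the displayed equivalence, this shows that strict inequality between the two Busemann values is equivalent to their non-equality, and hence to the empty-intersection condition on the two geodesics. The argument is purely algebraic; the only step of substance is the correct matching of $(m,t)$ and $(m+1,t)$ to the $\bfx\succeq\bfy$ hypothesis of Theorem \ref{thm:Bus-geo}, so I do not anticipate any substantive obstacle.
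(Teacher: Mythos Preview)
Your proposal is correct and is exactly the paper's approach: the paper merely remarks that the lemma follows from Theorem \ref{thm:Bus-geo}, and you have spelled this out, including the use of the vertical monotonicity $\bus{\gamma}{-}(m,t,m+1,t)\le\bus{\tht}{+}(m,t,m+1,t)$ from Theorem \ref{thm:B}\eqref{B:mono} to convert non-equality into strict inequality. (As written, the displayed inequality in the lemma has its two sides transposed relative to Definition \ref{def:instpt.int}; your argument establishes the intended direction $\bus{\gamma}{-}(m,t,m+1,t)<\bus{\tht}{+}(m,t,m+1,t)$.)
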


The next result characterizes all instability points. 

\begin{thm}\label{geo:instpt}
    For all $\w\in\Omref{Omega8}$, $\tht\ge\gamma>0$,  $m\in\Z$, and $t\in\R$, $(m+\half,t)$ is a $[\gamma,\tht]$-instability point 
        if and only if $\geo\from{(m,t)}\dir{R}{\tht}{+}$ and $\geo\from{(m+1,t)}\dir{L}{\gamma}{-}$ do not intersect.
\end{thm}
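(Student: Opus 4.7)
The plan is to reduce the statement to Theorem \ref{thm:instpt}\eqref{instpt.b}, which characterizes $(m+\half,t)\in\IG{[\gamma,\tht]}$ by the Busemann inequality $\bus{\gamma-}{}(m+1,r',m,r'')>\bus{\tht+}{}(m+1,r',m,r'')$ for every $r'<t$ and $r''>t$. Applying the cocycle identity $\bus{}{}(\bfx,\bfy)=-\bus{}{}(\bfy,\bfx)$ to flip the arguments, and observing that $(m,r'')\succeq(m+1,r')$, I invoke Theorem \ref{thm:Bus-geo}, together with the monotonicity $\bus{\gamma-}{}(m,r'',m+1,r')\le\bus{\tht+}{}(m,r'',m+1,r')$ (obtained from \eqref{B:mono-eqn} via the cocycle by decomposing the path through $(m,r')$). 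This translates the strict Busemann inequality into $\geo\from{(m+1,r')}\dir{R}{\gamma-}{}\cap\geo\from{(m,r'')}\dir{L}{\tht+}{}=\varnothing$. The theorem thus reduces to showing that this ``for all $r'<t<r''$'' non-intersection is equivalent to $\geo\from{(m,t)}\dir{R}{\tht+}{}\cap\geo\from{(m+1,t)}\dir{L}{\gamma-}{}=\varnothing$.

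For the direction where non-intersection at $t$ implies non-intersection for every $(r',r'')$, monotonicity \eqref{geomono} gives $\geo\from{(m+1,r')}\dir{R}{\gamma-}{}\preceq\geo\from{(m+1,t)}\dir{L}{\gamma-}{}$ and $\geo\from{(m,t)}\dir{R}{\tht+}{}\preceq\geo\from{(m,r'')}\dir{L}{\tht+}{}$ for $r'<t<r''$, so the approximating pair lies further apart than the limit pair at every level. Writing $s^{U_0}_k,s^{L_0}_k$ for the level-$k$ jump times of the limit upper and lower paths and $s^{U_1}_k,s^{L_1}_k$ for those of the approximating pair, disjointness of the limit pair yields $s^{U_0}_n<s^{L_0}_{n-1}$ on every level $n\ge m+1$; monotonicity then gives $s^{U_1}_n\le s^{U_0}_n<s^{L_0}_{n-1}\le s^{L_1}_{n-1}$, establishing disjointness of the approximating pair on every level.

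For the reverse direction, I take $r'\nearrow t$ and $r''\searrow t$ and invoke continuity \eqref{geocont} to get $\geo\from{(m+1,r')}\dir{R}{\gamma-}{}\to\geo\from{(m+1,t)}\dir{L}{\gamma-}{}$ from above and $\geo\from{(m,r'')}\dir{L}{\tht+}{}\to\geo\from{(m,t)}\dir{R}{\tht+}{}$ from below. Suppose for contradiction that the limit pair intersects at some $(n,u)$. Two cases arise: either $u$ is an interior point of both level-$n$ intervals, in which case by continuity of jump times the approximating intervals also contain $u$ for all $r',r''$ sufficiently close to $t$, contradicting the hypothesis; or the intersection is a boundary coincidence $s^{U_0}_n=s^{L_0}_{n-1}$, where the upper limit departs level $n$ exactly as the lower limit arrives. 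The main obstacle is excluding this boundary-coincidence scenario on $\Omref{Omega8}$. I plan to rule it out by a generic-position argument in the spirit of Proposition \ref{nodouble}: simultaneous-jump coincidences between geodesics of distinct direction/sign labels $\gamma-$ versus $\tht+$ reduce to the alignment of maxima of independent Brownian motions, which has probability zero and can be excluded uniformly on a full-probability event contained in $\Omref{Omega8}$.
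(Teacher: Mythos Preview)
Your reduction to Theorem \ref{thm:instpt}\eqref{instpt.b} via Theorem \ref{thm:Bus-geo} is correct, and your forward direction (limit pair disjoint $\Rightarrow$ all approximating pairs disjoint) via monotonicity is fine; this matches the paper's first paragraph.

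The gap is in the reverse direction, specifically your treatment of the boundary coincidence $s^{U_0}_n=s^{L_0}_{n-1}$. Your proposed ``generic-position'' exclusion cannot work as stated. The geodesics $\geo\from{(m+1,t)}\dir{L}{\gamma}{-}$ and $\geo\from{(m,t)}\dir{R}{\tht}{+}$ are both measurable functions of the \emph{same} Brownian environment $\{B_k\}$; their jump times are not maxima of independent Brownian motions, and there is no independence to exploit. Even if you could show the coincidence has probability zero for a fixed $t$, the theorem demands it for every $t\in\R$ on a single full-measure event, and you offer no mechanism for the uncountable union. Proposition \ref{nodouble} does not provide a template here: it rules out a specific degeneracy (two consecutive vertical steps in one path) via an explicit reflected Brownian motion computation, not a general principle that ``different geodesics have generically distinct jump times.''

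The paper's argument for this direction is structurally different and does not use your continuity limit at all. It uses \emph{coalescence} (Theorem \ref{thm:geo}\eqref{thm:geo:coal}): since $\geo\from{(m+1,r')}\dir{R}{\gamma}{-}$ coalesces with $\geo\from{(m+1,t)}\dir{L}{\gamma}{-}$ and $\geo\from{(m,r'')}\dir{L}{\tht}{+}$ coalesces with $\geo\from{(m,t)}\dir{R}{\tht}{+}$, the only way the limit pair can intersect while \emph{every} approximating pair stays disjoint is if the intersection lies entirely on the initial vertical ray $\{(k,t):k\ge m+1\}$, with the two limit geodesics then splitting permanently. That scenario is then ruled out deterministically: if $\geo\from{(m,t)}\dir{R}{\tht}{+}$ goes up to $(m+1,t)$, monotonicity forces $\geo\from{(m,t)}\dir{L}{\gamma}{-}$ up as well; but from $(m+1,t)$ this geodesic must follow $\geo\from{(m+1,t)}\dir{L}{\gamma}{-}$, which goes up again, giving two consecutive vertical steps and contradicting Proposition \ref{nodouble}. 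The role of Proposition \ref{nodouble} is thus applied to a \emph{third} geodesic $\geo\from{(m,t)}\dir{L}{\gamma}{-}$, not to the limit pair directly.
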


\begin{proof}
    Suppose first that $\geo\from{(m,t)}\dir{R}{\tht}{+}\cap\geo\from{(m+1,t)}\dir{L}{\gamma}{-}=\varnothing$. Take $r'<t$ and $r''>t$. Since $\geo\from{(m+1,r')}\dir{R}{\gamma}{-}$ must remain to the left of $\geo\from{(m+1,t)}\dir{L}{\gamma}{-}$ and $\geo\from{(m,r'')}\dir{L}{\tht}{+}$ must remain to the right of $\geo\from{(m,t)}\dir{R}{\tht}{+}$, we have $\geo\from{(m+1,r')}\dir{R}{\gamma}{-}\cap\geo\from{(m,r'')}\dir{L}{\tht}{+}=\varnothing$. 
    This and Theorem \ref{thm:Bus-geo} imply \eqref{instpt.b} in Theorem \ref{thm:instpt}, which by that theorem is equivalent to $(m+\half,t)$ being an instability point. 

For the reverse implication, suppose that 
$\geo\from{(m,t)}\dir{R}{\theta}{+}$ and $\geo\from{(m+1,t)}\dir{L}{\gamma}{-}$ do intersect.  
If $\geo\from{(m,t)}\dir{R}{\theta}{+}$ first goes to $(m+1,t)$ and the two geodesics then follow the same vertical path up to some level $n\ge m+1$ before splitting, then $\geo\from{(m,t)}\dir{L}{\gamma}{-}$ would necessarily pass through $(m+1,t)$ and take at least one additional vertical step with $\geo\from{(m+1,t)}\dir{L}{\gamma}{-}$.  
By Proposition \ref{nodouble}, this is impossible.  
Hence there must exist an integer $n>m$ and an $r>t$ such that
$(n,r)\in\geo\from{(m,t)}\dir{R}{\theta}{+}\cap\geo\from{(m+1,t)}\dir{L}{\gamma}{-}$.
Then by the limits \eqref{geocont}, for $r'<t$ and $r''>t$ close enough to $t$ we have $(n,r)\in\geo\from{(m+1,r')}\dir{R}{\theta}{-}\cap\geo\from{(m,r'')}\dir{L}{\theta}{+}$.
Together with Theorem \ref{thm:Bus-geo}, this implies that  
\eqref{instpt.b} in Theorem \ref{thm:instpt} does not hold.  
By that theorem, this is equivalent to $(m+\tfrac12,t)$ not being an instability point.
\end{proof}

\section{The structure of the instability graph}\label{sec:inst}

Having defined the instability graph, both analytically and geometrically, we turn to its properties. The first result states a certain monotonicity of the graph $\IG{[\gamma,\tht]}$, in $\gamma$ and $\tht$.

\begin{lem}\label{mono-inst-graph}
For all $\w\in\Omref{Omega8}$ and $\tht'\ge\tht\ge\gamma\ge\gamma'>0$,
$\IG{[\gamma,\tht]}\subset\IG{[\gamma',\tht']}$. 
\end{lem}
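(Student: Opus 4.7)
The plan is to derive this inclusion directly from the monotonicity of the Busemann process in $\tht$, stated in Theorem \ref{thm:B}\eqref{B:mono}. The lemma splits into two pieces: one for instability edges and one for (the closure of) proper instability points. Both pieces reduce to an elementary comparison of Busemann increments, so the main work is lining up the right inequalities; there is no significant obstacle.

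First, for instability edges, I would consider the nondecreasing function
\[f_{\gamma,\tht}^{(m)}(s)=\bus{\gamma-}{}(m,0,m,s)-\buse{+}{m}{0}{m}{s}.\]
By the cocycle property in Theorem \ref{thm:B}\eqref{B:cocycle}, for $s<t$ the increment is
\[f_{\gamma,\tht}^{(m)}(t)-f_{\gamma,\tht}^{(m)}(s)=\bus{\gamma-}{}(m,s,m,t)-\buse{+}{m}{s}{m}{t}.\]
Theorem \ref{thm:B}\eqref{B:mono} tells us that on a single horizontal level, $\bus{\alpha\sig}{}(m,s,m,t)$ is nonincreasing in $\alpha$. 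Thus $\gamma'\le\gamma$ gives $\bus{\gamma'-}{}(m,s,m,t)\ge\bus{\gamma-}{}(m,s,m,t)$, and $\tht\le\tht'$ gives $\bus{\tht'+}{}(m,s,m,t)\le\bus{\tht+}{}(m,s,m,t)$. Adding these,
\[f_{\gamma',\tht'}^{(m)}(t)-f_{\gamma',\tht'}^{(m)}(s)\ \ge\ f_{\gamma,\tht}^{(m)}(t)-f_{\gamma,\tht}^{(m)}(s)\ \ge\ 0.\]
Consequently any point of increase of $f_{\gamma,\tht}^{(m)}$ is also a point of increase of $f_{\gamma',\tht'}^{(m)}$, so every $[\gamma,\tht]$-instability edge is a $[\gamma',\tht']$-instability edge by Definition \ref{def:edge.int}.

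Second, for proper instability points, I would use the complementary half of Theorem \ref{thm:B}\eqref{B:mono}, which states that on a single horizontal slice the quantity $\bus{\alpha\sig}{}(m,s,m+1,s)$ is nondecreasing in $\alpha$. Hence for $\gamma'\le\gamma\le\tht\le\tht'$,
\[\bus{\gamma'-}{}(m,t,m+1,t)\ \le\ \bus{\gamma-}{}(m,t,m+1,t)\quad\text{and}\quad\buse{+}{m}{t}{m+1}{t}\le\bus{\tht'+}{}(m,t,m+1,t).\]
Chaining these two inequalities, if $(m+\tfrac12,t)$ is a proper $[\gamma,\tht]$-instability point, i.e.\ $\bus{\gamma-}{}(m,t,m+1,t)<\buse{+}{m}{t}{m+1}{t}$, then also
\[\bus{\gamma'-}{}(m,t,m+1,t)<\bus{\tht'+}{}(m,t,m+1,t),\]
so $(m+\tfrac12,t)$ is a proper $[\gamma',\tht']$-instability point.

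Finally, to conclude $\IG{[\gamma,\tht]}\subset\IG{[\gamma',\tht']}$, I would combine the two set-inclusions above and take closures. Writing $P_{\alpha,\beta}$ for the set of proper $[\alpha,\beta]$-instability points and $E_{\alpha,\beta}$ for the union of $[\alpha,\beta]$-instability edges, the previous two paragraphs give $E_{\gamma,\tht}\subset E_{\gamma',\tht'}$ and $P_{\gamma,\tht}\subset P_{\gamma',\tht'}$. Since closure is monotone, $\overline{P_{\gamma,\tht}}\subset\overline{P_{\gamma',\tht'}}$, and then by Definition \ref{def:IG.int}
\[\IG{[\gamma,\tht]}=E_{\gamma,\tht}\cup\overline{P_{\gamma,\tht}}\subset E_{\gamma',\tht'}\cup\overline{P_{\gamma',\tht'}}=\IG{[\gamma',\tht']},\]
finishing the proof.
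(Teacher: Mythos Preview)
Your proof is correct and follows essentially the same approach as the paper's: both use the monotonicity of the Busemann process (Theorem~\ref{thm:B}\eqref{B:mono}) to compare increments, first for edges and then for instability points. The paper is terser, concluding the second part with a single ``Similarly,'' whereas you spell out the proper-point comparison and the closure step explicitly; this is a harmless and arguably clearer elaboration of the same argument.
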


\begin{proof}
Suppose $[(m-\half,t),(m+\half,t)]$ is a $[\gamma,\tht]$-instability edge. Then for any $\e>0$, $\bus{\gamma}{-}(m,t-\e,m,t+\e)>\bus{\tht}{+}(m,t-\e,m,t+\e)$.
By the monotonicity property in Theorem \ref{thm:B} we have for any $\e>0$
\begin{align*}
    \bus{\gamma'}{-}(m,t-\e,m,t+\e)
    &\ge\bus{\gamma}{-}(m,t-\e,m,t+\e)\\
    &>\bus{\tht}{+}(m,t-\e,m,t+\e)\ge\bus{\tht'}{+}(m,t-\e,m,t+\e).
\end{align*}
Consequently, $t$ is a point of increase for $s\mapsto\bus{\gamma'}{-}(m,0,m,s)-\bus{\tht'}{+}(m,0,m,s)$ and $[(m-\half,t),(m+\half,t)]$ is also a $[\gamma',\tht']$-instability edge. Similarly, if $(m+\half,t)$ is a $[\gamma,\tht]$-instability point, then it is also a $[\gamma',\tht']$-instability point.
\end{proof}

Next, we show that the instability graph $\IG{[\gamma,\tht]}$ is northeast and southwest bi-infinite.

\begin{lem}\label{biinf:geo}
	For all $\w\in\Omref{Omega8}$, $\tht\ge\gamma>0$, and any instability point $\bfx\dual\in \IG{[\gamma,\tht]}$ there exist an up-right path and a down-left path, both starting at $\bfx\dual$ and moving along the instability graph $\IG{[\gamma,\tht]}$.
\end{lem}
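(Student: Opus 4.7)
The plan is to build the two half-infinite paths by tracking the two non-intersecting semi-infinite geodesics that characterize $\bfx^* = (m+\half, t^*) \in \IG{[\gamma,\tht]}$. By Theorem \ref{geo:instpt}, setting $\Gamma_L = \geo\from{(m+1,t^*)}\dir{L}{\gamma}{-}$ and $\Gamma_R = \geo\from{(m,t^*)}\dir{R}{\tht}{+}$, these two paths do not intersect. Writing $(s_k^L)_{k\ge m}$ and $(s_k^R)_{k\ge m-1}$ for their jump times, with $s_m^L = s_{m-1}^R = t^*$, the non-intersection gives $s_k^L < s_{k-1}^R$ for all $k\ge m+1$. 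The up-right path will shadow $\Gamma_L$ at offset $+\half$ in the level coordinate, while the down-left path will use the Busemann characterization together with the no-bi-infinite-geodesic result.

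For the up-right direction I propose the candidate that, for each $k\ge m$, traverses the horizontal interval $\{(k+\half, r):r\in[s_k^L,s_{k+1}^L]\}$ at height $k+\half$ and then the vertical edge up to $(k+\tfrac{3}{2}, s_{k+1}^L)$. To check each horizontal segment lies in $\IG{[\gamma,\tht]}$: every $r$ in that interval has $(k+1,r)\in\Gamma_L$, and the recursion defining leftmost $L,\gamma-$ geodesics forces $\geo\from{(k+1,r)}\dir{L}{\gamma}{-}$ to coincide with the continuation of $\Gamma_L$ from $(k+1,r)$; a monotonicity argument based on Theorem \ref{thm:geo}\eqref{thm:geo:mono} then shows that $\geo\from{(k,r)}\dir{R}{\tht}{+}$ stays weakly to the right of $\Gamma_R$'s relevant portion and therefore, using that $\Gamma_L$ and $\Gamma_R$ do not meet, cannot meet $\Gamma_L$'s continuation either; Theorem \ref{geo:instpt} supplies the conclusion. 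For the vertical edges at $(k+1, s_{k+1}^L)$, Lemma \ref{geodefinstabedge1} applies: the $L,\gamma-$ geodesic from that point jumps up (by definition of $s_{k+1}^L$), and the corresponding $R,\tht+$ geodesic must go right and stay apart thereafter, which will follow from the non-intersection of $\Gamma_L$ and $\Gamma_R$ together with coalescence (Theorem \ref{thm:geo}\eqref{thm:geo:coal}) and the rightmost-segment property (Theorem \ref{thm:geo}\eqref{thm:geo:LR}).

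For the down-left path I will not have backward geodesics at my disposal, so I will work instead on the analytical side via the Busemann process. Pick $\tht' \in \baddir \cap [\gamma,\tht]$, which exists by Theorem \ref{baddir dense}; by Theorem \ref{thm:B}\eqref{B:mono}, the difference $\bus{\gamma}{-}(m,0,m,\cdot) - \bus{\tht}{+}(m,0,m,\cdot)$ dominates $\bus{\tht'}{-}(m,0,m,\cdot) - \bus{\tht'}{+}(m,0,m,\cdot)$, so by Theorem \ref{B:increase} the former diverges to $-\infty$ at $-\infty$ and has an unbounded-below set of points of increase at every level. The plan is then to use Theorem \ref{thm:instpt} and the cocycle property (Theorem \ref{thm:B}\eqref{B:cocycle}) to propagate instability to lower and earlier lattice positions, and Theorem \ref{nobiinf} (no non-degenerate bi-infinite geodesics) to rule out the pathology where the path would stabilize on a single horizontal level or a single vertical column without being able to descend further. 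The main obstacles are (i) the vertical-edge verification in the up-right step, where one must carefully rule out that $\geo\from{(k+1,s_{k+1}^L)}\dir{R}{\tht}{+}$ ascends together with $\Gamma_L$ rather than going right --- a planarity-plus-direction argument leveraging $\gamma \le \tht$ and the non-intersection of $\Gamma_L, \Gamma_R$; and (ii) assembling the down-left path as a single connected path on $\IG{[\gamma,\tht]}$ rather than merely an unbounded collection of instability points, which will require stringing together instability edges at consecutive levels via the cocycle and avoiding gaps by the absence of bi-infinite geodesics.
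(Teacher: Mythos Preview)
The paper's proof is far simpler than your proposal and handles both directions symmetrically with a one-step local argument, using only the analytical characterization in Theorem~\ref{thm:instpt} rather than the geodesic picture. From $(m+\tfrac12,t)\in\IG{[\gamma,\tht]}$, condition~\eqref{instpt.c} of Theorem~\ref{thm:instpt} says that for every $r'<t<r''$, either $\bus{\gamma}{-}(m+1,r',m+1,r'')>\bus{\tht}{+}(m+1,r',m+1,r'')$ or $\bus{\gamma}{-}(m+1,r'',m,r'')>\bus{\tht}{+}(m+1,r'',m,r'')$. If no instability edge goes up from $\bfx\dual$, the first alternative fails for $r',r''$ close to $t$, forcing the second, i.e., $(m+\tfrac12,r'')$ is a proper instability point for all $r''>t$ near $t$. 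Either way one step up-right stays in $\IG{[\gamma,\tht]}$, and iteration gives the half-infinite path. The down-left step is identical, using condition~\eqref{instpt.d}.

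Your up-right construction along $\Gamma_L$ is in the right spirit (it foreshadows Lemma~\ref{instabifbtw}), but the justification has a gap. The claim that $\geo\from{(k,r)}\dir{R}{\tht}{+}$ ``stays weakly to the right of $\Gamma_R$'' via Theorem~\ref{thm:geo}\eqref{thm:geo:mono} is only correct at $k=m$. For $k>m$ the point $(k,r)$ may lie strictly \emph{above} $\Gamma_R$ (whenever $r<s^R_{k-1}$), and then monotonicity on level $k$ places $\geo\from{(k,r)}\dir{R}{\tht}{+}$ to the \emph{left} of $\Gamma_R$'s continuation from level $k$, not to the right; the argument as written does not rule out contact with $\Gamma_L$. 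This can be repaired by induction on $k$, resetting the reference pair to the geodesics emanating from the freshly-established instability point $(k+\tfrac12,s_k^L)$, but you have not done this, and your obstacle~(i) (the vertical-edge check) also requires this inductive structure.

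The real gap is the down-left direction. Your plan---invoke Theorem~\ref{B:increase} for unbounded points of increase and Theorem~\ref{nobiinf} to rule out pathologies---never produces a connected down-left path in $\IG{[\gamma,\tht]}$; it is a list of ingredients without a construction. The insight you are missing is that the problem is symmetric: condition~\eqref{instpt.d} of Theorem~\ref{thm:instpt} plays exactly the role for the down-left step that condition~\eqref{instpt.c} plays for up-right, so no backward geodesics, no auxiliary $\tht'\in\baddir$, and no appeal to Theorem~\ref{nobiinf} are needed. (Theorem~\ref{nobiinf} does enter later, in Theorem~\ref{instabcommonancstr}, but not for this lemma.)
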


\begin{proof}
	Let $\bfx\dual = (m+\half,t)$ be an instability point in $\IG{[\gamma,\tht]}$. Then, by Theorem \ref{thm:instpt}\eqref{instpt.c}, for any $ r'<t $ and $ r''>t $, 
	\begin{align*}
		\begin{split}
			&\bus{\gamma}{-}(m+1,r',m+1,r'')>\buse{+}{m+1}{r'}{m+1}{r''}\\
			&\text{or}\quad\bus{\gamma}{-}(m+1,r'',m,r'')>\buse{+}{m+1}{r''}{m}{r''}.
		\end{split}
	\end{align*}
    If there is no instability edge going up from $\bfx\dual$, then $\bus{\gamma}{-}(m+1,r',m+1,r'')=\buse{+}{m+1}{r'}{m+1}{r''}$ for all $r'<r$ and $r''>r$ close enough to $r$. Then the second inequality in the above display must hold for all such $r''$. This implies  that $(m+\half,r'')$ is an instability point for all $r''>r$ close enough to $r$.

    The argument for moving in the south-west direction is similar.
\end{proof}

\begin{lem}\label{nodoubleinstab}
	For all $\w\in\Omref{Omega8}$ and $\tht\ge\gamma>0$   there are no double-edge instability points in $\IG{[\gamma,\tht]}$. 
\end{lem}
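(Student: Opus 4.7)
I would argue by contradiction, invoking the geometric characterization of instability edges (Lemma~\ref{geodefinstabedge1}) together with the no-double-vertical-step Proposition~\ref{nodouble}, the coalescence property in Theorem~\ref{thm:geo}\eqref{thm:geo:coal}, and the uniqueness of leftmost finite geodesics in Theorem~\ref{thm:geo}\eqref{thm:geo:LR}.

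Suppose $(m+\tfrac12,t)$ were a double-edge $[\gamma,\tht]$-instability point. By Lemma~\ref{geodefinstabedge1} applied to each of the two incident edges, the pair $\bigl(\geo\from{(m,t)}\dir{L}{\gamma}{-},\,\geo\from{(m,t)}\dir{R}{\tht}{+}\bigr)$ separates at $(m,t)$ and never meets again, and similarly the pair $\bigl(\geo\from{(m+1,t)}\dir{L}{\gamma}{-},\,\geo\from{(m+1,t)}\dir{R}{\tht}{+}\bigr)$ separates at $(m+1,t)$ and never meets again. Since $\gamma\le\tht$, monotonicity of the geodesic process in direction and sign (chaining the inequalities in \eqref{geomono} with the Busemann monotonicity of Theorem~\ref{thm:B}\eqref{B:mono}) gives $\geo\from{(m,t)}\dir{L}{\gamma}{-}\preceq\geo\from{(m,t)}\dir{R}{\tht}{+}$, so the former must exit $(m,t)$ vertically while the latter exits horizontally; the same reasoning at the next level forces $\geo\from{(m+1,t)}\dir{L}{\gamma}{-}$ to exit $(m+1,t)$ vertically. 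By Proposition~\ref{nodouble}, $\geo\from{(m,t)}\dir{L}{\gamma}{-}$ cannot take a second consecutive vertical step, so after reaching $(m+1,t)$ it must proceed rightward.

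The contradiction now comes from coalescence. Theorem~\ref{thm:geo}\eqref{thm:geo:coal} makes the two $\gamma{-}$-geodesics $\geo\from{(m,t)}\dir{L}{\gamma}{-}$ and $\geo\from{(m+1,t)}\dir{L}{\gamma}{-}$ coalesce at some lattice point $\bfz$; this $\bfz$ cannot equal $(m+1,t)$ because the two paths diverge at their first steps out of that point. Consequently both $(m+1,t)$ and $\bfz$ lie on each of the two semi-infinite geodesics, so by Theorem~\ref{thm:geo}\eqref{thm:geo:LR} the $(m+1,t)$-to-$\bfz$ segment of each path coincides with the \emph{unique} leftmost finite geodesic joining that pair of lattice points. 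The two segments must therefore be identical, which is absurd since one leaves $(m+1,t)$ to the right while the other leaves upward. The main conceptual obstacle is arranging for leftmost-geodesic uniqueness to be applied to two different semi-infinite paths sharing a common interior pair of lattice points; Proposition~\ref{nodouble} forces $(m+1,t)$ to be such a shared point, and coalescence produces the second one.
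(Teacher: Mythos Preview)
Your argument is correct. Both you and the paper use Lemma~\ref{geodefinstabedge1} to force $\geo\from{(m,t)}\dir{L}{\gamma}{-}$ and $\geo\from{(m+1,t)}\dir{L}{\gamma}{-}$ each to exit vertically, and both invoke Proposition~\ref{nodouble}; the difference is only in how the contradiction is closed.

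The paper's proof is one line: it simply asserts that $\geo\from{(m,t)}\dir{L}{\gamma}{-}$ must pass through $(m+2,t)$, which immediately contradicts Proposition~\ref{nodouble}. This relies on the (implicit, from the recursive construction of Busemann geodesics) \emph{restartability} property: once $\geo\from{(m,t)}\dir{L}{\gamma}{-}$ reaches $(m+1,t)$, its continuation from there is exactly $\geo\from{(m+1,t)}\dir{L}{\gamma}{-}$. You instead apply Proposition~\ref{nodouble} first to send $\geo\from{(m,t)}\dir{L}{\gamma}{-}$ rightward from $(m+1,t)$, and then rederive the needed consistency via coalescence (Theorem~\ref{thm:geo}\eqref{thm:geo:coal}) and the leftmost-segment property (Theorem~\ref{thm:geo}\eqref{thm:geo:LR}). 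Your route is more self-contained with respect to the explicitly stated results in Section~\ref{sec:previous}, since restartability is not listed there as a separate item; the paper's route is shorter because it treats that fact as immediate from the definition of the geodesic process~\eqref{geo-proc}.
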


\begin{proof}
    By Lemma \ref{geodefinstabedge1}, if both $[(m-\half,t),(m+\half,t)]$ and $[(m+\half,t),(m+\frac32,t)]$ are instability edges in $\IG{[\gamma,\tht]}$, then $\geo\from{(m,t)}\dir{L}{\gamma}{-}$ has to go through $(m+2,t)$, which cannot happen by Proposition \ref{nodouble}.
\end{proof}

Together, the above lemma and Lemma \ref{lm:PIP} imply that the interior of an instability interval consists of proper instability points.

\begin{lem}\label{onlypropinprop}
	The following holds for all $\w\in\Omref{Omega8}$, $\tht\ge\gamma>0$,  $m\in\Z$, and $s<t$. Suppose $\{(m+\half,r):s<r<t\}\subset\IG{[\gamma,\tht]}$. Then for each $r\in(s,t)$,  $(m+\half,r)$ is a proper $[\gamma,\tht]$-instability point. 
\end{lem}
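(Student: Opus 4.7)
The plan is to obtain Lemma \ref{onlypropinprop} as an immediate consequence of the two lemmas that immediately precede it, namely Lemma \ref{lm:PIP} and Lemma \ref{nodoubleinstab}. Concretely, I would fix $\w\in\Omref{Omega8}$, $\tht\ge\gamma>0$, $m\in\Z$, and $s<t$ with $\{(m+\half,r):s<r<t\}\subset\IG{[\gamma,\tht]}$, and then fix an arbitrary $r\in(s,t)$. The goal is to rule out the only non-proper possibility allowed at an interior point of this interval.

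First I would invoke Lemma \ref{lm:PIP} directly with the same parameters; since the hypothesis of that lemma is exactly the hypothesis we have here, it yields that $(m+\half,r)$ is either a proper $[\gamma,\tht]$-instability point or a double-edge $[\gamma,\tht]$-instability point. Next, I would apply Lemma \ref{nodoubleinstab}, which states that there are no double-edge $[\gamma,\tht]$-instability points in $\IG{[\gamma,\tht]}$ at all; this eliminates the second alternative. The only remaining option is that $(m+\half,r)$ is a proper $[\gamma,\tht]$-instability point, which is what we wanted. Since $r\in(s,t)$ was arbitrary, the lemma follows.

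There is essentially no obstacle to overcome: the real content is already packaged into Lemma \ref{lm:PIP} (whose proof combines Theorem \ref{thm:instpt}\eqref{instpt.c}--\eqref{instpt.d} with the cocycle and continuity properties of the Busemann process) and Lemma \ref{nodoubleinstab} (which rules out consecutive vertical steps via Lemma \ref{geodefinstabedge1} and Proposition \ref{nodouble}). The step most worth sanity-checking is that the hypothesis of Lemma \ref{lm:PIP} is the open interval $\{(m+\half,r):s<r<t\}$ and not a closed one, so we apply it only at interior points $r\in(s,t)$; this matches the statement of Lemma \ref{onlypropinprop} exactly, so no boundary subtlety arises and the proof is a one-line combination of the two preceding results.
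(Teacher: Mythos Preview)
Your proposal is correct and matches the paper's own proof exactly: the paper simply notes that the result follows by combining Lemma \ref{lm:PIP} with Lemma \ref{nodoubleinstab}.
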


\begin{lem}\label{improperendpts}
	For all $\w\in\Omref{Omega8}$ and $\tht\ge\gamma>0$, the only improper $[\gamma,\tht]$-instability points are the left and right endpoints of maximal proper $[\gamma,\tht]$-instability intervals.
\end{lem}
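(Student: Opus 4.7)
The plan is to show that if $(m+\half,t)$ is an improper $[\gamma,\tht]$-instability point, then proper instability points accumulate at $(m+\half,t)$ from at least one side, and $(m+\half,t)$ is the corresponding endpoint of the maximal such interval. The key tool is the characterization of instability points in Theorem \ref{thm:instpt}, combined with the no-double-edge lemma.

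First, by Lemma \ref{nodoubleinstab}, at most one of the two vertical edges $[(m-\half,t),(m+\half,t)]$ and $[(m+\half,t),(m+\frac32,t)]$ is a $[\gamma,\tht]$-instability edge. Assume without loss of generality that $[(m+\half,t),(m+\frac32,t)]$ is \emph{not} an instability edge; the other case is treated by a symmetric argument. Then the continuous, nondecreasing function $r\mapsto\bus{\gamma}{-}(m+1,0,m+1,r)-\bus{\tht}{+}(m+1,0,m+1,r)$ is locally constant near $t$, so the continuity and monotonicity parts of Theorem \ref{thm:B} yield an $\e>0$ with
\[\bus{\gamma}{-}(m+1,r',m+1,r'')=\bus{\tht}{+}(m+1,r',m+1,r'')\quad\text{for all }r'<r''\text{ in }(t-\e,t+\e).\]

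Now apply Theorem \ref{thm:instpt}\eqref{instpt.c} at the instability point $(m+\half,t)$: for every pair $r'\in(t-\e,t)$ and $r''\in(t,t+\e)$, at least one of the two inequalities in \eqref{bus:cond2} must hold. The first has just been ruled out, so the second must hold, i.e.\ $\bus{\gamma}{-}(m+1,r'',m,r'')>\bus{\tht}{+}(m+1,r'',m,r'')$. By the cocycle identity $\bus{\cdot}{\cdot}(m+1,r'',m,r'')=-\bus{\cdot}{\cdot}(m,r'',m+1,r'')$, this is precisely the condition that $(m+\half,r'')$ is a proper $[\gamma,\tht]$-instability point.

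Thus every $(m+\half,r)$ with $r\in(t,t+\e)$ is a proper instability point. Let $t''=\sup\{r>t:(m+\half,s)\text{ is a proper instability point for all }s\in(t,r)\}\in(t,\infty]$; then $(t,t'')$ is a proper instability interval having $(m+\half,t)$ as its left endpoint. Since $(m+\half,t)$ is improper, Lemma \ref{onlypropinprop} forbids any extension of the interval across $t$, so $(t,t'')$ is maximal. No substantial obstacle is anticipated; the main care needed is the bookkeeping translating ``no up-edge at $(m+\half,t)$'' into ``propriety of $(m+\half,r'')$ for $r''$ slightly to the right of $t$'' via Theorem \ref{thm:instpt}\eqref{instpt.c} and the cocycle.
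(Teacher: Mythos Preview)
Your proof is correct and follows essentially the same route as the paper. The paper first packages the step ``no up-edge $\Rightarrow$ instability points accumulate to the right'' into Lemma~\ref{biinf:geo} and then invokes it together with Lemma~\ref{nodoubleinstab} and Lemma~\ref{onlypropinprop}; you instead inline that step directly via Theorem~\ref{thm:instpt}\eqref{instpt.c} (and \eqref{instpt.d} for the symmetric case), observing along the way that the accumulating points are already \emph{proper}, which slightly shortens the appeal to Lemma~\ref{onlypropinprop}.
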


\begin{proof}
	Suppose that $(m+\half,s)\in\IG{[\gamma,\tht]}$ but that $(m+\half,s)$ is not a proper $[\gamma,\tht]$-instability point. By Lemma \ref{nodoubleinstab}, $(m+\half,s)$ cannot be a double-edge instability point. Then by Lemma \ref{biinf:geo}, either $[(m+\half,s),(m+\half,t)]$ is an instability interval or $[(m+\half,r),(m+\half,s)]$ is an instability interval, for some $t>s$ or $r<s$. It cannot be both because then Lemma \ref{onlypropinprop} would imply that $(m+\half,s)$ is proper. Thus, we have shown that $(m+\half,s)$ is an endpoint of an instability interval.
\end{proof}

The next lemma says that the instability graph extends infinitely far to the left and to the right.

\begin{lem}\label{inf many edges}
	Take $\w\in\Omref{Omega8}$, $\tht\ge\gamma>0$, and $m\in\Z$. If $\tht=\gamma$, then assume that $\tht\in\baddir$. Then
    \[\sup\bigl\{s:\bigl[(m-\tfrac12,s),(m+\tfrac12,s)\bigr]\in\IG{[\gamma,\tht]}\bigr\}=\infty\quad\text{and}\quad
    \inf\bigl\{s:\bigl[(m-\tfrac12,s),(m+\tfrac12,s)\bigr]\in\IG{[\gamma,\tht]}\bigr\}=-\infty.\]
\end{lem}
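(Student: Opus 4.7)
The plan is to reduce to the case of a single direction $\tht' \in \baddir$, where we can invoke Theorem \ref{B:increase}, and then sandwich via the monotonicity of the Busemann process in $\tht$.

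First, I would use Theorem \ref{baddir dense} to pick some $\tht' \in [\gamma,\tht] \cap \baddir$: if $\gamma = \tht$, take $\tht' = \tht$ (which is in $\baddir$ by hypothesis); if $\gamma < \tht$, use density of $\baddir$ in $\R_+$ to find $\tht' \in (\gamma,\tht) \cap \baddir$. Define the two continuous, nondecreasing functions
\[
f(s) = \bus{\gamma-}(m,0,m,s) - \bus{\tht+}(m,0,m,s), \qquad g(s) = \bus{\tht'-}(m,0,m,s) - \bus{\tht'+}(m,0,m,s).
\]
Monotonicity in $\tht$ (Theorem \ref{thm:B}\eqref{B:mono}) applied to the chain $\gamma \le \tht' \le \tht$ gives, for any $s < t$,
\[
\bus{\gamma-}(m,s,m,t) - \bus{\tht+}(m,s,m,t) \ge \bus{\tht'-}(m,s,m,t) - \bus{\tht'+}(m,s,m,t) \ge 0,
\]
and then the cocycle property (Theorem \ref{thm:B}\eqref{B:cocycle}) yields $f(t) - f(s) \ge g(t) - g(s)$ for all $s < t$. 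Consequently, every point of increase of $g$ is also a point of increase of $f$.

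Next, since $\tht' \in \baddir$, Theorem \ref{B:increase} tells us that $g(s) \to -\infty$ as $s \to -\infty$ and $g(s) \to +\infty$ as $s \to +\infty$. A continuous nondecreasing function with these asymptotics cannot be constant on any half-line $[N,\infty)$ or $(-\infty,-N]$. Since the complement of the set of points of increase of $g$ is open and, by continuity plus monotonicity, any connected component of it is an interval on which $g$ is constant, it follows that the set of points of increase of $g$ meets $[N,\infty)$ and $(-\infty,-N]$ for every $N > 0$. Hence points of increase of $g$ exist at arbitrarily large positive and negative values of $s$, and by the comparison above the same holds for $f$.

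Finally, by Definition \ref{def:edge.int}, a point of increase $s$ of $f$ is exactly the location of a $[\gamma,\tht]$-instability edge $[(m-\tfrac12,s),(m+\tfrac12,s)]$. This yields the desired suprema and infima being $\pm\infty$. The only real subtlety is the basic but careful step linking the asymptotics of $g$ to the existence of strict points of increase arbitrarily far out, which I have handled via the open-complement observation above.
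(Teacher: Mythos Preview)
Your proof is correct and follows essentially the same approach as the paper: pick $\tht'\in[\gamma,\tht]\cap\baddir$, compare increments via the monotonicity of the Busemann process, and invoke Theorem~\ref{B:increase} to get unbounded points of increase. The only cosmetic difference is that the paper deduces $f\to\pm\infty$ directly from $f(t)\ge g(t)$ for $t\ge0$ and $f(t)\le g(t)$ for $t\le0$, whereas you transfer points of increase of $g$ to $f$ via the increment inequality; both routes are equivalent.
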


\begin{proof}
    When $\tht=\gamma\in\baddir$, take $\kappa=\tht$.
    If $\gamma<\tht$, then Theorem \ref{baddir dense} implies the existence of a $\kappa\in[\gamma,\tht]\cap\baddir$. 
    The monotonicity \eqref{B:mono-eqn} implies that for all $t\ge0$
    \[\bus{\gamma-}{}(m,0,m,t)-\buse{+}{m}{0}{m}{t}\ge\bus{\kappa-}{}(m,0,m,t)-\bus{\kappa}{+}(m,0,m,t),\]
    with the reversed inequality for all $t\le0$.
    The claim now follows from Definition \ref{def:edge.int} and Theorem \ref{B:increase} (applied with the parameter $\kappa$).
\end{proof}

\begin{lem}\label{improp=endpts}
		The following holds for all $\w\in\Omref{Omega8}$, $\tht\ge\gamma>0$, $m\in\Z$, and $s<r_0<t$. Suppose $\{(m+\half,r):s<r<r_0\}$ and $\{(m+\half,r):r_0<r<t\}$ are proper $\tht$-instability intervals. Then $(m+\half,r_0)$ is a proper $\tht$-instability point. Consequently, if two maximal proper instability intervals are not identical, then their closures are disjoint.
\end{lem}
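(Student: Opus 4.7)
The plan is to reduce the first claim to Lemma \ref{onlypropinprop} and then derive the ``Consequently'' part by a short maximality argument. The key observation is that $(m+\half, r_0)$ automatically lies in $\IG{[\gamma,\tht]}$: for any sequence $r_n \nearrow r_0$ with $r_n \in (s, r_0)$, the points $(m+\half, r_n)$ are proper $[\gamma,\tht]$-instability points by hypothesis, and they converge to $(m+\half, r_0)$, so $(m+\half, r_0)$ belongs to the closure of the proper instability points and hence to $\IG{[\gamma,\tht]}$ by Definition \ref{def:IG.int}. Combining this with the hypothesis that every $(m+\half, r)$ with $r \in (s, r_0) \cup (r_0, t)$ is proper (and thus in $\IG{[\gamma,\tht]}$), I obtain
\[
\{(m+\tfrac12, r) : s < r < t\} \subset \IG{[\gamma,\tht]}.
\]
Lemma \ref{onlypropinprop} then applies and declares every interior point of this horizontal interval to be proper; in particular, so is $(m+\half, r_0)$.

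For the ``Consequently'' statement, suppose two maximal proper $[\gamma,\tht]$-instability intervals $I_1 \neq I_2$ have $\overline{I_1} \cap \overline{I_2} \neq \varnothing$. Each instability interval sits on a single horizontal level $m + \half$ by Definition \ref{def:II}, so intervals on different levels have disjoint closures; hence $I_1$ and $I_2$ lie on the same level. The set of proper $[\gamma,\tht]$-instability points on that level is open and a countable disjoint union of open intervals (as noted just after Definition \ref{def:instpt.int}), so $I_1$ and $I_2$ must be disjoint open intervals whose closures meet at a single common endpoint $r_0$. Without loss of generality,
\[
I_1 = \{(m+\tfrac12, r) : s < r < r_0\}, \qquad I_2 = \{(m+\tfrac12, r) : r_0 < r < t\}.
\]
By the first part of the lemma, $(m+\half, r_0)$ is proper, so $I_1 \cup \{(m+\half, r_0)\} \cup I_2$ is a proper $[\gamma,\tht]$-instability interval strictly containing $I_1$, contradicting the maximality of $I_1$.

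The entire argument is essentially a bookkeeping step once one sees the reduction; the only substantive input is Lemma \ref{onlypropinprop}, which already does the heavy lifting of upgrading membership in $\IG{[\gamma,\tht]}$ to properness in the interior. No new geometric or Busemann estimates are needed, and the ``main obstacle'' is simply recognizing that the hypothesis puts the would-be isolated improper point $(m+\half, r_0)$ into the interior of a horizontal instability interval.
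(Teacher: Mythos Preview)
Your proof is correct and follows essentially the same route as the paper: show that $(m+\tfrac12,r_0)$ lies in $\IG{[\gamma,\tht]}$ by closure, deduce that the full open interval $\{(m+\tfrac12,r):s<r<t\}$ sits in $\IG{[\gamma,\tht]}$, and invoke Lemma~\ref{onlypropinprop}. The paper's proof is terser (it compresses your closure step into the phrase ``instability intervals are closed'') and omits the ``Consequently'' argument entirely, whereas you spell it out; your added maximality paragraph is correct and completes what the paper leaves implicit.
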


\begin{proof}
    Since instability intervals are closed we get that $\{(m+\half,r):s\le r\le t\}$ is an instability interval. By Lemma \ref{onlypropinprop}, $(m+\half,r_0)$ is a proper instability point.
\end{proof}

The following is a summary of the above results. The case where $\gamma=\tht\in\baddir$ gives Theorem \ref{inst:summary}, except for its part \eqref{inst:summary.directed}, which follows from Lemma \ref{lm:directed} below. 

\begin{thm}\label{inst:summary.int}
    Take $\w\in\Omref{Omega8}$ and $\tht\ge\gamma>0$. If $\tht=\gamma$, then assume $\tht\in\baddir$.
    \begin{enumerate}  [label={\rm(\roman*)}, ref={\rm\roman*}]   \itemsep=3pt 
    \item\label{inst:summary.int.biinf} The instability graph is bi-infinite: For any instability point $\bfx\dual\in \IG{[\gamma,\tht]}$ there exist an infinite up-right path and an infinite down-left path, both starting at $\bfx\dual$ and moving along the instability graph $\IG{[\gamma,\tht]}$.
    \item\label{inst:summary.int.nodouble} There are no double-edge instability points in $\IG{[\gamma,\tht]}$.  \item\label{inst:summary.int.improper} The only improper $[\gamma,\tht]$-instability points are the boundaries of maximal $[\gamma,\tht]$-instability intervals.
     \item\label{inst:summary.int.unbounded} The instability graph extends infinitely far to the left and to the right: For any $m\in\Z$
     \[\sup\bigl\{s:\bigl[(m-\tfrac12,s),(m+\tfrac12,s)\bigr]\in\IG{[\gamma,\tht]}\bigr\}=\infty\quad\text{and}\quad
    \inf\bigl\{s:\bigl[(m-\tfrac12,s),(m+\tfrac12,s)\bigr]\in\IG{[\gamma,\tht]}\bigr\}=-\infty.\]
    \item\label{inst:summary.int.I} Take $m\in\Z$ and $s<t$. The interval $\{(m+\half,r):s\le r\le t\}$ is a maximal $[\gamma,\tht]$-instability interval if and only if $(m+\half,r)$ is a proper $[\gamma,\tht]$-instability point for all $r\in(s,t)$ and $(m+\half,s)$ and $(m+\half,t)$ are improper $[\gamma,\tht]$-instability points.
    \item\label{inst:summary.int.Hausdorff} For any $m\in\Z$, the set of instability points $(m+\half,s)\in\IG{[\gamma,\tht]}$ from which descends an instability edge has a Hausdorff dimension of $\half$. 
    \end{enumerate}
\end{thm}

\begin{figure}[ht!]
\begin{center}

\begin{tikzpicture}[>=latex, scale=0.6]

\draw(0,0)--(12,0);
\draw[dashed](0,1)--(12,1);
\draw(0,2)--(12,2);
\draw[dashed](0,3)--(12,3);
\draw(0,4)--(12,4);

\draw[line width=2pt,color=Inst](1,-1)--(1,1)--(5,1)--(5,3)--(11,3);
\draw[line width=2pt,color=Inst](5,1)--(8,1)--(8,3);
\draw[line width=2pt,color=Inst](1.15,-1)--(1.15,1);
\draw[line width=2pt,color=Inst](1.3,-1)--(1.3,1);
\draw[line width=2pt,color=Inst](1.5,-1)--(1.5,1);
\draw[line width=2pt,color=Inst](2.5,-1)--(2.5,1);
\draw[line width=2pt,color=Inst](2.65,-1)--(2.65,1);
\draw[line width=2pt,color=Inst](2.8,-1)--(2.8,1);
\draw[line width=2pt,color=Inst](5.2,1)--(5.2,3);
\draw[line width=2pt,color=Inst](5.35,1)--(5.35,3);
\draw[line width=2pt,color=Inst](5.5,1)--(5.5,3);
\draw[line width=2pt,color=Inst](6.1,1)--(6.1,3);
\draw[line width=2pt,color=Inst](6.25,1)--(6.25,3);
\draw[line width=2pt,color=Inst](6.45,1)--(6.45,3);
\draw[line width=2pt,color=Inst](7.5,1)--(7.5,3);
\draw[line width=2pt,color=Inst](7.65,1)--(7.65,3);
\draw[line width=2pt,color=Inst](7.85,1)--(7.85,3);
\draw[line width=2pt,color=Inst](8.95,5)--(10.5,5);
\draw[line width=2pt,color=Inst,->](10.5,5)--(11.5,6);
\draw[line width=2pt,color=Inst,->](11,3)--(13,5);
\draw[line width=2pt,color=Inst,->](0,-1)--(-1,-2);
\draw[line width=2pt,color=Inst](9,3)--(9,5);
\draw[line width=2pt,color=Inst](9.15,3)--(9.15,5);
\draw[line width=2pt,color=Inst](9.35,3)--(9.35,5);
\draw[line width=2pt,color=Inst](0,-1)--(7,-1);
\draw[line width=2pt,color=Inst](7,-1)--(7,1);
\draw[line width=2pt,color=Inst](6.85,-1)--(6.85,1);
\draw[line width=2pt,color=Inst](6.65,-1)--(6.65,1);
\draw[line width=2pt,color=Inst](6.5,-1)--(6.5,1);
\draw[line width=2pt,color=Inst](6.35,-1)--(6.35,1);
\draw[line width=2pt,color=Inst](4.15,-1)--(4.15,1);
\draw[line width=2pt,color=Inst](4,-1)--(4,1);
\draw[line width=2pt,color=Inst](3.85,-1)--(3.85,1);
\draw[line width=2pt,color=Inst](3.6,-1)--(3.6,1);

\end{tikzpicture}
\end{center}

\caption{\small An illustration of Theorem \ref{inst:summary.int}.}
\label{fig:inst-summary}
\end{figure}

Now let us investigate how the semi-infinite geodesics interact with the instability graph $\IG{[\gamma,\tht]}$. The contents of the next three lemmas are summarized in Figure \ref{fig:instabrightendpt}.

\begin{lem}\label{underpropinstab}
    The following holds for all $\w\in\Omref{Omega8}$ and $\tht\ge\gamma>0$. Suppose that for some $m\in\Z$ and $s<t$, $\{(m+\half,r):s<r<t\}$ is a proper $[\gamma,\tht]$-instability interval.  Then $\geo\from{(m,s)}\dir{R}{\tht}{+}$ goes through $(m,t)$. For any $r\in (s,t)$, $\geo\from{(m,r)}\dir{L}{\tht}{+}=\geo\from{(m,r)}\dir{R}{\tht}{+}$ and this geodesic also goes through $(m,t)$. 
\end{lem}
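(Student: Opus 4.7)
The three assertions to prove are: (i) for each $r\in(s,t)$ the two geodesics $\geo\from{(m,r)}\dir{L}{\tht}{+}$ and $\geo\from{(m,r)}\dir{R}{\tht}{+}$ coincide; (ii) this common geodesic passes through $(m,t)$; and (iii) $\geo\from{(m,s)}\dir{R}{\tht}{+}$ also passes through $(m,t)$. My plan is to extract (i) from the proper instability of the interior points, derive (ii) by a variational argument using the characterization of geodesic jump times from the Busemann process, and deduce (iii) from the right-continuity in Theorem \ref{thm:geo}\eqref{thm:geo:cont}.

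For (i), fix $r\in(s,t)$. Since $(m+\half,r)$ is a proper $[\gamma,\tht]$-instability point, Lemma \ref{geodefproperIS} gives $\geo\from{(m+1,r)}\dir{R}{\gamma}{-}\cap\geo\from{(m,r)}\dir{L}{\tht}{+}=\varnothing$. If $\geo\from{(m,r)}\dir{L}{\tht}{+}$ jumped up at $(m,r)$, it would contain $(m+1,r)$, which also lies on $\geo\from{(m+1,r)}\dir{R}{\gamma}{-}$; contradiction. Hence $\geo\from{(m,r)}\dir{L}{\tht}{+}$ does not jump up at its starting point, and neither does $\geo\from{(m,r)}\dir{R}{\tht}{+}$ by the monotonicity $\geo^L\preceq\geo^R$. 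Lemma \ref{geo-split} then forces $\geo\from{(m,r)}\dir{L}{\tht}{+}=\geo\from{(m,r)}\dir{R}{\tht}{+}$; call this common path $\Gamma_r$.

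For (ii), recall that the first jump time $s_m^L(r)$ of $\geo\from{(m,r)}\dir{L}{\tht}{+}$ equals $\min\arg\max_{s\ge r}\bigl[B_m(s)-\buse{+}{m+1}{0}{m+1}{s}\bigr]$. Suppose for contradiction that $s_m^L(r)=r'\in(r,t)$. Then $r'$ attains the maximum of $s\mapsto B_m(s)-\buse{+}{m+1}{0}{m+1}{s}$ over $s\ge r$, so it also attains the maximum over the smaller set $s\ge r'$; thus $r'\in\arg\max_{s\ge r'}$, forcing $s_m^L(r')\le r'$, hence $s_m^L(r')=r'$, i.e.\ $\geo\from{(m,r')}\dir{L}{\tht}{+}$ jumps up immediately at $(m,r')$. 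But $(m+\half,r')$ is a proper instability point, and step (i) applied at $r'$ rules this out. Therefore $s_m^L(r)\ge t$, so $\Gamma_r$ stays on level $m$ throughout $[r,t]$ and in particular contains $(m,t)$.

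For (iii), apply the right-continuity from Theorem \ref{thm:geo}\eqref{thm:geo:cont}: $\lim_{r\searrow s}\geo\from{(m,r)}\dir{R}{\tht}{+}=\geo\from{(m,s)}\dir{R}{\tht}{+}$, which means pointwise convergence of jump times. For every $r\in(s,t)$ the first jump time $s_m^R(r)\ge t$ by (ii), so the limiting first jump time $s_m^R(s)\ge t$ as well, yielding $(m,t)\in\geo\from{(m,s)}\dir{R}{\tht}{+}$. The main obstacle will be a clean execution of step (ii): one must justify passing from the definitional $\min\arg\max$ over $s\ge r$ to the same role at $r'$, and ensure the geodesic constructed from the Busemann process genuinely coincides with $\Gamma_r$ when $\geo^L=\geo^R$. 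The rest is a straightforward application of monotonicity and continuity from Theorem \ref{thm:geo}.
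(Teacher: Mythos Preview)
Your proof is correct and follows essentially the same strategy as the paper's: use Lemma~\ref{geodefproperIS} to show $\geo\from{(m,r'')}\dir{L}{\tht}{+}$ cannot jump up at any proper instability point $r''\in(s,t)$, propagate this to prevent any jump of $\geo\from{(m,r)}\dir{L}{\tht}{+}$ before $(m,t)$, and then invoke right-continuity \eqref{geocont} for $r\searrow s$. The only notable difference is in how $L=R$ is obtained: the paper picks a rational $r'\in(s,r)$ and applies Theorem~\ref{thm:shocks}\eqref{thm:shocks:0} to get $\geo\from{(m,r')}\dir{L}{\tht}{+}=\geo\from{(m,r')}\dir{R}{\tht}{+}$ and then inherits this at $(m,r)$, whereas you appeal directly to Lemma~\ref{geo-split} once you know neither geodesic jumps at its starting point---a slightly cleaner route.
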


\begin{proof}
	If $\geo\from{(m,r)}\dir{L}{\tht}{+}$ does not go through $(m,t)$, then it must go up at $(m,r'')$ for some $r''\in[r,t)$. But this would mean that $\geo\from{(m,r'')}\dir{L}{\tht}{+}$ goes up and Lemma \ref{geodefproperIS} says that this contradicts $(m+\half,r'')$ being a proper $[\gamma,\tht]$-instability point.
	Consequently, $\geo\from{(m,r)}\dir{L}{\tht}{+}$ cannot go up before it reaches $(m,t)$. By monotonicity, $\geo\from{(m,r)}\dir{R}{\tht}{+}$ must do the same. 
	Taking a rational $r'\in(s,r)$ and applying 
	Theorem \ref{thm:shocks}\eqref{thm:shocks:0} tells us that $\geo\from{(m,r')}\dir{L}{\tht}{+}=\geo\from{(m,r')}\dir{R}{\tht}{+}$, which implies that also $\geo\from{(m,r)}\dir{L}{\tht}{+}=\geo\from{(m,r)}\dir{R}{\tht}{+}$. 
	Since $\geo\from{(m,s)}\dir{R}{\tht}{+} = \lim\limits_{r\searrow s}\geo\from{(m,r)}\dir{R}{\tht}{+}$, $\geo\from{(m,s)}\dir{R}{\tht}{+}$ also goes through $(m,t)$.
\end{proof}

\begin{lem}\label{instableftendpt}
    The following holds for all $\w\in\Omref{Omega8}$ and  $\tht\ge\gamma>0$. Suppose that for some $m\in\Z$ and $s<t$, $\{(m+\half,r):s<r<t\}$ is a proper $[\gamma,\tht]$-instability interval. Then $(m+\half,s)$ is an improper $[\gamma,\tht]$-instability point if, and only if, $\geo\from{(m,s)}\dir{L}{\tht}{+}$ goes up to $(m+1,s)$.  
\end{lem}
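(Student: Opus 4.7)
The plan is to establish both directions, with $(\Leftarrow)$ essentially immediate from Lemma \ref{geodefproperIS} together with the closure definition of the instability graph, and $(\Rightarrow)$ proved by contradiction through a Busemann-cocycle computation that uncovers a $[\gamma,\tht]$-instability edge descending from $(m+\half,s)$, from which Proposition \ref{nodouble} will yield the contradiction.

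For $(\Leftarrow)$: if $\geo\from{(m,s)}\dir{L}{\tht}{+}$ passes through $(m+1,s)$, then $(m+1,s)$ lies in $\geo\from{(m,s)}\dir{L}{\tht}{+}\cap\geo\from{(m+1,s)}\dir{R}{\gamma}{-}$; Lemma \ref{geodefproperIS} then precludes properness at $(m+\half,s)$, and since $(m+\half,s)\in\IG{[\gamma,\tht]}$ (the closure of the given proper interval), it must be improper.

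For $(\Rightarrow)$: I would assume $(m+\half,s)$ improper and suppose for contradiction that $\geo\from{(m,s)}\dir{L}{\tht}{+}$ does not go up. Monotonicity \eqref{geomono} and Lemma \ref{geo-split} would then force $\geo\from{(m,s)}\dir{L}{\tht}{+}=\geo\from{(m,s)}\dir{R}{\tht}{+}$, and Lemma \ref{underpropinstab} would put this common geodesic through $(m,t)$. For $r\in(s,t)$, the cocycle \eqref{cocycle} decomposes $\bus{\gamma-}{}(m+1,r,m,r)-\buse{+}{m+1}{r}{m}{r}$ through the pivot $(m+1,s)\to(m,s)$ into a vertical pivot term at $(m,s)$ (which vanishes by improperness), a level-$m{+}1$ horizontal term, and a level-$m$ horizontal term; by \eqref{B:mono-eqn} the last two are nonnegative, and since their sum is strictly positive on $(s,t)$ by properness, the level-$m$ term would be strictly positive. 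This would make $s$ a point of increase of $r\mapsto\bus{\gamma-}{}(m,0,m,r)-\buse{+}{m}{0}{m}{r}$, so that $[(m-\half,s),(m+\half,s)]$ is a $[\gamma,\tht]$-instability edge. Lemma \ref{geodefinstabedge1} would then tell us that $\geo\from{(m,s)}\dir{R}{\tht}{+}$ and $\geo\from{(m,s)}\dir{L}{\gamma}{-}$ separate at $(m,s)$ and never meet again; since the former proceeds right, $\geo\from{(m,s)}\dir{L}{\gamma}{-}$ must go up to $(m+1,s)$ and, by Proposition \ref{nodouble}, then proceed right, exiting level $m{+}1$ at some $s''\in(s,t)$ (a later exit would rejoin $\geo\from{(m,s)}\dir{R}{\tht}{+}$ on level $m{+}1$).

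The contradiction would then come from comparing this tail at $(m+1,s)$ against the two canonical geodesics there. Lemma \ref{geodefproperIS} with improperness would force $\geo\from{(m+1,s)}\dir{R}{\gamma}{-}$ to intersect $\geo\from{(m,s)}\dir{R}{\tht}{+}$ at level $\ge m{+}1$ and time $\ge t$, so its level-$m{+}1$ exit time is $\ge t$; meanwhile Theorem \ref{geo:instpt} with $(m+\half,s)\in\IG{[\gamma,\tht]}$ would give $\geo\from{(m+1,s)}\dir{L}{\gamma}{-}\cap\geo\from{(m,s)}\dir{R}{\tht}{+}=\varnothing$, so Lemma \ref{geo-split} together with $\geo\from{(m+1,s)}\dir{L}{\gamma}{-}\preceq\geo\from{(m+1,s)}\dir{R}{\gamma}{-}$ would force $\geo\from{(m+1,s)}\dir{L}{\gamma}{-}$ to go up. The two canonical geodesics would thus have level-$m{+}1$ exit times $s$ and $\ge t$ respectively, while the tail's exit time is $s''\in(s,t)$; by the leftmost/rightmost characterization of Theorem \ref{thm:geo}\eqref{thm:geo:LR} together with the almost-sure uniqueness of the extremal Busemann maximizers defining \eqref{geo-proc}, the tail would have to coincide with one of the two canonicals, yielding the contradiction. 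The hardest step will be this last one, identifying the tail with one of the two canonical geodesics, which rests on the Busemann variational definition of $\geo\from{(m+1,s)}\dir{L}{\gamma}{-}$ and $\geo\from{(m+1,s)}\dir{R}{\gamma}{-}$ as extremal maximizers and on the almost-sure two-valued structure of these maximizers at a $\NU_1^{\gamma-}$ point.
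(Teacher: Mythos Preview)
Your $(\Leftarrow)$ direction is correct and essentially matches the paper's (which phrases it contrapositively via Lemma~\ref{underpropinstab}).

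Your $(\Rightarrow)$ direction, however, is far more elaborate than necessary and has two genuine issues.

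First, in the cocycle step, the level-$(m{+}1)$ horizontal term $[\bus{\gamma-}{}-\bus{\tht+}{}](m+1,r,m+1,s)$ is \emph{nonpositive}, not nonnegative, since $r>s$ and monotonicity \eqref{B:mono-eqn} gives $\bus{\gamma-}(m+1,s,m+1,r)\ge\bus{\tht+}(m+1,s,m+1,r)$. Your conclusion that the level-$m$ term is strictly positive is salvageable (it equals the positive left-hand side minus a nonpositive quantity), but the reasoning as written (``both nonnegative, sum positive, hence the level-$m$ term positive'') is invalid.

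Second, and more seriously, your final step---that the tail of $\geo\from{(m,s)}\dir{L}{\gamma}{-}$ from $(m+1,s)$ must coincide with one of $\geo\from{(m+1,s)}\dir{L}{\gamma}{-}$ or $\geo\from{(m+1,s)}\dir{R}{\gamma}{-}$---amounts to invoking Lemma~\ref{no-geo} from the appendix, which the paper explicitly sets aside as unused in the main development. You describe this as resting on ``the almost-sure two-valued structure of these maximizers at a $\NU_1^{\gamma-}$ point'', but that is precisely the content of Lemma~\ref{no-geo}, not something available from Theorem~\ref{thm:geo}\eqref{thm:geo:LR} alone.

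The paper's argument for $(\Rightarrow)$ is a four-line contrapositive that avoids $\gamma{-}$ geodesics out of $(m,s)$ entirely: if $\geo\from{(m,s)}\dir{L}{\tht}{+}$ proceeds right to some $(m,r)$ with $r\in(s,t)$, it coalesces there with $\geo\from{(m,r)}\dir{L}{\tht}{+}$; properness at $(m+\half,r)$ gives $\geo\from{(m+1,r)}\dir{R}{\gamma}{-}\cap\geo\from{(m,r)}\dir{L}{\tht}{+}=\varnothing$; since $\geo\from{(m+1,s)}\dir{R}{\gamma}{-}\preceq\geo\from{(m+1,r)}\dir{R}{\gamma}{-}$ by \eqref{geomono}, it follows that $\geo\from{(m+1,s)}\dir{R}{\gamma}{-}\cap\geo\from{(m,s)}\dir{L}{\tht}{+}=\varnothing$, so $(m+\half,s)$ is proper by Lemma~\ref{geodefproperIS}. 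The key idea you missed is that properness can be transferred from a nearby interior point $r$ back to $s$ directly via geodesic monotonicity, without any analysis of instability edges or Busemann maximizer structure at $(m+1,s)$.
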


\begin{proof}
	When $(m+\half,s)$ is proper, $\geo\from{(m,s)}\dir{L}{\tht}{+}$ must go right (all the way to $(m,t)$) by Lemma \ref{underpropinstab}. Conversely, 
	suppose $\geo\from{(m,s)}\dir{L}{\tht}{+}$ goes to $(m,r)$ for some $r\in(s,t)$. Then it  coalesces with $\geo\from{(m,r)}\dir{L}{\tht}{+}$ and thus can never intersect $\geo\from{(m+1,s)}\dir{R}{\gamma}{-}$ since $\geo\from{(m+1,s)}\dir{R}{\gamma}{-} \preceq \geo\from{(m+1,r)}\dir{R}{\gamma}{-}$ and $\geo\from{(m+1,r)}\dir{R}{\gamma}{-}\cap \geo\from{(m,r)}\dir{L}{\tht}{+}=\varnothing$ (because $(m+\half,r)$ is a proper instability point). This implies that $(m+\half,s)$ is a proper instability point.  
\end{proof}

\begin{lem}\label{instabrightendpt}
    The following holds for all $\w\in\Omref{Omega8}$ and $\tht\ge\gamma>0$. Suppose that for some $m\in\Z$ and $s<t$, $\{(m+\half,r):s<r<t\}$ is a proper $[\gamma,\tht]$-instability interval. If $(m+\half,t)$ is an improper $[\gamma,\tht]$-instability point, then $\geo\from{(m,t)}\dir{L}{\gamma}{-}$ proceeds rightwards out of $(m,t)$ but $\geo\from{(m+1,t)}\dir{L}{\gamma}{-}$ proceeds immediately vertically to $(m+2,t)$ while $\geo\from{(m+1,t)}\dir{R}{\gamma}{-}$ proceeds rightwards.
\end{lem}

\begin{proof}
	Suppose that  $(m+\half,t)$ is an improper $[\gamma,\tht]$-instability point. Then, 
 by Lemma \ref{improperendpts}, $(m+\half,t)$ is the right endpoint of $\{(m+\half,r):s<r<t\}$ and, by Lemma \ref{biinf:geo}, there is a vertical $[\gamma,\tht]$-instability edge up from $(m+\half,t)$. Thus, by Lemma \ref{geodefinstabedge1}, $\geo\from{(m+1,t)}\dir{L}{\gamma}{-}$ must proceed vertically to $(m+2,t)$. By Proposition \ref{nodouble}, $\geo\from{(m,t)}\dir{L}{\gamma}{-}$ must proceed laterally on level $m$. Likewise $\geo\from{(m+1,t)}\dir{R}{\gamma}{-}$ must immediately proceed laterally. Indeed, if it were to proceed vertically along with $\geo\from{(m+1,t)}\dir{L}{\gamma}{-}$, then we would have $\geo\from{(m+1,t)}\dir{L}{\gamma}{-}=\geo\from{(m+1,t)}\dir{R}{\gamma}{-}$ by Lemma \ref{geo-split}. 
 On the other hand, Theorem \ref{geo:instpt} implies $\geo\from{(m+1,t)}\dir{L}{\gamma}{-}\cap\geo\from{(m,t)}\dir{R}{\tht}{+}=\varnothing$.
 Furthermore, since $\geo\from{(m,t)}\dir{L}{\gamma}{-}$  goes right on level $m$, it forces both $\geo\from{(m,t)}\dir{L}{\tht}{+}$ and $\geo\from{(m,t)}\dir{R}{\tht}{+}$ to go right together and hence $\geo\from{(m,t)}\dir{L}{\tht}{+}=\geo\from{(m,t)}\dir{R}{\tht}{+}$ (Lemma \ref{geo-split} again). But then we would have $\geo\from{(m+1,t)}\dir{R}{\gamma}{-}\cap\geo\from{(m,t)}\dir{L}{\tht}{+}=\varnothing$. By Lemma \ref{geodefproperIS}, this makes $(m+\half,t)$ a proper instability point, which it is assumed not to be.
\end{proof}

The following is a corollary of Theorem \ref{CI-Hausdorff} and the geodesics characterization of vertical instability edges. Compare with Corollary \ref{cor:no isolated}, which states that there are no isolated instability edges.

\begin{lem}\label{lm:edge nowhere dense}
    The following holds for all $\w\in\Omref{Omega8}$ and $\tht\ge\gamma>0$. For any $m\in\Z$, the set
    $\{s:[(m-\half,s),(m+\half,s)]\text{ is a }[\gamma,\tht]\text{-instability edge}\}$
    is nowhere dense. 
\end{lem}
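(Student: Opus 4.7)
The plan is to prove that the level-$m$ instability edge set is a closed subset of ${\mathrm{CI}}_m$; since by Theorem \ref{CI-Hausdorff} this latter set contains no rational point, a closed subset of it must have empty interior and is therefore nowhere dense.

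For the containment in ${\mathrm{CI}}_m$, I would argue as follows. Suppose $[(m-\half,s),(m+\half,s)]$ is a $[\gamma,\tht]$-instability edge. By Lemma \ref{geodefinstabedge1}, $\geo\from{(m,s)}\dir{R}{\tht}{+}$ and $\geo\from{(m,s)}\dir{L}{\gamma}{-}$ separate immediately at $(m,s)$. The monotonicity of the geodesic family (Theorem \ref{thm:geo}\eqref{thm:geo:mono}) gives
\[\geo\from{(m,s)}\dir{L}{\gamma}{-} \preceq \geo\from{(m,s)}\dir{R}{\gamma}{-} \preceq \geo\from{(m,s)}\dir{R}{\tht}{+},\]
so immediate separation of the leftmost and rightmost of these three forces $\geo\from{(m,s)}\dir{L}{\gamma}{-}$ to jump vertically from $(m,s)$ to $(m+1,s)$. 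By directedness (Theorem \ref{thm:geo}\eqref{thm:geo:dir}), this semi-infinite geodesic visits lattice points $(n,t_n)$ with $t_n/n\to\gamma>0$, so for large $n$, $t_n>s$. The portion of the semi-infinite geodesic from $(m,s)$ to $(n,t_n)$ is a point-to-point geodesic by Theorem \ref{thm:geo}\eqref{thm:geo:LR} and passes through $(m+1,s)$, so $s\in{\mathrm{CI}}_m$.

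For closedness, I would observe that the level-$m$ instability edge set equals the set of points of increase of
\[f(s) := \bus{\gamma}{-}(m,0,m,s) - \bus{\tht}{+}(m,0,m,s),\]
which is continuous and nondecreasing by Theorem \ref{thm:B}. If $t$ is not a point of increase of $f$, there exists $\e>0$ with $f(t-\e)=f(t+\e)$; monotonicity then forces $f$ to be constant on $[t-\e,t+\e]$, so every $t'\in(t-\e,t+\e)$ is also not a point of increase. Thus the set of non-increase points is open, and the instability edge set is closed.

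Combining the two observations, the level-$m$ instability edge set is a closed subset of $\R$ contained in ${\mathrm{CI}}_m$, and ${\mathrm{CI}}_m$ contains no rational point. Since every non-empty open interval in $\R$ contains a rational, the closed instability edge set has empty interior and is therefore nowhere dense. The main conceptual step is identifying the containment in ${\mathrm{CI}}_m$ via the geometric characterization of instability edges; once that is established, the topological conclusion is immediate, with the density of $\Q$ in $\R$ doing all of the remaining work.
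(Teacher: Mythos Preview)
Your proof is correct. Both your argument and the paper's rely on the same two ingredients—Lemma \ref{geodefinstabedge1} and the fact from Theorem \ref{CI-Hausdorff} that ${\mathrm{CI}}_m$ contains no rational—but the logical structure differs. The paper fixes a rational $r$, uses $r\notin{\mathrm{CI}}_m$ to force $\geo\from{(m,r)}\dir{L}{\gamma}{-}$ to move right to some $(m,t)$, and then argues via Lemma \ref{geodefinstabedge1} that the whole interval $[r,t)$ lies in the complement of the edge set; density of $\Q$ then gives nowhere density. You instead prove the containment $E\subset{\mathrm{CI}}_m$ directly and combine it with the (easy) closedness of $E$ as the increase set of a continuous nondecreasing function, so that $E=\overline E$ misses every rational and hence has empty interior. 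Your route is a bit more streamlined, since it avoids the extra step of producing an explicit interval to the right of each rational and makes the ``nowhere dense'' conclusion immediate from closedness.
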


\begin{proof}
    By Theorem \ref{CI-Hausdorff}, for any rational $r$, there exists a $t>r$ such that $\geo\from{(m,r)}\dir{L}{\gamma}{-}$ goes through $(m,t)$. By the geodesics ordering, $\geo\from{(m,r)}\dir{R}{\tht}{+}$ also goes through $(m,t)$. Then Lemma \ref{geodefinstabedge1} implies that for any $s\in[r,t)$, $[(m-\half,s),(m+\half,s)]$ is not a $[\gamma,\tht]$-instability edge. The claim now follows from the fact that rational numbers are dense.
\end{proof}

\begin{figure}[ht!]
\begin{center}

\begin{tikzpicture}[>=latex, scale=0.6]

\draw(0,0)--(6,0);
\draw[dashed](0,1)--(6,1);
\draw(0,2)--(6,2);
\draw[dashed](0,3)--(6,3);

\draw(6,0)node[right]{$m$};
\draw(6,1)node[right]{$m+\half$};
\draw(6,2)node[right]{$m+1$};
\draw(6,3)node[right]{$m+\frac32$};

\draw[line width=2pt,color=Lm](0.88,0)--(0.88,2);
\draw[line width=2pt,color=Lp](1,0)--(1,2);
\draw[line width=2pt,color=Rp](1,0)--(5,0);
\draw[line width=2pt,color=Lp](1.17,0.1)--(5,0.1);
\draw[line width=2pt,color=Rm](4,0.22)--(5,0.22);
\draw[line width=2pt,color=Lm](4,0.32)--(5,0.32);

\draw[line width=3.5pt,color=Inst](1,1)--(4.02,1)--(4.02,3);

\draw[line width=2pt,color=Rm](3.95,2)--(5,2);
\draw[line width=2pt,color=Lm](3.9,2)--(3.9,4);

\end{tikzpicture}
\end{center}

\caption{\small A summary of the contents of Lemmas \ref{underpropinstab}-\ref{instabrightendpt}. The geodesics are color-coded as follows: \textcolor{Lm}{$\geo\from{}\dir{L}{\gamma}{-}$}, \textcolor{Rm}{$\geo\from{}\dir{R}{\gamma}{-}$}, \textcolor{Lp}{$\geo\from{}\dir{L}{\tht}{+}$}, and \textcolor{Rp}{$\geo\from{}\dir{R}{\tht}{+}$}. The instability graph is the thicker red line.}
\label{fig:instabrightendpt}
\end{figure}

Next, we describe the web-like structure of the instability graph. 

\begin{lem}\label{instabifbtw}
	The following holds for all $\w\in\Omref{Omega8}$ and $\tht\ge\gamma>0$. 
 Take $\bfx\dual, \bfy\dual \in \IG{[\gamma,\tht]}$ such that $\bfy\dual=(n+\half,t)$ and $\bfx\dual = (m+\half,s)$ with $m\le n$ in $\Z$ and $s\leq t$ in $\R$.  Then the following are equivalent:
	\begin{enumerate}[label={\rm(\alph*)}, ref={\rm\alph*}]   \itemsep=3pt 
		\item\label{instabifbtw.a} There is an up-right path in $\IG{[\gamma,\tht]}$ from $\bfx\dual$ to $\bfy\dual$.
		\item\label{instabifbtw.b} $\bfy\dual$ is between $\geo\from{(m+1,s)}\dir{L}{\gamma}{-}$ and $\geo\from{(m,s)}\dir{R}{\tht}{+}$. 
	\end{enumerate}
\end{lem}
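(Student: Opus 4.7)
The plan is to prove the two implications separately. Set $\Gamma_1:=\geo\from{(m,s)}\dir{R}{\tht}{+}$ and $\Gamma_2:=\geo\from{(m+1,s)}\dir{L}{\gamma}{-}$, with jump times $(u_k)$ and $(v_k)$, respectively. Since $\bfx\dual\in\IG{[\gamma,\tht]}$, Theorem \ref{geo:instpt} gives $\Gamma_1\cap\Gamma_2=\varnothing$, so $\Gamma_2$ stays strictly above $\Gamma_1$, and condition \eqref{instabifbtw.b} translates to $v_n\le t\le u_n$ (with the convention $v_m=s$).

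For \eqref{instabifbtw.a}$\Rightarrow$\eqref{instabifbtw.b}, I would induct along the up-right path in $\IG{[\gamma,\tht]}$ from $\bfx\dual$ to $\bfy\dual$, preserving the invariant that every visited point $(k+\half,r)$ satisfies $v_k\le r\le u_k$. The base case $\bfx\dual$ is immediate. For a horizontal step across a maximal proper instability interval $(\alpha,\beta)$ at level $k+\half$ (by Lemma \ref{improp=endpts} the connected path at one level sits inside a single such interval), Lemma \ref{underpropinstab} forces $\geo\from{(k,\alpha)}\dir{R}{\tht}{+}$ to stay on level $k$ until $(k,\beta)$. Since $\Gamma_1$ coalesces with this auxiliary geodesic by Theorem \ref{thm:geo}\eqref{thm:geo:coal}, if $u_k<\beta$ then $\Gamma_1$ and $\geo\from{(k,\alpha)}\dir{R}{\tht}{+}$ would share the two lattice points $(k,u_k)$ and their coalescence point $P$; yet the segment of $\Gamma_1$ between them takes a vertical step first while the corresponding segment of $\geo\from{(k,\alpha)}\dir{R}{\tht}{+}$ takes a horizontal step first, so $\Gamma_1$'s segment is leftmost rather than rightmost between these two points, contradicting Theorem \ref{thm:geo}\eqref{thm:geo:LR}. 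Thus $u_k\ge\beta$. A symmetric argument pairing $\Gamma_2$ with a $\gamma-$, $L$-type auxiliary geodesic and invoking the leftmost clause of Theorem \ref{thm:geo}\eqref{thm:geo:LR} yields $v_k\le\alpha$. For a vertical step across an instability edge $[(k+\half,\tau),(k+\frac32,\tau)]$, monotonicity of jump times gives $u_{k+1}\ge u_k\ge\tau$, while applying Lemma \ref{geodefinstabedge1} together with an analogous leftmost argument for $\Gamma_2$ at level $k+1$ yields $v_{k+1}\le\tau$.

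For \eqref{instabifbtw.b}$\Rightarrow$\eqref{instabifbtw.a}, I would apply Lemma \ref{biinf:geo} at $\bfy\dual$ to extract an infinite down-left path in $\IG{[\gamma,\tht]}$, and then run the arguments of the previous paragraph in reverse along this path to show it stays in the corridor $\{(k+\half,r):v_k\le r\le u_k\}$. Because distinct maximal proper instability intervals have disjoint closures (Lemma \ref{improp=endpts}), once the path reaches level $m+\half$ it must lie inside the maximal proper instability interval containing $\bfx\dual$, and a finite horizontal move then reaches $\bfx\dual$. Reversing the resulting path produces the desired up-right path from $\bfx\dual$ to $\bfy\dual$.

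The hardest part will be setting up the coalescence-plus-extremality contradiction in the horizontal case of \eqref{instabifbtw.a}$\Rightarrow$\eqref{instabifbtw.b} rigorously, verifying that $\Gamma_1$ and $\geo\from{(k,\alpha)}\dir{R}{\tht}{+}$ truly share two lattice points between which they take genuinely distinct up-right routes. For \eqref{instabifbtw.b}$\Rightarrow$\eqref{instabifbtw.a}, the secondary subtlety is confirming that the down-left path cannot escape the corridor laterally on any intermediate level, which itself follows inductively from the step-by-step analysis in part (a).
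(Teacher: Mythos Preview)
Your \eqref{instabifbtw.a}$\Rightarrow$\eqref{instabifbtw.b} argument is essentially correct and close in spirit to the paper's. The paper phrases it as ``$\Gamma_1$ cannot cross an instability interval and $\Gamma_2$ cannot cross an instability edge, hence both stay on their side of $\pi$'', but unpacking that uses exactly the coalescence-plus-extremality contradiction you describe. One caveat: your extra claim $v_k\le\alpha$ is not needed for this direction and is not established by a ``symmetric argument'' --- there is no analogue of Lemma~\ref{underpropinstab} for $\gamma-$ geodesics sitting above the interval, so this particular symmetry is illusory.

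The genuine gap is in \eqref{instabifbtw.b}$\Rightarrow$\eqref{instabifbtw.a}. You propose to follow an \emph{arbitrary} down-left path in $\IG{[\gamma,\tht]}$ from $\bfy\dual$ (via Lemma~\ref{biinf:geo}) and argue it stays in the corridor by ``running the arguments in reverse''. This does not work. When the path takes a vertical step down through an edge $[(k-\tfrac12,\tau),(k+\tfrac12,\tau)]$, staying in the corridor requires $\tau\le u_{k-1}$; but all you know is $\tau\le u_k$, and the edge condition (that $\geo\from{(k,\tau)}\dir{R}{\tht}{+}$ goes right) does not force this. Concretely, $\Gamma_1$ may already be on level $k$ at time $\tau$ (i.e.\ $u_{k-1}<\tau\le u_k$), in which case descending to level $k-\tfrac12$ lands strictly to the right of $\Gamma_1$'s crossing point and exits the corridor. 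The missing $v_k\le\alpha$ bound leads to an analogous failure for leftward horizontal moves.

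The paper's proof is therefore \emph{constructive} rather than passive: it follows a down-left path, and whenever that path is about to leave the corridor --- either by reaching a vertical segment of $\Gamma_2$, or by taking an edge down through a horizontal segment of $\Gamma_1$ --- it exhibits an alternative move in $\IG{[\gamma,\tht]}$ (an edge down in the first case, an instability interval to the left in the second) that keeps the path inside. Since the corridor narrows to the single point $\bfx\dual$ at its SW corner, any confined infinite down-left path must pass through $\bfx\dual$. Your plan is salvageable, but only by inserting exactly this step-by-step choice mechanism in place of ``follow an arbitrary path and reverse the bounds''.
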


\begin{figure}[ht!]
\begin{center}

\begin{tikzpicture}[>=latex, scale=0.6]
\begin{scope}
\draw[dashed](0,1)--(6,1);
\draw(0,2)--(6,2);
\draw[dashed](0,3)--(6,3);
\draw(0,4)--(6,4);

\draw[line width=3.5pt,color=Inst](4,3)--(1.1,3);
\draw[line width=3.5pt,color=Inst,dashed](1.1,3)--(1.1,1);
\draw[line width=2pt,color=Lm](1,2)--(1,4);
\draw[line width=2pt,color=Rp](1,2)--(3,2);
\draw(1.5,3.85)node[above]{\textcolor{Lm}{$\geo\from{}\dir{L}{\gamma}{-}$}};
\draw(2.5,1.75)node[above right]{\textcolor{Rp}{$\geo\from{}\dir{R}{\tht}{+}$}};

\shade[ball color=Inst](4,3)circle(1.5mm);
\draw(4,3)node[above]{$\bfy\dual$};

\draw(6,1)node[right]{$k-\half$};
\draw(6,2)node[right]{$k$};
\draw(6,3)node[right]{$k+\half$};
\draw(6,4)node[right]{$k+1$};
\end{scope}

\begin{scope}[shift={(11,0)}]
\draw[dashed](0,1)--(6,1);
\draw(0,2)--(6,2);
\draw[dashed](0,3)--(6,3);
\draw(0,4)--(6,4);

\draw[line width=3.5pt,color=Inst](3,1)--(3,3);
\draw[line width=3.5pt,color=Inst,dashed](1,3)--(3,3);
\draw[line width=2pt,color=Lm](1,2.05)--(2.9,2.05)--(2.9,4);
\draw[line width=2pt,color=Rp](1,1.95)--(5,1.95);
\draw(3,0.8)node[below]{$r$};
\draw[line width=1.5pt](1,.85)--(1,1.15);
\draw(1,1)node[below]{$r'$};

\draw(3.5,3.85)node[above]{\textcolor{Lm}{$\geo\from{}\dir{L}{\gamma}{-}$}};
\draw(4.5,1.75)node[above right]{\textcolor{Rp}{$\geo\from{}\dir{R}{\tht}{+}$}};

\shade[ball color=Inst](3,3)circle(1.5mm);
\draw(3,3)node[above right]{$\bfy\dual$};

\draw(6,1)node[right]{$k-\half$};
\draw(6,2)node[right]{$k$};
\draw(6,3)node[right]{$k+\half$};
\draw(6,4)node[right]{$k+1$};
\end{scope}

\end{tikzpicture}
\end{center}

\caption{\small An illustration of the two cases in the proof of \eqref{instabifbtw.b}$\Rightarrow$\eqref{instabifbtw.a} in Lemma \ref{instabifbtw}. In both instances, the configuration of the instability graph and the geodesics (depicted by the solid lines) implies that the instability graph follows the path indicated by the thick dashed red line.}
\label{fig:instabifbtw}
\end{figure}

\begin{proof}
\eqref{instabifbtw.a}$\Rightarrow$\eqref{instabifbtw.b}. Suppose there exists an up-right path $\pi$ in $\IG{[\gamma,\tht]}$ from $\bfx\dual$ to $\bfy\dual$.
By Lemma \ref{underpropinstab},  $\geo\from{(m,s)}\dir{R}{\tht}{+}$ cannot cross an instability interval neither at an interior (and therefore proper) point nor at its left endpoint. Moreover, considering that from the right endpoint of an instability interval (denoted as $\bfz\dual$), the instability graph can only go up, and in accordance with Proposition \ref{nodouble}, geodesics do not take consecutive vertical steps, the geodesic $\geo\from{(m,s)}\dir{R}{\tht}{+}$ is unable to transition from a position below 
$\bfz\dual$ to a position above $\bfz\dual+e_2$.  
Consequently, based on these observations, we see that since $\geo\from{(m,s)}\dir{R}{\tht}{+}$ starts below $\pi$, the whole path has to remain below $\pi$. 
Similarly,  by Lemma \ref{geodefinstabedge1}, $\geo\from{(m+1,s)}\dir{L}{\gamma}{-}$ cannot cross an instability edge. Therefore, since it starts above $\pi$, the whole path has to remain above $\pi$. This proves \eqref{instabifbtw.b}.

\eqref{instabifbtw.b}$\Rightarrow$\eqref{instabifbtw.a}.
By Lemma \ref{biinf:geo}, there always exists an infinite down-left path on $\IG{[\gamma,\tht]}$, starting at $\bfy\dual$. We will show, however, that there exists a down-left path that remains confined between $\geo\from{(m,s)}\dir{R}{\tht}{+}$ and $\geo\from{(m+1,s)}\dir{L}{\gamma}{-}$. This then ensures that the path must go through $\bfx\dual$ and proves \eqref{instabifbtw.a}.  To see the existence of such a path, start at $\bfy\dual$ and follow any down-left path in $\IG{[\gamma,\tht]}$.

If the path reaches a point $(k+\half,r)\in\IG{[\gamma,\tht]}$ such that $\{(k,r),(k+1,r)\}\subset\geo\from{(m+1,s)}\dir{L}{\gamma}{-}$, then Theorem \ref{geo:instpt} says that $\geo\from{(k,r)}\dir{R}{\tht}{+}$ goes right and then Lemma \ref{geodefinstabedge1} implies that 
$[(k-\half,r),(k+\half,r)]$ is an instability edge. We can then use this edge to continue the down-left path without crossing $\geo\from{(m+1,s)}\dir{L}{\gamma}{-}$, and hence continuing to remain between $\geo\from{(m,s)}\dir{R}{\tht}{+}$ and $\geo\from{(m+1,s)}\dir{L}{\gamma}{-}$. 

Similarly, suppose the path reaches a point $(k+\half,r)\in\IG{[\gamma,\tht]}$ such that 
$[(k-\half,r),(k+\half,r)]$ is an instability edge in $\IG{[\gamma,\tht]}$, but where $\{(k,r'),(k,r)\}\subset\geo\from{(m,s)}\dir{R}{\tht}{+}$ for some $r'<r$. By Lemma \ref{geodefinstabedge1}, $\geo\from{(m,s)}\dir{R}{\tht}{+}$ must go right, $\geo\from{(k,r)}\dir{L}{\gamma}{-}$ must go up, and then the two never touch again. By the monotonicity of geodesics, $\geo\from{(k+1,r'')}\dir{L}{\gamma}{-}$ is always to the left of $\geo\from{(k,r)}\dir{L}{\gamma}{-}$, for any $r''\le r$. Therefore, we see that $\geo\from{(k+1,r'')}\dir{L}{\gamma}{-}\cap\geo\from{(k,r'')}\dir{R}{\tht}{+}=\varnothing$ for any $r''\in[r',r]$. Then by Theorem \ref{geo:instpt}, 
$\{(k,r''):r'\le r''\le r\}$ is an instability interval.
We can then follow this interval to continue the down-left path without crossing $\geo\from{(m,s)}\dir{R}{\tht}{+}$, and hence continuing to remain between $\geo\from{(m,s)}\dir{R}{\tht}{+}$ and $\geo\from{(m+1,s)}\dir{L}{\gamma}{-}$. 
\end{proof}

Taking $\gamma=\tht\in\baddir$ in the next lemma gives Theorem \ref{instabweb:summary}\eqref{inst:summary.directed}.

\begin{lem}\label{lm:directed}
The following holds for all $\w\in\Omref{Omega8}$ and $\tht\ge\gamma>0$.
Any up-right path $\bfx_{0:\infty}\dual$ on the instability graph $\IG{\tht}$ is directed into $[\gamma,\tht]$: 
\[\gamma\le\varliminf_{n\to\infty}\frac{\bfx_n\dual\cdot e_2}{\bfx_n\dual\cdot e_1}\le\varlimsup_{n\to\infty}\frac{\bfx_n\dual\cdot e_2}{\bfx_n\dual\cdot e_1}\le\tht.\]
\end{lem}

\begin{proof}
Let $\bfx_0\dual=(m+\half,s)$. Then
Lemma \ref{instabifbtw} implies that for all $n\in\N$,  $\bfx_n\dual$ is between $\geo\from{(m+1,s)}\dir{L}{\gamma}{-}$ and $\geo\from{(m,s)}\dir{R}{\tht}{+}$. The lemma thus follows from the directedness of these geodesics, given in Theorem \ref{thm:geo}\eqref{thm:geo:dir}.
\end{proof}

\begin{defn}
    Take $\tht\ge\gamma>0$. We say that $\bfy\dual\in\IG{[\gamma,\tht]}$ is a NE \emph{ancestor} of $\bfx\dual\in\IG{[\gamma,\tht]}$ or, equivalently, that $\bfx\dual$ is a SW \emph{descendant} of $\bfy\dual$ if there is an up-right path on $\IG{[\gamma,\tht]}$ going from $\bfx\dual$ to $\bfy\dual$. 
\end{defn}

Theorem \ref{instabweb:summary} follows from the case $\gamma=\tht\in\baddir$ in next two theorems.

\begin{thm}\label{instabcmnchild}
	The following holds for all $\w\in\Omref{Omega8}$ and $\tht\ge\gamma>0$. Every pair of instability points $ \bfx\dual, \bfy\dual \in\IG{[\gamma,\tht]}$ have a common NE ancestor $ \bfz\dual \in \IG{[\gamma,\tht]}$.  
 \end{thm}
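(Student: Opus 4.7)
The plan is to combine the existence of infinite up-right paths in the instability graph (Lemma \ref{biinf:geo}), the ``corridor'' characterization of NE ancestors via pairs of bounding geodesics (Lemma \ref{instabifbtw}), and the coalescence of semi-infinite geodesics of matching sign and direction (Theorem \ref{thm:geo}\eqref{thm:geo:coal}). Write $\bfx\dual=(m+\half,s)$ and $\bfy\dual=(n+\half,t)$. By Lemma \ref{biinf:geo} there is an infinite up-right path $\pi_x$ on $\IG{[\gamma,\tht]}$ starting at $\bfx\dual$, and by Lemma \ref{lm:directed} this path has asymptotic direction in $[\gamma,\tht]\subset(0,\infty)$, so both coordinates of its vertices tend to $+\infty$.

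Next, I invoke Theorem \ref{thm:geo}\eqref{thm:geo:coal} twice: the geodesics $\geo\from{(m+1,s)}\dir{L}{\gamma}{-}$ and $\geo\from{(n+1,t)}\dir{L}{\gamma}{-}$ coalesce at some point $\bfp$, and $\geo\from{(m,s)}\dir{R}{\tht}{+}$ and $\geo\from{(n,t)}\dir{R}{\tht}{+}$ coalesce at some point $\bfq$. On every level strictly above those of both $\bfp$ and $\bfq$, the ``corridor'' between $\geo\from{(m+1,s)}\dir{L}{\gamma}{-}$ and $\geo\from{(m,s)}\dir{R}{\tht}{+}$ from Lemma \ref{instabifbtw} (which traps every NE ancestor of $\bfx\dual$) coincides with the corresponding corridor attached to $\bfy\dual$, namely the region between $\geo\from{(n+1,t)}\dir{L}{\gamma}{-}$ and $\geo\from{(n,t)}\dir{R}{\tht}{+}$.

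Now pick a vertex $\bfz\dual$ on $\pi_x$ whose level strictly exceeds those of $\bfp$, $\bfq$, and $\bfy\dual$, and whose time coordinate is at least $t$; this is possible by the divergence of both coordinates of $\pi_x$. The initial segment of $\pi_x$ from $\bfx\dual$ to $\bfz\dual$ already exhibits the required up-right path on $\IG{[\gamma,\tht]}$ from $\bfx\dual$ to $\bfz\dual$. On the other hand, $\bfz\dual$ lies in the $\bfy\dual$-corridor by the previous paragraph, so Lemma \ref{instabifbtw} (the direction \eqref{instabifbtw.b}$\Rightarrow$\eqref{instabifbtw.a}) applied to the pair $(\bfy\dual,\bfz\dual)$ produces an up-right path on $\IG{[\gamma,\tht]}$ from $\bfy\dual$ to $\bfz\dual$. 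Thus $\bfz\dual$ is a common NE ancestor.

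The only delicate point is arranging that $\bfz\dual$ is genuinely weakly NE of $\bfy\dual$ (i.e.\ both coordinates of $\bfz\dual$ dominate those of $\bfy\dual$) so that the hypothesis $m\le n$, $s\le t$ of Lemma \ref{instabifbtw} is available for the pair $(\bfy\dual,\bfz\dual)$. Since $\bfy\dual$ is fixed, this reduces to pushing $\bfz\dual$ far enough along $\pi_x$; the positivity of $\gamma$ combined with the directedness of $\pi_x$ from Lemma \ref{lm:directed} is exactly what guarantees that both coordinates of $\pi_x$ grow without bound, so no further work is needed.
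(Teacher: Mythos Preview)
Your proof is correct. Both your argument and the paper's hinge on the same two ingredients: coalescence of the $L,\gamma-$ and $R,\tht+$ bounding geodesics (Theorem \ref{thm:geo}\eqref{thm:geo:coal}) and the corridor characterization of NE ancestry (Lemma \ref{instabifbtw}). The difference lies in how the candidate $\bfz\dual$ is located. The paper picks a point $u$ on $\geo\from{(n,t)}\dir{R}{\tht}{+}$ beyond both coalescence points, and then watches $\geo\from{u}\dir{L}{\gamma}{-}$ peel off from $\geo\from{(n,t)}\dir{R}{\tht}{+}$; the separation point yields an explicit instability edge (via Lemma \ref{geodefinstabedge1}), and its upper endpoint is the desired $\bfz\dual$. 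You instead invoke Lemma \ref{biinf:geo} and Lemma \ref{lm:directed} to send an up-right instability path from $\bfx\dual$ off to infinity in both coordinates and then harvest any vertex far enough along it. Your route is slightly more economical, reusing structure already established; the paper's route is more constructive and avoids appealing to directedness. Either way, once $\bfz\dual$ sits in both corridors past the coalescence levels, Lemma \ref{instabifbtw} finishes the job identically.
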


 \begin{proof}
	Write $ \bfx\dual=(m+\half,s) $ and $ \bfy\dual=(n+\half,t) $ and consider the geodesics $ \geo\from{(m+1,s)}\dir{L}{\gamma}{-}$, $\geo\from{(m,s)}\dir{R}{\tht}{+}$, $\geo\from{(n+1,t)}\dir{L}{\gamma}{-}$, and $\geo\from{(n,t)}  \dir{R}{\tht}{+}$. By the coalescence of geodesics (Theorem \ref{thm:geo}\eqref{thm:geo:coal}), there exist coalescence points $ \mathbf{z}^{\gamma-} $ and $  \mathbf{z}^{\tht+} $ such that $\geo\from{(m+1,s)}\dir{L}{\gamma}{-}$ and $\geo\from{(n+1,t)}\dir{L}{\gamma}{-}$ match beyond $ \mathbf{z}^{\gamma-} $  and $ \geo\from{(m,s)}\dir{R}{\tht}{+}$ and $\geo\from{(n,t)}  \dir{R}{\tht}{+} $ match beyond $  \mathbf{z}^{\tht+} $. Choose a $ u\in \geo\from{(n,t)}  \dir{R}{\tht}{+} $ such that $\mathbf{z}^{\gamma-} \vee \mathbf{z}^{\tht+} \le u  $ coordinatewise. Then $ \geo\from{u} \dir{R}{\tht}{+}$ will continue along with $ \geo\from{(n,t)}  \dir{R}{\tht}{+} $, but $ \geo\from{u} \dir{L}{\gamma}{-}$ will eventually have to separate from $ \geo\from{(n,t)}  \dir{R}{\tht}{+} $ at some point $z=(
 \ell,r)$ to coalesce with $ \geo\from{(m+1,s)}\dir{L}{\gamma}{-} $ (and $ \geo\from{(n+1,t)}\dir{L}{\gamma}{-} $). By Lemma \ref{geodefinstabedge1}, it follows that $[(\ell-\half,r),(\ell+\half,r)]$ is an instability edge in $\IG{[\gamma,\tht]}$.  Then $\bfz\dual=(\ell+\half,r)\in\IG{[\gamma,\tht]}$ is between $ \geo\from{(m+1,s)}\dir{L}{\gamma}{-}$ and $\geo\from{(m,s)}\dir{R}{\tht}{+}$, as well as between $ \geo\from{(n+1,t)}\dir{L}{\gamma}{-}$ and $\geo\from{(n,t)}  \dir{R}{\tht}{+}$ and thus by Lemma \ref{instabifbtw} is a NE ancestor of both $\bfx\dual$ and $ \bfy\dual $. 
\end{proof}

\begin{thm}\label{instabcommonancstr}
	The following holds for all $\w\in\Omref{Omega8}$ and $\tht\ge\gamma>0$. Every pair of instability points $ \bfx\dual, \bfy\dual \in\IG{[\gamma,\tht]}$ have a common SW descendant $ \bfz\dual\in \IG{[\gamma,\tht]}$. 
 \end{thm}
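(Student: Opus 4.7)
Writing $\bfx\dual=(m+\half,s)$ and $\bfy\dual=(n+\half,t)$, with $m\le n$ WLOG, by Lemma \ref{instabifbtw} it suffices to find $\bfz\dual=(\ell+\half,r)\in\IG{[\gamma,\tht]}$ with $\ell<m$ such that both $\bfx\dual$ and $\bfy\dual$ lie between $\geo\from{(\ell+1,r)}\dir{L}{\gamma}{-}$ and $\geo\from{(\ell,r)}\dir{R}{\tht}{+}$. My plan is therefore to find $\bfz\dual$ far in the south-west whose NE ``shadow'' is wide enough to contain both of the given instability points.

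The approach mirrors the construction of Theorem \ref{instabcmnchild} but run in the opposite direction. By Theorem \ref{thm:geo}\eqref{thm:geo:coal}, the two leftmost $\gamma-$ geodesics $\geo\from{(m+1,s)}\dir{L}{\gamma}{-}$ and $\geo\from{(n+1,t)}\dir{L}{\gamma}{-}$ coalesce into a common NE-bound tail, and symmetrically the two rightmost $\tht+$ geodesics $\geo\from{(m,s)}\dir{R}{\tht}{+}$ and $\geo\from{(n,t)}\dir{R}{\tht}{+}$ coalesce. Using Lemma \ref{inf many edges} I would pick an instability edge $[(\ell-\half,r),(\ell+\half,r)]\in\IG{[\gamma,\tht]}$ with $\ell$ far below both $m$ and $n$. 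For such an edge, Lemma \ref{geodefinstabedge1} guarantees that $\geo\from{(\ell,r)}\dir{L}{\gamma}{-}$ and $\geo\from{(\ell,r)}\dir{R}{\tht}{+}$ split immediately and never re-meet, while Proposition \ref{nodouble} forces the tail of the former above level $\ell+1$ to coincide with $\geo\from{(\ell+1,r)}\dir{L}{\gamma}{-}$. Consequently, the wedge between these two geodesics is exactly the NE shadow of $\bfz\dual=(\ell+\half,r)$.

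The main obstacle is arranging for $r$ and $\ell$ so that this shadow actually encloses both $\bfx\dual$ and $\bfy\dual$. In the strict case $\gamma<\tht$, the directedness in Theorem \ref{thm:geo}\eqref{thm:geo:dir} shows the wedge widens at asymptotic rate $\tht-\gamma$ per level, so the set of admissible $r$ at level $\ell$ is an interval of length growing linearly in $|\ell|$, centered on the coalesced $\gamma-$ and $\tht+$ tails of $\bfx\dual$ and $\bfy\dual$. The borderline case $\gamma=\tht\in\baddir$ is more delicate: linear widening is replaced by the unbounded growth of the Busemann difference $\bus{\tht}{-}(\ell,0,\ell,\cdot)-\bus{\tht}{+}(\ell,0,\ell,\cdot)$ guaranteed by Theorem \ref{B:increase}, which still yields an unbounded admissible window for $r$. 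To conclude, one combines the growth of the admissible window with the fact (Lemma \ref{inf many edges}) that the instability-edge time set on level $\ell+\half$ is unbounded on both sides, together with the observation that as $\ell\to-\infty$ the window sweeps through an unbounded region, so eventually some instability edge must fall inside it. Once $(\ell,r)$ has been found, Lemma \ref{instabifbtw} certifies that $\bfz\dual=(\ell+\half,r)$ is the desired common SW descendant of $\bfx\dual$ and $\bfy\dual$.
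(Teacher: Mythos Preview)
Your outline has a genuine gap at the step you label ``the main obstacle.'' The NE ancestor argument in Theorem \ref{instabcmnchild} works because semi-infinite geodesics \emph{coalesce going forward} (Theorem \ref{thm:geo}\eqref{thm:geo:coal}); once past the coalescence points of the $\gamma-$ and $\tht+$ families, any instability point lies in both wedges simultaneously. There is no backward analogue of coalescence, so the symmetry you invoke does not carry over, and you instead fall back on a quantitative widening argument that the paper's tools do not provide. Directedness in Theorem \ref{thm:geo}\eqref{thm:geo:dir} is the asymptotic statement $s_n/n\to\tht$ as $n\to\infty$; it gives no control on the position at a \emph{fixed} level $m$ of a geodesic launched from level $\ell\to-\infty$, so the claimed linear widening at rate $\tht-\gamma$ is heuristic. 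In the critical case $\gamma=\tht$ even that heuristic evaporates, and Theorem \ref{B:increase} concerns the Busemann difference on a fixed level, not the width at level $m$ of a wedge opened far below. Finally, even if you could show the admissible window $[a_\ell,b_\ell]$ has growing length, there is no reason an instability edge must fall inside it: Lemma \ref{inf many edges} says the edge set on level $\ell$ is unbounded, and Lemma \ref{edge-Hausdorff} says it has Hausdorff dimension $\tfrac12$, but neither gives any density bound inside a prescribed moving interval.

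The paper proceeds by contradiction: assuming no common SW descendant, it follows two disjoint down-left paths $\{(k+\tfrac12,s_k)\}$ and $\{(k+\tfrac12,t_k)\}$ in $\IG{[\gamma,\tht]}$ (supplied by Lemma \ref{biinf:geo}), uses Lemma \ref{instabifbtw} to trap the geodesics $\geo\from{(k,s_k)}\dir{R}{\tht}{+}$ between them at every higher level, and extracts a subsequential limit yielding a non-degenerate bi-infinite geodesic, contradicting Theorem \ref{nobiinf}. This is the right tool precisely because there is no backward coalescence; the absence of bi-infinite geodesics is what forces the two descending paths to eventually merge.
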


\begin{figure}[ht!]
\begin{center}

\begin{tikzpicture}[>=latex, scale=0.6]

\draw(0,0)--(12,0);
\draw[dashed](0,1)--(12,1);
\draw(0,2)--(12,2);
\draw[dashed](0,3)--(12,3);
\draw(0,4)--(12,4);
\draw(0,8)--(12,8);
\draw[dashed](0,9)--(12,9);
\draw(0,10)--(12,10);
\draw(0,14)--(12,14);
\draw[dashed](0,15)--(12,15);
\draw(0,16)--(12,16);
\draw[dashed](0,17)--(12,17);
\draw(0,18)--(12,18);

\draw(12,0)node[right]{$k-1$};
\draw(12,2)node[right]{$k$};
\draw(12,4)node[right]{$k+1$};
\draw(12,8)node[right]{$\ell$};
\draw(12,10)node[right]{$\ell+1$};
\draw(12,14)node[right]{$m-1$};
\draw(12,16)node[right]{$m$};
\draw(12,18)node[right]{$m+1$};

\draw(12,6)node[right]{$\vdots$};
\draw(12,12)node[right]{$\vdots$};

\draw[line width=2pt,color=Lm,->](1,0)--(1,2)--(1.5,2)--(1.5,4)--(2.25,4)--(2.25,6)--(2.55,6)--(2.55,8)--(2.7,8)--(2.7,10)--(3.3,10)--(3.3,12)--(3.7,12)--(3.7,14)--(4.5,14)--(4.5,16)--(4.8,16)--(4.8,18)--(5.2,18)--(6.2,19);
\draw[line width=2pt,color=Lm](2,2)--(2,4);
\draw[line width=2pt,color=Rp](2,2)--(2.75,2)--(2.75,4)--(3.35,4)--(3.35,6)--(4.5,6);
\draw[line width=2pt,color=Rp](1,0)--(3.2,0)--(3.2,2)--(4,2)--(4,4)--(4.5,4)--(4.5,6)--(5.75,6)--(5.75,8)--(6.25,8)--(6.25,10)--(6.65,10)--(6.65,12)--(7.55,12);
\draw[line width=2pt,color=Rp,<->](2.75,-3)--(3.5,-2)--(4,-2)--(4,0)--(4.5,0)--(4.5,2)--(5.2,2)--(5.2,4)--(6.2,4)--(6.2,6)--(6.75,6)--(6.75,8)--(7.1,8)--(7.1,10)--(7.55,10)--(7.55,12)--(8,12)--(8,14)--(8.45,14)--(8.45,16)--(9,16)--(9,18)--(9.55,18)--(10.55,19);

\shade[ball color=Inst](1,1)circle(1.5mm);
\draw(1.7,1)node[below]{$s_{k-1}$};
\shade[ball color=Inst](5.5,1)circle(1.5mm);
\draw(6.2,1)node[below]{$t_{k-1}$};
\shade[ball color=Inst](2,3)circle(1.5mm);
\draw(2.35,3)node[below]{$s_k$};
\shade[ball color=Inst](6.6,3)circle(1.5mm);
\draw(6.95,3)node[below]{$t_k$};
\shade[ball color=Inst](3.5,9)circle(1.5mm);
\draw(3.85,9)node[below]{$s_\ell$};
\shade[ball color=Inst](8,9)circle(1.5mm);
\draw(8.35,9)node[below]{$t_\ell$};
\shade[ball color=pt](6.25,10)circle(1.5mm);
\draw(6.65,10.1)node[below]{$\pi_\ell^k$};
\shade[ball color=Inst](4.9,15)circle(1.5mm);
\draw(5.25,15)node[below]{$s_m$};
\shade[ball color=Inst](9,15)circle(1.5mm);
\draw(9.35,15)node[below]{$t_m$};
\shade[ball color=Inst](5.5,17)circle(1.5mm);
\draw(5.85,17)node[below]{$x\dual$};
\shade[ball color=Inst](9.8,17)circle(1.5mm);
\draw(10.15,17)node[below]{$y\dual$};

\draw(4,-1)node[right]{$\Gamma$};

\end{tikzpicture}
\end{center}

\caption{\small The proof of Theorem \ref{instabcommonancstr}.}
\label{fig:instabcommonancstr}
\end{figure}

\begin{proof}
	Write $\bfx\dual=(m+\half,s)$ and $\bfy\dual=(n+\half,t)$ and suppose  $ \bfx\dual$ and $\bfy\dual $ do not have a common descendant in $\IG{[\gamma,\tht]}$. By Lemma \ref{biinf:geo}, there are infinite down-left paths on $\IG{[\gamma,\tht]}$ out of each of $\bfx\dual$ and $\bfy\dual$. Therefore, without loss of generality, we continue the proof under the assumption that $m=n$ and $t>s$. 

    Again, by Lemma \ref{biinf:geo}, there exist sequences $s_k$ and $t_k$, $k\le m+1$, such that $s_{m+1}=s$, $t_{m+1}=t$, and for all $k\le m$ we have that $(k-\half,s_{k-1})\in\IG{[\gamma,\tht]}$ is a descendant of $(k+\half,s_k)$ and $(k-\half,t_{k-1})\in\IG{[\gamma,\tht]}$ is a descendant of $(k+\half,t_k)$. Since we assumed that $\bfx\dual$ and $\bfy\dual$ do not share a common descendant and that $s<t$, we must also have $s_k<t_k$ for all $k\le m$.

    Take $k<\ell\le m$. By Lemma \ref{instabifbtw}, being a NE ancestor of $(k+\half,s_k)$, $(\ell+\half,s_\ell)$ must be between $ \geo\from{(k+1,s_k)}\dir{L}{\gamma}{-} $ and $ \geo\from{(k,s_k)}\dir{R}{\tht}{+} $. Since $s_\ell<t_\ell$, $(\ell+\half,t_\ell)$ is to the right of $ \geo\from{(k+1,s_k)}\dir{L}{\gamma}{-} $. But since $\bfy\dual$ does not share a descendant with $\bfx\dual$, $(k+\half,s_k)$ cannot be a descendant of $(\ell+\half,t_\ell)$ and hence, by Lemma \ref{instabifbtw}, $(\ell+\half,t_\ell)$ must be to the right of $ \geo\from{(k,s_k)}\dir{R}{\tht}{+} $. Therefore, we have shown that for all $k<\ell\le m$, $ \geo\from{(k,s_k)}\dir{R}{\tht}{+} $ passes between $(\ell+\half,s_\ell)$ and $(\ell+\half,t_\ell)$ when it crosses level $\ell+\half$.

    Furthermore, since $(k-\half,s_{k-1})$ is a descendant of $(k+\half,s_k)$, the latter must be to the left of 
    $ \geo\from{(k-1,s_{k-1})}\dir{R}{\tht}{+} $, which implies that this geodesic has to remain entirely to the right of $ \geo\from{(k,s_k)}\dir{R}{\tht}{+} $.

    For $k<\ell\le m$ let $\pi_\ell^k$ be the first entry point of $ \geo\from{(k,s_k)}\dir{R}{\tht}{+} $ at level $\ell+1$ and, hence, the last exit point of the geodesic from level $\ell$. This means that the geodesic crosses level $\ell+\half$ at $(\ell+\half,\pi_\ell^k)$. The above observations then show that for each $k\le k'<\ell\le m$ we have $s_\ell\le\pi_\ell^k\le t_\ell$ and $\pi_\ell^{k'}\le\pi_\ell^k$.

    Thus, for each $\ell\le m$ there exists a subsequence $k_j$ and a $\pi_\ell\in[s_\ell,t_\ell]$ such that $\pi_\ell^{k_j}\nearrow\pi_\ell$ as $j\to\infty$. By the diagonal trick, we can extract a single subsequence that works for all $\ell\le m$ simultaneously. 

    Let $\Gamma$ be the bi-infinite up-right path that enters each level $\ell+1$ at $(\ell+1,\pi_\ell)$, for $\ell\le m$, and from $(m+1,\pi_m)$ and on follows  $\geo\from{(m+1,\pi_m)}\dir{L}{\tht}{+} $. Then, by the definition of $\{\pi_\ell:\ell\le m\}$, for each $\ell\le m$, the portion of $\Gamma$ that goes between levels $\ell+1$ and $m+1$ is the limit of the corresponding portion of $ \geo\from{(k_j,s_{k_j})}\dir{R}{\tht}{+} $ as $j\to\infty$. Consequently, the piece of $\Gamma$ from level $\ell+1$ and on matches $\geo\from{(\ell+1,\pi_\ell)}\dir{L}{\tht}{+} $ for all $\ell\le m$. This, together with Lemma \ref{geo-split}, imply that $\Gamma$ is a non-degenerate bi-infinite locally rightmost (and locally leftmost) geodesic, which is prohibited by Theorem \ref{nobiinf}.
\end{proof}

We close this section with two properties relating $\IG{[\gamma,\delta]}$, $\delta>\gamma>0$, to $\IG{\tht}$, $\tht\in[\gamma,\delta]$. 

\begin{lem}\label{IGint=union}
For all $\w\in\Omref{Omega8}$ and $\delta>\gamma>0$,
$\IG{[\gamma,\delta]}=\bigcup_{\tht\in[\gamma,\delta]\cap\baddir}\IG{\tht}$.
\end{lem}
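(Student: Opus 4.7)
The plan is to prove two inclusions. The $\supseteq$ direction is immediate from the monotonicity Lemma \ref{mono-inst-graph}: for any $\tht\in[\gamma,\delta]\cap\baddir$, $\IG{\tht}=\IG{[\tht,\tht]}\subset\IG{[\gamma,\delta]}$.

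For the reverse inclusion, the main tool will be a local finite decomposition of the Busemann difference into individual jumps. Fix a compact $K\subset\Z\times\R$. Theorem \ref{thm:B}\eqref{B:jump} asserts that for each $\tht\in[\gamma,\delta]$ there is a neighborhood of $\tht$ on which $\bus{\tht'}{\pm}(\bfx,\bfy)$ is independent of $\tht'$ on each side of $\tht$, uniformly in $\bfx,\bfy\in K$. Since $\bus{\tht'}{-}=\bus{\tht'}{+}$ whenever $\tht'\notin\baddir$, any jump of $\tht\mapsto\bus{\tht}{\pm}|_K$ must occur at a point of $\baddir$. Compactness of $[\gamma,\delta]$ then yields finitely many jump points $\tht_1,\ldots,\tht_n\in[\gamma,\delta]\cap\baddir$, and telescoping gives, for all $\bfx,\bfy\in K$,
\[
    \bus{\gamma}{-}(\bfx,\bfy)-\bus{\delta}{+}(\bfx,\bfy)=\sum_{i=1}^n\bigl[\bus{\tht_i}{-}(\bfx,\bfy)-\bus{\tht_i}{+}(\bfx,\bfy)\bigr],
\]
with every summand non-negative on south-east pairs by Theorem \ref{thm:B}\eqref{B:mono}.

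For a $[\gamma,\delta]$-instability edge $[(m-\half,t),(m+\half,t)]$, I would choose $K$ containing $(m,0)$ together with a neighborhood of $(m,t)$ at level $m$ and apply the decomposition to the non-decreasing function $s\mapsto\bus{\gamma}{-}(m,0,m,s)-\bus{\delta}{+}(m,0,m,s)$. A point of increase of a finite sum of non-decreasing functions is a point of increase of at least one summand, so $t$ is a point of increase of $s\mapsto\bus{\tht_i}{-}(m,0,m,s)-\bus{\tht_i}{+}(m,0,m,s)$ for some $i$, and the edge in question becomes a $\tht_i$-instability edge, lying in $\IG{\tht_i}$.

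For a $[\gamma,\delta]$-instability point $(m+\half,t)$, I will use the characterization in Theorem \ref{thm:instpt}\eqref{instpt.b}: the point lies in $\IG{[\gamma,\delta]}$ iff $\bus{\gamma}{-}(m+1,r',m,r'')>\bus{\delta}{+}(m+1,r',m,r'')$ for all $r'<t<r''$. Choosing $K\supset\{m,m+1\}\times[t-\e_0,t+\e_0]$, the decomposition writes this difference as a finite sum of non-negative terms, so for each $\e\in(0,\e_0)$ some summand is strictly positive at $(m+1,t-\e,m,t+\e)$. The pigeonhole principle applied along a sequence $\e_n\searrow0$ produces a single index $i_0$ for which $\bus{\tht_{i_0}}{-}(m+1,t-\e_n,m,t+\e_n)>\bus{\tht_{i_0}}{+}(m+1,t-\e_n,m,t+\e_n)$ holds for infinitely many $n$. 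Given any $r'<t<r''$, choosing $n$ so that $r'<t-\e_n<t+\e_n<r''$ and splitting via the cocycle into $(m+1,r')\to(m+1,t-\e_n)\to(m,t+\e_n)\to(m,r'')$, while using non-negativity of the two outer contributions, transfers the strict inequality to $(m+1,r',m,r'')$. Theorem \ref{thm:instpt}\eqref{instpt.b} then places $(m+\half,t)$ in $\IG{\tht_{i_0}}$. The main obstacle is establishing the finite local decomposition, which hinges on combining the local-constancy statement Theorem \ref{thm:B}\eqref{B:jump} with the compactness of $[\gamma,\delta]$; once this is in hand, the pigeonhole-plus-cocycle argument is routine.
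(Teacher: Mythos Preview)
Your proof is correct and takes a genuinely different route from the paper's.

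The paper deferred this lemma to the end of Section~\ref{sec:cif} precisely because its argument for edges relies on Theorem~\ref{thm:cif.int}\eqref{thm:cif-int.a}: if $[(m-\tfrac12,s),(m+\tfrac12,s)]$ is a $[\gamma,\delta]$-edge then some $\tht_{(m,s)}^{S}\in[\gamma,\delta]$, and applying the same theorem at that single $\tht$ shows the edge lies in $\IG{\tht}$. For proper instability points the paper locates one explicit jump of $\kappa\mapsto\bus{\kappa}{+}(m,s,m+1,s)$ via an infimum, and improper points are handled through their attached edges.

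Your argument replaces both the competition-interface input and the case split by a single mechanism: the finite telescoping decomposition on a compact $K$, obtained by covering $[\gamma,\delta]$ with the locally-constant neighborhoods from Theorem~\ref{thm:B}\eqref{B:jump}. Once that decomposition is in hand, the edge case reduces to the observation that the support of a finite sum of nonnegative measures is the union of the supports, and the instability-point case reduces to pigeonhole plus the cocycle/monotonicity squeeze to pass from one small $\e_n$ to arbitrary $r'<t<r''$ in the criterion of Theorem~\ref{thm:instpt}\eqref{instpt.b}. Both steps are clean and correct.

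The practical payoff of your approach is that it uses only Theorem~\ref{thm:B} and Theorem~\ref{thm:instpt}, so the proof could live in Section~\ref{sec:inst} without forward reference to competition interfaces. The paper's approach, on the other hand, makes the connection to $\tht_{(m,s)}^{L},\tht_{(m,s)}^{R}$ explicit and so identifies the relevant $\tht$ geometrically rather than through a pigeonhole selection.
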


The last result of this section gives one practical use for $\IG{[\gamma,\delta]}$. It says that we can approximate $\IG{\tht}$ (e.g.\ for simulation purposes) by $\IG{[\gamma,\delta]}$ with $\gamma<\tht<\delta$ and $\delta-\gamma$ small.  

\begin{lem}\label{IG-approx}
     For all $\w\in\Omref{Omega8}$ and $\tht>0$,
$\IG{\tht}=\bigcap_{\gamma<\tht<\delta}\IG{[\gamma,\delta]}$.
\end{lem}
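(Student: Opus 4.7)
The plan is to handle the two inclusions separately. The inclusion $\IG{\tht} \subset \bigcap_{\gamma<\tht<\delta}\IG{[\gamma,\delta]}$ is immediate from Lemma \ref{mono-inst-graph}, which yields $\IG{\tht} = \IG{[\tht,\tht]} \subset \IG{[\gamma,\delta]}$ whenever $\gamma \le \tht \le \delta$.

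The substantive direction $\IG{\tht} \supset \bigcap_{\gamma<\tht<\delta}\IG{[\gamma,\delta]}$ rests on the local constancy of the Busemann process in its directional parameter (Theorem \ref{thm:B}\eqref{B:jump}). Fix a point $p$ in the intersection and split into two cases. If $p = (m+\half, t)$ is a dual-lattice point, Theorem \ref{thm:instpt} characterizes $p \in \IG{[\gamma,\delta]}$ as $\bus{\gamma}{-}(m+1,r',m,r'') > \bus{\delta}{+}(m+1,r',m,r'')$ for all $r'<t<r''$. Fix such $r',r''$. Applying Theorem \ref{thm:B}\eqref{B:jump} with the compact set $K = \{m,m+1\}\times\{r',r''\}$ yields $\e>0$ such that for $\gamma \in (\tht-\e,\tht)$ and $\delta \in (\tht,\tht+\e)$, both $\bus{\gamma}{-}$ and $\bus{\delta}{+}$ coincide on $K$ with $\bus{\tht}{-}$ and $\bus{\tht}{+}$ respectively. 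Combining with the hypothesis gives $\bus{\tht}{-}(m+1,r',m,r'') > \bus{\tht}{+}(m+1,r',m,r'')$. Since $r',r''$ were arbitrary, Theorem \ref{thm:instpt} applied with $\gamma=\tht$ places $p$ in $\IG{\tht}$.

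The remaining case is $p = (x,t)$ with $m-\half < x < m+\half$ for some integer $m$; such a $p$ can only lie on the vertical edge $[(m-\half,t),(m+\half,t)]$. For each $\gamma<\tht<\delta$ the hypothesis therefore forces this segment to be a $[\gamma,\delta]$-instability edge, i.e.\ by Definition \ref{def:edge.int}, $\bus{\gamma}{-}(m,t-\e_1,m,t+\e_1) > \bus{\delta}{+}(m,t-\e_1,m,t+\e_1)$ for every $\e_1 > 0$. Fixing $\e_1$ and applying local constancy on $\{m\} \times \{t-\e_1, t+\e_1\}$ transfers this strict inequality to the $\tht$-level, so $[(m-\half,t),(m+\half,t)]$ is a $\tht$-instability edge and $p \in \IG{\tht}$.

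The main mild obstacle is the treatment of improper instability points, which are defined by a closure rather than by an explicit Busemann condition. This is bypassed by appealing to the uniform characterization of all $[\gamma,\delta]$-instability points in Theorem \ref{thm:instpt}, which covers proper and improper points via the same strict Busemann inequality. The case $\tht\notin\baddir$ (for which $\IG{\tht}=\varnothing$) is handled automatically: $\bus{\tht}{-} = \bus{\tht}{+}$ together with local constancy forces $\bus{\gamma}{-} = \bus{\delta}{+}$ on any fixed compact set when $(\gamma,\delta)$ is sufficiently close to $(\tht,\tht)$, so no $p$ can lie in the intersection.
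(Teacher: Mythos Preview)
Your proof is correct and takes a genuinely different, more elementary route than the paper's. The paper defers the proof of this lemma to the end of Section~\ref{sec:cif} because its argument for edges invokes the competition-interface characterization (Theorem~\ref{thm:cif.int}\eqref{thm:cif-int.a}): from the hypothesis it deduces $[\gamma,\delta]\cap\{\tht^L_{(m,s)},\tht^R_{(m,s)}\}\ne\varnothing$ for all $\gamma<\tht<\delta$, hence $\tht\in\{\tht^L_{(m,s)},\tht^R_{(m,s)}\}$, and then feeds this back through the same theorem. For dual-lattice points the paper further splits into a proper/improper dichotomy along a sequence $\gamma_k\nearrow\tht$, $\delta_k\searrow\tht$, reducing the improper case to the edge case before handling the eventually-proper case by local constancy (essentially your idea, but applied only to the proper-point inequality $\bus{\delta}{+}(m,s,m+1,s)>\bus{\gamma}{-}(m,s,m+1,s)$).

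You bypass all of this by exploiting Theorem~\ref{thm:instpt}, which already gives a \emph{uniform} Busemann characterization of every instability point (proper or improper) via condition~\eqref{bus:cond}. Combined with local constancy (Theorem~\ref{thm:B}\eqref{B:jump}) on a four-point compact set, this handles all dual-lattice points in one stroke; the edge case is then the same idea applied to Definition~\ref{def:edge.int}. Your argument depends only on results from Sections~\ref{sec:previous} and~\ref{def:Bus:analytic}, so it could be placed immediately in Section~\ref{sec:inst} rather than deferred, and it removes the reliance on the competition-interface machinery entirely.
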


We defer the proofs of Lemmas \ref{IGint=union} and \ref{IG-approx} to the end of Section \ref{sec:cif}, as they rely on theorems concerning competition interfaces.

\begin{rmk}
    The points in the union $\bigcup_{\delta>\gamma>0}\IG{[\gamma,\delta]}=\bigcup_{\tht\in\baddir}\IG{\tht}$ are all of $\Z\times\R$. Curiously, the vertical edges do not fill out all of $\R^2$ but rather, by Theorem \ref{CI-Hausdorff} and Theorem \ref{thm:cif.int}\eqref{thm:cif-int.a} below, form a Hausdorff $\half$ dimensional dense subset of $\R^2$.
\end{rmk}

\section{Shocks and their relation to the instability graph}\label{sec:shocks}

Recall that $(m,s)$ is called a $\tht\sigg$-shock point if $(m,s)\in\NU_1^{\tht\sig}$, where $\NU_1^{\tht\sig}$ is defined in \eqref{def:NU1}. 
As a first observation, the following Lemma notes that for any $\tht>0$ and $\sigg\in\{-,+\}$, the $\tht\sigg$ shocks, on any level, are nowhere dense. This contrasts with the fact that, almost surely, for any $m\in\Z$, the set $\bigcup_{\tht>0}\bigcup_{\sig\in\{-,+\}}\{s:(m,s)\in\NU_1^{\tht\sig}\}=\CI_m$ is dense in $\R$ (Theorem \ref{CI-Hausdorff}).

\begin{lem}
  The following holds for all $\w\in\Omref{Omega8}$, $\gamma>0$, and $m\in\Z$. The set $\bigcup_{\tht\ge\gamma}\bigcup_{\sig\in\{-,+\}}\{s:(m,s)\in\NU_1^{\tht\sig}\}$ is nowhere dense.  
\end{lem}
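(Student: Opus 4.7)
The plan is to establish nowhere density by producing, for every rational $r$, an open interval $(r,r+\delta)$ that contains no $s$ with $(m,s)\in\NU_1^{\theta\sigma}$ for any $\theta\ge\gamma$ and $\sigma\in\{-,+\}$; since $\Q$ is dense, this forces the shock set to sit inside the complement of a dense open set and hence be nowhere dense.

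Fix a rational $r$. Since $r\notin\CI_m$ by Theorem \ref{CI-Hausdorff}, no semi-infinite geodesic out of $(m,r)$ takes a vertical step at $(m,r)$; in particular $\geo\from{(m,r)}\dir{R}{\gamma}{-}$ stays horizontally on level $m$ from $(m,r)$ to some point $(m,t_0)$ with $t_0>r$, where it makes its first vertical step. I would then invoke the right-continuity in Theorem \ref{thm:geo}\eqref{thm:geo:cont}, namely $\geo\from{(m,s)}\dir{L}{\gamma}{-}\to\geo\from{(m,r)}\dir{R}{\gamma}{-}$ in the jump-time sense as $s\searrow r$, to conclude that the first vertical jump time of $\geo\from{(m,s)}\dir{L}{\gamma}{-}$ converges to $t_0$ as $s\searrow r$. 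Choosing $\delta\in(0,(t_0-r)/2)$ small enough, this first jump time will exceed $s$ throughout $(r,r+\delta)$, meaning $\geo\from{(m,s)}\dir{L}{\gamma}{-}$ steps horizontally out of $(m,s)$, so $(m+1,s)\notin\geo\from{(m,s)}\dir{L}{\gamma}{-}$.

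The remaining step is to upgrade this from the single pair $(\gamma,-)$ to every $(\theta,\sigma)$ with $\theta\ge\gamma$ and $\sigma\in\{-,+\}$. Ordering pairs by $(\gamma,-)\prec(\gamma,+)\prec(\theta,-)\prec(\theta,+)$ for $\theta>\gamma$, the Busemann monotonicity \eqref{B:mono-eqn} says that the horizontal increments $\bus{\theta}{\sigma}(m+1,s',m+1,s'')$ for $s'<s''$ are non-increasing along this order. Combined with the variational description of $\geo\from{(m,s)}\dir{L}{\theta}{\sigma}$ recalled in Section \ref{sec:setting}, where the first vertical jump time is the left-most maximizer on $[s,\infty)$ of $s'\mapsto B_m(s')-\bus{\theta}{\sigma}(m+1,0,m+1,s')$, passing to a larger pair amounts to adding a non-decreasing function of $s'$ to the objective, which shifts the left-most maximizer to the right. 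Therefore the first vertical jump time of $\geo\from{(m,s)}\dir{L}{\theta}{\sigma}$ is non-decreasing in $(\theta,\sigma)$, remains $>s$ for every eligible pair, and so $(m,s)\notin\NU_1^{\theta\sigma}$.

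The only nontrivial input is this joint $(\theta,\sigma)$-monotonicity of the first vertical jump time; it is not among the quoted geodesic results of Section \ref{sec:previous} but is an immediate consequence of \eqref{B:mono-eqn} via the variational formula, and is the step I expect to require the most care.
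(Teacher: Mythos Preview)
Your proof is correct and follows the same strategy as the paper: produce, for each rational $r$, an interval $(r,r+\delta)$ free of $\NU_1^{\theta\sigma}$ points for all $\theta\ge\gamma$, $\sigma\in\{-,+\}$. The paper streamlines your step~2 by replacing the right-continuity argument with the spatial monotonicity $\geo\from{(m,r)}\dir{R}{\theta}{\sigma}\preceq\geo\from{(m,s)}\dir{L}{\theta}{\sigma}$ from \eqref{geomono}, which immediately gives the full interval $[r,t)$ (where $t$ is the first jump time of $\geo\from{(m,r)}\dir{L}{\gamma}{-}$) rather than a small $\delta$; and it invokes the $(\theta,\sigma)$-ordering of geodesics directly rather than rederiving it from the variational formula as you do, but the content is the same.
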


\begin{proof}
    By Theorem \ref{CI-Hausdorff}, for any rational $r$, there exists a $t>r$ such that $\geo\from{(m,r)}\dir{L}{\gamma}{-}$ goes through $(m,t)$. By the geodesics ordering, $\geo\from{(m,r)}\dir{S}{\tht}{\sig}$ also goes through $(m,t)$, for any $\tht\ge\gamma$, $\sigg\in\{-,+\}$, and $S\in\{L,R\}$. Thus, for any $s\in[r,t)$, $(m,s)\not\in\NU_1^{\tht\sig}$. The claim now follows from the fact that rational numbers are dense.
\end{proof}

We begin the study of the properties of shock points by showing that there is a $\tht+$ shock under each left endpoint of a maximal instability interval and that there are no $\tht+$ shocks under any other points in the interval.
The contents of the next five lemmas are summarized in Figure \ref{fig:shock-instability}.

\begin{lem}\label{+shckiffimprop}
        The following holds for all $\w\in\Omref{Omega8}$, $\tht\ge\gamma>0$, $m\in\Z$, and $s<t$. Suppose $\{(m+\half,r):s\le r\le t\}\subset\IG{[\gamma,\tht]}$.
        Then $(m,s)\in \NU_1^{\tht+}$ if, and only if, $(m+\half,s)$ is an improper $[\gamma,\tht]$-instability point. Consequently, for any $r\in(s,t)$, $(m,r)\not\in \NU_1^{\tht+}$. Furthermore, $(m,t)\not\in\NU_1^{\tht+}$.
\end{lem}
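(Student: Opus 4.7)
The plan is to reduce the three assertions to the geometric input provided by the three immediately preceding lemmas, Lemmas \ref{underpropinstab}, \ref{instableftendpt}, and \ref{instabrightendpt}, supplemented by the continuity of the Busemann process from Theorem \ref{thm:B}\eqref{B:cont}. The common setup is that, by Lemma \ref{onlypropinprop}, every interior point $(m+\half, r)$ with $r \in (s, t)$ is a \emph{proper} $[\gamma, \tht]$-instability point, so $(s, t)$ is a proper instability interval, placing us squarely in the hypotheses of Lemmas \ref{underpropinstab}--\ref{instabrightendpt}.

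The ``consequently'' claim for interior $r \in (s,t)$ is then immediate: Lemma \ref{underpropinstab} applied to the interval $(s, t)$ yields $\geo\from{(m, r)}\dir{L}{\tht}{+} = \geo\from{(m, r)}\dir{R}{\tht}{+}$, precluding $(m, r) \in \NU_1^{\tht+}$. For the left-endpoint iff, I would split on whether $(m+\half, s)$ is improper or proper. If improper, Lemma \ref{instableftendpt} gives $\geo\from{(m, s)}\dir{L}{\tht}{+}$ stepping vertically to $(m+1, s)$, while Lemma \ref{underpropinstab} (applied to $(s,t)$) gives $\geo\from{(m, s)}\dir{R}{\tht}{+}$ proceeding horizontally through $(m, t)$; together these are the definition of $(m, s) \in \NU_1^{\tht+}$. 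If proper, the strict inequality $\bus{\gamma-}{}(m, s, m+1, s) < \buse{+}{m}{s}{m+1}{s}$ persists on some neighborhood $(s - \e, s]$ by Theorem \ref{thm:B}\eqref{B:cont}, so $s$ sits in the interior of the enlarged proper instability interval $(s - \e, t)$, and Lemma \ref{underpropinstab} applied there forces $\geo\from{(m, s)}\dir{L}{\tht}{+} = \geo\from{(m, s)}\dir{R}{\tht}{+}$, ruling out $(m, s) \in \NU_1^{\tht+}$.

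For the ``furthermore'' claim at the right endpoint, a proper $(m+\half, t)$ is handled symmetrically to the proper left-endpoint case by extending the interval to the right via continuity and invoking Lemma \ref{underpropinstab}. The substantive case is when $(m+\half, t)$ is improper: Lemma \ref{instabrightendpt} yields that $\geo\from{(m, t)}\dir{L}{\gamma}{-}$ proceeds rightward out of $(m, t)$, and I need to transfer this to $\geo\from{(m, t)}\dir{L}{\tht}{+}$. The cleanest route is the competition-interface characterization recalled just before Theorem \ref{thm:cif-SS}: the rightward first step of $\geo\from{(m, t)}\dir{L}{\gamma}{-}$ forces $\gamma \ge \tht_{(m, t)}^L$, and a short case analysis via Theorem \ref{thm:cif-SS}\eqref{thm:cif:b}--\eqref{thm:cif:c} (in the borderline case $\gamma = \tht_{(m, t)}^L$ it additionally forces $\tht_{(m, t)}^R = \tht_{(m, t)}^L$) upgrades this to $\geo\from{(m, t)}\dir{L}{\tht}{+}$ also stepping rightward, which is incompatible with $(m, t) \in \NU_1^{\tht+}$.

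The main obstacle is this last step at the improper right endpoint, since it is the only point in the proof at which one must bridge the two direction regimes $\gamma-$ and $\tht+$: Lemmas \ref{underpropinstab} and \ref{instableftendpt} already speak in the $\tht+$ regime, whereas Lemma \ref{instabrightendpt} speaks only in the $\gamma-$ regime. The competition-interface monotonicity encoded in Theorem \ref{thm:cif-SS} is precisely the tool that makes this translation painless.
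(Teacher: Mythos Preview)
Your proof is correct and, for the interior points and the left-endpoint iff, it is essentially the paper's argument (the paper just says the first two claims ``follow directly from Lemmas \ref{underpropinstab} and \ref{instableftendpt}''; your continuity-extension for the proper case is a fine way to unpack this, though the iff in Lemma \ref{instableftendpt} already gives it in one stroke).

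The only substantive difference is at the improper right endpoint. You route the bridge from the $\gamma-$ information of Lemma \ref{instabrightendpt} to the $\tht+$ regime through the competition-interface characterization of Theorem \ref{thm:cif-SS}. This works (and in fact once you have $\gamma\ge\tht_{(m,t)}^L$ you are done immediately by part \eqref{thm:cif:c}: $\tht\ge\gamma\ge\tht_{(m,t)}^L$ already violates $\tht<\tht_{(m,t)}^L$, so the borderline case analysis you sketch is not needed). The paper, however, takes the one-line route: cross-direction geodesic monotonicity gives $\geo\from{(m,t)}\dir{L}{\gamma}{-}\preceq\geo\from{(m,t)}\dir{L}{\tht}{+}\preceq\geo\from{(m,t)}\dir{R}{\tht}{+}$, so if $\geo\from{(m,t)}\dir{L}{\gamma}{-}$ steps rightward (Lemma \ref{instabrightendpt}), both $\tht+$ geodesics must as well, precluding $(m,t)\in\NU_1^{\tht+}$. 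This ordering is used freely elsewhere in the paper (e.g.\ inside the proof of Lemma \ref{instabrightendpt} itself) and is the natural tool here; your detour is valid but heavier than necessary.
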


\begin{proof}
    Lemma \ref{onlypropinprop} implies that $\{(m+\half,r):s<r<t\}$ is a proper instability interval. The first two claims then follow directly from Lemmas \ref{underpropinstab} and \ref{instableftendpt}.
    This also implies the last claim if $(m+\half,t)$ is a proper instability point. If, on the other hand, 
    it is an improper instability point, then 
    the last claim follows from Lemma \ref{instabrightendpt} and the fact that $\geo\from{(m,t)}\dir{L}{\gamma}{-}$ is always above both $\geo\from{(m,t)}\dir{L}{\tht}{+}$ and $\geo\from{(m,t)}\dir{R}{\tht}{+}$.
\end{proof}

Next, we show that there cannot be a $\gamma-$ shock under the left endpoint of an instability interval and that each $\gamma-$ shock under an instability interval must have an instability edge going through it. Together, this and the previous lemma show that $\tht+$ and $\gamma-$ shocks are distinct under instability intervals. They also show that there are no $\tht+$ or $\gamma-$ shocks under right endpoints of maximal instability intervals.

\begin{lem}\label{properif-shck}
	The following holds for all $\w\in\Omref{Omega8}$, $\tht\ge\gamma>0$, $m\in\Z$, and $s<t$. Suppose $\{(m+\half,r):s\le r\le t\}\subset\IG{[\gamma,\tht]}$. If $(m,s)\in \NU_1^{\gamma-} $, then $ (m+\half,s) $ is a proper $[\gamma,\tht]$-instability point. Furthermore, $[(m-\half,s),(m+\half,s)]$ is a $[\gamma,\tht]$-instability edge. 
\end{lem}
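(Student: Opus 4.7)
The plan is to establish the two claims in sequence. For the instability-edge claim, I would apply Lemma~\ref{geodefinstabedge1}, which reduces the task to showing that $\geo\from{(m,s)}\dir{R}{\tht}{+}$ and $\geo\from{(m,s)}\dir{L}{\gamma}{-}$ split at $(m,s)$ and never meet again. The hypothesis $(m,s)\in\NU_1^{\gamma-}$ sends $\geo\from{(m,s)}\dir{L}{\gamma}{-}$ vertically to $(m+1,s)$, while Lemma~\ref{onlypropinprop} guarantees that $\{(m+\half,r):s<r<t\}$ is a proper instability interval, so Lemma~\ref{underpropinstab} sends $\geo\from{(m,s)}\dir{R}{\tht}{+}$ horizontally, at least through $(m,t)$; this yields the immediate split. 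For non-intersection beyond $(m,s)$, I would observe that the continuation of $\geo\from{(m,s)}\dir{L}{\gamma}{-}$ past $(m+1,s)$ is $\geo\from{(m+1,s)}\dir{L}{\gamma}{-}$ by the construction of the leftmost geodesic process, and then invoke Theorem~\ref{geo:instpt} at the instability point $(m+\half,s)\in\IG{[\gamma,\tht]}$ to get $\geo\from{(m+1,s)}\dir{L}{\gamma}{-}\cap\geo\from{(m,s)}\dir{R}{\tht}{+}=\varnothing$; the point $(m+1,s)$ itself is not on $\geo\from{(m,s)}\dir{R}{\tht}{+}$ since that path starts with a horizontal step.

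For the properness claim I would argue by contradiction. Suppose $(m+\half,s)$ is improper. Lemma~\ref{improperendpts} together with Lemma~\ref{onlypropinprop} makes $(m+\half,s)$ the left endpoint of a maximal proper instability interval extending to some $t'\ge t$, so
\[\phi(r):=\bus{\gamma-}{}(m,r,m+1,r)-\bus{\tht+}{}(m,r,m+1,r)\]
satisfies $\phi(s)=0$ and $\phi(r)>0$ for $r\in(s,t')$. The edge obtained in the first part together with Lemma~\ref{nodoubleinstab} forbids $[(m+\half,s),(m+\frac32,s)]$ from being a $[\gamma,\tht]$-instability edge; equivalently, $s$ is not a point of increase of
\[g(r):=\bus{\gamma-}{}(m+1,0,m+1,r)-\bus{\tht+}{}(m+1,0,m+1,r).\]
Since $g$ is continuous and nondecreasing by Theorem~\ref{thm:B}, it must be constant on some open interval $(s-\e,s+\e)$.

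A short cocycle calculation closes the argument. Setting $f(r):=\bus{\gamma-}{}(m,0,m,r)-\bus{\tht+}{}(m,0,m,r)$, repeated application of the cocycle property (Theorem~\ref{thm:B}\eqref{B:cocycle}) to the two Busemann terms of $\phi(r)$ yields
\[\phi(r)-\phi(s)=(g(r)-g(s))-(f(r)-f(s)).\]
For any $r\in(s,\min(s+\e,t))$, which is a nonempty interval, the right-hand side reduces to $-(f(r)-f(s))\le 0$ by monotonicity of $f$, whereas the left-hand side equals $\phi(r)>0$: a contradiction. The key insight, and the only non-routine step, is realizing that Lemma~\ref{nodoubleinstab} applied to the edge from the first part rigidifies $g$ in a neighborhood of $s$; once one sees this, the cocycle manipulation is pure bookkeeping.
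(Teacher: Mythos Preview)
Your argument for the instability-edge claim is correct and, in fact, cleaner than the paper's: you use Theorem~\ref{geo:instpt} at the instability point $(m+\tfrac12,s)$ to rule out any later intersection of $\geo\from{(m,s)}\dir{L}{\gamma}{-}$ with $\geo\from{(m,s)}\dir{R}{\tht}{+}$, whereas the paper first proves properness and then appeals to the leftmost point-to-point property (Theorem~\ref{thm:geo}\eqref{thm:geo:LR}). Your route has the advantage of not depending on the properness part, so your chosen order (edge first, then properness) is internally consistent.

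The properness argument, however, has a genuine gap. With your definition $\phi(r)=\bus{\gamma-}(m,r,m+1,r)-\bus{\tht+}(m,r,m+1,r)$, the monotonicity in Theorem~\ref{thm:B}\eqref{B:mono} gives $\phi\le 0$ everywhere, and \emph{proper} means $\phi(r)<0$, not $\phi(r)>0$. So your displayed contradiction collapses: the left-hand side $\phi(r)$ is $<0$ while the right-hand side $-(f(r)-f(s))$ is $\le 0$; these are perfectly compatible. No amount of sign-flipping rescues this as written, because knowing that $g$ is constant near $s$ only forces $\phi(r)=-(f(r)-f(s))$, and the edge claim you already proved says exactly that $f$ increases through $s$, which is consistent with $\phi(r)<0$.

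The missing idea is that the hypothesis $(m,s)\in\NU_1^{\gamma-}$ controls $f$, not $g$. Since $\geo\from{(m,s)}\dir{R}{\gamma}{-}$ goes rightward, it passes through $(m,r)$ for $r>s$ small, so $\geo\from{(m,s)}\dir{R}{\gamma}{-}\cap\geo\from{(m,r)}\dir{L}{\tht}{+}\ne\varnothing$ and Theorem~\ref{thm:Bus-geo} gives $\bus{\gamma-}(m,s,m,r)=\bus{\tht+}(m,s,m,r)$, i.e.\ $f(r)=f(s)$ on some $[s,s+\delta]$. Feeding this into your cocycle identity yields $\phi(r)=g(r)-g(s)\ge 0$ for $r\in(s,s+\delta)$, which now genuinely contradicts $\phi(r)<0$ for $r\in(s,t)$. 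This is precisely the paper's approach (it packages the last step via Theorem~\ref{thm:instpt}\eqref{instpt.d}).
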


\begin{proof}
If $(m,s)\in \NU_1^{\gamma-}$, then $\geo\from{(m,s)} \dir{R}{\gamma}{-}$ goes through $(m,r)$ for each $r>s$ close enough to $s$. This implies that for such $r$, $\geo\from{(m,s)} \dir{R}{\gamma}{-}\cap\geo\from{(m,r)} \dir{L}{\tht}{+}\ne\varnothing$. By Lemma \ref{lm:edge}, this implies that $\bus{\gamma}{-}(m,s,m,r)=\buse{+}{m}{s}{m}{r}$ for all $r>s$ close enough to $s$. But since $(m+\half,r)\in\IG{[\gamma,\tht]}$ for all $r\in(s,t)$, Theorem \ref{thm:instpt}\eqref{instpt.d} implies that $\bus{\gamma}{-}(m,s,m+1,s)<\buse{+}{m}{s}{m+1}{s}$, which says that $(m+\half,s)$ is a proper $[\gamma,\tht]$-instability point. 

Next, note that $(m,s)\in\NU_1^{\gamma-}$ implies that $\geo\from{(m,s)} \dir{L}{\gamma}{-}$ goes up to $(m+1,s)$ while, since $(m+\half,s)$ is proper, Lemma \ref{underpropinstab} implies that $\geo\from{(m,s)} \dir{R}{\tht}{+}$ and $\geo\from{(m,s)} \dir{L}{\tht}{+}$ both go right from $(m,s)$. Lemma \ref{geo-split} implies then that these two geodesics are equal. But then  $\geo\from{(m,s)} \dir{R}{\tht}{+}=\geo\from{(m,s)} \dir{L}{\tht}{+}$ cannot reconvene with $\geo\from{(m,s)} \dir{L}{\gamma}{-}$ because that would violate the fact that $\geo\from{(m,s)} \dir{L}{\tht}{+}$ is a leftmost point-to-point geodesic between any of its points (Theorem \ref{thm:geo}\eqref{thm:geo:LR}). 
Now Lemma \ref{geodefinstabedge1} implies that $[(m-\half,s),(m+\half,s)]$ is an instability edge. 
\end{proof}

The next lemma says that shock points that are not below instability points are simultaneously $\tht+$ and $\gamma-$ shock points. 
By Lemma \ref{+shckiffimprop}, each left endpoint of an instability interval has a $\tht+$ shock point below it and, by Theorem \ref{thm:shocks}\eqref{thm:shocks:c}, this shock point will have an accumulation of $\tht+$ shock points to its left. The following lemma then says that these are also $\gamma-$ shock points. Consequently, we see that there are infinitely many simultaneous $\gamma-$ and $\tht+$ shock points outside instability intervals.

\begin{lem}\label{+-shckiffnotinstab}
	The following holds for all $\w\in\Omref{Omega8}$ and $\tht\ge\gamma>0$. Take $(m,s)\in \NU_1^{\gamma-}\cup \NU_1^{\tht+}$. Then $ (m+\half,s) \notin \IG{[\gamma,\tht]}$ if and only if $(m,s)\in \NU_1^{\gamma-}\cap \NU_1^{\tht+}$.
\end{lem}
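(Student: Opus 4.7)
The plan is to convert both directions into statements about the intersection $\geo\from{(m,s)}\dir{R}{\tht}{+}\cap\geo\from{(m+1,s)}\dir{L}{\gamma}{-}$ via Theorem \ref{geo:instpt}, and to exploit coalescence (Theorem \ref{thm:geo}\eqref{thm:geo:coal}), the leftmost/rightmost point-to-point property (Theorem \ref{thm:geo}\eqref{thm:geo:LR}), and Lemma \ref{geo-split}.

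For the ``$\Leftarrow$'' direction, I will assume $(m,s)\in\NU_1^{\gamma-}\cap\NU_1^{\tht+}$. Since $(m,s)\in\NU_1^{\gamma-}$, the path $\geo\from{(m,s)}\dir{L}{\gamma}{-}$ passes through $(m+1,s)$ and, by the leftmost point-to-point property, agrees with $\geo\from{(m+1,s)}\dir{L}{\gamma}{-}$ from $(m+1,s)$ onwards. By coalescence, $\geo\from{(m,s)}\dir{R}{\tht}{+}$ and $\geo\from{(m,s)}\dir{L}{\gamma}{-}$ eventually merge; since $(m,s)\in\NU_1^{\tht+}$ makes $\geo\from{(m,s)}\dir{R}{\tht}{+}$ step right at $(m,s)$ while $\geo\from{(m,s)}\dir{L}{\gamma}{-}$ steps up, the merging point lies strictly past $(m+1,s)$ on $\geo\from{(m,s)}\dir{L}{\gamma}{-}$, hence on $\geo\from{(m+1,s)}\dir{L}{\gamma}{-}$, which gives the required intersection.

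For the ``$\Rightarrow$'' direction, I will assume $(m+\half,s)\notin\IG{[\gamma,\tht]}$ and handle the cases $(m,s)\in\NU_1^{\tht+}$ and $(m,s)\in\NU_1^{\gamma-}$ separately. The directional monotonicity $\geo\from{(m,s)}\dir{S}{\gamma}{-}\preceq\geo\from{(m,s)}\dir{S}{\tht}{+}$ for $S\in\{L,R\}$, inherited from \eqref{B:mono-eqn} through the variational construction of the geodesics recalled in Section \ref{sec:setting}, immediately yields one of the two needed steps in each case: $\geo\from{(m,s)}\dir{L}{\gamma}{-}$ must jump at $s$ because $\geo\from{(m,s)}\dir{L}{\tht}{+}$ does (first case), and $\geo\from{(m,s)}\dir{R}{\tht}{+}$ cannot jump at $s$ because $\geo\from{(m,s)}\dir{R}{\gamma}{-}$ does not (second case). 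The remaining step I obtain by contradiction. In the first case, if $\geo\from{(m,s)}\dir{R}{\gamma}{-}$ also jumped at $s$, Lemma \ref{geo-split} would force $\geo\from{(m,s)}\dir{R}{\gamma}{-}=\geo\from{(m,s)}\dir{L}{\gamma}{-}$; then the intersection point $\bfz\in\geo\from{(m,s)}\dir{R}{\tht}{+}\cap\geo\from{(m+1,s)}\dir{L}{\gamma}{-}$ supplied by Theorem \ref{geo:instpt} would also lie on $\geo\from{(m,s)}\dir{R}{\gamma}{-}$, and Theorem \ref{thm:geo}\eqref{thm:geo:LR} would make the segments of $\geo\from{(m,s)}\dir{R}{\tht}{+}$ and $\geo\from{(m,s)}\dir{R}{\gamma}{-}$ from $(m,s)$ to $\bfz$ both equal to the unique rightmost point-to-point geodesic, yet they disagree at the first step (right versus up). The second case is parallel with $L/R$ reversed: if $\geo\from{(m,s)}\dir{L}{\tht}{+}$ also went right, Lemma \ref{geo-split} would yield $\geo\from{(m,s)}\dir{L}{\tht}{+}=\geo\from{(m,s)}\dir{R}{\tht}{+}$, and the same $\bfz$ would sit on two distinct leftmost point-to-point geodesics from $(m,s)$ to $\bfz$.

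The main delicate point is the directional monotonicity of the geodesics across $\tht$-values, since it is not explicitly recorded among the properties of the geodesics process in Theorem \ref{thm:geo}. It is nonetheless standard: it follows inductively on levels from the variational construction of the leftmost (resp.\ rightmost) geodesics together with the fact that $\bus{\tht}{+}(n,0,n,s)-\bus{\gamma}{-}(n,0,n,s)$ is nonincreasing in $s$, which is a direct consequence of \eqref{B:mono-eqn} and the cocycle property.
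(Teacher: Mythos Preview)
Your $\Rightarrow$ direction is correct and is essentially the paper's argument. The directional monotonicity $\geo\from{(m,s)}\dir{S}{\gamma}{-}\preceq\geo\from{(m,s)}\dir{S}{\tht}{+}$, while not listed in Theorem~\ref{thm:geo}, follows from the variational construction and \eqref{B:mono-eqn} as you say, and the paper uses it freely as well.

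The $\Leftarrow$ direction, however, has a genuine gap. You invoke Theorem~\ref{thm:geo}\eqref{thm:geo:coal} to conclude that $\geo\from{(m,s)}\dir{R}{\tht}{+}$ and $\geo\from{(m,s)}\dir{L}{\gamma}{-}$ eventually merge. That coalescence statement is only for geodesics sharing the \emph{same} direction--sign pair $\tht\sigg$. Here the two geodesics carry different labels $\tht+$ and $\gamma-$, and in general they do \emph{not} coalesce: when $\gamma<\tht$ they have distinct asymptotic directions by Theorem~\ref{thm:geo}\eqref{thm:geo:dir}, and when $\gamma=\tht\in\baddir$ the failure of $\geo\from{(m,s)}\dir{L}{\tht}{-}$ and $\geo\from{(m,s)}\dir{R}{\tht}{+}$ to coalesce is precisely the instability phenomenon (cf.\ the discussion following \eqref{def:baddir}). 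So the step ``the merging point lies on $\geo\from{(m+1,s)}\dir{L}{\gamma}{-}$'' is unsupported, and the conclusion $\geo\from{(m,s)}\dir{R}{\tht}{+}\cap\geo\from{(m+1,s)}\dir{L}{\gamma}{-}\ne\varnothing$ has not been established.

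The paper handles this direction by contraposition: it assumes $(m+\tfrac12,s)\in\IG{[\gamma,\tht]}$ and appeals to Lemmas~\ref{+shckiffimprop} and~\ref{properif-shck}. The first says that, under an instability interval, $(m,s)\in\NU_1^{\tht+}$ forces $(m+\tfrac12,s)$ to be improper; the second says $(m,s)\in\NU_1^{\gamma-}$ forces it to be proper. Since these are mutually exclusive, membership in $\NU_1^{\gamma-}\cap\NU_1^{\tht+}$ is ruled out. A direct geometric argument for the intersection does not seem to be available without routing through this structural information about the instability graph.
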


\begin{proof}
    If $(m+\half,s)\in\IG{[\gamma,\tht]}$, then Lemmas \ref{+shckiffimprop} and \ref{properif-shck} imply that $(m,s)\not\in\NU_1^{\gamma-}\cap \NU_1^{\tht+}$.

    For the other direction, suppose that $ (m+\half,s) \notin \IG{[\gamma,\tht]}$ and $(m,s)\in \NU_1^{\tht+}\setminus\NU_1^{\gamma-}$. 
    Then $(m,s)\in \NU_1^{\tht+}$ implies that $ \geo\from{(m,s)}\dir{R}{\tht}{+}$ goes right while $ \geo\from{(m,s)}\dir{L}{\tht}{+}$ and hence also $ \geo\from{(m,s)}\dir{L}{\gamma}{-}$ go up. But then $(m,s)\not\in \NU_1^{\gamma-}$ implies that $ \geo\from{(m,s)}\dir{R}{\gamma}{-}$ goes up as well and Lemma \ref{geo-split} implies that $ \geo\from{(m,s)}\dir{R}{\gamma}{-}=\geo\from{(m,s)}\dir{L}{\gamma}{-}$. 
    Since $ (m+\half,s) \notin \IG{[\gamma,\tht]}$, $\geo\from{(m+1,s)}\dir{L}{\gamma}{-}\cap\geo\from{(m,s)}\dir{R}{\tht}{+}\ne\varnothing$. Since  $\geo\from{(m,s)}\dir{L}{\gamma}{-}$ goes up, this implies that $\geo\from{(m,s)}\dir{L}{\gamma}{-}\cap\geo\from{(m,s)}\dir{R}{\tht}{+}\ne\varnothing$, which in turn says that $\geo\from{(m,s)}\dir{R}{\gamma}{-}\cap\geo\from{(m,s)}\dir{R}{\tht}{+}\ne\varnothing$. Since these two geodesics split at $(m,s)$, we get a contradiction with $\geo\from{(m,s)}\dir{R}{\gamma}{-}$ being a right-most point-to-point geodesic between any of its two points (Theorem \ref{thm:geo}\eqref{thm:geo:LR}). 

    A similar contradiction is obtained from assuming that $ (m+\half,s) \notin \IG{[\gamma,\tht]}$ and $(m,s)\in \NU_1^{\gamma-}\setminus\NU_1^{\tht+}$.
    This proves the lemma.
\end{proof}

The next lemma shows that there are infinitely many $\gamma-$ shocks under each instability interval. Since we have shown that a maximal instability interval has only one $\tht+$ shock, located below its left endpoint, and that shocks outside instability intervals are simultaneously $\tht+$ and $\gamma-$ shocks, while shocks under proper instability points are only $\gamma-$ shocks, we now see that there are many more $\gamma-$ shocks than there are $\tht+$ shocks. 

\begin{lem}\label{-shockafter+shock}
    The following holds for all $\w\in\Omref{Omega8}$, $\tht\ge\gamma>0$, $m\in\Z$, and $s<t$. Suppose $\{(m+\half,r):s\le r\le t\}\subset\IG{[\gamma,\tht]}$ and that $(m,s)$ is an improper $[\gamma,\tht]$-instability point.
    Then there is a sequence $s_k\in(s,t]$, strictly decreasing to $s$, and such that $(m,s_k)\in\NU_1^{\gamma-}\setminus\NU_1^{\tht+}$ for all $k$.
\end{lem}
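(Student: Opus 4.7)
The plan is to reduce the claim to Theorem~\ref{thm:shocks}\eqref{thm:shocks:d} applied at the intermediate velocity $\tht':=\tht^R_{(m,s)}$. First, by Lemma~\ref{+shckiffimprop}, the hypothesis that $(m+\half,s)$ is the left endpoint of a maximal instability interval with improper endpoint gives $(m,s)\in\NU_1^{\tht+}$, while $(m,r)\notin\NU_1^{\tht+}$ for every $r\in(s,t]$. Hence the ``$\setminus\NU_1^{\tht+}$'' clause is automatic once $s_k\in(s,t]$, and it suffices to produce a sequence $s_k\searrow s$ in $(s,t]$ with $(m,s_k)\in\NU_1^{\gamma-}$.

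Next, from $(m,s)\in\NU_1^{\tht+}$ and $(m+\half,s)\in\IG{[\gamma,\tht]}$, Lemma~\ref{+-shckiffnotinstab} yields $(m,s)\notin\NU_1^{\gamma-}$. Since $\geo\from{(m,s)}\dir{L}{\gamma-}{}\preceq\geo\from{(m,s)}\dir{L}{\tht+}{}$ and the latter goes up by Lemma~\ref{instableftendpt}, the geodesic $\geo\from{(m,s)}\dir{L}{\gamma-}{}$ goes up, so the failure $(m,s)\notin\NU_1^{\gamma-}$ forces $\geo\from{(m,s)}\dir{R}{\gamma-}{}$ to go up as well. Translating via Theorem~\ref{thm:cif-SS}, $\gamma\le\tht^R_{(m,s)}=:\tht'$; and $\tht'\in\baddir$ as a right competition interface direction. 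Combined with $(m,s)\in\NU_1^{\tht+}$ this gives $\gamma\le\tht'\le\tht<\tht^L_{(m,s)}$, and a check using Theorem~\ref{thm:cif-SS} shows $(m,s)\in\NU_1^{\tht'+}\setminus\NU_1^{\tht'-}$: $\geo\from{(m,s)}\dir{L}{\tht'\pm}{}$ both go up (as $\tht'<\tht^L_{(m,s)}$), $\geo\from{(m,s)}\dir{R}{\tht'-}{}$ goes up (since $\tht'=\tht^R_{(m,s)}$), and $\geo\from{(m,s)}\dir{R}{\tht'+}{}$ goes right. Theorem~\ref{thm:shocks}\eqref{thm:shocks:d} at $\tht'$ then produces a sequence $s_k\searrow s$ with $(m,s_k)\in\NU_1^{\tht'-}$; discarding finitely many terms, we may assume $s_k\in(s,t]$.

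The remaining step is to promote $\NU_1^{\tht'-}$ to $\NU_1^{\gamma-}$ at each $s_k$. When $\gamma=\tht'$ this is immediate, since the two conditions coincide. The genuinely delicate case is $\gamma<\tht^R_{(m,s)}$, in which all four $\gamma\pm$ geodesics from $(m,s)$ go up, so the $\gamma-$ structure at $(m,s)$ is ``smooth'' yet we must still produce $\NU_1^{\gamma-}$ points converging to $s$ from the right. The strategy here is to exploit the continuity in Theorem~\ref{thm:geo}\eqref{thm:geo:cont} (which forces the right-going level-$m$ excursion of $\geo\from{(m,s_k)}\dir{R}{\tht'-}{}$ to shrink to $0$ as $s_k\searrow s$), together with Lemma~\ref{geodefproperIS} at the proper instability points $(m+\half,s_k)$ and Lemma~\ref{underpropinstab} (by which $\geo\from{(m,s_k)}\dir{L}{\tht+}{}=\geo\from{(m,s_k)}\dir{R}{\tht+}{}$ is right-going to $(m,t)$), to pin $\geo\from{(m,s_k)}\dir{R}{\gamma-}{}$ into the right-going configuration for at least a subsequence. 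This Case $\gamma<\tht^R_{(m,s)}$ analysis is the main obstacle of the proof.
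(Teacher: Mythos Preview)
Your first two paragraphs track the paper's proof closely: Lemma~\ref{+shckiffimprop} gives $(m,s)\in\NU_1^{\tht+}$ and $(m,r)\notin\NU_1^{\tht+}$ for all $r\in(s,t]$, so only the $\NU_1^{\gamma-}$ membership is at stake; and you correctly deduce $(m,s)\notin\NU_1^{\gamma-}$ with $\geo\from{(m,s)}\dir{R}{\gamma}{-}$ going up (the paper reaches the same conclusion via Lemma~\ref{properif-shck} instead of Lemma~\ref{+-shckiffnotinstab}). At this point the paper simply records $(m,s)\in\NU_1^{\tht+}\setminus\NU_1^{\gamma-}$ and invokes Theorem~\ref{thm:shocks}\eqref{thm:shocks:d} directly to obtain $s_k\searrow s$ with $(m,s_k)\in\NU_1^{\gamma-}$; that is, it reads the cited Theorem~4.32(iv) of \cite{Sep-Sor-23-pmp} in its two-parameter form $(\gamma-,\tht+)$ rather than the single-$\tht$ form restated here.

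You instead take the single-$\tht$ statement literally and try to reduce to it via $\tht'=\tht^R_{(m,s)}$. That works cleanly when $\gamma=\tht'$, but your own last paragraph concedes that $\gamma<\tht'$ is not finished --- and the gap is genuine. From $(m,s_k)\in\NU_1^{\tht'-}$ you only get $\tht^R_{(m,s_k)}<\tht'$ and $\tht^L_{(m,s_k)}\ge\tht'\ge\gamma$; membership in $\NU_1^{\gamma-}$ needs the strictly stronger $\tht^R_{(m,s_k)}<\gamma$. The ingredients you list do not supply this: continuity gives $\geo\from{(m,r)}\dir{R}{\gamma}{-}\to\geo\from{(m,s)}\dir{R}{\gamma}{-}$ as $r\searrow s$, and the limit goes \emph{up}, so nothing prevents $\geo\from{(m,s_k)}\dir{R}{\gamma}{-}$ from going up as well. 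The sketch is not a proof.

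If you want to close the gap without appealing to the two-parameter version of the imported result, one self-contained route (using results proved later in the paper but logically independent of this lemma) is: the edge $[(m-\tfrac12,s),(m+\tfrac12,s)]$ lies in $\IG{[\gamma,\tht]}$ (Lemmas~\ref{biinf:geo} and~\ref{nodoubleinstab}), and it cannot be right-isolated among such edges, else Lemma~\ref{lem:right isolated=>-} would give $(m,s)\in\NU_1^{\gamma-}$. Hence there are $[\gamma,\tht]$-edges at $s_n\searrow s$; since that edge set on level $m$ is closed and nowhere dense (Lemma~\ref{lm:edge nowhere dense}), one extracts right-isolated edge locations $r_n\in(s,t)$ with $r_n\searrow s$, and Lemma~\ref{lem:right isolated=>-} yields $(m,r_n)\in\NU_1^{\gamma-}$.
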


\begin{proof}
    By Lemma \ref{+shckiffimprop}  $(m,s)\in \NU_1^{\tht+}$. Consequently, $\geo\from{(m,s)} \dir{R}{\tht}{+}$ goes right from $(m,s)$ while  $\geo\from{(m,s)} \dir{L}{\tht}{+}$ goes up, causing $\geo\from{(m,s)} \dir{L}{\gamma}{-}$ to also go up. By Lemma \ref{properif-shck}, $(m,s)\not\in\NU_1^{\gamma-}$, which implies that $\geo\from{(m,s)} \dir{R}{\gamma}{-}$ goes up. Thus, $\geo\from{(m,s)} \dir{R}{\tht}{+}$ and $\geo\from{(m,s)} \dir{R}{\gamma}{-}$ split immediately, but $\geo\from{(m,s)} \dir{L}{\tht}{+}$ and $\geo\from{(m,s)} \dir{L}{\gamma}{-}$ do not. This implies that $(m,s)\in\NU_1^{\tht+}\setminus\NU_1^{\gamma-}$.
    The claim now follows from Theorem \ref{thm:shocks}\eqref{thm:shocks:d} and Lemma \ref{+shckiffimprop}. 
\end{proof}

In contrast, the next result shows that there are no shocks to the immediate right of a right endpoint of an instability interval.

\begin{lem}
    The following holds for all $\w\in\Omref{Omega8}$, $\tht\ge\gamma>0$, $m\in\Z$, and $s<t$. Suppose $\{(m+\half,r):s\le r\le t\}\subset\IG{[\gamma,\tht]}$ and that $(m,t)$ is an improper $[\gamma,\tht]$-instability point.
    Then there exists an $\e>0$ such that for any $r\in[t,t+\e]$, $(m,r)\not\in\NU_1^{\gamma-}\cup\NU_1^{\tht+}$.
\end{lem}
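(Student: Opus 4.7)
The plan is to show that all four geodesics $\geo\from{(m,t)}\dir{L}{\gamma}{-}$, $\geo\from{(m,t)}\dir{R}{\gamma}{-}$, $\geo\from{(m,t)}\dir{L}{\tht}{+}$, $\geo\from{(m,t)}\dir{R}{\tht}{+}$ proceed rightward out of $(m,t)$, then extend this rightward motion to a small forward-time neighborhood via the monotonicity of geodesics in their starting point provided by \eqref{geomono}.

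First, apply Lemma \ref{instabrightendpt} at the improper right endpoint $(m+\half,t)$: this yields that $\geo\from{(m,t)}\dir{L}{\gamma}{-}$ proceeds rightwards out of $(m,t)$, so in particular $(m,t)\notin\NU_1^{\gamma-}$, and by Lemma \ref{geo-split} this forces $\geo\from{(m,t)}\dir{L}{\gamma}{-}=\geo\from{(m,t)}\dir{R}{\gamma}{-}$, so $\geo\from{(m,t)}\dir{R}{\gamma}{-}$ also goes right. For the $\tht+$ geodesics, Lemma \ref{+shckiffimprop} records that $(m,t)\notin\NU_1^{\tht+}$, and so by Lemma \ref{geo-split}, $\geo\from{(m,t)}\dir{L}{\tht}{+}=\geo\from{(m,t)}\dir{R}{\tht}{+}$; I need to show this common path also goes right. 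Because $(m+\half,t)\in\IG{[\gamma,\tht]}$, Theorem \ref{geo:instpt} supplies $\geo\from{(m,t)}\dir{R}{\tht}{+}\cap\geo\from{(m+1,t)}\dir{L}{\gamma}{-}=\varnothing$. If $\geo\from{(m,t)}\dir{R}{\tht}{+}$ went up immediately, it would pass through $(m+1,t)$, which lies on $\geo\from{(m+1,t)}\dir{L}{\gamma}{-}$ as its starting point, contradicting the disjointness.

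Having established that all four geodesics go right at $(m,t)$, let $t_1,t_2,t_3,t_4>t$ denote their first jump times to level $m+1$, and choose any $\e>0$ with $t+\e<\min(t_1,t_2,t_3,t_4)$. For $r\in(t,t+\e]$, the monotonicity $\geo\from{(m,t)}\dir{R}{\tht}{+}\preceq\geo\from{(m,r)}\dir{L}{\tht}{+}$ from \eqref{geomono} implies that the first jump time of $\geo\from{(m,r)}\dir{L}{\tht}{+}$ is at least $t_4>r$; hence $\geo\from{(m,r)}\dir{L}{\tht}{+}$ does not go up at $(m,r)$, so $(m,r)\notin\NU_1^{\tht+}$. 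The analogous argument using $\geo\from{(m,t)}\dir{R}{\gamma}{-}$ yields $(m,r)\notin\NU_1^{\gamma-}$. At $r=t$, both non-memberships were already established in the previous paragraph.

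The main obstacle is the step forcing $\geo\from{(m,t)}\dir{R}{\tht}{+}$ to proceed rightward: monotonicity across distinct velocity/sign pairs is not supplied directly by \eqref{geomono}, so in place of a cross-monotonicity argument one must combine the $\gamma-$ information supplied by Lemma \ref{instabrightendpt} with the geometric characterization of instability points in Theorem \ref{geo:instpt} to rule out an immediate vertical step.
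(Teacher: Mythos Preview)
Your proof is correct and follows essentially the same strategy as the paper: establish that all four geodesics out of $(m,t)$ proceed rightward, then use monotonicity in the starting point to propagate this to $[t,t+\e]$.

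The only difference is in how you force $\geo\from{(m,t)}\dir{R}{\tht}{+}$ to go right. The paper simply writes ``by the ordering of geodesics'' --- it uses the cross-velocity monotonicity $\geo\from{(m,t)}\dir{L}{\gamma}{-}\preceq\geo\from{(m,t)}\dir{S}{\tht}{+}$ for $\gamma\le\tht$, which is part of the geodesic monotonicity in \cite{Sep-Sor-23-pmp} (and is used freely elsewhere in the paper, e.g.\ in the proofs of Lemmas \ref{instabrightendpt} and \ref{lm:edge nowhere dense}) even though it is not explicitly recorded in \eqref{geomono}. You correctly flag that \eqref{geomono} as stated does not supply this, and instead derive the rightward step from Theorem \ref{geo:instpt}: since $(m+\half,t)$ is an instability point, $\geo\from{(m,t)}\dir{R}{\tht}{+}\cap\geo\from{(m+1,t)}\dir{L}{\gamma}{-}=\varnothing$, ruling out an immediate vertical step. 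This is a valid and self-contained alternative.
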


\begin{proof}
    By Lemma \ref{instabrightendpt}, $\geo\from{(m,t)}\dir{L}{\gamma}{-}$ proceeds rightwards out of $(m,t)$. By the ordering of geodesics, this implies that $\geo\from{(m,t)}\dir{R}{\gamma}{-}$, $\geo\from{(m,t)}\dir{L}{\tht}{+}$, and $\geo\from{(m,t)}\dir{R}{\tht}{+}$ also proceed rightwards out of $(m,t)$, which prevents having any shock points $(m,r)$ for $r\ge t$ close enough to $t$.
\end{proof}

Next, we note that above each right endpoint of a maximal instability interval there is a $\gamma-$ shock that is not a $\tht+$ shock.

\begin{lem}\label{-shockabove}
		The following holds for all $\w\in\Omref{Omega8}$ and $\tht\ge\gamma>0$. Suppose that for some $m\in\Z$ and $s<t$, $\{(m+\half,r):s<r<t\}$ is a proper $[\gamma,\tht]$-instability interval. If $(m+\half,t)$ is an improper $[\gamma,\tht]$-instability point, then $(m+1,t)\in \NU_1^{\gamma-}\setminus\NU_1^{\tht+}$. 
\end{lem}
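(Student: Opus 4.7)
The plan is to verify the two conditions $(m+1,t)\in\NU_1^{\gamma-}$ and $(m+1,t)\notin\NU_1^{\tht+}$ independently and then combine them.

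For the first, I would simply cite Lemma \ref{instabrightendpt}: its hypotheses match ours, and it delivers that $\geo\from{(m+1,t)}\dir{L}{\gamma}{-}$ proceeds vertically to $(m+2,t)$ while $\geo\from{(m+1,t)}\dir{R}{\gamma}{-}$ proceeds rightwards. The two $\gamma-$ geodesics from $(m+1,t)$ therefore separate immediately, which is the definition of $(m+1,t)\in\NU_1^{\gamma-}$.

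For the second assertion, my strategy is to first produce a vertical $[\gamma,\tht]$-instability edge $[(m+\half,t),(m+\tfrac32,t)]$ and then invoke Lemma \ref{+-shckiffnotinstab}. By Lemma \ref{improperendpts}, $(m+\half,t)$ is the right endpoint of the given maximal proper instability interval. Lemma \ref{biinf:geo} then supplies an up-right path along $\IG{[\gamma,\tht]}$ out of $(m+\half,t)$. I would argue that this path cannot extend horizontally to the right: any such extension would, by Lemma \ref{onlypropinprop}, place proper instability points $(m+\half,r)$ for $r\in(t,t+\e)$, producing a second maximal proper instability interval whose closure shares the point $(m+\half,t)$ with the original one, contradicting the closure-disjointness statement in Lemma \ref{improp=endpts}. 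Hence the up-right path must ascend vertically, yielding the desired edge and in particular $(m+\tfrac32,t)\in\IG{[\gamma,\tht]}$.

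With the edge in hand, I would finish by contradiction. Suppose $(m+1,t)\in\NU_1^{\tht+}$; combined with the first paragraph this gives $(m+1,t)\in\NU_1^{\gamma-}\cap\NU_1^{\tht+}$. Applying Lemma \ref{+-shckiffnotinstab} with $(m,s)$ replaced by $(m+1,t)$ then forces $(m+\tfrac32,t)\notin\IG{[\gamma,\tht]}$, contradicting the edge established above $(m+\half,t)$. The only mildly subtle step is the dichotomy used to produce the vertical edge, since it combines the closure-disjointness corollary with the interior-propriety of instability intervals; everything else is a clean assembly of previously established lemmas.
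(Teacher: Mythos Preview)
Your argument is correct. The first half coincides with the paper's: Lemma \ref{instabrightendpt} immediately gives $(m+1,t)\in\NU_1^{\gamma-}$.

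For the second half, your route differs slightly from the paper's. The paper combines Lemmas \ref{+shckiffimprop} and \ref{properif-shck} one level up: once the vertical edge $[(m+\tfrac12,t),(m+\tfrac32,t)]$ is in hand (and, by Lemma \ref{nodoubleinstab}, an instability interval on level $m+\tfrac32$ begins at $t$), Lemma \ref{properif-shck} applied at $(m+1,t)$ makes $(m+\tfrac32,t)$ a \emph{proper} instability point, and then Lemma \ref{+shckiffimprop} on that interval rules out $(m+1,t)\in\NU_1^{\tht+}$. You instead invoke Lemma \ref{+-shckiffnotinstab} directly: since $(m+1,t)\in\NU_1^{\gamma-}$ and $(m+\tfrac32,t)\in\IG{[\gamma,\tht]}$, membership in the intersection $\NU_1^{\gamma-}\cap\NU_1^{\tht+}$ is excluded, hence $(m+1,t)\notin\NU_1^{\tht+}$. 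This is a clean shortcut that bypasses the need to argue propriety of $(m+\tfrac32,t)$. Your derivation of the vertical edge via Lemmas \ref{biinf:geo} and \ref{improp=endpts} is somewhat more elaborate than necessary (the edge is established inside the proof of Lemma \ref{instabrightendpt}), but it is logically sound and self-contained.
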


\begin{proof}
	This follows immediately from Lemmas \ref{instabrightendpt}, \ref{+shckiffimprop}, and \ref{properif-shck}.
\end{proof}

To complete the picture, the next two lemmas show that the $\gamma-$ shocks beneath an instability interval occur precisely at the points where a rightmost vertical instability edge passes through. This aligns with the facts that, on each level, there are countably many shock points and a set of instability edges with Hausdorff dimension $\half$.

\begin{lem}\label{lem:right isolated=>-}
The following holds for all $\w\in\Omref{Omega8}$,  $\tht\ge\gamma>0$, $m\in\Z$, and $s\in\R$. Suppose $[(m+\half,s),(m+\frac32,s)]$ is a $[\gamma,\tht]$-instability edge and for some $t>s$, $[(m+\half,r),(m+\frac32,r)]$ is not a $[\gamma,\tht]$-instability edge for all $r\in(s,t)$. Then $(m+1,s)\in\NU_1^{\gamma-}$.
\end{lem}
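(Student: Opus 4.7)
The plan is a proof by contradiction that translates both hypotheses into Busemann equalities, applies Theorem \ref{thm:Bus-geo} to extract a common vertex of two geodesics, and finishes with a squeeze argument for ordered up-right paths.

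First I would unpack the hypotheses via the continuous nondecreasing function
\[f(u):=\bus{\gamma-}{}(m+1,0,m+1,u)-\bus{\tht+}{}(m+1,0,m+1,u).\]
By Definition \ref{def:edge.int} the instability edge at $s$ says $s$ is a point of increase of $f$, while the hypothesis that $[(m+\half,r),(m+\frac32,r)]$ is not an instability edge for any $r\in(s,t)$ says that each such $r$ admits a neighborhood on which $f$ is constant. Since $f$ is continuous and nondecreasing (Theorem \ref{thm:B}), a standard connectedness argument makes $f$ constant on $[s,t]$, and the cocycle yields
\[\bus{\gamma-}{}(m+1,s,m+1,u_2)=\bus{\tht+}{}(m+1,s,m+1,u_2)\quad\text{for every }u_2\in(s,t).\]

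Next, for each such $u_2$ I would apply Theorem \ref{thm:Bus-geo} with $\bfx=(m+1,u_2)\succeq\bfy=(m+1,s)$ (the displayed identity flips sign via the cocycle into the form required there) to produce a common vertex
\[p\in\geo\from{(m+1,s)}\dir{R}{\gamma}{-}\cap\geo\from{(m+1,u_2)}\dir{L}{\tht}{+}.\]
Because $\geo\from{(m+1,u_2)}\dir{L}{\tht}{+}$ starts at time $u_2>s$, it does not pass through $(m+1,s)$, so $p\ne(m+1,s)$. Using that the $(\tht,\sig)$-monotonicity of the Busemann process (Theorem \ref{thm:B}\eqref{B:mono}) transfers to the geodesic construction, together with the starting-time ordering in Theorem \ref{thm:geo}\eqref{thm:geo:mono}, I would obtain the chain
\[\geo\from{(m+1,s)}\dir{R}{\gamma}{-}\preceq\geo\from{(m+1,s)}\dir{R}{\tht}{+}\preceq\geo\from{(m+1,u_2)}\dir{L}{\tht}{+}.\]
A direct inspection of jump-time intervals shows that any vertex shared by the outer two paths must also lie on the middle one; hence $p\in\geo\from{(m+1,s)}\dir{R}{\tht}{+}$.

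Finally I would close by contradiction: assume $(m+1,s)\notin\NU_1^{\gamma-}$. By Lemma \ref{geo-split} this forces $\geo\from{(m+1,s)}\dir{R}{\gamma}{-}=\geo\from{(m+1,s)}\dir{L}{\gamma}{-}$, and then Lemma \ref{geodefinstabedge1} applied to the instability edge $[(m+\half,s),(m+\frac32,s)]$ tells us that $\geo\from{(m+1,s)}\dir{L}{\gamma}{-}$ and $\geo\from{(m+1,s)}\dir{R}{\tht}{+}$ separate at $(m+1,s)$ and never touch again. This flatly contradicts the existence of $p\ne(m+1,s)$ in $\geo\from{(m+1,s)}\dir{R}{\gamma}{-}\cap\geo\from{(m+1,s)}\dir{R}{\tht}{+}$ produced by the squeeze. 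The main delicate point is bookkeeping around the $\preceq$ convention and the orientation of Theorem \ref{thm:Bus-geo} (where $\bfx\succeq\bfy$ places $\bfx$ down-right of $\bfy$): the cocycle flip between the displayed Busemann identity and the intersection statement must be tracked, along with the monotonicity of geodesics in $\tht$; once that is in place, the squeeze and the contradiction are routine.
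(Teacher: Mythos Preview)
Your proof is correct but takes a genuinely different route from the paper's.

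The paper argues directly and geometrically: from the instability edge at $s$, Lemma~\ref{geodefinstabedge1} forces $\geo\from{(m+1,s)}\dir{L}{\gamma}{-}$ to go up and $\geo\from{(m+1,s)}\dir{R}{\tht}{+}$ to go right. For each $r\in(s,t')$ (with $t'$ slightly larger than $s$), if $\geo\from{(m+1,r)}\dir{L}{\gamma}{-}$ went up it would merge with $\geo\from{(m+1,s)}\dir{L}{\gamma}{-}$ and never touch $\geo\from{(m+1,r)}\dir{R}{\tht}{+}$, producing (again via Lemma~\ref{geodefinstabedge1}) an instability edge at $r$, contrary to hypothesis. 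Hence $\geo\from{(m+1,r)}\dir{L}{\gamma}{-}$ goes right for all such $r$, and the right-continuity \eqref{geocont} makes $\geo\from{(m+1,s)}\dir{R}{\gamma}{-}$ go right as well, giving $(m+1,s)\in\NU_1^{\gamma-}$.

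Your argument instead translates the ``no edge on $(s,t)$'' hypothesis into a Busemann equality, invokes the equivalence Theorem~\ref{thm:Bus-geo} to produce a common point $p$ of $\geo\from{(m+1,s)}\dir{R}{\gamma}{-}$ and $\geo\from{(m+1,u_2)}\dir{L}{\tht}{+}$, squeezes $p$ onto $\geo\from{(m+1,s)}\dir{R}{\tht}{+}$, and then contradicts Lemma~\ref{geodefinstabedge1} under the hypothesis $(m+1,s)\notin\NU_1^{\gamma-}$. The squeeze step (any vertex on both outer paths of an ordered triple lies on the middle one) is valid from the jump-time inequalities, as you note. The paper's approach is shorter and avoids the Busemann--geodesic dictionary entirely; yours illustrates how the analytic and geometric descriptions interlock. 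One small remark: the ordering $\geo\from{(m+1,s)}\dir{R}{\gamma}{-}\preceq\geo\from{(m+1,s)}\dir{R}{\tht}{+}$ for $\gamma\le\tht$ is not literally among the inequalities displayed in Theorem~\ref{thm:geo}\eqref{thm:geo:mono}, but it is used freely throughout the paper and does follow from the Busemann monotonicity you cite (and from the cited results in \cite{Sep-Sor-23-pmp}); it would be cleanest to state this once explicitly.
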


\begin{proof}
By Lemma \ref{geodefinstabedge1}, the geodesic $\geo\from{(m+1,s)}\dir{R}{\tht}{+}$ goes right, the geodesic $\geo\from{(m+1,s)}\dir{L}{\gamma}{-}$ goes up to $(m+2,s)$, and the two never touch again.  Since no geodesics go up twice in a row (Proposition \ref{nodouble}), the latter geodesic must turn right from $(m+2,s)$. Thus, there exists some $t'\in(s,t)$ such that the  geodesic $\geo\from{(m+1,s)}\dir{R}{\tht}{+}$ goes through $(m+1,t')$ and the geodesic $\geo\from{(m+1,s)}\dir{L}{\gamma}{-}$ goes through $(m+2,t')$. If for some $r\in(s,t')$, the geodesic $\geo\from{(m+1,r)}\dir{L}{\gamma}{-}$  were to go up, then it would continue with the geodesic $\geo\from{(m+1,s)}\dir{L}{\gamma}{-}$  and thus never again touch the geodesic $\geo\from{(m+1,r)}\dir{R}{\tht}{+}$  (which follows the geodesic $\geo\from{(m+1,s)}\dir{R}{\tht}{+}$). Then, $[(m+\half,r),(m+\frac23,r)]$ would be a $[\gamma,\tht]$-instability edge. The assumptions of the lemma prevent that and hence for any $r\in(s,t')$, the geodesic $\geo\from{(m+1,r)}\dir{L}{\gamma}{-}$ must go right. By the convergence in \eqref{geocont}, the geodesic $\geo\from{(m+1,s)}\dir{R}{\gamma}{-}$ must go right and since the geodesic $\geo\from{(m+1,s)}\dir{L}{\gamma}{-}$ goes up we have that $(m+1,s)\in\NU_1^{\gamma-}$.
\end{proof}

\begin{lem}
The following holds for all $\w\in\Omref{Omega8}$, $\tht\ge\gamma>0$, $m\in\Z$, and $s\in\R$.
Suppose that for some sequence $s_n$ that strictly decreases to $s$, $[(m+\half,s_n),(m+\frac32,s_n)]$ is a $[\gamma,\tht]$-instability edge for all $n$.  Then $(m+1,s)\not\in\NU_1^{\gamma-}$.
\end{lem}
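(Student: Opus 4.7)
The plan is to derive a contradiction from assuming $(m+1,s) \in \NU_1^{\gamma-}$ by combining the geometric characterization of instability edges in Lemma \ref{geodefinstabedge1} with the right-continuity of the geodesic process in Theorem \ref{thm:geo}\eqref{thm:geo:cont}. The key observation is that the sequence of instability edges forces the left geodesics $\geo\from{(m+1,s_n)}\dir{L}{\gamma}{-}$ all to start with a vertical step, and taking a limit propagates this vertical step to the \emph{right} geodesic at $s$.

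More precisely, by Lemma \ref{geodefinstabedge1}, for each $n$ the geodesics $\geo\from{(m+1,s_n)}\dir{R}{\tht}{+}$ and $\geo\from{(m+1,s_n)}\dir{L}{\gamma}{-}$ separate immediately and never touch again. Since the left geodesic is weakly above the right one by \eqref{geomono}, this means $\geo\from{(m+1,s_n)}\dir{L}{\gamma}{-}$ must make its first jump at $(m+1,s_n)$, i.e.\ go up to $(m+2,s_n)$. Writing this in terms of jump times, the first jump time of $\geo\from{(m+1,s_n)}\dir{L}{\gamma}{-}$ equals $s_n$.

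By the right-continuity property in Theorem \ref{thm:geo}\eqref{thm:geo:cont},
    \[\lim_{n\to\infty}\geo\from{(m+1,s_n)}\dir{L}{\gamma}{-}=\geo\from{(m+1,s)}\dir{R}{\gamma}{-}.\]
Since convergence of up-right paths is coordinate-wise in jump times and each $\geo\from{(m+1,s_n)}\dir{L}{\gamma}{-}$ has first jump time $s_n\searrow s$, the limiting path $\geo\from{(m+1,s)}\dir{R}{\gamma}{-}$ has first jump time exactly $s$. That is, $\geo\from{(m+1,s)}\dir{R}{\gamma}{-}$ proceeds vertically out of $(m+1,s)$ rather than laterally.

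But $(m+1,s)\in\NU_1^{\gamma-}$ would require, by the definition \eqref{def:NU1}, the existence of $\delta>0$ with $(m+1,s+\delta)\in\geo\from{(m+1,s)}\dir{R}{\gamma}{-}$, i.e.\ this geodesic must proceed laterally out of $(m+1,s)$. This contradicts the previous paragraph, so $(m+1,s)\notin\NU_1^{\gamma-}$. The only step that requires any care is verifying that coordinate-wise convergence of jump times transfers the ``first jump at the starting time'' property to the limit, but this is immediate from the definition of convergence of up-right paths given before Theorem \ref{thm:geo}.
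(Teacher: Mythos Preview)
Your proof is correct and follows essentially the same approach as the paper's: use Lemma \ref{geodefinstabedge1} to deduce that $\geo\from{(m+1,s_n)}\dir{L}{\gamma}{-}$ jumps immediately for each $n$, then invoke the right-continuity \eqref{geocont} to conclude that $\geo\from{(m+1,s)}\dir{R}{\gamma}{-}$ also jumps immediately, which rules out $(m+1,s)\in\NU_1^{\gamma-}$. The paper's version is terser (two lines, direct rather than by contradiction), but the logical content is identical.
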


\begin{proof}
By Lemma \ref{geodefinstabedge1}, for all $n$, the geodesic $\geo\from{(m+1,s_n)}\dir{L}{\gamma}{-}$  goes up. By the convergence in \eqref{geocont}, the geodesic $\geo\from{(m+1,s)}\dir{R}{\gamma}{-}$  goes up and hence $(m+1,s)\not\in\NU_1^{\gamma-}$. \end{proof}

The following corollary is an interesting contrast to Lemma \ref{lm:edge nowhere dense}. 

\begin{cor}\label{cor:no isolated}
    The following holds for all $\w\in\Omref{Omega8}$, $\tht\ge\gamma>0$, $m\in\Z$, and $s\in\R$. Suppose $[(m-\half,s),(m+\half,s)]$ is a $[\gamma,\tht]$-instability edge. Then there is either a decreasing sequence $s_n\searrow s$ or an increasing sequence $s_n\nearrow s$ {\rm(}or both{\rm)} such that $[(m-\half,s_n),(m+\half,s_n)]$ is a $[\gamma,\tht]$-instability edge for all $n$. 
\end{cor}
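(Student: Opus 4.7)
The plan is to reduce this corollary to a clean analytic fact about continuous monotone functions, bypassing any geodesic manipulation. By Definition \ref{def:edge.int}, the set of $t\in\R$ for which $[(m-\half,t),(m+\half,t)]$ is a $[\gamma,\tht]$-instability edge is exactly the set of points of increase of
\[f(t) \;:=\; \bus{\gamma-}{}(m,0,m,t)-\buse{+}{m}{0}{m}{t},\]
which (as observed immediately after Definition \ref{def:edge.int}) is continuous and non-decreasing, using the cocycle, monotonicity, and continuity in Theorem \ref{thm:B}. Thus the corollary is equivalent to the assertion that no point of increase of a continuous non-decreasing function on $\R$ is isolated, and this is what I would prove.

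The plan for the analytic step is a short proof by contradiction. Suppose $s$ were isolated in the set of points of increase of $f$, so there exist $r'<s<r''$ with no point of $(r',s)\cup(s,r'')$ being a point of increase. For each such $t$, failure to be a point of increase combined with monotonicity of $f$ yields a $\delta_t>0$ with $f$ constant on $[t-\delta_t,t+\delta_t]$. A standard connectedness argument (or a sup-argument on subintervals, using the local-constancy cover) then gives that $f$ is constant on $(r',s)$, and continuity at the endpoints extends this to $[r',s]$. The same reasoning on $(s,r'')$ yields $f$ constant on $[s,r'']$, and continuity of $f$ at $s$ forces the two constants to coincide. Hence $f$ is constant on $[r',r'']$, contradicting the assumption that $s$ is a point of increase.

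The main obstacle is almost entirely cosmetic: one must spell out why ``$t$ is not a point of increase'' combined with monotonicity gives an actual neighborhood of constancy (rather than just equality of one-sided limits). A tempting alternative would be to apply Lemma \ref{lem:right isolated=>-} with the level shifted down by one to conclude $(m,s)\in\NU_1^{\gamma-}$ whenever the edge at $s$ is right-isolated, and then use the left-accumulation of $\gamma-$ shocks from Theorem \ref{thm:shocks}\eqref{thm:shocks:c} to produce the desired increasing sequence. However, upgrading membership in $\NU_1^{\gamma-}$ to the presence of an instability edge requires ruling out that $\geo\from{(m,s_k)}\dir{R}{\tht}{+}$ and $\geo\from{(m,s_k)}\dir{L}{\gamma}{-}$ rejoin after their initial split (per Lemma \ref{geodefinstabedge1}), which is not automatic. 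The continuity-based argument sidesteps this entanglement, so I would present it as the proof.
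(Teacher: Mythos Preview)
Your argument is correct and considerably more direct than the paper's. You observe that the set of $[\gamma,\tht]$-instability edges on level $m$ is, by Definition \ref{def:edge.int}, exactly the set of points of increase of the continuous nondecreasing function $f(t)=\bus{\gamma-}{}(m,0,m,t)-\bus{\tht+}{}(m,0,m,t)$, and then you prove the elementary real-analysis fact that such a set can have no isolated points. This is a clean reduction: the proof uses nothing beyond Theorem \ref{thm:B}(\ref{B:cocycle},\ref{B:mono},\ref{B:cont}) and the definition of point of increase.

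The paper instead argues through the shock and instability-interval machinery. Assuming the edge at $s$ is right-isolated, it invokes Lemma \ref{lem:right isolated=>-} (shifted one level) to get $(m,s)\in\NU_1^{\gamma-}$, then uses Lemmas \ref{biinf:geo} and \ref{nodoubleinstab} to produce an instability interval on level $m+\tfrac12$ containing $s$, then Lemma \ref{properif-shck} and Lemma \ref{improperendpts} to extend that interval to the left past $s$, and finally Theorem \ref{thm:shocks}\eqref{thm:shocks:c} (left-density of $\gamma-$ shocks) together with the second conclusion of Lemma \ref{properif-shck} to manufacture the increasing sequence of edges. Your worry about upgrading shock membership to edge presence is exactly what Lemma \ref{properif-shck} handles, but at the cost of first verifying its interval hypothesis. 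So the paper's route does go through, but your analytic shortcut avoids all of the geodesic bookkeeping; conversely, the paper's argument recovers as a byproduct the additional structural information that a right-isolated edge sits over a $\gamma-$ shock and under a proper instability point.
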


\begin{proof}
    Suppose that there does not exist a decreasing sequence $s_n\searrow s$ as in the claim. Then Lemma \ref{lem:right isolated=>-} implies that $(m,s)\in\NU_1^{\gamma-}$.
    By Lemmas \ref{biinf:geo} and \ref{nodoubleinstab}, $(m-\half,s)$ is part of a $[\gamma,\tht]$-instability interval. Then Lemma \ref{properif-shck} implies that $(m-\half,s)$ is a proper $[\gamma,\tht]$-instability point and Lemma \ref{improperendpts} implies that there exists an $r<s$ such that $[(m-\half,r),(m+\half,s)]$ is a $[\gamma,\tht]$-instability interval. The left density of shocks (Theorem \ref{thm:shocks}\eqref{thm:shocks:c}) and Lemma \ref{properif-shck} imply now the existence of an increasing sequence $s_n\nearrow s$ as in the claim.
\end{proof}

We summarize the above results in the following theorem. See Figure \ref{fig:shock-instability}.
Theorem \ref{thm:shocks-IG} comes as the special case when $\gamma=\tht\in\baddir$.

\begin{thm}\label{thm:shocks-IG.int}
The following holds for all $\w\in\Omref{Omega8}$ and $\tht\ge\gamma>0$.
Suppose that for some $m\in\Z$ and $s<t$, $\{(m+\half,r):s\le r\le t\}$ is a $[\gamma,\tht]$-instability interval and that $(m+\half,s)$ and $(m+\half,t)$ are improper $[\gamma,\tht]$-instability points.
\begin{enumerate}  [label={\rm(\roman*)}, ref={\rm\roman*}]   \itemsep=3pt 
\item\label{thm:shocks-IG.int.a} $(m,s)\in\NU_1^{\tht+}\setminus\NU_1^{\gamma-}$ and $(m+1,t)\in\NU_1^{\gamma-}\setminus\NU_1^{\tht+}$. 
\item\label{thm:shocks-IG.int.b} For any $r\in(s,t)$, $(m,r)\not\in\NU_1^{\tht+}$. Furthermore, $(m,r)\in\NU_1^{\gamma-}$ if, and only if, $[(m-\half,r),(m+\half,r)]$ is a $[\gamma,\tht]$-instability edge that is right-isolated among $[\gamma,\tht]$-instability edges.
\item\label{thm:shocks-IG.int.c} For any $r\in(s,t)$ with $(m,r)\in\NU_1^{\gamma-}$, $[m-\half,r),(m+\half,r)]$ is a $[\gamma,\tht]$-instability edge.
\item\label{thm:shocks-IG.int.d} There exists an $\e>0$ such that for any $r\in[t,t+\e]$, $(m,r)\not\in\NU_1^{\gamma-}\cup\NU_1^{\tht+}$.
\item\label{thm:shocks-IG.int.e} For any $r\in\R$ such that $(m+\half,r)\not\in\IG{[\gamma,\tht]}$, $(m,r)$ is either in $\NU_1^{\gamma-}\cap\NU_1^{\tht+}$ or not in $\NU_1^{\gamma-}\cup\NU_1^{\tht+}$.
\item\label{thm:shocks-IG.int.f} There exist sequences $r_n'\nearrow s$ and $r_n''\searrow s$ such that $(m,r_n')\in\NU_1^{\gamma-}\cap\NU_1^{\tht+}$ and $(m,r_n'')\in\NU_1^{\gamma-}\setminus\NU_1^{\tht+}$ for all $n$.
\end{enumerate}
\end{thm}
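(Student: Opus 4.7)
The theorem is almost entirely a consolidation of the geometric lemmas proved earlier in Section \ref{sec:shocks}, so the plan is to assemble them in order rather than to build new machinery. First I would handle part \eqref{thm:shocks-IG.int.a}: the improperness of $(m+\half,s)$ together with $\{(m+\half,r):s<r<t\}$ being a proper instability interval lets me apply Lemma \ref{+shckiffimprop} to obtain $(m,s)\in\NU_1^{\tht+}$, then Lemma \ref{properif-shck} (contrapositive) to rule out $(m,s)\in\NU_1^{\gamma-}$, since otherwise $(m+\half,s)$ would be proper. The claim $(m+1,t)\in\NU_1^{\gamma-}\setminus\NU_1^{\tht+}$ is exactly Lemma \ref{-shockabove}.

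For parts \eqref{thm:shocks-IG.int.b} and \eqref{thm:shocks-IG.int.c}, the statement $(m,r)\notin\NU_1^{\tht+}$ for $r\in(s,t)$ and the corresponding instability-edge claim in (iii) are direct applications of Lemmas \ref{+shckiffimprop} and \ref{properif-shck}, respectively. The ``if and only if'' characterization of $(m,r)\in\NU_1^{\gamma-}$ via right-isolated instability edges splits into two directions: the implication from right-isolation to $(m,r)\in\NU_1^{\gamma-}$ is Lemma \ref{lem:right isolated=>-} (with the $m$ index shifted down by one). The reverse direction uses Lemma \ref{geodefinstabedge1} together with the fact that $(m,r)\in\NU_1^{\gamma-}$ forces $\geo\from{(m,r)}\dir{R}{\gamma}{-}$ to proceed rightwards, so by monotonicity and continuity \eqref{geocont}, $\geo\from{(m,r')}\dir{L}{\gamma}{-}$ also goes rightwards for all $r'>r$ sufficiently close to $r$, which by Lemma \ref{geodefinstabedge1} means $[(m-\half,r'),(m+\half,r')]$ is not a $[\gamma,\tht]$-instability edge. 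Part \eqref{thm:shocks-IG.int.d} is precisely the unnamed lemma appearing immediately after Lemma \ref{-shockafter+shock}, whose proof notes that Lemma \ref{instabrightendpt} forces all four geodesics $\geo\from{(m,t)}\dir{S}{\kappa}{\sig}$ to proceed rightwards and then monotonicity propagates this to all $(m,r)$ with $r\ge t$ close enough to $t$.

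Part \eqref{thm:shocks-IG.int.e} is just a restatement of Lemma \ref{+-shckiffnotinstab} applied at the point $(m,r)$. For part \eqref{thm:shocks-IG.int.f}, the sequence $r_n''\searrow s$ with $(m,r_n'')\in\NU_1^{\gamma-}\setminus\NU_1^{\tht+}$ is furnished directly by Lemma \ref{-shockafter+shock}, since $(m+\half,s)$ is the relevant improper endpoint. To produce $r_n'\nearrow s$ with $(m,r_n')\in\NU_1^{\gamma-}\cap\NU_1^{\tht+}$, I apply Theorem \ref{thm:shocks}\eqref{thm:shocks:c} to the point $(m,s)\in\NU_1^{\tht+}$ from part \eqref{thm:shocks-IG.int.a} to obtain a sequence $r_n'\nearrow s$ with $(m,r_n')\in\NU_1^{\tht+}$. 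By Lemma \ref{improp=endpts}, the closures of distinct maximal proper instability intervals are disjoint, so for $n$ large enough $(m+\half,r_n')\notin\IG{[\gamma,\tht]}$; then Lemma \ref{+-shckiffnotinstab} upgrades $(m,r_n')\in\NU_1^{\tht+}$ to $(m,r_n')\in\NU_1^{\gamma-}\cap\NU_1^{\tht+}$.

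There is no real obstacle here since all the geometric work has been carried out in Lemmas \ref{+shckiffimprop}--\ref{cor:no isolated}; the only subtle point is the reverse direction of the equivalence in \eqref{thm:shocks-IG.int.b}, where one must argue that $(m,r)\in\NU_1^{\gamma-}$ not only produces an instability edge below it but also that this edge is right-isolated, and this requires the continuity argument sketched above rather than a direct appeal to a single previous lemma.
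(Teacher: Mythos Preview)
Your proposal is correct and follows essentially the same route as the paper, which presents Theorem \ref{thm:shocks-IG.int} explicitly as a summary of the preceding lemmas without a separate proof. The one place where you do a bit more than cite is the reverse direction of \eqref{thm:shocks-IG.int.b}; the paper in fact records this as the unnamed lemma immediately following Lemma \ref{lem:right isolated=>-} (with the same continuity-of-geodesics argument you sketch), so you could simply cite that as well.
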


\begin{figure}[ht!]
\begin{center}

\begin{tikzpicture}[>=latex, scale=0.6]

\draw(-1,0)--(10,0);
\draw[dashed](-1,1)--(10,1);
\draw(-1,2)--(10,2);
\draw[dashed](-1,3)--(10,3);
\draw(-1,4)--(10,4);

\draw(10,0)node[right]{$m$};
\draw(10,1)node[right]{$m+\half$};
\draw(10,2)node[right]{$m+1$};
\draw(10,3)node[right]{$m+\frac32$};
\draw(10,4)node[right]{$m+2$};

\draw[line width=2pt,color=Inst](1,-1)--(1,1)--(5,1)--(5,3)--(9,3);
\draw[line width=2pt,color=Inst](5,1)--(8,1)--(8,3);
\draw[line width=2pt,color=Inst](1.5,-1)--(1.5,1);
\draw[line width=2pt,color=Inst](2.1,-1)--(2.1,1);
\draw[line width=2pt,color=Inst](2.7,-1)--(2.7,1);
\draw[line width=2pt,color=Inst](5.5,1)--(5.5,3);
\draw[line width=2pt,color=Inst](6.1,1)--(6.1,3);
\draw[line width=2pt,color=Inst](7.5,1)--(7.5,3);

\filldraw[fill=white](-0.7,0)circle(2.5mm);
\draw(-0.7,0)node{\tiny$\pm$};
\filldraw[fill=white](-0.15,0)circle(2.5mm);
\draw(-0.15,0)node{\tiny$\pm$};
\filldraw[fill=white](.5,0)circle(2.5mm);
\draw(.5,0)node{\tiny$\pm$};
\filldraw[fill=white](1,0)circle(2.5mm);
\draw(1,0)node{\tiny$+$};
\filldraw[fill=white](1.5,0)circle(2.5mm);
\draw(1.5,0)node{\tiny$-$};
\filldraw[fill=white](2.1,0)circle(2.5mm);
\draw(2.1,0)node{\tiny$-$};
\filldraw[fill=white](2.7,0)circle(2.5mm);
\draw(2.7,0)node{\tiny$-$};
\filldraw[fill=white](5,2)circle(2.5mm);
\draw(5,2)node{\tiny$+$};
\filldraw[fill=white](5.5,2)circle(2.5mm);
\draw(5.5,2)node{\tiny$-$};
\filldraw[fill=white](6.1,2)circle(2.5mm);
\draw(6.1,2)node{\tiny$-$};
\filldraw[fill=white](4.5,2)circle(2.5mm);
\draw(4.5,2)node{\tiny$\pm$};
\filldraw[fill=white](3.85,2)circle(2.5mm);
\draw(3.85,2)node{\tiny$\pm$};
\filldraw[fill=white](8,2)circle(2.5mm);
\draw(8,2)node{\tiny$-$};
\filldraw[fill=white](7.5,2)circle(2.5mm);
\draw(7.5,2)node{\tiny$-$};

\end{tikzpicture}
\end{center}

\caption{\small An illustration of 
Theorem \ref{thm:shocks-IG.int}. Circles with $-$ refer to $\gamma-$ shock points, circles with $+$ refer to $\tht+$ shock points, and circles with $\pm$ refer to points that are simultaneously $\gamma-$ and $\tht+$ shock points.}
\label{fig:shock-instability}
\end{figure}

\begin{rmk}\label{more shocks 2}
Parts \eqref{thm:shocks-IG.int.a}, \eqref{thm:shocks-IG.int.b},  \eqref{thm:shocks-IG.int.e}, and \eqref{thm:shocks-IG.int.f} in the above theorem give a natural injection that shows there are infinitely many more $\gamma-$ shocks than there are $\tht+$ shocks. As mentioned in Remark \ref{rk:(gamma,tht)}, repeating our results for the instability graph $\IG{(\gamma,\tht)}$ gives a similar statement about $\gamma+$ shocks and $\tht-$ shocks, for all $\tht>\gamma>0$. Putting these results together shows that $\P$-almost surely, for any $\sig,\sig'\in\{-,+\}$ and any $\tht>\gamma>0$, there are many more $\gamma\sig$ shocks than there are $\tht\sig'$ shocks. This quantifies the following statement: The number of shock points in $\NU_1^{\tht\sig}$ increases as $\tht\sig$ decreases.
\end{rmk}

\begin{rmk}[Reconstructing $\IG{[\gamma,\tht]}$ from shocks]\label{shock-reconstruct}
	Knowledge of $\NU_1^{\tht+}$ and $\NU_1^{\gamma-}$ can allow one to sketch a skeleton of the instability graph $\IG{[\gamma,\tht]}$ through the following algorithm:	
	\begin{enumerate}
		\item\label{alg1} Mark all solely $\tht+$ shocks a half level up. I.e.\ if $(m,s)\in \NU_1^{\tht+}\setminus\NU_1^{\gamma-}$, mark the point $(m+\half,s)$. These are the left endpoints of maximal instability intervals.
		\item For each point that you marked in \eqref{alg1}, say for example $(m+\half,s)$, find the rightmost solely $\gamma-$ shock up a half level from the marked point (e.g.\ $(m+1,t)\in \NU_1^{\gamma-}\setminus\NU_1^{\tht+}$ for $s<t$) and left of the next marked point on level $(m+\half)$. Half a level down from this point, e.g.\ $(m+\half,t)$ is the right endpoint of the instability interval that began at your marked point $( m+\half,s )$.
		\item Find the remaining solely $\gamma-$ shock points and draw vertical instability edges through these points. E.g.\ if $(m,r)\in\NU_1^{\gamma-}\setminus\NU_1^{\tht+}$ is such a solely $\gamma-$ shock point, then draw an instability edge between $(m+\half,r)$ and $(m-\half,r)$. 
	\end{enumerate}
 What is missing, however, is a Hausdorff dimension 1/2 set of vertical instability edges (Theorem \ref{inst:summary.int}\eqref{inst:summary.int.Hausdorff}) that do not go through any shock points. It is natural that these intervals cannot be reconstructed from shock points because there are only countably many shock points (Theorem \ref{thm:shocks}\eqref{thm:shocks:a}). We will see in the next section that these edges can be recovered from the knowledge of competition interface starting points.
\end{rmk}

Recall the ancestry relationship between shock points, given in Definition \ref{def:shock-tree}. We show that this induces a tree structure.

In our next lemma we show that even though shock points are left-dense (Theorem \ref{thm:shocks}\eqref{thm:shocks:c}),   
they do not have descendants on the same level.

\begin{lem}\label{nodescendants}
    The following holds for all $\w\in\Omref{Omega8}$, $\tht>0$, and $\sigg\in\{-,+\}$. Take $(m,s)\in\NU_1^{\tht\sig}$. Then for any $r<s$ with $(m,r)\in\NU_1^{\tht\sig}$, $(m,r)$ is not a descendant of $(m,s)$.
\end{lem}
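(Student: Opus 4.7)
The plan is to argue by contradiction. Suppose $(m,r)$ is a SW descendant of $(m,s)$, so by Definition \ref{def:shock-tree} the point $(m,s)$ lies weakly between $\geo\from{(m,r)}\dir{L}{\tht}{\sig}$ and $\geo\from{(m,r)}\dir{R}{\tht}{\sig}$. Since $(m,r)\in\NU_1^{\tht\sig}$, the leftmost geodesic jumps up immediately from $(m,r)$ to $(m+1,r)$, so the only intersection of $\geo\from{(m,r)}\dir{L}{\tht}{\sig}$ with level $m$ is the point $(m,r)$ itself. Because $s>r$, this forces $(m,s)$ to lie on the horizontal segment of $\geo\from{(m,r)}\dir{R}{\tht}{\sig}$ at level $m$; in particular, writing $t^\ast$ for the jump time at level $m$ of $\geo\from{(m,r)}\dir{R}{\tht}{\sig}$, we have $t^\ast\ge s$.

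Next, I would pin $t^\ast$ down to equal $s$ using monotonicity. The first inequality of \eqref{geomono}, applied with the starting times $r<s$, gives $\geo\from{(m,r)}\dir{R}{\tht}{\sig}\preceq\geo\from{(m,s)}\dir{L}{\tht}{\sig}$. Comparing jump times at level $m$ and using that $(m,s)\in\NU_1^{\tht\sig}$ forces $\geo\from{(m,s)}\dir{L}{\tht}{\sig}$ to exit level $m$ at time $s$, we conclude $t^\ast\le s$. Hence $t^\ast=s$, which means the vertical edge from $(m,s)$ to $(m+1,s)$ belongs to $\geo\from{(m,r)}\dir{R}{\tht}{\sig}$.

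To close the argument, invoke coalescence (Theorem \ref{thm:geo}\eqref{thm:geo:coal}) to produce a point $\bfy$ strictly above level $m$ lying on both $\geo\from{(m,r)}\dir{R}{\tht}{\sig}$ and $\geo\from{(m,s)}\dir{R}{\tht}{\sig}$. By Theorem \ref{thm:geo}\eqref{thm:geo:LR}, the segment between $(m,s)$ and $\bfy$ along each of these two paths coincides with the unique rightmost point-to-point geodesic from $(m,s)$ to $\bfy$, so the two segments must agree. However, the segment coming from $\geo\from{(m,r)}\dir{R}{\tht}{\sig}$ begins with the vertical step to $(m+1,s)$ by the previous paragraph, whereas the segment coming from $\geo\from{(m,s)}\dir{R}{\tht}{\sig}$ begins with a horizontal step along level $m$, since $(m,s)\in\NU_1^{\tht\sig}$ forces the rightmost geodesic out of $(m,s)$ to proceed rightward. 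This contradiction completes the argument.

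The main subtlety is the monotonicity comparison in the second step: the natural R-to-R comparison $\geo\from{(m,r)}\dir{R}{\tht}{\sig}\preceq\geo\from{(m,s)}\dir{R}{\tht}{\sig}$ is too weak to bound $t^\ast$ from above, so one has to invoke the cross R-to-L inequality with $\geo\from{(m,s)}\dir{L}{\tht}{\sig}$ and exploit that this L-geodesic jumps at time $s$ precisely because $(m,s)\in\NU_1^{\tht\sig}$.
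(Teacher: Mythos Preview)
Your proof is correct and takes a genuinely different route from the paper's.

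The paper argues via the continuity property \eqref{geocont}: as $r\nearrow s$, the geodesics $\geo\from{(m,r)}\dir{R}{\tht}{\sig}$ converge to $\geo\from{(m,s)}\dir{L}{\tht}{\sig}$, which jumps at time $s$; this produces a sequence $r_k\nearrow s$ along which $\geo\from{(m,r_k)}\dir{R}{\tht}{\sig}$ jumps before $s$, and then the monotonicity $\geo\from{(m,r)}\dir{R}{\tht}{\sig}\preceq\geo\from{(m,r_k)}\dir{L}{\tht}{\sig}$ (for $r<r_k$) forces the jump time of $\geo\from{(m,r)}\dir{R}{\tht}{\sig}$ at level $m$ to be strictly less than $s$, so $(m,s)$ is never in the closed region between the $L$ and $R$ geodesics from $(m,r)$.

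Your argument bypasses the limiting step entirely: you use the descendant hypothesis to get $t^\ast\ge s$ and the cross $R$-to-$L$ monotonicity $\geo\from{(m,r)}\dir{R}{\tht}{\sig}\preceq\geo\from{(m,s)}\dir{L}{\tht}{\sig}$ to get $t^\ast\le s$, pinning $t^\ast=s$; then coalescence together with Theorem~\ref{thm:geo}\eqref{thm:geo:LR} yields two distinct paths from $(m,s)$ to a common point $\bfy$ that are both the \emph{unique} rightmost point-to-point geodesic, a contradiction. This is arguably cleaner, and your remark about needing the $R$-to-$L$ comparison (rather than the looser $R$-to-$R$ one) is exactly the right observation. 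The price is that you invoke both coalescence and the rightmost property, whereas the paper only needs continuity plus monotonicity.
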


\begin{proof}
Since $\geo\from{(m,s)}\dir{L}{\tht}{\sig}$ goes vertically from $(m,s)$ and $\geo\from{(m,r)}\dir{R}{\tht}{\sig}$ converges to $\geo\from{(m,s)}\dir{L}{\tht}{\sig}$ as $r\nearrow s$, there exists a sequence $r_k$ that increases to $s$ as $k\to\infty$ with $\geo\from{(m,r_k)}\dir{R}{\tht}{\sig}$ going through $(m+1,r_k)$, for each $k$. Now take  any $r<s$ and choose $k$ large enough so that $r<r_k<s$. Then $\geo\from{(m,r)}\dir{R}{\tht}{\sig}$ goes up at or before $(m,r_k)$ and hence strictly before reaching $(m,s)$. Consequently, $(m,r)$ is not a descendant of $(m,s)$.
\end{proof}

Next, we show that the SW descendants of a shock point are totally ordered by the ancestry relation.

\begin{lem}\label{totalorder}
    The following holds for all $\w\in\Omref{Omega8}$, $\tht>0$, and $\sigg\in\{-,+\}$.  Take $\bfx, \bfy,\bfz \in\NU_1^{\tht\sig}$. Assume that $\bfx$ and $\bfy$ are both SW descendants of $\bfz$. Then either $\bfx$ is a SW descendant of $\bfy$ or $\bfy$ is a SW descendant of $\bfx$. 
\end{lem}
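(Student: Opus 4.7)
The plan is to argue by contradiction, so suppose that neither $\bfx$ is an SW descendant of $\bfy$ nor $\bfy$ is an SW descendant of $\bfx$; equivalently, $\bfy\notin R_{\bfx}$ and $\bfx\notin R_{\bfy}$, where $R_{\bfp}$ denotes the closed region weakly bounded by $\geo\from{\bfp}\dir{L}{\tht}{\sig}$ and $\geo\from{\bfp}\dir{R}{\tht}{\sig}$. I would derive a contradiction with the hypothesis $\bfz\in R_{\bfx}\cap R_{\bfy}$.

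Write $\bfx=(m_1,s_1)$ and $\bfy=(m_2,s_2)$, and, after swapping labels if necessary, arrange $m_1\le m_2$ (with $s_1\le s_2$ if $m_1=m_2$). At level $m_2$ the horizontal slice of $R_{\bfx}$ is an interval $[\alpha,\beta]$, with $\alpha$ the entry time of $\geo\from{\bfx}\dir{L}{\tht}{\sig}$ into level $m_2$ and $\beta$ the exit time of $\geo\from{\bfx}\dir{R}{\tht}{\sig}$ from level $m_2$. The assumption $\bfy\notin R_{\bfx}$ gives either $s_2<\alpha$ or $s_2>\beta$; I would handle $s_2<\alpha$, the other case being symmetric upon interchanging the roles of $L$ and $R$. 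Applying the same-level monotonicity in Theorem~\ref{thm:geo}\eqref{thm:geo:mono} to $\geo\from{(m_2,s_2)}\dir{R}{\tht}{\sig}$ and $\geo\from{(m_2,\alpha)}\dir{L}{\tht}{\sig}$, together with Theorem~\ref{thm:geo}\eqref{thm:geo:LR} and the uniqueness of the leftmost semi-infinite $\tht\sig$-geodesic from $(m_2,\alpha)$, I obtain $\geo\from{\bfy}\dir{R}{\tht}{\sig}\preceq\geo\from{\bfx}\dir{L}{\tht}{\sig}$ on every level at or above $m_2$.

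Now write $\bfz=(m_3,s_3)$; since $\bfz\in R_{\bfx}\cap R_{\bfy}$, necessarily $m_3\ge m_2$. Being in $R_{\bfx}$ places $\bfz$ weakly to the right of $\geo\from{\bfx}\dir{L}{\tht}{\sig}$ at level $m_3$, while being in $R_{\bfy}$ places $\bfz$ weakly to the left of $\geo\from{\bfy}\dir{R}{\tht}{\sig}$ at level $m_3$. Combined with $\geo\from{\bfy}\dir{R}{\tht}{\sig}\preceq\geo\from{\bfx}\dir{L}{\tht}{\sig}$, these three constraints squeeze $\bfz$ onto both of these geodesics simultaneously. By the same restriction property invoked above, the tail of $\geo\from{\bfx}\dir{L}{\tht}{\sig}$ starting at $\bfz$ is exactly $\geo\from{\bfz}\dir{L}{\tht}{\sig}$, and the tail of $\geo\from{\bfy}\dir{R}{\tht}{\sig}$ starting at $\bfz$ is exactly $\geo\from{\bfz}\dir{R}{\tht}{\sig}$. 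Restricting the inequality $\geo\from{\bfy}\dir{R}{\tht}{\sig}\preceq\geo\from{\bfx}\dir{L}{\tht}{\sig}$ to levels past $\bfz$ then yields $\geo\from{\bfz}\dir{R}{\tht}{\sig}\preceq\geo\from{\bfz}\dir{L}{\tht}{\sig}$, which together with the universal $\geo\from{\bfz}\dir{L}{\tht}{\sig}\preceq\geo\from{\bfz}\dir{R}{\tht}{\sig}$ forces the two geodesics from $\bfz$ to coincide, contradicting $\bfz\in\NU_1^{\tht\sig}$.

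The main obstacle I anticipate is rigorously justifying the semi-infinite restriction property used twice above: that the tail of $\geo\from{\bfp}\dir{L}{\tht}{\sig}$ from any point $\bfq$ on it equals $\geo\from{\bfq}\dir{L}{\tht}{\sig}$, and analogously for $R$. This should follow from the finite statement in Theorem~\ref{thm:geo}\eqref{thm:geo:LR}, applied to arbitrarily long finite subpaths of $\geo\from{\bfp}\dir{L}{\tht}{\sig}$ beginning at $\bfq$, combined with the uniqueness of the Busemann-defined leftmost (resp.\ rightmost) semi-infinite $\tht\sig$-geodesic rooted at $\bfq$.
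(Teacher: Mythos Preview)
Your argument is correct. Both your proof and the paper's reach the same key configuration—$\bfz$ lying simultaneously on $\geo\from{\bfx}\dir{L}{\tht}{\sig}$ and $\geo\from{\bfy}\dir{R}{\tht}{\sig}$ (or the $L/R$-swapped pair)—but the two finish differently. The paper argues topologically: it shows the open cones $C_{\bfx}$ and $C_{\bfy}$ are disjoint via coalescence, forces $\bfz$ onto the common boundary, and then runs a local corner analysis using Proposition~\ref{nodouble} (at $\bfz$, an $L$-geodesic must turn upward while an $R$-geodesic cannot). You instead use monotonicity plus the semi-infinite restriction property to identify the two tails through $\bfz$ as $\geo\from{\bfz}\dir{L}{\tht}{\sig}$ and $\geo\from{\bfz}\dir{R}{\tht}{\sig}$ in the wrong order, contradicting $\bfz\in\NU_1^{\tht\sig}$ directly. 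Your route is arguably cleaner once the restriction property is in hand; the paper's route avoids that property but pays with the corner case analysis.

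Regarding your flagged obstacle: the restriction property follows immediately from the \emph{definition} of $\geo\from{\bfp}\dir{L}{\tht}{\sig}$ and $\geo\from{\bfp}\dir{R}{\tht}{\sig}$ as the paths choosing, at each level, the smallest (resp.\ largest) maximizer of $B_n(s)-\buse{\sig}{n+1}{0}{n+1}{s}$ over $[s_{n-1},\infty)$. If $\bfq=(k,t)$ lies on $\geo\from{\bfp}\dir{L}{\tht}{\sig}$ with $s_{k-1}^L\le t\le s_k^L$, then the supremum over $[t,\infty)$ equals the supremum over $[s_{k-1}^L,\infty)$ (both attained at $s_k^L$), and the smallest maximizer over $[t,\infty)$ is still $s_k^L$; inductively the tail is exactly $\geo\from{\bfq}\dir{L}{\tht}{\sig}$. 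The same works for $R$. This is more direct than going through Theorem~\ref{thm:geo}\eqref{thm:geo:LR}. One small bookkeeping note: when $m_1=m_2$ your tie-break $s_1\le s_2$ forces the $s_2>\beta$ case, and there you should check (as you implicitly do) that $m_3>m_2$; this follows since $R_{\bfx}\cap R_{\bfy}$ is empty at level $m_2$.
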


\begin{proof}
    For $(m,s)\in\Z\times\R$ let $C_{(m,s)}$ denote the set of sites $(n,t)\in\Z\times\R$ that are strictly between $\geo\from{(m,s)}\dir{\gamma}{L}{\sig}$ and $\geo\from{(m,s)}\dir{\gamma}{R}{\sig}$. Let $\overline C_{(m,s)}$ denote the closure of $C_{(m,s)}$, i.e.\ the set of sites that are weakly between the two geodesics.
    
    Suppose $\bfy$ is not a NE ancestor of $\bfx$ and $\bfx$ is not a NE ancestor of $\bfy$. Then $\bfy$ is strictly outside $\overline C_{\bfx}$ and $\bfx$ is strictly outside $\overline C_{\bfy}$. Since for any $S,S'\in\{L,R\}$, $\geo\from{\bfy}\dir{\gamma}{S}{\sig}$ coalesces with $\geo\from{\bfx}\dir{\gamma}{S'}{\sig}$ as soon they meet, the geodesic $\geo\from{\bfy}\dir{\gamma}{S}{\sig}$ cannot penetrate to region $C_{\bfx}$ and the geodesic $\geo\from{\bfx}\dir{\gamma}{S'}{\sig}$ cannot penetrate to region $C_{\bfy}$. This implies that $C_{\bfx}$ and $C_{\bfy}$ are disjoint.

    Since $\bfz$ is a NE ancestor of both $\bfx$ and $\bfy$, it must be in both $\overline C_{\bfx}$ and $\overline C_{\bfy}$. Therefore, it belongs to a common part of the boundary of the two regions.
    That is, either $\bfz\in\geo\from{\bfx}\dir{\gamma}{L}{\sig}\cap\geo\from{\bfy}\dir{\gamma}{R}{\sig}$ or $\bfz\in\geo\from{\bfx}\dir{\gamma}{R}{\sig}\cap\geo\from{\bfy}\dir{\gamma}{L}{\sig}$.

    Since $\geo\from{\bfz}\dir{\gamma}{L}{\sig}$ proceeds vertically from $\bfz$ and since Proposition \ref{nodouble} prevents geodesics from having two consecutive vertical steps, the only place where $\bfz$ can be on $\geo\from{\bfx}\dir{\gamma}{L}{\sig}$ or $\geo\from{\bfy}\dir{\gamma}{L}{\sig}$ is at a corner where the geodesic comes in horizontally and then moves immediately up. 
    But since $\geo\from{\bfz}\dir{\gamma}{R}{\sig}$ proceeds horizontally from $\bfz$, $\bfz$ cannot be at a corner where  $\geo\from{\bfx}\dir{\gamma}{R}{\sig}$ or $\geo\from{\bfy}\dir{\gamma}{R}{\sig}$ arrives horizontally and then moves immediately up. Thus, $\bfz$ cannot belong to $\geo\from{\bfx}\dir{\gamma}{L}{\sig}\cap\geo\from{\bfy}\dir{\gamma}{R}{\sig}$ nor to $\geo\from{\bfx}\dir{\gamma}{R}{\sig}\cap\geo\from{\bfy}\dir{\gamma}{L}{\sig}$, arriving at a contradiction.
\end{proof}

Working towards establishing the tree structure of shocks, we demonstrate that each shock point has a unique SW descendant on the next level down. This is the shock point's ``immediate'' descendant, which we call the shock point's \emph{child}.

\begin{lem} \label{shock:descendant}
	The following holds for all $\w\in\Omref{Omega8}$, $\tht>0$, and $\sigg\in\{-,+\}$.  For each $(m,s) \in \NU_1^{\tht\sig}$ there exists a unique $t<s$ such that $(m-1,t)\in \NU_1^{\tht\sig}$ and $(m-1,t)$ is a descendant of $(m,s)$. Furthermore, if $s_1<s_2$ are such that $(m,s_1),(m,s_2)\in\NU_1^{\tht\sig}$ and $(m-1,t_i)$ is the unique SW descendant of $(m,s_i)$ for each $i\in\{1,2\}$, then $t_1\le t_2$. 
\end{lem}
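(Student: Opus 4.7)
The plan is to establish existence by explicitly constructing the child $t$ as an infimum on level $m-1$, then derive uniqueness (including the strict inequality $t<s$) from Lemmas \ref{totalorder} and \ref{nodescendants}, and finally deduce the monotonicity in $s$ from the infimum characterization. Define $v(r)$ to be the jump time from level $m$ to level $m+1$ of the semi-infinite geodesic $\geo\from{(m-1,r)}\dir{R}{\tht}{\sig}$. By Theorem \ref{thm:geo}\eqref{thm:geo:mono}--\eqref{thm:geo:cont}, $v$ is nondecreasing and right-continuous, with left-limit $v_L(r)$ equal to the analogous exit time for $\geo\from{(m-1,r)}\dir{L}{\tht}{\sig}$.

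For existence, two preparatory claims are key. First, $v(r)\ne s$ for every $r<s$: otherwise $\geo\from{(m-1,r)}\dir{R}{\tht}{\sig}$ would include the vertical edge $(m,s)\to(m+1,s)$, and then by the rightmost property (Theorem \ref{thm:geo}\eqref{thm:geo:LR}) together with the almost-sure uniqueness of point-to-point geodesics, the tail at $(m,s)$ would have to coincide with $\geo\from{(m,s)}\dir{R}{\tht}{\sig}$, which proceeds rightward because $(m,s)\in\NU_1^{\tht\sig}$---contradiction. Second, $v_L(s)>s$: if $\geo\from{(m-1,s)}\dir{L}{\tht}{\sig}$ went vertically to $(m,s)$, then by the leftmost property its continuation would go vertically to $(m+1,s)$, yielding two consecutive vertical steps, which Proposition \ref{nodouble} forbids. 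Set $U=\{r<s:v(r)>s\}$. The second claim gives $(s-\varepsilon,s)\subset U$ for some $\varepsilon>0$, so $U\ne\varnothing$, and the asymptotic direction $\tht$ (Theorem \ref{thm:geo}\eqref{thm:geo:dir}) shows $U$ is bounded below. Let $t=\inf U$. Right-continuity of $v$ combined with the first claim forces $v(t)>s$, while the definition of $t$ gives $v_L(t)\le s$. Thus $v_L(t)<v_R(t)$, so $\geo\from{(m-1,t)}\dir{L}{\tht}{\sig}\ne\geo\from{(m-1,t)}\dir{R}{\tht}{\sig}$, and Lemma \ref{NU0=NU1} yields $(m-1,t)\in\NU_1^{\tht\sig}$. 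At level $m$, the wedge of these two geodesics covers the interval $[t,v(t)]$, which contains $s$, so $(m,s)$ is weakly between them and $(m-1,t)$ is a SW descendant of $(m,s)$.

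For uniqueness, if $(m-1,t_1)$ and $(m-1,t_2)$ are both SW descendants of $(m,s)$, Lemma \ref{totalorder} gives without loss of generality that $(m-1,t_1)$ is a SW descendant of $(m-1,t_2)$; equivalently, $(m-1,t_2)$ lies in the wedge of the geodesics from $(m-1,t_1)$. Restricted to level $m-1$, this wedge is the rightward segment of $\geo\from{(m-1,t_1)}\dir{R}{\tht}{\sig}$ starting at $(m-1,t_1)$, forcing $t_2\ge t_1$. If $t_2>t_1$, Lemma \ref{nodescendants} applied at level $m-1$ forbids $(m-1,t_1)$ from being a SW descendant of $(m-1,t_2)$, a contradiction. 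Hence $t_1=t_2$. The strict inequality $t<s$ follows from the wedge constraint $t\le s$ together with $(m-1,s)\notin\NU_1^{\tht\sig}$, which is again the no-double-step consequence used above. For the monotonicity in $s$, assume $s_1<s_2$ but $t_1>t_2$; since $t_1<s_1$, one has $t_2<t_1<s_1$, so $t_2<s_1$, and $v(t_2)>s_2>s_1$ by $t_2\in U_2$, hence $t_2\in U_1$, contradicting $t_1=\inf U_1$.

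The main obstacle is ensuring that the infimum point $t$ corresponds to a genuine shock: this depends on ruling out $v(t)=s$ via the rightmost property, so that the monotonicity-driven jump of $v$ at $t$ is nontrivial and yields the left/right split required by $\NU_1^{\tht\sig}$-membership.
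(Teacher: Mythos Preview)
Your approach is essentially the paper's: construct the child as an infimum, verify it is a shock and a descendant, and then get uniqueness and monotonicity from Lemmas~\ref{totalorder} and~\ref{nodescendants}. The specific infimum you use (based on the level-$m$ exit time $v(r)$ of the $R$-geodesic) is a clean variant of the paper's (based on the $L$-geodesic passing through a fixed point $(m-1,s')$), and your monotonicity argument via $U_1\supset U_2$ is arguably slicker than the paper's.

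There is, however, one genuine gap. You claim that directedness (Theorem~\ref{thm:geo}\eqref{thm:geo:dir}) shows $U$ is bounded below, but directedness only controls the jump times $s_n$ as $n\to\infty$; it says nothing about the level-$m$ exit time $v(r)$ of a geodesic started far to the left at level $m-1$. The correct tool is Theorem~\ref{nobiinf}: if $v(r)>s$ for all $r<s$, then by monotonicity of $v$ the exit times $v(r)$ decrease to some $v_\infty\ge s$ as $r\to-\infty$, and the corresponding $R$-geodesics converge (via the limits in Theorem~\ref{thm:geo}\eqref{thm:geo:cont}) to a path that is a semi-infinite geodesic from each $(m-1,r)$, hence a non-degenerate bi-infinite geodesic. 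This is exactly how the paper argues $t>-\infty$.

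A smaller point: in your first preparatory claim, invoking ``almost-sure uniqueness of point-to-point geodesics'' is misleading, since that uniqueness is only for deterministic endpoints and $(m,s)$ is random. What actually works is the rightmost property (Theorem~\ref{thm:geo}\eqref{thm:geo:LR}) together with coalescence (Theorem~\ref{thm:geo}\eqref{thm:geo:coal}): both $\geo\from{(m-1,r)}\dir{R}{\tht}{\sig}$ and $\geo\from{(m,s)}\dir{R}{\tht}{\sig}$ pass through $(m,s)$ and through their coalescence point, and between these two points each is the rightmost point-to-point geodesic, hence they agree. You do cite the rightmost property, so this is only a matter of phrasing.
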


\begin{proof}
	Take  $(m,s) \in \NU_1^{\tht\sig}$. Then $\geo\from{(m,s)}\dir{L}{\tht}{\sig}$ must immediately proceed upwards to $(m+1,s)$. By Proposition \ref{nodouble}, $\geo\from{(m-1,s)}\dir{L}{\tht}{\sig}$ must initially proceed laterally on level $m-1$, going through $(m-1,s')$ for some $s'>s$. Define 
	\begin{align}\label{def:des} 
	t = \inf\Bigl\{r\le s:\geo\from{(m-1,r)}\dir{L}{\tht}{\sig}\text{ goes through }(m-1,s')  \Bigr\}.
	\end{align}
	The above set is not empty as it contains $s$. Also, $t>-\infty$ since otherwise (with Lemma \ref{geo-split}) we would have a non-degenerate bi-infinite locally rightmost (and locally leftmost) geodesic, which is prohibited by Theorem \ref{nobiinf}.
    
    Note that for any $r\in(t,s')$,  $\geo\from{(m-1,r)}\dir{L}{\tht}{\sig}$ goes through $(m-1,s')$. In particular, $(m-1,r)\not\in\NU_1^{\tht\sig}$ for all such $r$. Since $\geo\from{(m-1,r)}\dir{L}{\tht}{\sig}$ converge to $\geo\from{(m-1,t)}\dir{R}{\tht}{\sig}$ as $r\searrow t$, we get that $\geo\from{(m-1,t)}\dir{R}{\tht}{\sig}$ must go to the right out of $(m-1,t)$.  
	
    If $\geo\from{(m-1,t)}\dir{L}{\tht}{\sig}$ were to proceed horizontally, then the fact that $\geo\from{(m-1,t)}\dir{L}{\tht}{\sig}$ is the limit of $\geo\from{(m-1,r)}\dir{L}{\tht}{\sig}$ as $r\nearrow t$ would imply that for $r<t$ close enough to $t$, $\geo\from{(m-1,r)}\dir{L}{\tht}{\sig}$ must go rightwards and merge with $\geo\from{(m-1,t)}\dir{L}{\tht}{\sig}$, which by proceeding to the right will have to continue going to the right until it reaches $(m,s')$. But then the definition of $t$ as an infimum would imply that $t\le r$, which would be a contradiction.  Therefore, $\geo\from{(m-1,t)}\dir{L}{\tht}{\sig}$ must proceed upwards immediately to level $m$. In particular, we have that $t<s$ and that $(m-1,t)\in\NU_1^{\tht\sig}$.

    That $(m-1,t)$ is a descendant of $(m,s)$ follows from the fact that $\geo\from{(m-1,t)}\dir{L}{\tht}{\sig}$ has to remain to the left of $\geo\from{(m,s)}\dir{L}{\tht}{\sig}$ while $\geo\from{(m-1,t)}\dir{R}{\tht}{\sig}$ proceeds to $(m-1,s')$ and hence has to remain below $\geo\from{(m,s)}\dir{L}{\tht}{\sig}$. 

 We have shown so far that $(m-1,t)$ is the rightmost descendant of $(m,s)$ on level $m-1$. By Lemma \ref{totalorder} any other descendant of $(m,s)$ on level $m-1$ must be a descendant of $(m-1,t)$. However, Lemma \ref{nodescendants} implies that $(m-1,t)$ does not have any descendants on level $m-1$. Hence, $(m-1,t)$ is the only descendant of $(m,s)$ on level $m-1$.

 For the monotonicity claim, observe that if $s'_1$ and $s'_2$ denote the times defining $t_1$ and $t_2$, respectively, via \eqref{def:des}, then we can assume $s'_1<s'_2$ (since $s_1<s_2$) and then \eqref{def:des} implies that $t_1\le t_2$.
\end{proof}

\begin{lem}\label{lm:inf-ancestors}
    The following holds for all $\w\in\Omref{Omega8}$, $\tht>0$, and $\sigg\in\{-,+\}$. 
    Suppose $(m,s)\in\NU_1^{\tht\sig}$ has a NE ancestor. Then it has infinitely many NE ancestors on level $m+1$.
\end{lem}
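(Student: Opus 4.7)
The plan is to first produce a single NE ancestor of $(m,s)$ on level $m+1$ and then upgrade to infinitely many using the left-density of shocks.

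First, given a non-trivial NE ancestor $\bfy \in \NU_1^{\tht\sig}$ of $(m,s)$, a short monotonicity argument (based on \eqref{geomono}), combined with uniqueness of rightmost point-to-point geodesics (Theorem \ref{thm:geo}\eqref{thm:geo:LR}), shows that $\bfy$ must sit on some level $n \ge m+1$: any same-level ancestor would be forced to coincide with the corner of $\geo\from{(m,s)}\dir{R}{\tht}{\sig}$ where it turns upward, and one checks that this corner point cannot itself be a shock. Setting $\bfy_0 = \bfy$, I iterate Lemma \ref{shock:descendant} to define $\bfy_k$ as the unique SW shock descendant of $\bfy_{k-1}$ on the level immediately below, so that $\bfy_k$ lives on level $n-k$ and is a SW descendant of $\bfy$. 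Since $(m,s)$ is also a SW descendant of $\bfy$, Lemma \ref{totalorder} makes $(m,s)$ and $\bfy_k$ ancestry-comparable. For any $k$ with $n-k \ge m+1$, $\bfy_k$ lies strictly above level $m$, so it cannot be a SW descendant of $(m,s)$, and Lemma \ref{totalorder} therefore forces $\bfy_k$ to be an NE ancestor of $(m,s)$. Choosing $k = n-m-1$ produces an NE ancestor $(m+1, t^*)$ of $(m,s)$ on level $m+1$.

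Next, I would invoke Theorem \ref{thm:shocks}\eqref{thm:shocks:c} at $(m+1, t^*)$ to obtain shocks $(m+1, t_k) \in \NU_1^{\tht\sig}$ with $t_k \nearrow t^*$, and show each is an NE ancestor of $(m,s)$ for all $k$ sufficiently large. Parameterizing up-right paths from $(m,s)$ by their jump times, a direct computation shows that a point $(m+1, t)$ is weakly between $\geo\from{(m,s)}\dir{L}{\tht}{\sig}$ and $\geo\from{(m,s)}\dir{R}{\tht}{\sig}$ if and only if $s \le t \le s_{m+1}^R$, where $s_{m+1}^R$ is the jump time at which $\geo\from{(m,s)}\dir{R}{\tht}{\sig}$ leaves level $m+1$. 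Since $(m+1, t^*)$ is an NE ancestor, $t^* \le s_{m+1}^R$, and so $t_k < t^* \le s_{m+1}^R$ comes for free; all that remains is to ensure $t_k > s$ for all $k$ large, which follows from $t^* > s$.

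The main obstacle is proving the strict inequality $t^* > s$, equivalently, ruling out $(m+1, s) \in \NU_1^{\tht\sig}$. Suppose for contradiction that $(m+1, s) \in \NU_1^{\tht\sig}$. Since $(m,s) \in \NU_1^{\tht\sig}$, the geodesic $\geo\from{(m,s)}\dir{L}{\tht}{\sig}$ passes through $(m+1, s)$, placing $(m+1, s)$ weakly between the L- and R-geodesics of $(m,s)$; hence $(m+1, s)$ would be an NE ancestor of $(m,s)$, equivalently, $(m,s)$ would be a SW shock descendant of $(m+1, s)$ on level $m$. But Lemma \ref{shock:descendant} applied to $(m+1, s)$ asserts that its unique SW shock descendant on level $m$ is of the form $(m, t)$ with $t < s$, which contradicts $(m,s)$ playing that role. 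Therefore $(m+1, s) \notin \NU_1^{\tht\sig}$, so $t^* > s$, and consequently $(m+1, t_k)$ is an NE ancestor of $(m,s)$ for every sufficiently large $k$, producing infinitely many NE ancestors on level $m+1$.
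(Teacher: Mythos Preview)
Your proof is correct and follows essentially the same strategy as the paper: produce one NE ancestor $(m+1,t^*)$ with $t^*>s$, then invoke left-density of shocks (Theorem~\ref{thm:shocks}\eqref{thm:shocks:c}) to obtain infinitely many. You are actually more careful than the paper in one place: the paper simply asserts that having any NE ancestor yields one on level $m+1$, whereas you supply the descent via iterated applications of Lemma~\ref{shock:descendant} together with Lemma~\ref{totalorder}. One minor simplification: your exclusion of same-level NE ancestors via the ``corner'' argument is correct, but Lemma~\ref{nodescendants} gives it immediately---applied to a putative ancestor $(m,s')$ with $s'>s$, it says $(m,s)$ cannot be its descendant.
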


\begin{proof}
Note that Lemma \ref{shock:descendant} implies that $(m,s)$ cannot be the descendant of $(m+1,s)$.  (This also follows from Proposition \ref{nodouble}.) Therefore, if $(m,s)$ has a NE ancestor, it must have a NE ancestor $(m+1,t)$ with $t>s$. This means that $(m+1,t)$ is to the left of $\geo\from{(m,s)}\dir{R}{\tht}{\sig}$ and strictly to the right of $\geo\from{(m,s)}\dir{L}{\tht}{\sig}$.
By Theorem \ref{thm:shocks}\eqref{thm:shocks:c}, there exists a sequence $(m+1,t_k)\in\NU_1^{\tht\sig}$ such that $t_k$ strictly increases to $t$ as $k\to\infty$. Then for each $k$ large enough $(m+1,t_k)$ is between $\geo\from{(m,s)}\dir{R}{\tht}{\sig}$ and  $\geo\from{(m,s)}\dir{L}{\tht}{\sig}$ and, therefore, is a NE ancestor of $(m,s)$. 
\end{proof}

The above lemmas establish that shocks have a forest structure. In the next lemma, we show that the ancestry relation gives a single tree.

\begin{lem}\label{shockcommonancstr}
	The following holds for all $\w\in\Omref{Omega8}$, $\tht>0$, and $\sigg\in\{-,+\}$.  Take $\bfx, \bfy \in\NU_1^{\tht\sig}$. Then $\bfx$ and $\bfy$ have a common SW descendant $\bfz \in \NU_1^{\tht\sig}$. 
\end{lem}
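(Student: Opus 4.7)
The plan is to follow the strategy of Theorem~\ref{instabcommonancstr}: assume no common SW descendant exists, construct from the two descendant chains a non-degenerate bi-infinite geodesic, and derive a contradiction with Theorem~\ref{nobiinf}.

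First I reduce to the case $\bfx=(m,s_x)$, $\bfy=(m,s_y)$ on the same level with $s_x<s_y$. If $\bfx$ and $\bfy$ lie on different levels, I use Lemma~\ref{shock:descendant} iteratively to bring the higher one down to the level of the other; since the NE-ancestor relation is transitive (the region bounded by the extremal geodesics of any point is contained in that of any of its NE ancestors, a fact used in the proof of Lemma~\ref{totalorder}), any common SW descendant of the reduced pair serves the original pair. Now suppose for contradiction that $\bfx$ and $\bfy$ share no common SW descendant, and iterate the child operation of Lemma~\ref{shock:descendant} to produce descendant chains $\bfx_k=(m-k,t_x^{(k)})$ and $\bfy_k=(m-k,t_y^{(k)})$, $k\ge 0$. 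Lemma~\ref{totalorder} combined with Lemma~\ref{nodescendants} shows that $\bfx_k$ is the \emph{unique} SW descendant of $\bfx$ on level $m-k$, and likewise for $\bfy_k$; if $\bfx_k=\bfy_k$ for some $k$ this is a common SW descendant, so by the monotonicity clause of Lemma~\ref{shock:descendant} applied inductively, $t_x^{(k)}<t_y^{(k)}$ for every $k\ge 0$.

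Next, for each $k\ge 0$ and each level $\ell$ with $m-k\le \ell\le m$, let $\sigma_k^{(\ell)}$ denote the last position of $\geo\from{\bfx_k}\dir{R}{\tht}{\sig}$ on level $\ell$. Since $\bfx_{m-\ell}$ is a NE ancestor of $\bfx_k$, it lies weakly between the two extremal geodesics of $\bfx_k$ at level $\ell$, so $\sigma_k^{(\ell)}\ge t_x^{(m-\ell)}$. If we also had $\sigma_k^{(\ell)}\ge t_y^{(m-\ell)}$, then $\bfy_{m-\ell}$ would lie weakly between the extremals of $\bfx_k$ as well, and by the nesting of regions $\bfx_k$ would be a common SW descendant of $\bfx$ and $\bfy$, contradicting the standing assumption. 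Therefore $\sigma_k^{(\ell)}\in[t_x^{(m-\ell)},t_y^{(m-\ell)})$ for every $k$, and a diagonal extraction yields a subsequence $k_j\to\infty$ and values $\sigma^{(\ell)}\in[t_x^{(m-\ell)},t_y^{(m-\ell)}]$ along which $\sigma_{k_j}^{(\ell)}\to\sigma^{(\ell)}$ for every $\ell\le m$.

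Finally I define the bi-infinite up-right path $\Gamma$ that exits each level $\ell\le m$ at $\sigma^{(\ell)}$ and, from $(m+1,\sigma^{(m)})$ onward, follows $\geo\from{(m+1,\sigma^{(m)})}\dir{L}{\tht}{\sig}$ or $\geo\from{(m+1,\sigma^{(m)})}\dir{R}{\tht}{\sig}$ according to the side from which $\sigma_{k_j}^{(m)}$ converges, invoking Theorem~\ref{thm:geo}\eqref{thm:geo:cont}. Every finite sub-segment of $\Gamma$ is the limit of the matching sub-segment of $\geo\from{\bfx_{k_j}}\dir{R}{\tht}{\sig}$, so $\Gamma$ is a bi-infinite geodesic by continuity of the last-passage times in their endpoints. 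It visits every integer level, so it is not a horizontal line; and its ascending portion has asymptotic direction $\tht>0$, so $\sigma^{(\ell)}\to+\infty$ as $\ell\to+\infty$, ruling out that $\Gamma$ is a vertical line. This contradicts Theorem~\ref{nobiinf}.

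The main obstacle is the non-degeneracy of $\Gamma$: the uniform-in-$k$ bounds $\sigma_k^{(\ell)}\in[t_x^{(m-\ell)},t_y^{(m-\ell)})$ on every level $\ell\le m$ are what make the diagonal extraction produce a well-defined bi-infinite path, and the direction $\tht>0$ is what prevents $\Gamma$ from collapsing to a vertical line in the limit.
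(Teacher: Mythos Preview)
Your proof is correct and follows the same strategy as the paper's: reduce to a common level, build descendant chains via Lemma~\ref{shock:descendant}, sandwich the $R$-geodesics between them, and extract a bi-infinite geodesic by a diagonal argument to contradict Theorem~\ref{nobiinf}. One small slip: in your parenthetical justification of transitivity, the containment should read ``the region of any point is contained in that of any of its SW \emph{descendants}'' (equivalently, the region of a NE ancestor sits inside that of the point it is an ancestor of), which is precisely what your argument then uses.
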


\begin{proof}
     The proof is similar to that of Theorem \ref{instabcommonancstr}.
     Write $\bfx=(m,s)$ and $\bfy=(n,t)$. By Lemma \ref{shock:descendant} we can assume, without loss of generality, that $m=n$ and $s<t$. Using Lemma \ref{shock:descendant} again, we can find sequences $s_k$ and $t_k$, $k\le m$, such that $s_m=s$, $t_m=t$, and for all $k<m$, $(k,s_k)\in\NU_1^{\tht\sig}$ is the rightmost descendant of $(k+1,s_{k+1})$ on level $k$ and $(k,t_k)\in\NU_1^{\tht\sig}$ is the rightmost descendant of $(k+1,t_{k+1})$ on level $k$. Since $s<t$, we get $s_k\le t_k$ for all $k\le m$. The assumption that $\bfx$ and $\bfy$ have no common descendant implies then that in fact $s_k<t_k$ for all $k\le m$.

The geodesics $\geo\from{(k,s_k)}\dir{R}{\tht}{\sig}$ must now go between $(\ell,s_\ell)$ and $(\ell,t_\ell)$, for all $k<\ell\le m$ and, as in the proof of Theorem \ref{instabcommonancstr}, we can extract from this a bi-infinite locally rightmost (and locally leftmost) geodesic which, by Theorem \ref{nobiinf} has zero $\P$-probability of occuring.
\end{proof}

The next three lemmas demonstrate that both $\NU_1^{\tht+}\setminus\NU_1^{\gamma-}$ and $\NU_1^{\gamma-}\setminus\NU_1^{\tht+}$ also form trees. 

\begin{lem}\label{+tree}
    The following holds for all $\w\in\Omref{Omega8}$ and $\tht\ge\gamma>0$. Take and $(m,s) \in\NU_1^{\tht+}\setminus\NU_1^{\gamma-} $. Then all SW descendants of $ (m,s) $ in the $\NU_1^{\tht+}$ tree  are in $\NU_1^{\tht+}\setminus \NU_1^{\gamma-} $.
\end{lem}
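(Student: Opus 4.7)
The plan is to proceed by induction on the level, reducing the statement to the single-step claim that the unique $\NU_1^{\tht+}$-tree child $(m-1,t)$ of $(m,s)$---which exists by Lemma \ref{shock:descendant}---itself lies in $\NU_1^{\tht+}\setminus\NU_1^{\gamma-}$. Combined with Lemma \ref{totalorder}, which implies that every SW $\NU_1^{\tht+}$-descendant is a descendant of the immediate child on the level below, the full statement then follows by induction on the level difference.

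For the single-step claim, I would argue by contradiction. Suppose $(m-1,t)\in\NU_1^{\tht+}\cap\NU_1^{\gamma-}$. By Lemma \ref{+-shckiffnotinstab} this is equivalent to $(m-\half,t)\notin\IG{[\gamma,\tht]}$, and Theorem \ref{geo:instpt} (applied at level $m-1$) then furnishes a point $\bfz\in\geo\from{(m-1,t)}\dir{R}{\tht+}{}\cap\geo\from{(m,t)}\dir{L}{\gamma-}{}$. In parallel, since $(m,s)\in\NU_1^{\tht+}\setminus\NU_1^{\gamma-}$, Lemmas \ref{+shckiffimprop} and \ref{+-shckiffnotinstab} give that $(m+\half,s)$ is the left endpoint of a maximal $[\gamma,\tht]$-instability interval. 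Applying Lemma \ref{biinf:geo} at $(m+\half,s)$, and using that a left endpoint admits no horizontal extension to the left, the down-left path in $\IG{[\gamma,\tht]}$ must begin with a downward step. This forces $[(m-\half,s),(m+\half,s)]$ to be a $[\gamma,\tht]$-instability edge, so Lemma \ref{geodefinstabedge1} yields that $\geo\from{(m,s)}\dir{R}{\tht+}{}$ and $\geo\from{(m,s)}\dir{L}{\gamma-}{}$ split immediately at $(m,s)$ and never meet again.

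To close the argument, I would propagate the intersection $\bfz$ forward along the coalescing families. From the construction of the $\tht+$-tree child in Lemma \ref{shock:descendant}, $\geo\from{(m-1,t)}\dir{R}{\tht+}{}$ passes through a point $(m,s')$ with $s'>s$, and Theorem \ref{thm:geo}\eqref{thm:geo:LR} identifies its $(m,s')$-tail with the rightmost $\tht+$ geodesic from $(m,s')$; by coalescence (Theorem \ref{thm:geo}\eqref{thm:geo:coal}) and starting-point monotonicity (Theorem \ref{thm:geo}\eqref{thm:geo:mono}), it eventually merges with $\geo\from{(m,s)}\dir{R}{\tht+}{}$ past a point $\bfw_1$. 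A symmetric argument on the $\gamma-$ side shows that $\geo\from{(m,t)}\dir{L}{\gamma-}{}$ coalesces with $\geo\from{(m,s)}\dir{L}{\gamma-}{}$ past some point $\bfw_2$. Tracking $\bfz$ through both coalescences should then produce a point strictly beyond $(m,s)$ lying on both $\geo\from{(m,s)}\dir{R}{\tht+}{}$ and $\geo\from{(m,s)}\dir{L}{\gamma-}{}$, contradicting the never-meet-again statement of the previous paragraph.

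The main obstacle will be this last propagation step. Past $\bfz$, the two geodesics $\geo\from{(m-1,t)}\dir{R}{\tht+}{}$ and $\geo\from{(m,t)}\dir{L}{\gamma-}{}$ may diverge, and the coalescence points $\bfw_1,\bfw_2$ may occur at different levels along different subpaths, so a common survivor is not automatic. Careful use of monotonicity in starting point and in the sign, combined with the tail property of Theorem \ref{thm:geo}\eqref{thm:geo:LR}, will be needed to guarantee that the rightmost-$\tht+$ and leftmost-$\gamma-$ families from $(m,s)$ share a common point strictly past $(m,s)$.
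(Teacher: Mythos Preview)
Your reduction to the immediate child via Lemmas \ref{nodescendants}, \ref{totalorder}, \ref{shock:descendant} is correct and matches the paper. The setup through the instability edge $[(m-\tfrac12,s),(m+\tfrac12,s)]$ is also fine. The difficulty is exactly where you place it, and your description of how to resolve it is slightly off. You aim to manufacture a common point of $\geo\from{(m,s)}\dir{R}{\tht}{+}$ and $\geo\from{(m,s)}\dir{L}{\gamma}{-}$ strictly past $(m,s)$; but by the instability edge no such point exists, and there is no reason $\bfz$ lies beyond both coalescence points $\bfw_1,\bfw_2$. What does work is a sandwich: since $(m-1,t)$ is the $\tht+$-child, $\geo\from{(m-1,t)}\dir{R}{\tht}{+}$ enters level $m$ at a time $\tilde s>s$ (it passes through $(m-1,s')$ with $s'>s$ in Lemma \ref{shock:descendant}), and its tail from $(m,\tilde s)$ is a $\tht+$-Busemann geodesic from $(m,\tilde s)$, hence $\succeq\geo\from{(m,\tilde s)}\dir{L}{\tht}{+}\succeq\geo\from{(m,s)}\dir{R}{\tht}{+}$ by Theorem \ref{thm:geo}\eqref{thm:geo:mono}. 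Thus $\bfz$ lies weakly to the right of $\geo\from{(m,s)}\dir{R}{\tht}{+}$. On the other side, $\geo\from{(m,t)}\dir{L}{\gamma}{-}\preceq\geo\from{(m,s)}\dir{L}{\gamma}{-}$ places $\bfz$ weakly to the left of $\geo\from{(m,s)}\dir{L}{\gamma}{-}$. Since these two paths meet only at $(m,s)$, this forces $\bfz=(m,s)$, contradicting $\tilde s>s$. So your route can be completed, but not by producing a common point.

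The paper's argument is shorter and avoids the geodesic chase entirely. After reaching the instability edge $[(m-\tfrac12,s),(m+\tfrac12,s)]$, it continues the down-left path one more step: Lemma \ref{nodoubleinstab} forbids a second downward edge at $(m-\tfrac12,s)$, so Lemma \ref{biinf:geo} yields an instability interval $[(m-\tfrac12,r'),(m-\tfrac12,s)]$ with $(m-\tfrac12,r')$ improper. Lemma \ref{+shckiffimprop} then gives $(m-1,r')\in\NU_1^{\tht+}$ and rules out any $\tht+$-shock on level $m-1$ in $(r',s)$; Lemmas \ref{underpropinstab} and \ref{instableftendpt} show $(m-1,r')$ is a descendant of $(m,s)$, so by uniqueness $r'$ is the child. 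Finally, Lemma \ref{properif-shck} excludes $(m-1,r')\in\NU_1^{\gamma-}$. This buys you a direct identification of the child without any ordering-of-geodesics argument; your approach, once fixed, buys a purely geometric proof that does not rely on locating the next instability interval.
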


\begin{proof}
        By Lemmas \ref{nodescendants}, \ref{totalorder}, and \ref{shock:descendant}, it is enough to prove the claim for the immediate child of $(m,s)$, in the $\NU_1^{\tht+}$ tree. Denote this by $(m-1,r)$, where $r<s$.

        By Lemmas \ref{+shckiffimprop} and \ref{+-shckiffnotinstab}, $(m+\half,s)$ must be the left endpoint of a $[\gamma,\tht]$-instability interval. By Lemmas \ref{biinf:geo} and \eqref{nodoubleinstab}, 
        there must exist an $r'<s$ such that $[(m-\half,r'),(m-\half,s)] \subset \IG{[\gamma,\tht]}$ and $(m-\half,r')$ is an improper instability point. By Lemma \ref{+shckiffimprop}, $(m-1,r') \in \NU_1^{\tht+}\setminus\NU_1^{\gamma-} $ and there are no points in $\NU_1^{\tht+}$ in the interval $(r',s)$ on level $m-1$. Thus, $r'=r$, i.e.\ $(m-1,r')$ is the SW child of $(m,s)$ and so $(m-1,r) \in \NU_1^{\tht+}\setminus\NU_1^{\gamma-} $. 
\end{proof}

\begin{lem}\label{-tree}
   The following holds for all $\w\in\Omref{Omega8}$ and $\tht\ge\gamma>0$.  Take $ (m,s) \in\NU_1^{\gamma-}\setminus\NU_1^{\tht+} $. Then all SW descendants of $ (m,s) $ in the $\NU_1^{\gamma-}$ tree are in $\NU_1^{\gamma-}\setminus \NU_1^{\tht+} $. 
\end{lem}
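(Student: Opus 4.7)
As in the proof of Lemma \ref{+tree}, the first step is to reduce the claim to the case of the immediate $\NU_1^{\gamma-}$-child. By Lemmas \ref{nodescendants}, \ref{totalorder}, and \ref{shock:descendant}, every SW descendant of $(m,s)$ in the $\NU_1^{\gamma-}$ tree is obtained by iterating the unique child relation, so it suffices to show that the immediate child $(m-1,t)$ (with $t<s$) satisfies $(m-1,t)\in\NU_1^{\gamma-}\setminus\NU_1^{\tht+}$, after which the claim follows by induction on the levels.

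Next I analyze the local instability structure at $(m,s)$. Since $(m,s)\in\NU_1^{\gamma-}\setminus\NU_1^{\tht+}$, Lemma \ref{+-shckiffnotinstab} gives $(m+\half,s)\in\IG[\gamma,\tht]$. I rule out $(m+\half,s)$ being an improper endpoint of its maximal instability interval: if it were the improper left endpoint, Lemma \ref{+shckiffimprop} would force $(m,s)\in\NU_1^{\tht+}$, contrary to our hypothesis; if it were the improper right endpoint, Lemma \ref{instabrightendpt} would force $\geo\from{(m,s)}\dir{L}{\gamma}{-}$ to proceed rightward, contradicting $(m,s)\in\NU_1^{\gamma-}$. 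Therefore $(m+\half,s)$ is a proper instability point lying in the interior of its maximal interval, and Lemma \ref{properif-shck} yields that $[(m-\half,s),(m+\half,s)]$ is a $[\gamma,\tht]$-instability edge; in particular $(m-\half,s)\in\IG[\gamma,\tht]$.

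Since $(m-1,t)\in\NU_1^{\gamma-}$ is already given, Lemma \ref{+-shckiffnotinstab} reduces the conclusion to showing $(m-\half,t)\in\IG[\gamma,\tht]$. I would argue this by contradiction: assume $(m-\half,t)\notin\IG[\gamma,\tht]$, so that $(m-1,t)\in\NU_1^{\gamma-}\cap\NU_1^{\tht+}$. Then by Lemma \ref{shock:descendant} applied to the $\NU_1^{\tht+}$ tree, $(m-1,t)$ has a unique $\NU_1^{\tht+}$-parent $(m,u)$ with $u>t$. The already-proved Lemma \ref{+tree}, applied contrapositively, shows that $(m,u)\in\NU_1^{\gamma-}$: otherwise $(m,u)\in\NU_1^{\tht+}\setminus\NU_1^{\gamma-}$ would force its $\NU_1^{\tht+}$-descendant $(m-1,t)$ to lie in $\NU_1^{\tht+}\setminus\NU_1^{\gamma-}$, contradicting $(m-1,t)\in\NU_1^{\gamma-}$. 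Hence $(m,u)\in\NU_1^{\gamma-}\cap\NU_1^{\tht+}$, and by Lemma \ref{+-shckiffnotinstab}, $(m+\half,u)\notin\IG[\gamma,\tht]$. Consequently $u$ must lie strictly outside the maximal instability interval at level $m+\half$ that contains $s$, and by uniqueness of the $\NU_1^{\gamma-}$-parent it satisfies $u\ne s$.

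The main obstacle lies in extracting a contradiction from this configuration. My intended route is a geodesic comparison: by the adaptation of Theorem \ref{geo:instpt} to the edge $(m-\half,t)$, the hypothesis $(m-\half,t)\notin\IG$ is equivalent to $\geo\from{(m-1,t)}\dir{R}{\tht}{+}\cap\geo\from{(m,t)}\dir{L}{\gamma}{-}\ne\varnothing$. On the other hand, since $[(m-\half,s),(m+\half,s)]$ is an instability edge, Lemma \ref{geodefinstabedge1} gives that $\geo\from{(m,s)}\dir{R}{\tht}{+}$ and $\geo\from{(m,s)}\dir{L}{\gamma}{-}$ separate immediately at $(m,s)$ and never touch again. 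The plan is to combine this non-intersection with the coalescence of $\geo\from{(m-1,t)}\dir{R}{\tht}{+}$ with $\geo\from{(m,s)}\dir{R}{\tht}{+}$ and of $\geo\from{(m,t)}\dir{L}{\gamma}{-}$ with $\geo\from{(m,s)}\dir{L}{\gamma}{-}$ (Theorem \ref{thm:geo}\eqref{thm:geo:coal}), together with the position of $(m,u)$ relative to the maximal interval at level $m+\half$, to produce a geodesic segment between two common points that violates the leftmost or rightmost property of Theorem \ref{thm:geo}\eqref{thm:geo:LR}. This final step is the technically delicate one: the two cases $u<s_1$ and $u>s_2$ require separate tracking of the relative positions of $\geo\from{(m-1,t)}\dir{R}{\tht}{+}$, $\geo\from{(m,u)}\dir{R}{\tht}{+}$, $\geo\from{(m,t)}\dir{L}{\gamma}{-}$, and $\geo\from{(m,s)}\dir{L}{\gamma}{-}$, and closing the argument cleanly is the point where the analogy with Lemma \ref{+tree} stops being automatic.
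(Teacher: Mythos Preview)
Your reduction to the immediate child and your analysis of the instability structure at level $m+\tfrac12$ are correct and in fact slightly more explicit than the paper's, which invokes Lemma~\ref{properif-shck} without spelling out why its hypothesis holds. The divergence comes after you have the edge $[(m-\tfrac12,s),(m+\tfrac12,s)]$.

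The paper's route from there is direct and short, with no contradiction argument. The edge lands at $(m-\tfrac12,s)$, which (by Lemmas~\ref{biinf:geo} and~\ref{nodoubleinstab}) lies on a $[\gamma,\tht]$-instability interval at level $m-\tfrac12$ and cannot be its left endpoint; hence the interval extends leftward to an improper left endpoint $(m-\tfrac12,r')$ with $r'<s$. Lemma~\ref{-shockafter+shock} then supplies a $\gamma-$ shock $(m-1,r'')$ with $r''\in(r',s)$. Since, by the construction in the proof of Lemma~\ref{shock:descendant}, there are no $\gamma-$ shocks on level $m-1$ strictly between the child $(m-1,t)$ and $(m-1,s)$, one gets $t\ge r''>r'$. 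Thus $(m-\tfrac12,t)$ lies in the interior of the interval, is proper (Lemma~\ref{onlypropinprop}), and Lemma~\ref{+shckiffimprop} gives $(m-1,t)\notin\NU_1^{\tht+}$.

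Your contradiction route has a genuine gap: you invoke Lemma~\ref{shock:descendant} to produce a $\NU_1^{\tht+}$-\emph{parent} $(m,u)$ of $(m-1,t)$, but that lemma only furnishes a \emph{child} on the level below. A shock point need not have any NE ancestor in its own tree (the branches may die out in the north-east direction; cf.\ the discussion preceding Theorem~\ref{shocks:summary} and Lemma~\ref{lm:inf-ancestors}), so the existence of $(m,u)$ is not guaranteed. Without it, the contrapositive use of Lemma~\ref{+tree} and the entire geodesic comparison cannot even begin. Even granting $(m,u)$, the ``technically delicate'' closing step you flag is genuinely not routine, and the paper simply sidesteps it by locating the child inside the level-$(m-\tfrac12)$ interval.
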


\begin{proof}
    By Lemmas \ref{nodescendants}, \ref{totalorder}, and \ref{shock:descendant}, it is enough to prove the claim for the immediate child of $(m,s)$, in the $\NU_1^{\gamma-}$ tree. Denote this by $(m-1,r)$, where $r<s$.
    
	By Lemma \ref{properif-shck}, $[(m+\half,s), (m-\half,s)]$ must be a $[\gamma,\tht]$-instability edge that, by Lemmas \ref{biinf:geo} and \ref{nodoubleinstab}, goes down into a $[\gamma,\tht]$-instability interval on level $m-\half$. 
     By Lemmas \ref{biinf:geo} and \ref{nodoubleinstab}, $(m-\half,s)$ cannot be the left endpoint of that instability interval, and thus there exists an $r'<s$ such that $[(m-\half,r'),(m-\half,s)] \subset \IG{[\gamma,\tht]}$ and $(m-\half,r')$ is an improper $[\gamma,\tht]$-instability point. Then by Lemma \ref{-shockafter+shock}, there exists an $r''\in(r',s)$ such that $(m-1,r'') \in NU_1^{\gamma-}$. This implies that $r\in[r'',s)$. In particular, $(m-\half,r)$ is proper and by Lemma \ref{+shckiffimprop}, it is not in $\NU_1^{\tht+}$.
\end{proof}

A consequence of Lemmas \ref{+tree} and \ref{-tree} is that $\NU_1^{\tht+}\setminus\NU_1^{\gamma-}$ and $\NU_1^{\gamma-}\setminus\NU_1^{\tht+}$ are both forests. Together with Lemma \ref{shockcommonancstr}, we get that they, in fact, form subtrees of, respectively, $\NU_1^{\tht+}$ and $\NU_1^{\gamma-}$.

\begin{lem}\label{+&-trees}
    For all $\w\in\Omref{Omega8}$ and $\tht\ge\gamma>0$,   $\NU_1^{\tht+}\setminus\NU_1^{\gamma-} $ is a subtree of the tree $\NU_1^{\tht+}$ and $\NU_1^{\gamma-}\setminus\NU_1^{\tht+} $ is a subtree of the tree $\NU_1^{\gamma-}$.
\end{lem}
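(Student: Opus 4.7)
My plan is to derive the lemma directly from Lemmas \ref{+tree}, \ref{-tree}, and \ref{shockcommonancstr}, with essentially no new analytic work: the two cited ``tree closure'' statements and the common-descendant statement are exactly what is needed, and all the hard geodesic arguments have already been done there. Concretely, the ambient tree structure on $\NU_1^{\tht+}$ is the one given by Definition \ref{def:shock-tree} together with Lemmas \ref{nodescendants}, \ref{totalorder}, \ref{shock:descendant}, \ref{lm:inf-ancestors}, and \ref{shockcommonancstr}, and ``subtree'' should be read as: the induced ancestry relation on $\NU_1^{\tht+}\setminus\NU_1^{\gamma-}$ satisfies the same two structural properties (downward closure under taking SW descendants, and existence of a common SW descendant for any two points).

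For the first assertion, pick any $\bfx,\bfy \in \NU_1^{\tht+}\setminus\NU_1^{\gamma-}$. By Lemma \ref{shockcommonancstr} applied inside the larger set $\NU_1^{\tht+}$, there exists a common SW descendant $\bfz\in\NU_1^{\tht+}$. Since $\bfz$ is a SW descendant of $\bfx\in\NU_1^{\tht+}\setminus\NU_1^{\gamma-}$, Lemma \ref{+tree} forces $\bfz\in\NU_1^{\tht+}\setminus\NU_1^{\gamma-}$, so $\bfx$ and $\bfy$ have a common SW descendant in the subset. Combined with Lemma \ref{+tree}, which states that the subset is closed under the operation of passing to any SW descendant in the $\NU_1^{\tht+}$-tree (in particular, it contains the unique child produced by Lemma \ref{shock:descendant}), we conclude that $\NU_1^{\tht+}\setminus\NU_1^{\gamma-}$ inherits the full tree structure from $\NU_1^{\tht+}$ and is therefore a subtree.

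The argument for $\NU_1^{\gamma-}\setminus\NU_1^{\tht+}$ is identical, exchanging the roles of $(\tht,+)$ and $(\gamma,-)$ and invoking Lemma \ref{-tree} in place of Lemma \ref{+tree}. There is no genuine obstacle at this stage; the only minor subtlety is being explicit that the common SW descendant supplied by Lemma \ref{shockcommonancstr} is sought in the ambient tree and then upgraded to the subset via the closure lemma, rather than the other way around. All of this holds on $\Omref{Omega8}$ for every $\tht\ge\gamma>0$, as both cited lemmas do.
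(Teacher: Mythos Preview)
Your proof is correct and follows essentially the same approach as the paper, which derives the lemma as an immediate consequence of Lemmas \ref{+tree}, \ref{-tree}, and \ref{shockcommonancstr}: downward closure from the first two lemmas gives forests, and the common-descendant property from the third (upgraded to the subset via downward closure) gives connectedness.
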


    By Lemmas \ref{+shckiffimprop} and \ref{+-shckiffnotinstab}, shock points in $\NU_1^{\tht+}\setminus\NU_1^{\gamma-}$ are in one-to-one correspondence with maximal $[\gamma,\tht]$-instability intervals. Hence, Lemma \ref{+&-trees} also says that maximal $[\gamma,\tht]$-instability intervals have a tree structure.  In contrast, Lemma \ref{-shockafter+shock} says that each maximal instability interval has infinitely many $\NU_1^{\gamma-}\setminus\NU_1^{\tht+}$ shock points under it.  This motivates the following interesting question:
Can one balance the $\NU_1^{\tht+}\setminus\NU_1^{\gamma-}$ tree by distinguishing a special shock point in $\NU_1^{\gamma-}\setminus\NU_1^{\tht+}$, for each maximal instability interval, in such a way that these special shock points form a subtree of the $\NU_1^{\gamma-}$ tree?

Since $\NU_1^{\tht+}\setminus\NU_1^{\gamma-}$ is a subtree of $\NU_1^{\tht+}$, one may wonder about the structure of the remaining points $\NU_1^{\tht+}\cap\NU_1^{\gamma-}$ inside the tree $\NU_1^{\tht+}$. The next lemma says that these points form ``bushes'', each of which has a finite number of generations. The same is true of $\NU_1^{\tht+}\cap\NU_1^{\gamma-}$ inside the tree $\NU_1^{\gamma-}$.

\begin{lem}\label{pm tree}
	    For all $\w\in\Omref{Omega8}$ and $\tht\ge\gamma>0$. Take $ (m,s) \in \NU_1^{\tht+}\cap \NU_1^{\gamma-} $. Then $ (m,s) $ has a SW descendant in $ \NU_1^{\tht+}\setminus \NU_1^{\gamma-} $ and a SW descendant in $ \NU_1^{\gamma-}\setminus \NU_1^{\tht+} $.
\end{lem}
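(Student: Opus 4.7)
The plan is to find a single point in $\NU_1^{\tht+}\setminus\NU_1^{\gamma-}$ and a single point in $\NU_1^{\gamma-}\setminus\NU_1^{\tht+}$ lying in the appropriate shock trees with $(m,s)$; once those representatives exist, Lemma \ref{shockcommonancstr} combined with the subtree invariance of Lemmas \ref{+tree} and \ref{-tree} will automatically upgrade any common SW descendant to one of the required type.

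For the representatives, I would start from the observation that Lemma \ref{+-shckiffnotinstab} forces $(m+\half,s)\not\in\IG{[\gamma,\tht]}$, and use Lemma \ref{inf many edges} (at level $m+1$) to see that the horizontal slice $S=\{r\in\R:(m+\half,r)\in\IG{[\gamma,\tht]}\}$ is closed and unbounded on both sides. Hence $s$ sits in a bounded open gap $(a',b')$ of $\R\setminus S$ with $a',b'\in S$. Using the openness of the proper-instability set on a level (from the remark following Definition \ref{def:instpt.int}), neither $a'$ nor $b'$ can be a proper instability point, since otherwise a two-sided neighborhood would lie in $S$. Lemma \ref{improperendpts}, together with the gap $(a',b')$ containing $s$, then forces $b'$ to be the left endpoint, and $a'$ the right endpoint, of maximal proper $[\gamma,\tht]$-instability intervals. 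Lemma \ref{+shckiffimprop} (applied with $t$ interior to the proper interval to the right of $b'$) combined with the contrapositive of Lemma \ref{properif-shck} then yields $\bfz:=(m,b')\in\NU_1^{\tht+}\setminus\NU_1^{\gamma-}$, while Lemma \ref{-shockabove} yields $\bfy:=(m+1,a')\in\NU_1^{\gamma-}\setminus\NU_1^{\tht+}$.

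To conclude, I would apply Lemma \ref{shockcommonancstr} inside the $\NU_1^{\tht+}$ tree to $(m,s)$ and $\bfz$ to obtain a common SW descendant $\bfz'\in\NU_1^{\tht+}$, and then invoke Lemma \ref{+tree} at $\bfz$ to upgrade $\bfz'$ into $\NU_1^{\tht+}\setminus\NU_1^{\gamma-}$; symmetrically, Lemma \ref{shockcommonancstr} inside the $\NU_1^{\gamma-}$ tree applied to $(m,s)$ and $\bfy$, followed by Lemma \ref{-tree} at $\bfy$, produces a common SW descendant in $\NU_1^{\gamma-}\setminus\NU_1^{\tht+}$. The main obstacle is the middle step: carefully extracting the bounded complementary gap of $S$ and verifying that both of its endpoints are of the correct improper type so that the $\tht+$- and $\gamma-$-shock classification lemmas apply cleanly. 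All the geometric and geodesic content is already packaged inside the previously established lemmas, so no further work with semi-infinite geodesics is needed.
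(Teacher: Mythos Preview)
Your proposal is correct and follows essentially the same strategy as the paper: locate representative shock points of the two desired types near $(m,s)$, then pass to a common SW descendant and invoke the subtree invariance (Lemmas \ref{+tree}/\ref{-tree}). The execution differs in two minor but nice ways. First, by working directly with the bounded complementary gap $(a',b')$ of $S$ on level $m+\tfrac12$, you immediately obtain a left endpoint $b'$ and a right endpoint $a'$, whereas the paper produces two \emph{left} endpoints $r'<s<r''$ on level $m+\tfrac12$ and must rule out the scenario that $(m+\tfrac12,r)\in\IG{[\gamma,\tht]}$ for all $r\le s'$ via a bi-infinite geodesic argument (Theorem \ref{nobiinf}). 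Second, your concluding step applies Lemma \ref{shockcommonancstr} to the pair $\{(m,s),\bfz\}$ and then Lemma \ref{+tree} to $\bfz$, while the paper instead takes a common descendant of the two flanking points $(m,r'),(m,r'')$ inside the subtree (Lemma \ref{+&-trees}) and then uses the monotonicity of children in Lemma \ref{shock:descendant} to sandwich $(m,s)$. Both routes are valid; yours is slightly more direct and treats the two claims symmetrically rather than deferring the second to ``similar''.
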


\begin{proof}
    We prove the first claim, the second being similar.
    
    By Lemma \ref{+-shckiffnotinstab}, $(m+\half,s) \not\in \IG{[\gamma,\tht]}$.
    By Lemma \ref{inf many edges}, there exist $s'<s$ and $s''>s$ such that $[(m-\half,s'),(m+\half,s')]$ and $[(m-\half,s''),(m+\half,s'')]$ are $[\gamma,\tht]$-instability edges. Thus, there exists an $r''>s$ such that $(m+\half,r'')$ is the right endpoint of a $[\gamma,\tht]$-instability interval.  On the other hand, if it were the case that $(m+\half,r)$ is a $[\gamma,\tht]$-instability point for all $r\le s'$, then by Lemma \ref{improperendpts} $(m+\half,r)$ is improper for all $r<s'$. Lemma \ref{underpropinstab} implies then that $\geo\from{(m,r)}\dir{R}{\tht}{+}$ goes through $(m,s')$, which (together with Lemma \ref{geo-split}) says that the path $\{\Gamma_t:t\in\R\}$ with $\Gamma_t=(m,t)$ for $t\le s'$ and $\Gamma_t=\geo\from{(m,s')}\dir{R}{\tht}{+}(t)$ for $t\ge s'$ is a non-trivial bi-infinite locally rightmost (and locally leftmost) geodesic. Since this is prohibited by Theorem \ref{nobiinf}, we get that there must exist an $r'\le s'$ such that $(m+\half,r')$ is a left endpoint of a $[\gamma,\tht]$-instability interval.
    
    By Lemma \ref{+shckiffimprop}, $(m,r')$ and $(m,r'')$ are in $\NU_1^{\tht+}\setminus\NU_1^{\gamma-}$. By Lemma \ref{+&-trees}, the two have a common SW descendant $(n,t)$ in the tree $\NU_1^{\tht+}\setminus\NU_1^{\gamma-}$, which is a subtree of  $\NU_1^{\tht+}$. By the monotonicity in Lemma \ref{shock:descendant}, $(n,t)$ must be a SW descendant of $(m,s)$ in $\NU_1^{\tht+}\setminus\NU_1^{\gamma-}$. 
\end{proof}

\begin{proof}[Proof of Theorem \ref{shocks:summary}]
The claims of the theorem follow directly from Lemmas \ref{nodescendants}-\ref{pm tree}.
\end{proof}

\section{Competition interfaces and their relation to shocks and the instability graph}\label{sec:cif}

We now turn to the relationship between the instability graph and the starting points of competition interfaces.

We show that points where left and/or right competition interfaces emanate and have an asymptotic direction in $[\gamma,\tht]$ are exactly those points where a $[\gamma,\tht]$-instability edge descends. Furthermore, we have the following trichotomy:
the point is the left endpoint of a maximal $[\gamma,\tht]$-instability interval, positioned above a $\tht+$ shock, and generates exclusively a right competition interface; or the point is interior to a $[\gamma,\tht]$-instability interval, situated above a $\gamma-$ shock, and produces solely a left competition interface; or the point is interior to a $[\gamma,\tht]$-instability interval, and no shock exists below it. In this case, both left and right competition interfaces emerge from the point, and they match in this scenario.

Theorem \ref{thm:cif} follows from the following by setting $\gamma=\tht\in\baddir$.

\begin{thm}\label{thm:cif.int}
    The following hold for all $\w\in\Omref{Omega8}$, $\gamma\ge\tht>0$, and $(m,s)\in\Z\times\R$.
    \begin{enumerate} [label={\rm(\alph*)}, ref={\rm\alph*}]   \itemsep=3pt 
    \item\label{thm:cif-int.a} $[(m-\half,s),(m+\half,s)]$ is $[\gamma,\tht]$-instability edge if, and only if, for some $S\in\{L,R\}$, $\geo\from{(m,s)}\dir{S}{\tht}{+}$ and $\geo\from{(m,s)}\dir{S}{\gamma}{-}$ split at $(m,s)$ {\rm(}i.e.\ $[\gamma,\tht]\cap\{\tht^L_{(m,s)},\tht^R_{(m,s)}\}\ne\varnothing${\rm)}.
    \item\label{thm:cif-int.b} If $(m+\half,s)$ is the left endpoint of a maximal $[\gamma,\delta]$-instability interval,
    then $\geo\from{(m,s)}\dir{R}{\tht}{+}$ and $\geo\from{(m,s)}\dir{R}{\gamma}{-}$ split at $(m,s)$ but $\geo\from{(m,s)}\dir{L}{\tht}{+}$ and $\geo\from{(m,s)}\dir{L}{\gamma}{-}$ do not. That is, $\gamma\le\tht^R_{(m,s)}\le\tht<\tht^L_{(m,s)}$.
    \item\label{thm:cif-int.c} If $(m+\half,s)$ is a proper $[\gamma,\delta]$-instability point and $(m,s)\in\NU_1^{\gamma-}$, then $\geo\from{(m,s)}\dir{L}{\tht}{+}$ and $\geo\from{(m,s)}\dir{L}{\gamma}{-}$ split at $(m,s)$ but $\geo\from{(m,s)}\dir{R}{\tht}{+}$ and $\geo\from{(m,s)}\dir{R}{\gamma}{-}$ do not. That is, $\tht\ge\tht^L_{(m,s)}\ge\gamma>\tht^R_{(m,s)}$.
    \item\label{thm:cif-int.d} If $[(m-\half,s),(m+\half,s)]$ is an instability edge but $(m,s)\not\in\NU_1^{\gamma-}\cup\NU_1^{\tht+}$, then $\geo\from{(m,s)}\dir{L}{\gamma}{-}=\geo\from{(m,s)}\dir{L}{\tht}{+}$ and $\geo\from{(m,s)}\dir{R}{\gamma}{-}=\geo\from{(m,s)}\dir{R}{\tht}{+}$ split at $(m,s)$. That is, $\gamma\le\tht^R_{(m,s)}\le\tht^L_{(m,s)}\le\tht$.
    \end{enumerate}
\end{thm}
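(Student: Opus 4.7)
The plan is to build a dictionary between the $(\mu,\sig)$-parameterized geodesics at $(m,s)$ and the competition-interface thresholds $\tht^L_{(m,s)}$ and $\tht^R_{(m,s)}$, then combine the geometric characterization of instability edges in Lemma \ref{geodefinstabedge1} with the characterization of shocks in Theorem \ref{thm:cif-SS}. The dictionary, which reads off directly from the paragraph preceding Theorem \ref{thm:cif-SS}, is the following: for $S\in\{L,R\}$, the geodesic $\geo\from{(m,s)}\dir{S}{\mu}{-}$ leaves $(m,s)$ vertically iff $\mu\le\tht^S_{(m,s)}$, while $\geo\from{(m,s)}\dir{S}{\mu}{+}$ does so iff $\mu<\tht^S_{(m,s)}$. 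Since $\geo\from{(m,s)}\dir{S}{\gamma}{-}\preceq\geo\from{(m,s)}\dir{S}{\tht}{+}$ by monotonicity, the pair $\bigl(\geo\from{(m,s)}\dir{S}{\gamma}{-},\geo\from{(m,s)}\dir{S}{\tht}{+}\bigr)$ splits at $(m,s)$ precisely when $\gamma\le\tht^S_{(m,s)}\le\tht$.

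For part \eqref{thm:cif-int.a}, the reverse direction goes as follows. Assuming (without loss of generality) that the $L$-pair splits at $(m,s)$, the dictionary forces $\geo\from{(m,s)}\dir{L}{\gamma}{-}$ up and $\geo\from{(m,s)}\dir{L}{\tht}{+}$ to the right, which by monotonicity also puts $\geo\from{(m,s)}\dir{R}{\tht}{+}$ to the right; hence $\geo\from{(m,s)}\dir{R}{\tht}{+}$ and $\geo\from{(m,s)}\dir{L}{\gamma}{-}$ separate immediately. If they were to meet again at some point $\bfz$, then the sandwiched $\geo\from{(m,s)}\dir{L}{\tht}{+}$ would also pass through $\bfz$, and the portions of $\geo\from{(m,s)}\dir{L}{\gamma}{-}$ and $\geo\from{(m,s)}\dir{L}{\tht}{+}$ from $(m,s)$ to $\bfz$ would each, by Theorem \ref{thm:geo}\eqref{thm:geo:LR}, be the unique leftmost geodesic from $(m,s)$ to $\bfz$ in the model --- impossible since they begin in different directions. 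Lemma \ref{geodefinstabedge1} then yields the instability edge. Conversely, if the edge is present, Lemma \ref{geodefinstabedge1} gives $\geo\from{(m,s)}\dir{L}{\gamma}{-}$ up and $\geo\from{(m,s)}\dir{R}{\tht}{+}$ right; a case split on $\geo\from{(m,s)}\dir{L}{\tht}{+}$ and $\geo\from{(m,s)}\dir{R}{\gamma}{-}$ shows that some pair must split at $(m,s)$, and the only configuration in which neither pair splits has both $L$'s going up and both $R$'s going right, placing $(m,s)\in\NU_1^{\gamma-}\cap\NU_1^{\tht+}$ and hence by Lemma \ref{+-shckiffnotinstab} evicting $(m+\half,s)$ from $\IG{[\gamma,\tht]}$, contradicting the edge.

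Parts \eqref{thm:cif-int.b}--\eqref{thm:cif-int.d} are then routine translations through the dictionary. For \eqref{thm:cif-int.b}, Lemma \ref{+shckiffimprop} places $(m,s)\in\NU_1^{\tht+}$ (so $\tht<\tht^L_{(m,s)}$) and Lemma \ref{properif-shck} places $(m,s)\notin\NU_1^{\gamma-}$; combined with $\gamma\le\tht$, these force $\gamma\le\tht^R_{(m,s)}\le\tht<\tht^L_{(m,s)}$, so the $R$-pair splits while the $L$-pair does not. Part \eqref{thm:cif-int.c} is analogous: properness gives $(m,s)\notin\NU_1^{\tht+}$, and combining with $(m,s)\in\NU_1^{\gamma-}$ and $\gamma\le\tht$ yields $\tht\ge\tht^L_{(m,s)}\ge\gamma>\tht^R_{(m,s)}$, so only the $L$-pair splits. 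For \eqref{thm:cif-int.d}, the two negations together with $\gamma\le\tht$ and part \eqref{thm:cif-int.a} (which demands at least one threshold in $[\gamma,\tht]$) leave only $\gamma\le\tht^R_{(m,s)}\le\tht^L_{(m,s)}\le\tht$; then both $\gamma-$ geodesics go up and both $\tht+$ geodesics go right, so Lemma \ref{geo-split} collapses each pair to a common geodesic, and these two collapsed geodesics split at $(m,s)$.

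The principal obstacle is the forward direction of \eqref{thm:cif-int.a}: excluding the configuration in which the presence of the edge is invisible to both the $L$-pair and the $R$-pair. This exclusion is not purely combinatorial and relies on the nontrivial Lemma \ref{+-shckiffnotinstab}. Once past it, the remainder of the theorem unfolds mechanically from the dictionary and the monotonicity of the geodesic process.
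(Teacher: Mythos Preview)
Your proof is correct and follows essentially the same route as the paper's, with two minor organizational differences worth noting. First, you make the dictionary between geodesic initial directions and the thresholds $\tht^L_{(m,s)},\tht^R_{(m,s)}$ explicit up front and then read off parts \eqref{thm:cif-int.b}--\eqref{thm:cif-int.d} via Theorem~\ref{thm:cif-SS}; the paper instead argues each part directly from the geodesic pictures and only invokes Theorem~\ref{thm:cif-SS} at the end for the inequalities. Second, for the forward implication in \eqref{thm:cif-int.a} the paper proves \eqref{thm:cif-int.b}--\eqref{thm:cif-int.d} first and observes that their hypotheses exhaust the cases in which an instability edge descends from $(m+\tfrac12,s)$; your direct contradiction via Lemma~\ref{+-shckiffnotinstab} is cleaner and avoids having to check this exhaustiveness. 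Your explicit ``never meet again'' argument in the reverse direction of \eqref{thm:cif-int.a} (via Theorem~\ref{thm:geo}\eqref{thm:geo:LR}) also makes transparent a step the paper leaves implicit in its appeal to Lemma~\ref{geodefinstabedge1}.
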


\begin{proof}
    Part \eqref{thm:cif-int.a}. Suppose that $\geo\from{(m,s)}\dir{R}{\tht}{+}$ and $\geo\from{(m,s)}\dir{R}{\gamma}{-}$ split at $(m,s)$. Then $\geo\from{(m,s)}\dir{R}{\tht}{+}$ goes right while $\geo\from{(m,s)}\dir{R}{\gamma}{-}$ goes up. This forces $\geo\from{(m,s)}\dir{L}{\gamma}{-}$ to go up and Lemma \ref{geodefinstabedge1} implies that $[(m-\half,s),(m+\half,s)]$ is a $[\gamma,\tht]$-instability edge. A similar argument works if $\geo\from{(m,s)}\dir{L}{\tht}{+}$ and $\geo\from{(m,s)}\dir{L}{\gamma}{-}$ split at $(m,s)$. One direction is proved. 
    Observe that Lemmas \ref{biinf:geo} and \ref{properif-shck} imply that $[(m-\half,s),(m+\half,s)]$ is an instability edge in each of the situations in parts (\ref{thm:cif-int.b}-\ref{thm:cif-int.d}). Therefore,  proving these parts implies the other direction in part \eqref{thm:cif-int.a}.

    Part \eqref{thm:cif-int.b}. Lemma \ref{improperendpts} implies that $(m+\half,s)$ is an improper $[\gamma,\tht]$-instability point and Lemma \ref{+shckiffimprop} implies that $(m,s)\in\NU_1^{\tht+}$. This implies that $\geo\from{(m,s)}\dir{R}{\tht}{+}$ goes right from $(m,s)$ while $\geo\from{(m,s)}\dir{L}{\tht}{+}$ goes up, forcing $\geo\from{(m,s)}\dir{L}{\gamma}{-}$ to also go up from $(m,s)$, which means the last two geodesics do not split at $(m,s)$. Lemma \ref{properif-shck} implies that $(m+\half,s)\not\in\NU_1^{\gamma-}$ and hence $\geo\from{(m,s)}\dir{R}{\gamma}{-}$ also goes up, which means $\geo\from{(m,s)}\dir{R}{\gamma}{-}$ and $\geo\from{(m,s)}\dir{R}{\tht}{+}$ split at $(m,s)$.

    Part \eqref{thm:cif-int.c}. Since $(m,s)\in\NU_1^{\gamma-}$, $\geo\from{(m,s)}\dir{L}{\gamma}{-}$ goes up while $\geo\from{(m,s)}\dir{R}{\gamma}{-}$ goes right, forcing $\geo\from{(m,s)}\dir{R}{\tht}{+}$ to also go right from $(m,s)$.
    By Lemma \ref{+shckiffimprop}, $(m,s)\not\in\NU_1^{\tht+}$ and hence  $\geo\from{(m,s)}\dir{L}{\tht}{+}$ must also go right. Part \eqref{thm:cif-int.c} is proved.

    Part \eqref{thm:cif-int.d}. By Lemma \ref{geodefinstabedge1}, $\geo\from{(m,s)}\dir{R}{\tht}{+}$ goes right while $\geo\from{(m,s)}\dir{L}{\gamma}{-}$ goes up from $(m,s)$. Since $(m,s)$ is not a $\tht+$ nor a $\gamma-$ shock point, $\geo\from{(m,s)}\dir{L}{\tht}{+}$ must go right and $\geo\from{(m,s)}\dir{R}{\gamma}{-}$ must go up. The claim follows now from Lemma \ref{geo-split}.

    The claims about the relationship between $\gamma$, $\tht$, $\tht_{(m,s)}^L$, and $\tht_{(m,s)}^R$ all come from Theorem \ref{thm:cif-SS}.
\end{proof}

We can now prove Lemmas \ref{IGint=union} and \ref{IG-approx}.

\begin{proof}[Proof of Lemma \ref{IGint=union}]
    Lemma \ref{mono-inst-graph} implies that for any $\tht>0$, $\IG{\tht}\subset\IG{[\gamma,\delta]}$. 
    Thus, we have the inclusion $\bigcup_{\tht\in[\gamma,\delta]\cap\baddir}\IG{\tht}\subset\IG{[\gamma,\delta]}$. We prove the other inclusion.

    Suppose $[(m-\half,s),(m+\half,s)]$ is a $[\gamma,\delta]$-instability edge. By Theorem \ref{thm:cif.int}\eqref{thm:cif-int.a}, there exists a $\tht\in[\gamma,\delta]\cap\{\tht^L_{(m,s)},\tht^R_{(m,s)}\}$. By the same theorem, applied now to $\gamma=\tht$, $\tht\in\{\tht^L_{(m,s)},\tht^R_{(m,s)}\}$ implies that $[(m-\half,s),(m+\half,s)]$ is a $\tht$-instability edge and, in particular, $\tht\in\baddir$. Thus, we have shown that all vertical instability edges in $\IG{[\gamma,\delta]}$ are also in 
    $\bigcup_{\tht\in[\gamma,\delta]\cap\baddir}\IG{\tht}$.

    Next, suppose $(m+\half,s)$ is an improper $[\gamma,\delta]$-instability point. By Theorem \ref{inst:summary.int}\eqref{inst:summary.int.I}, $(m+\half,s)$ is either a left or a right endpoint of a maximal $[\gamma,\delta]$-instability interval and by part \eqref{inst:summary.int.biinf} of the same theorem, either $[(m-\half,s),(m+\half,s)]$ or $[(m+\half,s),(m+\frac32,s)]$ is a $[\gamma,\delta]$-instability edge. By what we proved above, this edge is in $\IG{\tht}$ for some $\tht\in[\gamma,\delta]\cap\baddir$ and hence so is $(m+\half,s)$. Thus, improper $[\gamma,\delta]$-instability points are all in $\bigcup_{\tht\in[\gamma,\delta]\cap\baddir}\IG{\tht}$.
    
    Now suppose $(m+\half,s)$ is a proper $[\gamma,\delta]$-instability point, i.e.\ $\bus{\gamma}{-}(m,s,m+1,s)<\bus{\delta}{+}(m,s,m+1,s)$. We will prove that there exists a $\tht'\in[\gamma,\delta]\cap\baddir$ such that $(m+\half,s)$ is also a proper $\tht'$-instability point.
    
    The above claim holds if $(m+\half,s)$ is a proper $\delta$-instability point or if it is a proper $\gamma$-instability point. Assume, therefore, that $(m+\half,s)$ is neither. Thus, $\bus{\delta}{-}(m,s,m+1,s)=\bus{\delta}{+}(m,s,m+1,s)$ and $\bus{\gamma}{-}(m,s,m+1,s)=\bus{\gamma}{+}(m,s,m+1,s)$. Let 
        \[\tht'=\inf\bigl\{\kappa\in[\gamma,\delta]:\bus{\kappa}{+}(m,s,m+1,s)=\bus{\delta}{-}(m,s,m+1,s)\bigr\}.\]
    By Theorem \ref{thm:B}\eqref{B:jump}, $\bus{\kappa}{+}(m,s,m+1,s)=\bus{\delta}{-}(m,s,m+1,s)$ for $\kappa<\delta$ close enough to $\delta$. Hence, the set in the above infimum is not empty and $\tht'<\delta$. Similarly, 
        \[\bus{\kappa}{+}(m,s,m+1,s)=\bus{\gamma}{+}(m,s,m+1,s)=\bus{\gamma}{-}(m,s,m+1,s)<\bus{\delta}{+}(m,s,m+1,s)\]
    for $\kappa>\gamma$ close enough to $\gamma$. Hence, $\tht'>\gamma$.
    
    By Theorem \ref{thm:B}\eqref{B:cont2}, $\bus{\kappa}{+}(m,s,m+1,s)$ is right-continuous in $\kappa$.
    Consequently, $\bus{\tht'}{+}(m,s,m+1,s)=\bus{\delta}{-}(m,s,m+1,s)$.
    If $\bus{\tht'}{-}(m,s,m+1,s)$ were equal to $\bus{\delta}{-}(m,s,m+1,s)$, Theorem \ref{thm:B}\eqref{B:jump} would imply that $\bus{\kappa}{+}(m,s,m+1,s)=\bus{\tht'}{-}(m,s,m+1,s)=\bus{\delta}{-}(m,s,m+1,s)$ for $\kappa<\tht'$ close enough to $\tht'$. This would contradict the definition of $\tht'$. Hence, it must be the case that 
    $\bus{\tht'}{-}(m,s,m+1,s)<\bus{\tht'}{+}(m,s,m+1,s)$, which means $(m+\half,s)$ is a proper $\tht'$-instability point and is hence in $\bigcup_{\tht\in[\gamma,\delta]\cap\baddir}\IG{\tht}$.
\end{proof}

\begin{proof}[Proof of Lemma \ref{IG-approx}]
     Lemma \ref{mono-inst-graph} implies that $\IG{\tht}\subset\bigcap_{\gamma<\tht<\delta}\IG{[\gamma,\delta]}$. We prove the other inclusion.

    Suppose $[(m-\half,s),(m+\half,s)]$ is an edge in $\IG{[\gamma,\delta]}$ for all $\delta>\gamma>0$ with $\tht\in(\gamma,\delta)$. Then by Theorem \ref{thm:cif.int}\eqref{thm:cif-int.a}, $[\gamma,\delta]\cap\{\tht^L_{(m,s)},\tht^R_{(m,s)}\}\ne\varnothing$ for all such $\gamma$ and $\delta$, which implies that $\tht\in\{\tht^L_{(m,s)},\tht^R_{(m,s)}\}$. The same theorem implies then that $[(m-\half,s),(m+\half,s)]$ is an edge in $\IG{\tht}$. 

    Suppose next $(m+\half,s)\in\bigcap_{\gamma<\tht<\delta}\IG{[\gamma,\delta]}$. By Theorem \ref{inst:summary.int}\eqref{inst:summary.int.I}, if $(m+\half,s)$ is an improper $[\gamma,\delta]$-instability point, then it is either a left or a right endpoint of a maximal $[\gamma,\delta]$-instability interval and by part \eqref{inst:summary.int.biinf} of the same theorem, either $[(m-\half,s),(m+\half,s)]$ or $[(m+\half,s),(m+\frac32,s)]$ is a $[\gamma,\delta]$-instability edge. By Lemma \ref{mono-inst-graph}, this edge also belongs to $\IG{[\gamma',\delta']}$ for all $\gamma'\in(0,\gamma)$ and all  $\delta'>\delta$. Thus, if for  $\delta_k=\tht+1/k$ and $\gamma_k=\tht-1/k$, $(m+\half,s)$ is an improper $[\gamma_k,\delta_k]$-instability point for infinitely many $k\in\N$, then it must be the case that either $[(m-\half,s),(m+\half,s)]$ is a $[\gamma,\delta]$-instability edge for all $\delta>\gamma>0$ or $[(m+\half,s),(m+\frac32,s)]$ is a $[\gamma,\delta]$-instability edge for all $\delta>\gamma>0$. By what we proved above, this edge is in $\IG{\theta}$ and therefore so is $(m+\half,s)$. 

    It remains to consider the case where there exists an $\e>0$ such that $(m+\half,s)$ is a proper $[\gamma,\delta]$-instability point for all $\delta>\gamma>0$ with $0<\tht-\e<\gamma<\tht<\delta<\tht+\e$. This implies that for all such $\gamma$ and $\delta$, $\bus{\delta}{+}(m,s,m+1,s)>\bus{\gamma}{-}(m,s,m,s+1)$.  By Theorem \ref{thm:B}\eqref{B:jump}, $\bus{\delta}{+}(m,s,m+1,s)=\bus{\tht}{+}(m,s,m+1,s)$ for $\delta>\tht$ close enough to $\tht$ and $\bus{\gamma}{-}(m,s,m+1,s)=\bus{\tht}{-}(m,s,m+1,s)$ for $\gamma<\tht$ close enough to $\tht$. Hence, $\bus{\tht}{+}(m,s,m+1,s)>\bus{\tht}{-}(m,s,m,s+1)$ and $(m+\half,s)$ is a proper $\tht$-instability point. The lemma is now proved.
\end{proof}

\section{Proofs of general BLPP results}\label{sec:aux}

\begin{proof}[Proof of Proposition \ref{nodouble}]
    Note that $L_{(m,s),(n,t)}=L_{(m,s),(n-2,t)}$ implies  $L_{(m,s),(n,t)}=L_{(m,s),(n-1,t)}$. 
    On the event in part \eqref{nodouble-a}, take
        \[m'=\min\{\ell\in[m,n]\cap\Z:L_{(m,s),(n,t)}=L_{(m,s),(\ell,t)}\}.\]
    Then $m\le m'\le n-2$ and 
    $L_{(m,s),(\ell,t)}=L_{(m,s),(n,t)}$ for all integers $\ell\in[m',n]$. In particular, $L_{(m,s),(m',t)}
        =L_{(m,s),(m'+2,t)}$. This means that there exists a geodesic path from $(m,s)$ to $(m'+2,t)$ that goes through $(m',t)$.

    If $m'>m$, then the definition of $m'$ implies that $L_{(m,s),(m'-1,t)}<L_{(m,s),(m'+2,t)}$. This implies that there are no  geodesic paths from $(m,s)$ to $(m'+2,t)$ that go through $(m'-1,t)$. Thus, the geodesic from the previous paragraph must arrive at level $m'$ at some $s'\in[s,t)$. It then must proceed from $(m',s')$ to $(m',t)$ and hence must cross $(m',s'')$ for some integer $s''\in(s',t)$.  Then we have $L_{(m',s''),(m',t)}=L_{(m',s''),(m'+2,t)}$.
    We have thus shown that on the event in part \eqref{nodouble-a}, we must have $L_{(m',s''),(m',t)}=L_{(m',s''),(m'+2,t)}$ for some integer $m'$, some rational $s''$, and some $t>s''$.
    Thus, to deduce the claim, it is enough to prove that for each integer $m'$ and rational $s''$, there is zero $\P$-probability that there exists a $t>s''$ with  $L_{(m',s''),(m',t)}=L_{(m',s''),(m'+2,t)}$. By shift-invariance, we can assume $m'=0$ and $s''=0$. 
    Therefore, to prove that the event in part \eqref{nodouble-a} has zero probability, it suffices to show that
    \begin{align}\label{aux0001}
    \P\{\exists t>0:L_{(0,0),(0,t)}=L_{(0,0),(2,t)}\}=0.
    \end{align}
    Similarly, to show that the event in part \eqref{nodouble-b} has zero probability, it suffices to prove
    \[\P\{\exists t<0:L_{(-2,t),(0,0)}=L_{(0,t),(0,0)}\}=0.\]
    This follows from \eqref{aux0001} 
    by reflection symmetry. Therefore, we only show the proof of \eqref{aux0001}.
    
    Define $ Y_k(t)=L_{(0,0),(k-1,t)} $ for $k\in\{1,2,3\}$. 
    Observe that, $\P$-almost surely,  
    $Y_1(t)\le Y_2(t)\le Y_3(t)$ for all $t\ge0$. Therefore, we 
    want to show that 
    \begin{align}\label{no-triple}
    \P\{\exists t>0:Y_1(t)=Y_2(t)=Y_3(t)\}=0.
    \end{align}
    When the above event occurs, we say that there is \emph{triple collision}.

    Denoting the i.i.d.\ Brownian environment on the first three levels by $B_j$, $j\in\{0,1,2\}$, we have
    \begin{align*}
        &Y_1(t)=B_0(t),\\
        &Y_2(t)=\sup_{0\le s\le t}\{B_0(s)+B_1(t)-B_1(s)\}=B_1(t)+A_{1,2}(t),\quad\text{and}\\
        &Y_3(t)=\sup_{0\le s'\le s\le t}\{B_0(s')+B_1(s)-B_1(s')+B_2(t)-B_2(s)\}
        =B_2(t)+A_{2,3}(t),
    \end{align*}
    where 
    \[A_{1,2}(t)=\sup_{0\le s\le t}\{Y_1(s)-B_1(s)\}\quad\text{and}\quad
    A_{2,3}(t)=\sup_{0\le s\le t}\{Y_2(s)-B_2(s)\}.\]
    Note that $A_{1,2}(0)=A_{2,3}(0)=0$ and both $A_{1,2}$ and $A_{2,3}$ are nondecreasing and only increase at $t\ge0$ such that  $Y_2(t)=Y_1(t)$ and $Y_3(t)=Y_2(t)$, respectively.  
    Thus, the processes $Y_k$, $k\in\{1,2,3\}$, satisfy Definition 1.6 in \cite{Sar-15} with drift coefficients $g_k=0$, diffusion coefficients $\sigma_k^2=1$, and collision coefficients $q_k^+=1$ and $q_k^-=0$ for $k\in\{1,2,3\}$. 
    Then \eqref{no-triple} follows from Theorem 1.9(i) in \cite{Sar-15}. Strictly speaking, the display above Definition 1.6 in \cite{Sar-15} requires $q_k^+,q_k^-\in(0,1)$. The proof of the theorem, however, does not use this condition and works as is in the extreme case where $q_k^+=1$ and $q_k^-=0$. For the convenience of the interested reader, we give a quick sketch of the proof that fleshes out why this works.
    	
     Let $Z_1(t)=Y_2(t)-Y_1(t)$ and $Z_2(t) = Y_3(t)-Y_2(t)$. Then, following Definition 2.8 in \cite{Sar-15}, $Z(t)=(Z_1(t),Z_2(t))\in\R_+^2$ is called the \emph{gap process}.
    A triple collision is equivalent to having $Z(t)=(0,0)$ and, therefore, we are aiming to show that 
    \begin{align}\label{no-triple2}
    \P\{\exists t>0:Z(t)=(0,0)\}=0.
    \end{align}
     
     By Lemma 2.9 in \cite{Sar-15}, the gap process $Z$ is a two-dimensional \emph{semimartingale reflected Brownian motion} in $\R_+^2$ (Definition 2.3 in \cite{Sar-15}) with zero drift, \emph{reflection matrix}
	\[R = 
	\begin{bmatrix}
	1 & 0\\
	-1 & 1
	\end{bmatrix},\]
 and covariance matrix  
    \[A = 
	\begin{bmatrix}
	2 & -1\\
	-1 & 2
	\end{bmatrix}.\]
 The proof of the lemma does not need the assumption that the collision coefficients are in $(0,1)$ and works for the extreme case where $q_k^+=1$ and $q_k^-=0$.

    Let $D$ be the diagonal matrix with the same diagonal as $A$, i.e.\ $D$ is twice the identity matrix. 
    Then we have $RD+DR' = 2A$. 
     By Definition 2.7 in \cite{Sar-15}, the non-smooth part of $\R_+^2$ is the set consisting of just the origin $\{(0,0)\}$.  Now \eqref{no-triple2} follows from Theorem 2.12(i) (and Remark 2.13). The proof of the theorem only depends on the fact that 
     \[Q=I-R=\begin{bmatrix}
	0 & 0\\
	1 & 0
	\end{bmatrix}\]
    has nonnegative entries and a spectral radius less than one. (See Lemma 2.5(iii) in \cite{Sar-15}.)
\end{proof}

\begin{proof}[Proof of Lemma \ref{geo-split}]
First, note that Proposition \ref{nodouble} prohibits geodesics from taking multiple consecutive vertical steps. Therefore, if $ \geo\from{(m,t)}\dir{R}{\tht}{\sig} $ and $ \geo\from{(m,t)}\dir{L}{\tht}{\sig} $ go vertically together they have to next go to the right together.  But Theorem 4.8(iv) in \cite{Sep-Sor-23-pmp} says that the two geodesics can only separate  along the upward vertical ray from $(m,t)$. 
Therefore, they either separate immediately at $(m,t)$ or else they have to stay together forever.
\end{proof}

\begin{lem}\label{nofixedjump}
Fix integers $m<k\le n$ and real numbers $s<r<t$.
Then $\P$-almost surely, $(k,r)$ is not a jump time for $\Gamma_{(m,s)}^{(n,t)}$, meaning $r$ it is not the first time $\Gamma_{(m,s)}^{(n,t)}$ enters level $k$.
\end{lem}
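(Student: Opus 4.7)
The plan is to express the event ``$(k,r)$ is a jump time of $\Gamma_{(m,s)}^{(n,t)}$'' as an $\arg\max$ condition and reduce it to the classical fact that Brownian motion almost surely has no right local maximum at a fixed time. Concatenating paths at level $k$ gives $L_{(m,s),(n,t)} = \max_{u \in [s,t]} f(u)$, where $f(u) := G(u) + H(u)$ with $G(u) := L_{(m,s),(k-1,u)}$ and $H(u) := L_{(k,u),(n,t)}$, and the event in question coincides with $\{r \in \arg\max f\}$ (the $\arg\max$ being a.s.\ unique by Theorem B.1 in \cite{Ham-19}).

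The main step is a local lower bound on $f$ to the right of $r$. Extending the geodesic realizing $G(r)$ laterally on level $k-1$ yields $G(u) - G(r) \ge B_{k-1}(u) - B_{k-1}(r)$ for every $u > r$. Letting $\tau$ denote the exit time from level $k$ of the geodesic realizing $H(r)$ and truncating that geodesic at $(k,u)$ (which lies on it when $u \le \tau$) yields $H(u) - H(r) \ge B_k(r) - B_k(u)$ for $u \in (r,\tau]$. Adding these,
\begin{equation*}
    f(u) - f(r) \ge X(u) - X(r) \quad \text{for } u \in (r, \tau],
\end{equation*}
where $X := B_{k-1} - B_k$ is $\sqrt{2}$ times a standard Brownian motion. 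On $\{r \in \arg\max f\} \cap \{\tau > r\}$, the inequality forces $X(u) \le X(r)$ on some right neighborhood of $r$, and this right-local-maximum event at a fixed time is a standard null event for Brownian motion.

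It remains to rule out $\tau = r$, i.e., that the geodesic from $(k,r)$ to $(n,t)$ exits level $k$ at its starting time. Because this is a jump at the \emph{starting} time, not at an interior time, it is not literally an instance of the lemma (which requires $s < r$), so I would prove it by induction on $n - k$. The inductive step applies the argument above on $[r, t]$: the event $\{\tau_\ell = r\}$, where $\tau_\ell$ is the exit time from level $\ell$ of the geodesic from $(\ell, r)$ to $(n, t)$, is contained (up to null sets) in the union of a right-local-maximum event for $B_\ell - B_{\ell+1}$ at $r$ and the analogous event $\{\tau_{\ell+1} = r\}$. The base case $\ell = n-1$ is clean because $L_{(n,w),(n,t)} = B_n(t) - B_n(w)$ is exact, so no residual term remains and $\{\tau_{n-1} = r\}$ reduces directly to a right-local-max event for $B_{n-1} - B_n$ on $[r, t]$.

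The main obstacle is the recursion itself: the sub-event $\{\tau = r\}$ is not literally an instance of the lemma, so it requires its own induction, with ``jump at the starting time'' as a distinct sub-claim at each level. Once this is in place, the same Brownian right-local-maximum fact, applied to the independent differences $B_\ell - B_{\ell+1}$ at each level, closes both the sub-claim (by induction) and the original lemma.
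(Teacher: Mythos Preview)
Your argument is correct. The decomposition $L_{(m,s),(n,t)}=\max_{u\in[s,t]}\bigl(L_{(m,s),(k-1,u)}+L_{(k,u),(n,t)}\bigr)$ is the standard concatenation identity, your lower bounds on $G$ and $H$ are valid, and the right-local-maximum event for $X=B_{k-1}-B_k$ at a fixed time is indeed null. The induction you outline for the residual event $\{\tau=r\}$ is sound: writing $L_{(\ell,r),(n,t)}=\max_{u\in[r,t]}\bigl(B_\ell(u)-B_\ell(r)+L_{(\ell+1,u),(n,t)}\bigr)$, the same comparison yields $\{\tau_\ell=r\}\subset\{\tau_{\ell+1}=r\}\cup N_\ell$ with $N_\ell$ the null right-local-max event for $B_\ell-B_{\ell+1}$, and the base case $\ell=n-1$ is exact because $L_{(n,w),(n,t)}=B_n(t)-B_n(w)$.

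Your route genuinely differs from the paper's. The paper invokes Proposition~\ref{nodouble} (no two consecutive vertical steps) to guarantee that the geodesic moves laterally both just before and just after $(k,r)$; it then picks rational times $r'<r<r''$ on those lateral stretches and reduces to the two-level problem ``the jump time of $\Gamma_{(k-1,r')}^{(k,r'')}$ equals $r$,'' which is disposed of by the continuous distribution of the argmax of $B_{k-1}-B_k$ on $[r',r'']$. This is shorter once Proposition~\ref{nodouble} is in hand, but that proposition is itself nontrivial (it rests on a no-triple-collision result for reflected Brownian motions). Your argument is more self-contained: it replaces the appeal to Proposition~\ref{nodouble} by a finite induction that successively rules out immediate jumps at each remaining level, using only the elementary fact that Brownian motion has no right local maximum at a fixed time. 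The trade-off is a longer proof with an explicit recursion, versus a short reduction that leans on a heavier earlier result.
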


\begin{proof}
    Due to Proposition \ref{nodouble}, $\P$-almost surely, if $(k,r)$ is the first entry point of $\Gamma_{(m,s)}^{(n,t)}$ at level $k$, then there must exist rational $r'<r''$ with $r'<r<r''$ and such that $\Gamma_{(m,s)}^{(n,t)}$ goes through $(k-1,r')$, $(k-1,r)$, $(k,r)$, and $(k,r'')$. 
    Thus, it is enough to show that for each $k\in\Z$ and $r'<r<r''$,  
    \[\P\bigl\{\{(k-1,r),(k,r)\}\subset\Gamma_{(k-1,r')}^{(k,r'')}\bigr\}=0.\]
    Using shift-invariance, this is reduced to showing that for any $s<t$, 
    \begin{align}\label{max0}
    \P\bigl\{\{(0,s),(1,s)\}\subset\Gamma_{(0,0)}^{(1,t)}\bigr\}=0.
    \end{align}

    In the above event, $s$ is the first point where $B_0(s)+B_1(t)-B_1(s)$ reaches is maximum over the interval $[0,t]$. This is the same as the location of the maximum of $B_0(s)-B_1(s)$ over $[0,t]$. Since $B_0-B_1$ is itself a Brownian motion (with diffusion coefficient two), the location of its maximum has a continuous distribution. This proves \eqref{max0}.
\end{proof}

The next proof follows the strategy of Proposition 34 in \cite{Bha-24}, where the non-existence of bi-infinite geodesics was established for the directed landscape. In that setting, the key input is Lemma 3.12 in \cite{Gan-Zha-22-}, which provides a bound on the number of disjoint geodesics.
To obtain the analogous bound for BLPP, we appeal to a theorem of \cite{Ham-20}, the proof of which uses the line ensemble technique. Alternatively, one could follow the proof of Lemma 3.12 in \cite{Gan-Zha-22-}, which uses Theorem 3.10 in \cite{Bas-Hof-Sly-22} together with the scaling limit from exponential last-passage percolation to the directed landscape; in that approach, one would replace the latter scaling step by a direct scaling to BLPP. This route would involve fewer explicitly solvable features of the model. 
A third possibility is to adapt the method of \cite{Bal-Bus-Sep-20}, which depends even less on integrable structure. In any case, the non-existence of bi-infinite geodesics is expected to hold quite generally, well beyond the class of solvable models.

\begin{rmk}\label{rk:lrmllm}
If one proceeds via estimates on the number of disjoint geodesics, it becomes necessary to restrict attention to locally rightmost or locally leftmost geodesics, in order to account for the possibility of non-uniqueness between certain pairs of points along a given geodesic. If such pairs of non-uniqueness do not occur along an arbitrary semi-infinite geodesic---as is known to be the case, for example, in the directed landscape model---then any bi-infinite geodesic must be unique between every pair of its points. In particular, it would be both locally rightmost and locally leftmost, and the theorem would therefore rule out the existence of any bi-infinite geodesic.
For instance, if the conjecture stated in Remark \ref{rk:all geo} holds, then all geodesics are Busemann geodesics, and Lemma \ref{geo-split} shows that such geodesics do not admit pairs of points with non-uniqueness.
\end{rmk}

To prove Theorem \ref{nobiinf}, we need the following result.
For integers $k<\ell$ and intervals $I,J\subset\R$, let $D(k,I;\ell,J)$ denote the maximal number of disjoint geodesics that start at $(k,x)$ and end at $(\ell,y)$ with $x\in I$ and $y\in J$. 

\begin{thm}\label{thm:Ham-adapt}
    Fix $Q_0>0$. 
   There exist positive constants $c_1,c_2,m_0$ such that for any $q\in(0,Q_0)$ and positive integers $m\ge m_0$ and $n$ such that $(2/q)^3\le m\le n^{c_1}$, 
    we have the bound 
   \begin{align}\label{Ham-new}
   \P\bigl\{D\bigl(0,[0,2n^{2/3}];[qn,qn+2n^{2/3}]\bigr)\ge m\bigr\}
    \le m^{-c_2(\log\log m)^2}.
    \end{align}
\end{thm}

\begin{proof}
Set $h=1/q$. Then by
Theorem 1.4 in \cite{Ham-20}, with $t_1=0$, $t_2=1$, and $x=y=0$, there exist positive constants $c_1$, $c_2$, and $m_0$, such that, under the conditions of our theorem, we get 
    \begin{align}\label{Ham}
    \P\bigl\{D\bigl(0,[0,2hn^{2/3}];[n,n+2hn^{2/3}]\bigr)\ge m\bigr\}
    \le m^{-c_2(\log\log m)^2}.
    \end{align}

\begin{rmk}\label{rmk:Ham-Thm}
Theorem 1.4 in \cite{Ham-20} is stated for positive integer values of $h$, and its proof is given within the proof of Theorem 6.2 of that paper. A careful inspection of the argument shows that neither the integrality of $h$ nor the restriction $h \ge 1$ is essential. In fact, the theorem remains valid for any fixed $h_0 > 0$ and all $h \ge h_0$, provided the constants $G$ and $g$ are allowed to depend on $h_0$.
For the convenience of the reader, Appendix \ref{Ham-new-app} contains a variant that admits a comparatively simple proof. Using this version in place of \cite[Theorem 1.4]{Ham-20} yields Theorem \ref{thm:Ham-adapt} with $(2/q)^3$ replaced by $(2/q)^{16}$, while allowing $q$ to be any positive real number. This formulation is sufficient for the proof of Theorem \ref{nobiinf}.
\end{rmk}
    
Now, we perform a Brownian scaling to get to \eqref{Ham-new}.
Specifically, given the i.i.d.\ two-sided standard Brownian motions $\{B_k:k\in\Z\}$, define $\wt B_k(t)=q^{1/2} B_k(q^{-1}t)$, $t\in\R$. 
	  If for some integers $m<n$ and real times $s<t$, the passage time $L_{(m,s),(n,t)}$ in \eqref{LPP} is maximized by the times $s=s_{m-1}\le s_m\le\cdots\le s_n=t$, then the passage time $\wt L_{(m,qs),(n,qt)}$, using Brownian motions $\{\wt B_k:k\in\Z\}$, is maximized by the times $qs=qs_{m-1}\le qs_m\le\cdots\le qs_n=qt$ (and equals $q^{1/2} L_{(m,s),(n,t)}$). Consequently, scaling the geodesics in the environment given by $\{B_k:k\in\Z\}$ by a factor $q$ turns them into geodesics in the environment given by $\{\wt B_k:k\in\Z\}$. This  and the fact that the Brownian motions $\{\wt B_k:k\in\Z\}$ have the same distribution as the original Brownian motions $\{B_k:k\in\Z\}$, give us that the probabilities in \eqref{Ham-new} and \eqref{Ham} are equal.
\end{proof}

\begin{proof}[Proof of Theorem \ref{nobiinf}]
By Theorem 3.1(iv–v) in \cite{Sep-Sor-23-aihp}, it follows that, with $\P$-probability one, every nondegenerate semi-infinite geodesic---and hence every bi-infinite geodesic---is $\tht$-directed for some $\tht>0$.
Since the model is invariant under the reflections $n\mapsto-n$, $n\in\Z$, and $t\mapsto-t$, $t\in\R$, we see that any non-degenerate bi-infinite geodesic must also be $\tht'$-directed in the south-west direct, for some $\tht'>0$.

One consequence of the above directedness properties is that any non-degenerate bi-infinite geodesic must cross all levels $n\in\Z$. We can then encode the bi-infinite geodesic by its entry points at the various levels $n\in\Z$, which we will denote by $\tau_n$.

We now sketch the proof strategy at a high level. Using directedness and the fact that geodesics have transversal fluctuation exponent $2/3$, we show that for any positive $\tht$ and $\tht'$, the probability that a geodesic starting between $-\tht' n \pm n^{2/3}$ at level $-n$ and ending between $\tht n \pm n^{2/3}$ at level $n$ passes through $[-1,1]$ at level $0$ tends to zero as $n\to\infty$. Since the entry point at level $0$ fluctuates on scale $n^{2/3}$, one expects only $O(1)$ such geodesics to pass through $[-n^{2/3}, n^{2/3}]$ at level $0$. Partitioning this interval into $n^{2/3}$ subintervals of fixed size and using shift-invariance, the expected number passing through $[-1,1]$ is therefore of order $n^{-2/3}$, which vanishes as $n\to\infty$. Although the directions $\tht$ and $\tht'$ are not known a priori, one can instead work with intervals on scale $n$, partition them into subintervals of length $n^{2/3}$, and apply union bounds, which remain sufficient to reach the same conclusion. We now give the rigorous argument.

We will argue by contradiction.  We consider the case of locally rightmost geodesics, the case of locally leftmost ones being  identical. Suppose the probability there exists a non-degenerate bi-infinite locally rightmost geodesic $(\tau_n)_{n\in\Z}$ is positive. Denote it by $\delta$. Then, since $\tau_0$ is finite and, for some positive $\tht$ and $\tht'$, $\tau_n/n\to\tht$ as $n\to\infty$ and $\tau_n/n\to\tht'$ as $n\to-\infty$, we must have
\begin{align*}
\P\bigl\{&\exists\text{ a locally rightmost bi-infinite geodesic }(\tau_n)_{n\in\Z}:\\
&\quad|\tau_0|\le a,\,\e n\le\tau_n\le Mn,\ -Mn\le\tau_{-n}\le-\e n\bigr\}\ge\delta/2,
\end{align*}
for all sufficiently small $\e\in(0,1)$ and sufficiently large  positive $a$, $M>1$, and $n>1$. 
This implies  
\begin{align}\label{2343}
\P\{A_n(0,a,\e,M)\}\ge\delta/2,
\end{align}
where $A_n(r,a,\e,M)$ is the event that there exist $s\in[-Mn,-\e n]$, $t\in[\e n,Mn]$, and a rightmost geodesic from $(-n,s)$ to $(n,t)$ that enters level $0$ between $r-a$ and $r+a$. 
To obtain the desired contradiction, we follow the high-level strategy described at the outset and prove that, for any $a>0$, $\e\in(0,1)$, and $M>1$, the probability in \eqref{2343} tends to $0$ as $n\to\infty$. Fix such $a$, $\e$, and $M$.

Fix $\e'\in(0,2\e/3)$ and $M'>M+\e/3$. Let $\mathcal G_n(\e',M')$ denote the set of rightmost geodesics that make up the event $A_n(0,\e' n/2,\e',M')$. Note that this set can be empty. For each geodesic $\gamma\in\mathcal G_n(\e',M')$, let $\tau_0(\gamma)$ denote its entry point at level $0$. Thus,  $|\tau_0(\gamma)|\le \e' n/2$. 
Let $H_n(\e',M')$ denote the cardinality of the set 
\[\mathcal H_n(\e',M')=\{\tau_0(\gamma):\gamma\in \mathcal G_n(\e',M')\}\subset[-\e' n/2,\e' n/2].\]
In words, this is the set of distinct entry points to level $0$ of all the geodesics in the set $\mathcal G_n(\e',M')$. (Naturally, if $\mathcal G_n(\e',M')=\varnothing$, then $H_n(\e',M')=0$.) We aim to upper bound $\E[H_n(\e',M')]$.

Consider two distinct points $s \neq t$ in the $\mathcal H_n(\e',M')$. There exist two rightmost geodesics $\gamma$ and $\gamma'$ in $\mathcal G_n(\e',M')$ such that $\tau_0(\gamma)=s$ and $\tau_0(\gamma')=t$. Suppose that the segments of $\gamma$ and $\gamma'$ from $(0,s)$ and $(0,t)$, respectively, up to the points where they reach level $n$, intersect. Then the segments from the points where the geodesics leave level $-n$ up to $(0,s)$ and $(0,t)$ must be disjoint. Indeed, if these earlier portions were also to intersect, the rightmost property of $\gamma$ and $\gamma'$ would be violated. Likewise, if the segments from level $-n$ to  $(0,s)$ and $(0,t)$ intersect, then the remaining segments to level $n$ are disjoint.
\[H_n(\e',M')\le D(-n,[-M'n,-\e' n];0,[-\e' n/2,\e' n/2])
+D(0,[-\e' n/2,\e' n/2];n,[\e' n,M'n]).\]
The expected values of the two quantities on the right-hand side are bounded similarly. We upper bound the second one. For this, we will apply Theorem \ref{thm:Ham-adapt} and, therefore, we go from scale $n$ down to scale $n^{2/3}$.

Cover $[-\e' n/2,\e' n/2]$ and $[\e' n,M'n]$ by closed intervals of width $2n^{2/3}$. Since $\e'$ and $M'-\e'$
are both $<2M'$, once $n$ is large enough (depending only on $\e'$ and $M'$), we need no more than $M' n^{1/3}$ intervals. Let $[\alpha,\alpha+2n^{2/3}]$ be an interval in the first covering and let $[\beta,\beta+2n^{2/3}]$ be an interval in the second covering. Then $-\e' n/2\le \alpha\le\e' n/2$ and $\e' n\le \beta\le M'n$. 

Using shift-invariance in the first equality, then taking $Q_0=M+\e'/2$ and $q = (\beta-\alpha)/n$ in Theorem \ref{thm:Ham-adapt} gives 
\begin{align}\label{2464}
\begin{split}
&\P\bigl\{D(0,[\alpha,\alpha+2n^{2/3}];n,[\beta,\beta+2n^{2/3}])\ge m\bigr\}\\
&\qquad\qquad=\P\bigl\{D(0,[0,2n^{2/3}];n,[\beta-\alpha,\beta-\alpha+2n^{2/3}])\ge m\bigr\}\\
&\qquad\qquad\le m^{-c_2(\log\log m)^2},
\end{split}
\end{align}
for all $q\in[\e'/2,M+\e'/2]$ and all positive integers $m$ and $n$ with 
\[m_1=\max\bigl((4/\e')^3,m_0\bigr)\le m\le n^{c_1}.\]

Since $\beta\ge\e' n>\e' n/2+2n^{2/3}\ge \alpha+2n^{2/3}$ for $n$ sufficiently large (depending on $\e'$), any geodesic from $[\alpha,\alpha+2n^{2/3}]$ to $[\beta,\beta+2n^{2/3}]$ must cross the vertical line $\e' n e_1+\Z_+ e_2$. Since there are only $n+1$ possible crossing points between levels $0$ and $n+1$, we have that $D(0,[\alpha,\alpha+2n^{2/3}];n,[\beta,\beta+2n^{2/3}])\le n+1$. This and the upper bound \eqref{2464} give 
\begin{align*}
\E\bigl[D(0,[\alpha,\alpha+2n^{2/3}];n,[\beta,\beta+2n^{2/3}])\bigr]
&=\sum_{m=1}^{n+1}\P\bigl\{D(0,[\alpha,\alpha+2n^{2/3}];n,[\beta,\beta+2n^{2/3}])\ge m\bigr\}\\
&\le m_1+\sum_{m=m_1}^{n^{c_1}}m^{-c_2(\log\log m)^2}+(n+1)\cdot {\bigl(n^{c_1}\bigr)}^{-c_2(\log\log n^{c_1})^2}\\
&\le m_1+\sum_{m=m_1}^\infty m^{-c_2(\log\log m)^2}+1,
\end{align*}
for all large enough $n$. Denote the constant on the right-hand side by $C$.

We can bound $D(0,[-\e' n/2,\e' n/2];n,[\e' n,M'n])$ by the sum of $D(0,[\alpha,\alpha+2n^{2/3}];n,[\beta,\beta+2n^{2/3}])$, where $[\alpha,\alpha+2n^{2/3}]$ and $[\beta,\beta+2n^{2/3}]$ vary over the coverings of the intervals $[-\e' n/2,\e' n/2]$ and $[\e' n,M'n]$. Therefore, 
\[\E\bigl[D(0,[-\e' n/2,\e' n/2];n,[\e' n,M'n])\bigr]
\le C(M'n^{1/3})^2.\]
From this, we get that
\[\E[H_n(\e',M')]\le 2C(M')^2 n^{2/3},\]
for all sufficiently large $n$. 

Take $\delta>0$ and $n>(2a+\delta)/\e'$. Let $\ell_n=\lfloor\tfrac{\e'n+\delta}{2a+\delta}\rfloor$, $r_1=-\e'n/2+a$, and for integers $i\in[2,\ell_n]$, let $r_i=r_{i-1}+2a+\delta$. Then the intervals $[r_i-a,r_i+a]$ are all disjoint and contained inside $[-\e'n/2,\e'n/2]$. By shift-invariance, for each such interval, $A_n(r_i,a,\e',M')$ has the same probability as the event on which there exists a rightmost geodesic that starts inside $[-M'n-r_i,-\e'n-r_i]$ on level $-n$, goes through $[-a,a]$ on level $0$, and ends inside $[\e'n-r_i,M'n-r_i]$ on level $n$. By our choice of $\e'$ and $M'$ and the fact that $|r_i|\le\e'n/2$, we have that 
\[[-Mn,-\e n]\subset[-M'n-r_i,-\e'n-r_i]\quad\text{and}\quad[\e n,Mn]\subset[\e'n-r_i,M'n-r_i].\]
Therefore,  
\[\P\bigl\{A_n(0,a,\e,M)\}\le\P\bigl\{A_n(r_i,a,\e',M')\}.\] 
From this, we have
\begin{align*}
\P\bigl\{A_n(0,a,\e,M)\}
&\le\frac1{\ell_n}\sum_{i=1}^{\ell_n}\P\bigl\{A_n(r_i,a,\e',M')\bigr\}
=\frac1{\ell_n}\E\Bigl[\sum_{i=1}^{\ell_n}\one_{A_n(r_i,a,\e',M')}\Bigr]\\
&\le\frac1{\ell_n}\E[H_n(\e',M')]\le\frac{4(2a+\delta)C(M')^2}{\e' n^{1/3}}\,,
\end{align*}
where the last bound holds for large enough $n$. This contradicts \eqref{2343}, proving the theorem.
%
%
\end{proof}

{\it Acknowledgments.} We thank Timo Sepp\"al\"ainen and Evan Sorensen for valuable discussions and comments. We thank Evan Sorensen for pointing us to \cite{Sar-15}, which helped prove Proposition \ref{nodouble}. 

\appendix
\section{Two additional results}

We give here two more results that are not needed for our development, but that may be of independent interest.

\begin{lem}\label{no-geo}
    There exists an event $\Omega_2'\in\kS$ with $\Omega_2'\subset\Omref{Omega1}$, $\P(\Omega_2')=1$, and such that the following holds for all $\w\in\Omega_2'$. For any $\tht>0$, $\sigg\in\{-,+\}$, $(m,s)\in\Z\times\R$, and any semi-infinite geodesic $\geo$ emanating from $(m,s)$, if $\geo\from{(m,s)}\dir{L}{\tht}{\sig}\preceq\geo\preceq\geo\from{(m,s)}\dir{R}{\tht}{\sig}$, then $\geo\in\bigl\{\geo\from{(m,s)}\dir{L}{\tht}{\sig},\geo\from{(m,s)}\dir{R}{\tht}{\sig}\bigr\}$.
\end{lem}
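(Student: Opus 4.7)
If $\geo\from{(m,s)}\dir{L}{\tht}{\sig}=\geo\from{(m,s)}\dir{R}{\tht}{\sig}$, the sandwich trivially forces $\geo$ to equal both and the claim holds. So the first step of the plan is to assume these two boundary geodesics are distinct; by Lemma \ref{geo-split} they then split at $(m,s)$, and by Proposition \ref{nodouble} together with the ordering in Theorem \ref{thm:geo}\eqref{thm:geo:mono}, $\geo\from{(m,s)}\dir{L}{\tht}{\sig}$ takes the vertical step to $(m+1,s)$ followed by a horizontal step, while $\geo\from{(m,s)}\dir{R}{\tht}{\sig}$ moves horizontally from $(m,s)$.

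Next, I will observe that the first edge of $\geo$ must match one of the two boundary geodesics: it is either vertical (matching $\geo\from{(m,s)}\dir{L}{\tht}{\sig}$) or horizontal (matching $\geo\from{(m,s)}\dir{R}{\tht}{\sig}$). By symmetry, assume the vertical case, so the goal becomes showing $\geo=\geo\from{(m,s)}\dir{L}{\tht}{\sig}$. Then I invoke Theorem \ref{thm:geo}\eqref{thm:geo:coal} to produce a coalescence point $\bfz\in\Z\times\R$ of the two boundary geodesics beyond which they coincide. Since $\geo$ is sandwiched between two paths that merge at $\bfz$, $\geo$ must itself pass through $\bfz$ and coincide with both boundary geodesics from $\bfz$ onward, reducing the problem to the finite segment from $(m,s)$ to $\bfz$.

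On this segment, I plan to induct on levels. Suppose $\geo$ and $\geo\from{(m,s)}\dir{L}{\tht}{\sig}$ have agreed up to a common point on level $n$ where both continue rightward, and let $t^L\le t^\Gamma$ denote their respective first jump-up times from level $n$. If $t^L<t^\Gamma$, the two paths diverge on level $n$ and must later reconverge at some point $\bfy\preceq\bfz$, producing two distinct geodesic segments between common endpoints, with $\geo\from{(m,s)}\dir{L}{\tht}{\sig}$'s being the leftmost point-to-point geodesic by Theorem \ref{thm:geo}\eqref{thm:geo:LR}.

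The main obstacle will be to convert the existence of this strictly-rightward ``middle'' segment into a contradiction with the sandwich $\geo\preceq\geo\from{(m,s)}\dir{R}{\tht}{\sig}$. The plan is to combine the rightmost characterization of $\geo\from{(m,s)}\dir{R}{\tht}{\sig}$ in Theorem \ref{thm:geo}\eqref{thm:geo:LR} with Proposition \ref{nodouble} and the geometry of the $L$--$R$ channel between $\bfy$ and $\bfz$: forcing $\geo$ to remain between $\geo\from{(m,s)}\dir{L}{\tht}{\sig}$ and $\geo\from{(m,s)}\dir{R}{\tht}{\sig}$ after the detour should obstruct $\geo$ from reaching $\bfz$ without violating either the no-consecutive-vertical restriction or the extremality of $\geo\from{(m,s)}\dir{R}{\tht}{\sig}$ as the rightmost point-to-point geodesic between $\bfy$ and $\bfz$. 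Once this contradiction is established, $t^L=t^\Gamma$ follows and the induction propagates through the finitely many levels between $(m,s)$ and $\bfz$, yielding $\geo=\geo\from{(m,s)}\dir{L}{\tht}{\sig}$.
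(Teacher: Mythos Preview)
Your setup through locating the coalescence point $\bfz$ is sound, but the level-by-level induction you propose afterward is both unnecessary and incomplete: the ``obstacle'' paragraph is a plan, not an argument. You never actually derive a contradiction from $t^L<t^\Gamma$; having a second geodesic segment strictly to the right of $\geo\from{(m,s)}\dir{L}{\tht}{\sig}$ between common endpoints does not by itself violate $\geo\preceq\geo\from{(m,s)}\dir{R}{\tht}{\sig}$, and the gestures toward Proposition~\ref{nodouble} and the rightmost property do not close the gap.

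The paper sidesteps induction with a single observation you are missing. Once $\geo$ takes the vertical step to $(m+1,s)$, so does $\geo\from{(m,s)}\dir{L}{\tht}{\sig}$, which continues from there as $\geo\from{(m+1,s)}\dir{L}{\tht}{\sig}$. Proposition~\ref{nodouble} forbids this continuation from going vertically again, so by Lemma~\ref{geo-split} one has $\geo\from{(m+1,s)}\dir{L}{\tht}{\sig}=\geo\from{(m+1,s)}\dir{R}{\tht}{\sig}$. Theorem~\ref{thm:geo}\eqref{thm:geo:LR} then makes this single path simultaneously the leftmost and the rightmost, hence the \emph{unique}, point-to-point geodesic from $(m+1,s)$ to $\bfz$. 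Since $\geo$ is also a geodesic between these two points, equality is immediate and $\geo=\geo\from{(m,s)}\dir{L}{\tht}{\sig}$. The horizontal case is handled symmetrically after passing to a nearby $(m,t)$ on $\geo\cap\geo\from{(m,s)}\dir{R}{\tht}{\sig}$ at which $\geo\from{(m,t)}\dir{L}{\tht}{\sig}=\geo\from{(m,t)}\dir{R}{\tht}{\sig}$.
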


\begin{proof}
 Suppose first that $(m+1,s)\in\geo$.
 Then the coalescence in Theorem \ref{thm:geo}\eqref{thm:geo:coal} implies that  
 $\geo\from{(m+1,s)}\dir{L}{\tht}{\sig}=\geo\from{(m+1,s)}\dir{R}{\tht}{\sig}$. 
 This forces $\geo$ to equal these two geodesics from $(m+1,s)$ onwards. But then $\geo=\geo\from{(m,s)}\dir{L}{\tht}{\sig}$. 
 
Similarly, if $\geo$ moves to the right from $(m,s)$, then there exists a $t>s$ such that $(m,s)\in\geo$. Again, coalescence implies that  $\geo\from{(m,t)}\dir{L}{\tht}{\sig}=\geo\from{(m,t)}\dir{R}{\tht}{\sig}$, which 
forces $\geo=\geo\from{(m,s)}\dir{R}{\tht}{\sig}$. 
\end{proof}

For the next result, we use the natural order relation on $\{-,+\}$ which says that $-\preceq+$.

\begin{lem}
    There exists an event $\Omega_2''\in\kS$ with $\Omega_2''\subset\Omref{Omega1}$, $\P(\Omega_2'')=1$, and such that the following holds for all $\w\in\Omega_2''$. Take $\delta>\gamma>0$, $\tht\in[\gamma,\delta]$, and $\sig'\preceq\sig\preceq\sig''$ in $\{-,+\}$. Then $\NU_1^{\gamma\sig'}\cap\NU_1^{\delta\sig''}\subset\NU_1^{\tht\sig}$.
\end{lem}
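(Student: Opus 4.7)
The plan is to set $\Omega_2''=\Omref{Omega5}$ and deduce the claim directly from the characterization of shocks in terms of competition interface asymptotic directions given by Theorem \ref{thm:cif-SS}. The point is that, at fixed $(m,s)$, membership in $\NU_1^{\tht\sig}$ is controlled solely by where $\tht$ sits relative to the two thresholds $\alpha=\tht_{(m,s)}^R$ and $\beta=\tht_{(m,s)}^L$, with the sign $\sigg$ only changing whether the endpoints are included. Once this is unpacked, the inclusion becomes an algebraic tautology.

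First, abbreviate $\alpha=\tht_{(m,s)}^R$ and $\beta=\tht_{(m,s)}^L$. Theorem \ref{thm:cif-SS}\eqref{thm:cif:a}-\eqref{thm:cif:c} gives $\alpha\le\beta$ and the equivalences
\[
(m,s)\in\NU_1^{\tht-}\iff \alpha<\tht\le\beta,\qquad
(m,s)\in\NU_1^{\tht+}\iff \alpha\le\tht<\beta.
\]
Apply these to the hypothesis $(m,s)\in\NU_1^{\gamma\sig'}\cap\NU_1^{\delta\sig''}$ to extract: $\gamma>\alpha$ if $\sig'=-$ and $\gamma\ge\alpha$ if $\sig'=+$; and $\delta\le\beta$ if $\sig''=-$ and $\delta<\beta$ if $\sig''=+$. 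Combined with $\gamma\le\delta$, this forces $\alpha\le\gamma\le\delta\le\beta$, with the left inequality strict when $\sig'=-$ and the right inequality strict when $\sig''=+$.

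Now fix $\tht\in[\gamma,\delta]$ and $\sigg\in\{-,+\}$ with $\sig'\preceq\sigg\preceq\sig''$. If $\sigg=-$, then $\sig'\preceq-$ forces $\sig'=-$, so $\tht\ge\gamma>\alpha$, and $\tht\le\delta\le\beta$, so $(m,s)\in\NU_1^{\tht-}$. If $\sigg=+$, then $+\preceq\sig''$ forces $\sig''=+$, so $\tht\ge\gamma\ge\alpha$ and $\tht\le\delta<\beta$, so $(m,s)\in\NU_1^{\tht+}$. Either way, $(m,s)\in\NU_1^{\tht\sig}$, proving the inclusion.

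There is no real obstacle here: the only thing one needs to be careful about is the bookkeeping of strict versus weak inequalities, which is exactly what the distinction between $\sig=-$ and $\sig=+$ in Theorem \ref{thm:cif-SS} is designed to record. The constraint $\sig'\preceq\sig''$ ensures that the two hypotheses are not mutually incompatible (for example, one could not simultaneously require $\gamma\ge\alpha$ from $\sig'=+$ and $\delta<\beta$ from $\sig''=-$ in a way that breaks the sandwiching), and the ordering $\sig'\preceq\sig\preceq\sig''$ guarantees that the correct side of the thresholds is inherited by $(\tht,\sig)$.
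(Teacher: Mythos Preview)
Your proof is correct but takes a different route from the paper. The paper argues directly from the geodesic monotonicity in Theorem \ref{thm:geo}\eqref{thm:geo:mono}: if $(m,s)\in\NU_1^{\gamma\sig'}\cap\NU_1^{\delta\sig''}$, then both $\geo\from{(m,s)}\dir{L}{\gamma}{\sig'}$ and $\geo\from{(m,s)}\dir{L}{\delta}{\sig''}$ jump immediately to $(m+1,s)$ while both $\geo\from{(m,s)}\dir{R}{\gamma}{\sig'}$ and $\geo\from{(m,s)}\dir{R}{\delta}{\sig''}$ go right; sandwiching then forces $\geo\from{(m,s)}\dir{L}{\tht}{\sig}$ to jump immediately and $\geo\from{(m,s)}\dir{R}{\tht}{\sig}$ to go right, so $(m,s)\in\NU_1^{\tht\sig}$.

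Your approach via Theorem \ref{thm:cif-SS} translates the question into a one-variable interval problem in the direction parameter, which is clean and makes the role of the sign constraint $\sig'\preceq\sig\preceq\sig''$ transparent as endpoint bookkeeping. The paper's argument is slightly more elementary in that it only uses the event $\Omref{Omega1}$ (geodesic monotonicity) rather than the larger machinery behind $\Omref{Omega5}$, but since $\Omref{Omega5}\subset\Omref{Omega1}$ this is harmless. Your last paragraph of commentary is a bit muddled (the case $\sig'=+$, $\sig''=-$ simply makes the statement vacuous for lack of an admissible $\sig$, rather than introducing an incompatibility), but this does not affect the argument.
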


\begin{proof}
    Take $(m,s)\in\NU_1^{\gamma\sig'}\cap\NU_1^{\delta\sig''}$. Then $\geo\from{(m,s)}\dir{L}{\gamma}{\sig'}$ and $\geo\from{(m,s)}\dir{L}{\delta}{\sig''}$ both go through $(m+1,s)$ while $\geo\from{(m,s)}\dir{R}{\gamma}{\sig'}$ and $\geo\from{(m,s)}\dir{R}{\delta}{\sig''}$ both proceed rightwards from $(m,s)$. By the monotonicity of geodesics \eqref{geomono}, this forces $\geo\from{(m,s)}\dir{R}{\tht}{\sig}$ to go through $(m+1,s)$ and $\geo\from{(m,s)}\dir{R}{\tht}{\sig}$ to go right from $(m,s)$, which means $(m,s)\in\NU_1^{\tht\sig}$.
\end{proof}

\section{A variant of \cite[Theorem 1.4]{Ham-20}}\label{app:Ham}

In this appendix we prove a version of Theorem 1.4 in \cite{Ham-20} adapted to our needs. In particular, we obtain a statement sufficient to prove Theorem \ref{thm:Ham-adapt}, in which the condition $m\ge (2/q)^3$ is replaced by $m\ge (2/q)^{16}$. The same proof in fact works if $16$ is replaced by any power strictly more than $4$.
Our formulation thus differs from that of \cite{Ham-20}, but it is adequate for our purposes and admits a simpler proof than the argument given in Theorem 6.2 of \cite{Ham-20}. One point to note is that in our theorem, the parameter $h$ can be any positive number.

\begin{thm}\label{thm:Ham-application}
   There exist positive constants $c_1,c_2,m_0$ such that for any $h>0$ and positive integers $m\ge m_0$ and $n$ such that $(2h)^{16}\le m\le n^{c_1}$, we have the bound 
   \begin{align}\label{Ham-new-app}
   \P\bigl\{D\bigl(0,[0,2hn^{2/3}];[n,n+2hn^{2/3}]\bigr)\ge m\bigr\}
    \le \bigl(\tfrac{h}2+1\bigr)^2 m^{-c_2(\log\log m)^2}.
    \end{align}
\end{thm}

\begin{proof}
Theorem 1.1 in \cite{Ham-20} (with $t_1=0$ and $t_2=1$) says that there exists a constant $C>1$ such that for any integer $k\ge2$, any positive $\e\le C^{-4k^2}$, any integer 
\begin{align}\label{n-cond}
n\ge C^{k^2}\e^{-C}(1+\e^{-18}|\log\e|^{-24}C^{-36k}),
\end{align}
and any $x,y\in\R$ with 
\begin{align}\label{xy-cond}
|x-y|\le\e^{-1/2}|\log\e|^{-2/3}C^{-k},
\end{align}
we have the upper bound
\begin{align}\label{2562}
\begin{split}
&\P\bigl\{D\bigl(0,[2(x-\e)n^{2/3},2(x+\e)n^{2/3}];
[n+2(y-\e)n^{2/3},n+2(y+\e)n^{2/3}]\bigr)\ge k\bigr\}\\
&\qquad\qquad\qquad\qquad\qquad\qquad\qquad\qquad
\le \e^{\frac{k^2-1}2}C^{k^3}e^{C^k|\log\e|^{5/6}}.
\end{split}
\end{align}

\begin{rmk}
The constant $C$ above corresponds to the constant $G$ in \cite[Theorem~1.1]{Ham-20}. Although it is not stated explicitly there that this constant is $>1$, enlarging it only weakens the bounds required on $\e$ and $n$ and increases the upper bound in \eqref{2562}. Hence, without loss of generality, we may assume that $C>1$.
\end{rmk}

Let $m_0\ge55$ be large enough so that 
\begin{align}
&\log m\ge C^{12},\quad
\log m\ge\frac{4(\log\log m)^2}{9\log C},\label{m-cond1}\\
&\bigl(5\delta+(2\delta)^{5/6}\bigr)\log m+
\frac{(\log\log m)^3}{(6\log C)^3}\cdot\log C\le \delta\cdot\frac{(\log\log m)^2}{288(\log C)^2}\log m,\label{m-cond2}\\
&e^{\frac18\log m}e^{-\tfrac56\log\log m}
-2e^{-\frac14\log m}\ge e^{\frac1{16}\log m},\quad\text{and}\label{m-cond3}\\
&\frac{m}{(\tfrac 12 m^{5/16}+1)^2}\ge \frac{\log\log m}{6\log C},\label{m-cond4}
\end{align}
for all $m\ge m_0$. 
For $m\ge m_0$ let
\begin{align}\label{kep-def}
k=\Bigl\lfloor\frac{\log\log m}{6\log C}\Bigr\rfloor\quad\text{and}\quad\e=m^{-1/4}.
\end{align}
Then the first two inequalities in \eqref{m-cond1} imply $k\ge2$ and $\e\le C^{-4k^2}$.

Next, we check that the condition \eqref{n-cond} is satisfied. Note that since $m\ge55$, $e^{\frac92\log m}\ge(1+\sqrt 5)/2$ and hence $1+e^{\frac92\log m}\le e^{9\log m}$. Use this, $\lfloor a\rfloor\le a$, $k\ge1$, $\log m\ge4$, 
 $m\ge1$, and the second inequality in \eqref{m-cond1}, to write
\begin{align*}
  C^{k^2}\e^{-C}(1+\e^{-18}|\log\e|^{-24}C^{-36k})
  &\le e^{\frac{(\log\log m)^2}{36\log C}}e^{\tfrac14 C\log m}\bigl(1+e^{\frac92\log m}\bigr)\\
  &\le e^{(\frac1{16}+\frac{C}4+9)\log m}=m^{1/c_1},
\end{align*}
where we set $c_1=(9+\tfrac1{16}+\tfrac{C}4)^{-1}$. Thus, requiring $m\le n^{c_1}$ guarantees \eqref{n-cond}.

We now cover each of the intervals in \eqref{Ham-new-app} by smaller intervals of the form in \eqref{2562}. We thus check that \eqref{xy-cond} is satisfied for the $x$ and $y$ we use for the coverings.
Namely, we have
\[0\le x+\e,\quad x-\e\le h,\quad0\le y+\e,\quad\text{and}\quad y-\e\le h.\]
Thus,
\[|x-y|\le h+2\e.\]
Now bound
\begin{align*}
\e^{-1/2}|\log\e|^{-2/3}C^{-k}-2\e
&\ge e^{\frac18\log m}e^{-\tfrac23\log(\frac14\log m)}
e^{-\frac16\log\log m}-2e^{-\frac14\log m}\\
&\ge e^{\frac18\log m}e^{-\tfrac56\log\log m}
-2e^{-\frac14\log m}\\
&\ge e^{\frac1{16}\log m}=m^{1/16},   
\end{align*}
where the last inequality used \eqref{m-cond3}. Thus, if $m\ge h^{1/6}$, then the $x$ and $y$ used in the covering intervals satisfy \eqref{xy-cond}.

Now, apply a union bound. Specifically, the number of disjoint polymers in \eqref{Ham-new-app} is bounded above by the sum, over all pairs of covering subintervals of $[0,\,2hn^{2/3}]$ and $[n,\,n+2hn^{2/3}]$, of the numbers of disjoint polymers between each such pair. We need at most $h/(2\e)+1$ subintervals for each of the two coverings and by the condition $m\ge h^{16}$ and \eqref{m-cond4}, we have
\[\frac{m}{(\frac{h}{2\e}+1)^2}
=\frac{m}{(\tfrac h2 m^{1/4}+1)^2}
\ge\frac{m}{(\tfrac 12 m^{5/16}+1)^2}\ge \frac{\log\log m}{6\log C}=k.\]
Thus, by a union bound, and applying \eqref{2562} to each of the covering pairs, we get the upper bound
\begin{align*}
&\P\bigl\{D\bigl(0,[0,2hn^{2/3}];[n,n+2hn^{2/3}]\bigr)\ge m\bigr\}
\le\bigl(\tfrac{h}{2\e}+1\bigr)^2
\e^{\frac{k^2-1}2}C^{k^3}e^{C^k|\log\e|^{5/6}}.
\end{align*}
Substituting \eqref{kep-def}, then using  $m\ge1$ and $a-1\le\lfloor a\rfloor\le a$, then \eqref{m-cond1}, then \eqref{m-cond2}, we get 
\begin{align*}
\bigl(\tfrac{h}{2\e}+1\bigr)^2
\e^{\frac{k^2-1}2}C^{k^3}e^{C^k|\log\e|^{5/6}}
&\le\bigl(\tfrac {h}2+1\bigr)^2 e^{4\delta\log m}
e^{-\delta([\frac{\log\log m}{6\log C}-1]^2-1)\log m}
e^{\frac{(\log\log m)^3}{(6\log C)^3}\log C}
e^{(2\delta)^{5/6}\log m}\\
&\le\bigl(\tfrac {h}2+1\bigr)^2 e^{(5\delta+(2\delta)^{5/6})\log m}
e^{\frac{(\log\log m)^3}{(6\log C)^3}\log C}e^{-\delta[\frac{\log\log m}{12\log C}]^2\log m}\\
&\le\bigl(\tfrac {h}2+1\bigr)^2 e^{-\delta\cdot\frac{(\log\log m)^2}{288(\log C)^2}\log m}\\
&=\bigl(\tfrac {h}2+1\bigr)^2 e^{-c_2(\log\log m)^2\log m}=\bigl(\tfrac {h}2+1\bigr)^2 m^{-c_2(\log\log m)^2},
\end{align*}
where we set $c_2=\delta/(288(\log C)^2)$. The bound \eqref{Ham-new-app} is proved.
\end{proof}

\bibliographystyle{aop-no-url}
\bibliography{firasbib2010}

\end{document}